\DeclareMathOperator{\Spec}{\mathrm{Spec}}
\DeclareMathOperator{\gl}{\mathrm{GL}}
\DeclareMathOperator{\SL}{\mathrm{SL}}
\DeclareMathOperator{\g}{\mathfrak{g}}
\DeclareMathOperator{\edo}{End}
\DeclareMathOperator{\rank}{rank}
\DeclareMathOperator{\id}{Id}
\DeclareMathOperator{\ext}{Ext}
\DeclareMathOperator{\md}{-mod}
\DeclareMathOperator{\prj}{-proj}
\DeclareMathOperator{\Hom}{\mathrm{Hom}}
\DeclareMathOperator{\End}{\mathrm{End}}
\DeclareMathOperator{\rk}{\mathrm{rk}}
\DeclareMathOperator{\gr}{\mathrm{gr}}
\DeclareMathOperator{\Ann}{\mathrm{Ann}}
\DeclareMathOperator{\ad}{\mathrm{ad}}
\DeclareMathOperator{\quot}{\mathrm{Quot}}
\newcommand{\la}{\lambda}
\newcommand{\prtn}[1]{\mathscr{P}(#1)}
\newcommand{\prt}[2]{\mathscr{P}_{#1}(#2)}
\newcommand{\col}[1]{{\sf Col}(#1)}
\newcommand{\Opar}[1]{\mathcal{O}_{#1}^{\mbox{\tiny{\sf par}}}}
\newcommand{\tOpar}[1]{\tilde{\mathcal{O}}_{#1}^{\mbox{\tiny{\sf par}}}}
\newcommand{\C}{\mathbb{C}}
\newcommand{\lan}{\langle}
\newcommand{\ran}{\rangle}
\newcommand{\OO}{\mathcal{O}}
\newcommand{\irr}[1]{\textsf{Irrep}(#1)}
\newcommand{\Lmod}[1]{#1\text{-}{\mathsf{mod}}}
\newcommand{\erem}{\hfill$\lozenge$\end{remark}}
\newcommand{\beq}{\begin{equation}\label}
\newcommand{\eeq}{\end{equation}}
\newcommand{\f}[1]{\mathfrak{#1}}
\newcommand{\scr}[1]{\mathscr{#1}}
\newcommand{\GL}{GL}
\renewcommand{\mid}{\enspace\big|\enspace}
\newcommand{\M}{{\mathcal{M}}}
\renewcommand{\part}{{\f P}}
\newcommand{\eps}{\epsilon}
\newcommand{\h}{{\mathfrak{h}}}
\newcommand{\p}{{\f p}}
\newcommand{\Z}{{\mathbb{Z}}}
\newcommand{\La}{\Lambda}
\newcommand{\Coh}{{\operatorname{Coh}}}
\newcommand{\BC}{{\mathbb C}}
\newcommand{\BZ}{{\mathbb Z}}
\newcommand{\res}{{\operatorname{res}}}
\newcommand{\Refl}{\mathcal{A}}
\newcommand{\param}{\mathfrak{c}}
\newcommand{\Res}{\operatorname{Res}}
\newcommand{\Ind}{\operatorname{Ind}}
\newcommand{\Hecke}{\mathcal{H}}
\newcommand{\OCat}{\mathcal{O}}
\newcommand{\Fun}{\operatorname{Fun}}
\newcommand{\Funct}{\mathcal{F}}
\newcommand{\fd}{\operatorname{fd}}
\newcommand{\Sp}{\operatorname{Sp}}
\newcommand{\Gr}{{G}}
\newcommand{\mods}{\operatorname{mod}}
\newcommand{\Str}{\mathcal{O}}
\newcommand{\Pro}{\mathcal{P}}
\newcommand{\Dcal}{\mathcal{D}}
\newcommand{\W}{\mathbb{A}}
\newcommand{\z}{\mathfrak{z}}
\def\euu{{\mathsf{eu}}}
\def\C{{\mathbb{C}}}
\def\gl{{\mathfrak{g}\mathfrak{l}}}
\def\glinfty{{\mathfrak{g}\mathfrak{l}}_\infty}
\renewcommand{\subsection}{\@startsection{subsection}{2}{0pt}{-3ex plus -1ex minus -0.2ex}{-2mm
plus -0pt minus -2pt}{\normalfont\bfseries}} \makeatother
\numberwithin{equation}{subsection}
\newtheorem*{theorem}{Theorem}
\newtheorem*{proposition}{Proposition}
\newtheorem*{lemma}{Lemma}
\newtheorem*{corollary}{Corollary}
\newtheorem*{theoremA}{Theorem A}
\newtheorem*{theoremB}{Theorem B}
\newtheorem*{theoremC}{Theorem C}
\newtheorem{hypothesis}{Hypothesis}
\newtheorem{conjecture}{Conjecture}
\theoremstyle{remark}
 \newtheorem*{remark}{Remark}
\newcommand{\cmpl}[1]{%
    \sbox\z@{$#1$}%
    \dimen@=\wd\z@
    \advance \dimen@ -\strip@pt\fontdimen\@ne\textfont\@ne \ht\z@
    \setbox\tw@=\hb@xt@\dimen@{}%
    \ht\tw@=\ht\z@ \dp\tw@=\dp\z@
    \box\z@
    \llap{$\overline{\box\tw@}$}%
}
\begin{document}

\title{On category $\mathcal{O}$ for cyclotomic rational Cherednik
algebras}
\author{Iain Gordon}
\author{Ivan Losev}
\address{School of Mathematics and Maxwell Institute of Mathematics, University of Edinburgh, Edinburgh, EH9 3JZ, U.K. (I.G.); Department
of Mathematics, Northeastern University, Boston MA 02115 USA (I.L.)} \email{igordon@ed.ac.uk, i.loseu@neu.edu}
\begin{abstract} We study equivalences for category $\mathcal{O}_p$ of the rational Cherednik algebras ${\bf H}_p$ of type $G_{\ell}(n) = (\mu_{\ell})^n\rtimes \mathfrak{S}_n$: a highest weight
equivalence between $\mathcal{O}_p$ and $\mathcal{O}_{\sigma(p)}$ for $\sigma\in \mathfrak{S}_{\ell}$ and an action of $\mathfrak{S}_{\ell}$ on an explicit non-empty Zariski open set of parameters $p$; a derived equivalence between $\mathcal{O}_p$ and $\mathcal{O}_{p'}$ whenever $p$ and $p'$ have integral difference; a highest weight equivalence between $\mathcal{O}_p$ and a parabolic category $\mathcal{O}$ for the general linear group, under a non-rationality assumption on the parameter $p$. As a consequence, we confirm special cases of conjectures of Etingof and of Rouquier.
\end{abstract}

\thanks{We thank R. Bezrukavnikov, J. Brundan, P. Etingof, A. Kleshchev,
A. Kuznetsov and C. Stroppel for helpful discussions, the Research in Pairs programme at Oberwolfach where our work together began, and the Hausdorff Institute for Mathematics for the special semester on Representation Theory. The first author is grateful for the full financial support of EPSRC grant EP/G007632. The second author is supported by the NSF grant DMS-0900907.}
\maketitle
\tableofcontents

\section{Introduction}
Category $\mathcal{O}$ for the rational Cherednik algebra of type $G_{\ell}(n) = (\mu_{\ell})^n\rtimes \mathfrak{S}_n$ (where $\mu_{\ell}$ is the group of $\ell^{\text{th}}$ roots of unity in $\C^\times$) has a particularly rich structure. It is a highest weight category, constructed similarly to the Bernstein-Gelfand-Gelfand category $\mathcal{O}$ for a complex reductive Lie algebra, and as such it is a highest weight covering of the Hecke algebra of type $G_{\ell}(n)$ via the ${\sf KZ}$ functor.  Furthermore it can be realised as a certain category of $\mathcal{D}$-modules on a variety of representations of an (extension of an) affine quiver of type $A$, and because of this it is connected with the combinatorics of Hilbert schemes and the $n!$ theorem. Finally, taking the sum of all category $\mathcal{O}$'s for $G_{\ell}(n)$ as $n$ runs through $\Z_{\geq 0}$ categorifies higher level Fock spaces. 

We are first going to exploit this microlocal angle, particularly the  connection between the rational Cherednik algebras of type $G_{\ell}(n)$ and the deformation quantizations of a symplectic resolution of the orbit space $(\h\times \h^*)/G_{\ell}(n)$ where $\h$ is the reflection representation and $G_{\ell}(n)$ acts on $\h\times \h^*$ diagonally. \begin{enumerate}
\item[(a)] The symplectic resolution has a Weyl group action which lifts to its quantizations and in turn passes to non-trivial symmetries of the spherical subalgebras of the Cherednik algebras. Combined with an intrinsic construction of $\mathcal{O}$ for the spherical subalgebra, which is motivated by a similar construction for finite $W$-algebras, these symmetries produce the highest weight equivalences between $\mathcal{O}_p$ and $\mathcal{O}_{\sigma(p)}$.
\item[(b)] There is a tilting bundle $\mathcal{P}$ on the symplectic resolution which induces a derived equivalence between coherent sheaves on the resolution and finitely generated $G_{\ell}(n)$-equivariant modules over $\C[\h\times \h^*]$. We show that this lifts to the noncommutative setting, producing a derived equivalence between a category of torus equivariant sheaves for the deformation quantizations and categories of torus equivariant representations for the Cherednik algebras. Tensoring by appropriate line bundles induces equivalences between the categories of sheaves for different deformation quantizations, and hence derived equivalences for the Cherednik algebra at different parameters. Via a sequence of reductions it is possible to recover the derived category of $\mathcal{O}$ from these category of equivariant representations and so we deduce the derived equivalences between $\mathcal{O}_p$ and $\mathcal{O}_{p'}$.
\end{enumerate}
When we enforce some non-rationality on the parameter $p$ the Hecke algebra of $G_{\ell}(n)$ becomes isomorphic to its own degeneration. As higher level Schur-Weyl duality shows that parabolic category $\mathcal{O}$ for the general linear group produces a highest weight cover of the degenerate Hecke algebra of type $G_{\ell}(n)$, we are then able to compare $\mathcal{O}_p$ and the parabolic category $\mathcal{O}$, showing they are equivalent. The equivalence is explicit enough to give a combinatorial characterization of the support of any irreducible representation in $\mathcal{O}_p$, and in particular a precise classification of all the finite dimensional representations for the Cherednik algebra (with the non-rationality parameter restriction).
\medskip

We now give a more detailed summary of our results.
\subsection{Rational Cherednik algebras}
In \cite{EG} Etingof and Ginzburg introduced symplectic reflection algebras. These algebras arise in the framework of the deformation theory as follows.
Let ${V}$ be a symplectic vector space over $\BC$ with symplectic form $\omega$ and let $\Gr$ be a finite subgroup of $\Sp({V})$. Let $S$ denote the set of symplectic reflections in $\Gr$, that is
the set of all $\gamma\in \Gr$ such that $\rk_V(\gamma-\id)=2$. Decompose $S$ into the union
$\bigsqcup_{i=0}^r S_i$ of $\Gr$-conjugacy classes.
Pick complex numbers $(c_0,c_1,\ldots,c_r)$, one for each conjugacy class in $S$, and
another complex number $h$. Set $p:=(h,c_0,\ldots,c_r)$ and define the algebra
${\bf H}_p={\bf H}_p(\Gr,{V})$ as the quotient of the smash-product $T({V})\# \Gr$ of the tensor algebra on $V$ and the group $\Gr$ by the relations
\begin{equation}\label{eq:SRA_relation}
[u,v]=h \omega(u,v)-2\sum_{i=1}^r c_i\sum_{\gamma\in S_i}\omega_\gamma(u,v)\gamma,
\end{equation}
for $u,v\in V$,
where $\omega_\gamma$ denotes the skew-symmetric form that coincides with $\omega$
on the two-dimensional space $\operatorname{im}(\gamma-\operatorname{id})$ and whose kernel
is $\ker(\gamma-\operatorname{id})$. Under a mild restriction on $\Gr$, namely the absence
of proper symplectic $\Gr$-stable subspaces in ${V}$, the algebras ${\bf H}_p$
exhaust {\it filtered} deformations of the graded algebra $S({V})\#\Gr$.

Let $\h$ be a complex vector space and $W\subset \GL(\h)$ be a finite linear group
generated by complex reflections -- the elements $\gamma\in W$ with $\operatorname{rk}_{\h}(\gamma -\operatorname{id})=1$.
The  space ${V}=\h\times \h^*$ carries a natural symplectic form, and for $\Gr$
we take the image of $W$ in $\Sp({V})$ under the diagonal action. The corresponding symplectic reflection algebra
is usually referred to as a {\it rational Cherednik algebra}. When $h\neq 0$ the structure and representation theory of the rational Cherednik algebra are reminiscent
to those of the universal enveloping algebra of a reductive Lie algebra. We have a triangular decomposition ${\bf H}_p=S(\h^*)\otimes \BC W\otimes S(\h)$
and so can consider the category $\OCat_p$ of all left ${\bf H}_p$-modules that are finitely generated
over $S(\h^*)$ and locally nilpotent for the action of $\h\subset S(\h)$.  This category has a highest weight
structure where the standard modules $\Delta(\lambda)$ and simple modules $L(\lambda)$ are parameterized by irreducible $W$-modules $\lambda$, \cite{GGOR}.

Now let $\Gamma\subset \SL_2(\BC)$ be a finite subgroup, and form the wreath product $\Gr=\Gamma^n\rtimes \mathfrak{S}_n$ for some positive integer $n$, where the symmetric group $\mathfrak{S}_n$ acts on $\Gamma^n$ by permutations. This group naturally embeds into
$\Sp({V})$ where ${V}=(\BC^2)^{\oplus n}$. In these cases the symplectic reflection algebras are related to differential operators on the quiver representation spaces and to symplectic resolutions of quotient singularities, see \cite{quant} for instance.

The intersection of these two examples are the groups $G(\ell ,1,n) = (\mu_\ell)^n\rtimes \mathfrak{S}_n$ acting on $(\C^n)^{\oplus 2} = (\C^2)^{\oplus n}$. This is the case that we are mostly interested in here and we are going to study the corresponding categories $\mathcal{O}_p$ using structure from differential operators and symplectic resolutions.

\subsection{Main results}
The behaviour of category $\OCat_p$ crucially depends on the parameter $p$: it is, for example, semisimple for generic $p$, \cite{GGOR}.
So $\OCat_p$ becomes interesting only under some restrictions on $p$.
In order to state our results most efficiently we use an alternative parametrization of the rational Cherednik algebras using parameters that we call $\kappa, {\bf s}=(s_1,\ldots,s_{\ell})$ -- see (\ref{eq:c_formula}) for the expressions of the $c_i$'s in terms of these new parameters. Inside this set of parameters there is a non-empty Zariski open subset of {\it spherical} parameters -- see \ref{sphdefn} for
the definition -- which was described explicitly in \cite{DG}.

Let $\mathfrak{S}_{\ell}$ be the symmetric group on $\ell$ symbols
viewed as the group of bijections of $\mathbb{Z}/\ell\mathbb{Z}$. Then $\mathfrak{S}_{\ell}$  acts
on the set of parameters $(\kappa, s_1,\ldots,s_{\ell})$ by  permuting the $s$'s. The set of spherical parameters is stable under the action of $\mathfrak{S}_\ell$. If we identify $\irr{G_{\ell}(n)}$  with the set of $\ell$-multipartions $\lambda=(\lambda^{(1)},\ldots,\lambda^{(\ell)})$
of $n$ -- see  \ref{multiprtns} for this -- then $\mathfrak{S}_{\ell}$ also acts on the irreducible representations by permuting the components of $\lambda$.

Our first theorem shows that the action of $\mathfrak{S}_{\ell}$ produces highest weight equivalences of $\OCat$.

\begin{theoremA}\label{Thm:sym_group_equiv}
Let $p=(\kappa,{\bf s})$ be a spherical parameter and $\kappa\neq 0,1$.
Then for any $\sigma\in \mathfrak{S}_{\ell}$ there is an equivalence $\Xi_{\sigma,p}:\OCat_p\rightarrow
\OCat_{\sigma p}$ which for each $\lambda\in\irr{G_{\ell}(n)}$ sends $\Delta_p(\lambda)$ to $\Delta_{\sigma p}(\sigma \lambda)$.
\end{theoremA}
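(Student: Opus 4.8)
The plan is to realize the equivalence $\Xi_{\sigma,p}$ geometrically, via the symplectic resolution $X \to (\h\times\h^*)/G_\ell(n)$ and the Weyl-group action on its quantizations, exactly as outlined in (a) of the introduction. Since $\mathfrak{S}_\ell$ is generated by the simple transpositions $s_i = (i,i+1)$, $i \in \Z/\ell\Z$, it suffices to construct $\Xi_{s_i, p}$ for each such $s_i$ and a spherical parameter $p$ with $\kappa \neq 0,1$, and then to check that these equivalences satisfy the braid/Coxeter relations of $\mathfrak{S}_\ell$ up to natural isomorphism so that they compose consistently; the combinatorial bookkeeping (that $\Delta_p(\lambda) \mapsto \Delta_{\sigma p}(\sigma\lambda)$) is then forced by tracking the highest weights through the construction. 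The first reduction is therefore to the case $\sigma = s_i$.

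For a single $s_i$, the key steps are as follows. First, recall the intrinsic construction of category $\OCat$ for the \emph{spherical subalgebra} $U_p = \e \mathbf{H}_p \e$: one shows that for spherical $p$ the functor $\e\cdot$ is an equivalence $\OCat_p(\mathbf{H}_p) \iso \OCat_p(U_p)$, where the latter is defined intrinsically (using a $\C^\times$-action, an ``Euler'' grading element, and the requirement of local finiteness) in a way that does not reference the group $W$ or the multipartition labels. Second, identify $U_p$ with a deformation quantization $\mathcal{A}_p$ of the affine variety $(\h\times\h^*)/G_\ell(n)$, and note that $\mathfrak{S}_\ell$ appears as (a subgroup of) the Namikawa/Weyl group acting on the Picard group / period of the symplectic resolution $X$; this action lifts to an action on the family of quantizations $\{\mathcal{A}_p\}$, giving algebra isomorphisms $\mathcal{A}_p \iso \mathcal{A}_{\sigma p}$. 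Third, transport the intrinsic category $\OCat$ through this isomorphism: because $\OCat_p(U_p)$ was defined without reference to $W$, an isomorphism $\mathcal{A}_p \iso \mathcal{A}_{\sigma p}$ of filtered algebras carrying the relevant grading element to the relevant grading element automatically induces an equivalence $\OCat_p(U_p) \iso \OCat_{\sigma p}(U_{\sigma p})$. Composing with the two spherical equivalences (here is where $\kappa \neq 0,1$ is used — it guarantees sphericity of $p$ and of $\sigma p$, and possibly that the Euler element lies in the relevant range) yields $\Xi_{\sigma,p}: \OCat_p \to \OCat_{\sigma p}$.

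It remains to identify the images of the standard modules. The cleanest route: the equivalence $\Xi_{\sigma,p}$ is a highest weight equivalence (it is exact and commutes with the relevant $\C^\times$-action up to the shift, so it sends standards to standards — one can see this by noting it preserves the ordering filtration coming from the Euler grading), hence it induces a bijection $\irr{G_\ell(n)} \to \irr{G_\ell(n)}$ on labels. To pin this bijection down as $\sigma$ acting on multipartitions, compute the action on a single explicit family of standards, e.g. on the modules $\Delta_p(\lambda)$ with $\lambda$ ``rectangular'' or on the $\C[\h]$-generator, by matching $c$-functions (central characters / $h_c(\lambda)$ values): the Euler element acts on $\Delta_p(\lambda)$ by a scalar $h_p(\lambda)$ which is an explicit affine-linear function of $\kappa, {\bf s}$ and of $\lambda$, and one checks $h_{\sigma p}(\sigma\lambda) = h_p(\lambda)$, which together with the highest-weight property forces $\Xi_{\sigma,p}(\Delta_p(\lambda)) = \Delta_{\sigma p}(\sigma\lambda)$.

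The main obstacle I expect is Step two of the single-transposition argument: rigorously lifting the $\mathfrak{S}_\ell$-action from the symplectic geometry of $X$ (where it lives as quiver-variety / Namikawa symmetries and is essentially classical) to honest \emph{filtered algebra isomorphisms} of the quantizations $U_p \iso U_{\sigma p}$, with the correct effect on parameters, and — relatedly — establishing that the intrinsic category $\OCat$ for $U_p$ really does match the Cherednik-algebra category $\OCat_p$ under the spherical functor for \emph{all} spherical $p$ with $\kappa\neq 0,1$ (not just generic ones). This is where the hypothesis $\kappa \neq 0, 1$ and the ``spherical'' hypothesis do the real work, and where one must be careful that the two excluded values are genuinely the only obstructions; the rest is comparatively formal category theory plus the $c$-function computation.
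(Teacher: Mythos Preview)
Your construction of the equivalence $\Xi_{\sigma,p}$ is essentially the paper's: pass to the spherical subalgebra, use the $\mathfrak{S}_\ell$-action on ${\bf U}$ coming from the resolution (this isomorphism of filtered algebras, preserving $\euu^{sph}$, is established in \cite{quant}; it is not the obstacle), and transfer back. That part is fine, and your intrinsic definition of $\OCat^{sph}_p$ is exactly Section~\ref{SECTION_sphericalO}.

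The gap is in identifying the images of the standards. You assert that ``standards go to standards'' because the equivalence respects the Euler grading. This is precisely what the paper cannot prove directly: it would follow if one knew that $e\Delta_p(\lambda)\cong\Delta_p^{sph}(\iota_p(\lambda))$ for all spherical $p$, but that is Conjecture~\ref{sph_gen_main}, and Theorem~\ref{proposition:sph_main_relation} only establishes it on a (strictly smaller) Zariski open locus $\param_1^!\subset\param_1^{sph}$. For arbitrary spherical $p$ there is no a priori reason the spherical Verma $\Delta_p^{sph}(\iota_p(\lambda))$ and $e\Delta_p(\lambda)$ agree; see the Remark after~\ref{SSS_eta_restr}. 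So your argument, as written, assumes an open conjecture.

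Even granting ``standards to standards'', your proposed identification via $c$-function matching does not work: distinct $\lambda$ can have the same $\hat{c}_\lambda(p)$, so the equality $\hat{c}_{\sigma\lambda}(\sigma p)=\hat{c}_\lambda(p)$ (which is indeed the invariance (A) in~\ref{SS_K_groups}) does not pin down the bijection. The paper instead works on Grothendieck groups, using \emph{both} the full spherical character $\hat{\operatorname{ch}}$ \emph{and} compatibility with the restriction functor $\Res^n_{n-1}$ (via~\ref{SSS_eta_restr}), then proves by a delicate combinatorial argument that the pair $([\Res^n_{n-1}],\hat{\operatorname{ch}})$ is injective. Only afterwards does Lemma~\ref{SSS_high_wt_coinc} upgrade the Grothendieck-group equality $[\Delta'_p(\lambda)]=[\Delta_p(\lambda)]$ to an isomorphism of modules.

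Finally, your diagnosis of $\kappa\neq 0,1$ is incorrect: it is not a sphericity condition (sphericity of both $p$ and $\sigma p$ follows from the $\mathfrak{S}_\ell$-stability of $\param_1^{sph}$, see~\ref{SSS_equiv_def}). The restriction enters only in the injectivity step~\ref{SSS_sym_final}, where a specific linear-independence calculation for spherical characters degenerates exactly at $\kappa=0$ and $\kappa=1$.
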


For a somewhat more extensive statement, see Theorem \ref{SSS_sph_sym_main}.
It is the sphericity condition which is both crucial
and restrictive in this theorem;  the restriction $\kappa\neq 0,1$ is irritating. \bigskip

Our second theorem concerns certain derived equivalences and confirms \cite[Conjecture 5.6]{rouqqsch} for the group $G(\ell, 1, n)$. Parameters $p=(\kappa, {\bf s}),p'=(\kappa',{\bf s}')$ have {\it integral difference}
if $\kappa'-\kappa\in \mathbb{Z}$ and there exists $a\in \C$ such that $\kappa's_i'-\kappa s_i\in a+ \mathbb{Z}$ for each $i=1,\ldots,\ell$.

\begin{theoremB}\label{Thm:derived}
Suppose that $p,p'$ have integral difference.
Then there is an equivalence of triangulated categories $D^b(\OCat_p)\rightarrow D^b(\OCat_{p'})$.
\end{theoremB}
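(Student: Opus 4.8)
The plan is to realize $\OCat_p$ and $\OCat_{p'}$ as hearts of $t$-structures inside a common ambient derived category coming from the microlocal/quantization picture sketched in (b), and to move between them by tensoring with line bundles. First I would recall the symplectic resolution $\pi\colon \widetilde{X}\to X = (\h\times\h^*)/G_\ell(n)$ (a quiver variety) together with its family of deformation quantizations $\mathcal{A}_p$ parametrized by the cohomology class of the quantization, and the $\C^\times$-action that rescales the symplectic form. The spherical subalgebra $U_p = \e {\bf H}_p \e$ is recovered as the algebra of $\C^\times$-finite global sections of $\mathcal{A}_p$, and the category $\OCat_p$ is identified (after the standard reductions to the Serre quotient by modules supported off the zero fibre, or equivalently via the equivalence between $\OCat_p$ and a category of good $\C^\times$-equivariant $\mathcal{A}_p$-modules supported on $\pi^{-1}(0)$) with a category of equivariant sheaves on $\widetilde{X}$.

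The key step is the construction of the derived equivalence between the relevant categories of equivariant $\mathcal{A}_p$-modules and $\mathcal{A}_{p'}$-modules when $p,p'$ differ by an integral cohomology class. Here I would use the tilting bundle $\mathcal{P}$ on $\widetilde{X}$ (whose endomorphism algebra is the preprojective-type algebra governing $G_\ell(n)$-equivariant $\C[\h\times\h^*]$-modules) and its quantized lift: tensoring by a line bundle $\mathcal{O}(\chi)$ with $\chi$ in the relevant lattice produces a $(\mathcal{A}_{p'},\mathcal{A}_p)$-bimodule which is a Morita-type equivalence bimodule away from codimension-two loci, and hence induces a derived equivalence on the categories of coherent equivariant modules. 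The hypothesis that $\kappa'-\kappa\in\Z$ and $\kappa's_i'-\kappa s_i\in a+\Z$ is exactly the condition that the difference of quantization parameters lies in the image of the Picard lattice of $\widetilde{X}$, so that such a line bundle twist exists. One then transports the $t$-structures: the image under the twist of the $t$-structure cutting out $\OCat_{p'}$ need not be the one cutting out $\OCat_p$, but both $t$-structures are controlled by the support/filtration by the $\C^\times$-fixed-point strata, and a wall-crossing / perverse-$t$-structure argument (as in the theory of categorical $\mathfrak{sl}_2$ actions or the Bezrukavnikov--Kaledin / Braden--Proudfoot--Webster--Licata framework) shows the two derived categories are equivalent even if the hearts are not literally identified.

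The main obstacle I expect is precisely this last point: showing that after the line-bundle twist one can still recover $D^b(\OCat_p)$ rather than the derived category of some a priori larger equivariant category, i.e. controlling the interaction between the twist functor and the recollement/Serre-quotient reductions that pass from equivariant sheaves on $\widetilde{X}$ down to category $\OCat$. This requires (i) knowing that the twist preserves the subcategory of objects with support on $\pi^{-1}(0)$ up to the required truncation, and (ii) a GKdim or holonomicity estimate guaranteeing that the extra modules killed in the Serre quotient behave uniformly in $p$. I would handle (i) by an explicit analysis of supports of $\mathcal{O}(\chi)\otimes(-)$ using that $\mathcal{O}(\chi)$ is $\pi$-ample-up-to-sign, and (ii) by the standard dimension bound for modules over the quantization together with the fact that the $\C^\times$-action and the characteristic-cycle map are insensitive to integral shifts of the parameter. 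Once these are in place, composing the chain of equivalences — reduction, line-bundle twist, inverse reduction — yields the desired $D^b(\OCat_p)\xrightarrow{\ \sim\ } D^b(\OCat_{p'})$; note we only claim a triangulated equivalence, not a highest-weight or even $t$-exact one, which is what makes the argument feasible.
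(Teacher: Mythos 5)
Your outline matches the paper's architecture at a bird's-eye level — symplectic resolution, deformation quantizations, the Bezrukavnikov--Kaledin tilting/Procesi bundle, translation by a line bundle with the integrality hypothesis interpreted via the Picard lattice of the quiver variety, and then a chain of reductions to recover $D^b(\OCat)$. But several of your concerns are misdirected, a couple of assertions are incorrect as stated, and the actual technical core of the argument is not addressed.

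First, a factual correction. You identify $\OCat_p$ with a category of equivariant sheaves supported on the zero fibre $\pi^{-1}(0)$. That is not right. The condition defining $\OCat$ (finite generation over $\C[\h]$, local nilpotence of $\h$) translates, on the sheaf side, to support on the lagrangian $\pi^{-1}(\h/W)$, not on $\pi^{-1}(0)$; the zero fibre corresponds only to the finite-dimensional (or smallest-support) part.

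Second, your worry that after the line-bundle twist the transported $t$-structure ``need not be the one cutting out $\OCat_p$'' and that a wall-crossing/perverse $t$-structure argument is required is a non-issue here. The translation functor is implemented by the bimodule $\Dcal_\hbar^{\alpha,\chi}$, and the composition $\Dcal_\hbar^{\alpha+\chi,-\chi}\otimes_{\Dcal_\hbar^\alpha}\Dcal_\hbar^{\alpha,\chi}\to\Dcal_\hbar^{\alpha+\chi}$ is an isomorphism because it reduces modulo $\hbar$ to the tautological isomorphism $\Str_X^{-\chi}\otimes\Str_X^\chi\cong\Str_X$. So the translation is an \emph{abelian} equivalence on $\mods^T\text{-}\Dcal_\hbar^\bullet$ which manifestly preserves supports and hence the $\OCat$-subcategories; no $t$-structure gymnastics or codimension-two/``Morita-up-to-codimension-two'' language is needed. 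What this buys you is that the heart is preserved all the way along the sheaf side.

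Third, and most importantly, the genuine technical content is elsewhere, and your proposal leaves it unaddressed. There are three real gaps. (i) Quantizing the derived McKay correspondence: one must show that $\operatorname{RHom}(\Pro_\hbar^\alpha,\bullet)$ and $\bullet\otimes^{\mathbf L}\Pro_\hbar^\alpha$ are well-defined mutually inverse equivalences between $D^b(\mods^T\text{-}\Dcal^\alpha_\hbar)$ and $D^b(\mods^T\text{-}{\bf H}^{\wedge_\hbar}_{p\hbar})$; this requires finiteness/boundedness of the derived $\Hom$, existence of injectives and $\C[[\hbar]]$-flat resolutions in the $T$-equivariant $\hbar$-adic setting, and a ``derived Nakayama'' argument to pass the adjunction-cone vanishing from the special fibre up the formal deformation. (ii) Passing from $D^b_\OCat(\mods^{T_1}\text{-}{\bf H}_p)$ (complexes whose cohomology lies in $\OCat$) to $D^b(\OCat_p^{T_1})$: this is done via the algebra $\widetilde{\bf H}_p$ of $T_1$-finite elements in ${\bf H}_p\widehat{\otimes}_{S(\h)}\C[[\h^*]]$, following Etingof's flatness argument, not via a GKdim or holonomicity estimate as you suggest. (iii) Removing the $T_1$-equivariance to land in $D^b(\OCat_p)$, which uses a tilting-object/Morita argument from a projective generator lifted to the graded category. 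Your (ii) about characteristic cycles and GKdim being ``insensitive to integral shifts'' is not what is used and would not obviously suffice. Also note that the paper uses a two-dimensional torus $T=T_1\times T_2$: $T_2$ rescales $\hbar$ and is needed to get the $\hbar$-adic completion to interact correctly with the algebraic category; working with a single $\C^\times$ would miss this.
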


Again, a slightly more detailed statement holds, see Theorem \ref{der_main_thm}. We do not, however, say anything here about where this equivalence sends the important objects of $\OCat_p$, even at the level of Grothendieck groups. Our methods are complementary to the derived equivalences for ${\bf H}_p\md$ of \cite{mn}: the latter are constructed for a general class of hamiltonian reductions but would, in the instance of this paper, require the parameters to be spherical.
\bigskip

For our third theorem we put relatively strong restrictions on the parameter $p = (\kappa, {\bf s})$: we assume $\kappa\notin \mathbb{Q}$ and that $s_{\ell}>s_1>s_2>\cdots > s_{\ell - 1}$. (But see Proposition \ref{reductioncase} where we establish a reduction for $\kappa\notin \mathbb{Q}$ using the $\mathfrak{S}_\ell$-action of Theorem A.) To ${\bf s}$ we associate first a higher level Fock space $F= V(\Lambda_{-s_{\ell -1}})\otimes \cdots \otimes V(\Lambda_{-s_1}) \otimes V(\Lambda_{-s_{\ell}})$ for $\mathfrak{gl}_{\infty}$  and its combinatorial crystal, see \ref{SUBSECTION_Fock}. Now let $N =  \ell (n+s_{\ell}) - (s_1 + \cdots + s_{\ell})$ and let $\mathfrak{p}$ be the standard parabolic subalgebra of $\gl_{N}(\C)$ with Levi the block diagonal matrices $\mathfrak{gl}_{s_{\ell}-s_{\ell-1}+n}\times \mathfrak{gl}_{s_{\ell}-s_{\ell-2}+n} \cdots \times \mathfrak{gl}_{s_{\ell}-s_{1}+n}\times \mathfrak{gl}_{n}.$    Following \cite{BKschurweyl} we consider a category $\mathcal{O}^{\sf par}_{\bf s}(n)$ which is a sum of blocks of the parabolic category $\mathcal{O}$ attached to $\mathfrak{p}$.

\begin{theoremC}
Suppose $p= (\kappa, {\bf s})$ where $\kappa \notin \mathbb{Q}$ and  ${\bf s} = (s_1, \ldots , s_{\ell})\in \Z^{\ell}$ satisfies $s_\ell > s_1 > \cdots > s_{\ell-1}$.
\begin{enumerate}
\item There is an equivalence of highest weight categories $\mathcal{O}_p \stackrel{\sim}{\longrightarrow} \Opar{{\bf s}}(n)$.
\item The set $\{ L_p(\lambda): \lambda^{\star} :=((\lambda^{(\ell-1)})^t, (\lambda^{(\ell -2)})^t, \ldots , (\lambda^{(1)})^t, (\lambda^{(\ell)})^t) \text{ highest weight in the crystal of $F$}\}$ is a complete set of finite dimensional irreducible representations for ${\bf H}_p$.
\end{enumerate}
\end{theoremC}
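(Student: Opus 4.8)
The plan is to realise $\OCat_p$ and $\Opar{{\bf s}}(n)$ as highest weight covers of one and the same algebra and to invoke the uniqueness of such covers in the sense of \cite{rouqqsch}; part (2) is then extracted from the combinatorics that the equivalence produces. On the Cherednik side the $\KZ$ functor of \cite{GGOR} exhibits $\OCat_p$ as a highest weight cover of the cyclotomic Hecke algebra $\Hecke_p$ of $G_\ell(n)$, with Hecke parameter $q=\exp(2\pi\sqrt{-1}\,\kappa)$ and cyclotomic parameters built from the numbers $\kappa s_j$. Since $\kappa\notin\mathbb{Q}$ the element $q$ is not a root of unity, and since the $s_j$ are integers the ratios of the cyclotomic parameters are integral powers of $q$; in this regime $\Hecke_p$ is isomorphic to the degenerate cyclotomic Hecke algebra $\Hecke^{\operatorname{deg}}_{\bf s}(n)$ attached to ${\bf s}$ (both being cyclotomic quiver Hecke algebras for the linear quiver), and the isomorphism matches the cell modules, labelled on either side by $\ell$-multipartitions of $n$, up to the transposition and cyclic reordering of components recorded by $\lambda\mapsto\lambda^{\star}$. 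On the parabolic side, higher level Schur--Weyl duality \cite{BKschurweyl} supplies a quotient functor $\Opar{{\bf s}}(n)\to\Hecke^{\operatorname{deg}}_{\bf s}(n)\md$ exhibiting $\Opar{{\bf s}}(n)$ as a highest weight cover of the same algebra, sending parabolic Verma modules to the corresponding cell modules; the block sizes $s_\ell-s_{\ell-j}+n$ of $\p$ together with the tensor order of $F$ are exactly what fix the labelling dictionary $\lambda\mapsto\lambda^{\star}$.

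I would then check that both covers are $1$-faithful and apply the uniqueness theorem of \cite{rouqqsch}. For the parabolic cover this is known. For $\KZ$ this is where $\kappa\notin\mathbb{Q}$ is essential: deforming the multicharge ${\bf s}$ to a generic point of $\C^\ell$ makes $\OCat_p$ split semisimple, so the deformed cover is $1$-faithful for trivial reasons, and one propagates $1$-faithfulness to the special fibre by the standard flatness argument --- the content being the vanishing of $\Ext^1$ between standard and costandard modules --- irrationality of $\kappa$ ensuring that the integrality of the $s_j$ is the only surviving source of non-semisimplicity. Since $\lambda\mapsto\lambda^{\star}$ matches the images of the standard modules under the two quotient functors (both the cell modules) and is compatible with the two highest weight orders in the manner required by \cite{rouqqsch}, the uniqueness theorem yields an equivalence of highest weight categories $\OCat_p\iso\Opar{{\bf s}}(n)$ carrying $\Delta_p(\lambda)$ to the parabolic Verma labelled by $\lambda^{\star}$. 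This proves (1).

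For (2), note that every finite dimensional ${\bf H}_p$-module lies in $\OCat_p$ and is a finite iterated extension of the finite dimensional simples $L_p(\lambda)$, and that $L_p(\lambda)$ is finite dimensional exactly when $\supp L_p(\lambda)=\{0\}$. By Bezrukavnikov--Etingof, $\supp L_p(\lambda)$ is governed by which parabolic restrictions $\Res_{W'}(L_p(\lambda))$ are nonzero; since $\kappa\notin\mathbb{Q}$ the rational Cherednik algebras of the symmetric groups $\mathfrak{S}_k$ at parameter $\kappa$ have semisimple category $\OCat$, so the only supports that occur for simples in $\OCat_p$ are the closures $\overline{X_{G_\ell(m)}}$, $0\le m\le n$. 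Hence $L_p(\lambda)$ is finite dimensional iff it is killed by $\Res_{G_\ell(n-1)}$, which decomposes as the sum of the $i$-restriction functors, i.e. iff $\lambda$ is a source of the resulting $\glinfty$-crystal on $\bigsqcup_n\irr{G_\ell(n)}$. Under the equivalence of (1) and Schur--Weyl duality the $i$-restriction functors correspond to the translation and projection functors on the $\Opar{{\bf s}}(n)$, so this crystal is the crystal of the Fock space $F=V(\Lambda_{-s_{\ell-1}})\otimes\cdots\otimes V(\Lambda_{-s_1})\otimes V(\Lambda_{-s_\ell})$ transported along $\star$; therefore $L_p(\lambda)$ is finite dimensional precisely when $\lambda^{\star}$ is a highest weight vector in the crystal of $F$, which is (2).

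The main obstacle I anticipate is the middle step: establishing $1$-faithfulness of $\KZ$ under the sole hypothesis $\kappa\notin\mathbb{Q}$ --- i.e.\ the $\Ext^1$-vanishing propagated through a flat deformation --- together with pinning down the two highest weight orders and the precise labelling bijection tightly enough, in particular justifying the transposes and the cyclic reordering built into $\lambda\mapsto\lambda^{\star}$, for the uniqueness theorem of \cite{rouqqsch} to apply without slack; identifying the crystal on $\bigsqcup_n\OCat_p(G_\ell(n))$ with that of $F$ on the nose is a comparable, though more routine, bookkeeping task.
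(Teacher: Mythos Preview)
Your overall strategy coincides with the paper's: both sides are highest weight covers of isomorphic Hecke algebras, and Rouquier's uniqueness theorem does the work. The paper, however, does not apply Rouquier's theorem directly to the pair $(\OCat_p,\Opar{{\bf s}}(n))$; it routes through the cyclotomic $q$-Schur algebra. Concretely the chain is
\[
\OCat_p \;\cong\; \mathcal{S}_{q,{\bf s}^{\star}}(n)\text{-}\mods \;\cong\; \mathcal{S}_{1,{\bf s}^{\star}}(n)\text{-}\mods \;\cong\; \Opar{{\bf s}}(n),
\]
where the first step is Rouquier's theorem (here $1$-faithfulness of both covers over $\C[[t]]$ is available from \cite{rouqqsch}), the middle step is a new explicit algebra isomorphism built on top of the Brundan--Kleshchev Hecke isomorphism, and the last step is Brundan--Kleshchev's equivalence from \cite{BKschurweyl}, which needs no Rouquier-type argument at all.

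The reason for the detour is precisely the point you flag as ``known'': to apply Rouquier's theorem directly you need the ${\sf BK}$ functor to be a $1$-faithful cover over a discrete valuation ring whose generic fibre is semisimple. The paper only establishes $0$-faithfulness of ${\sf BK}$ over $\C$, and parabolic category $\OCat$ does not deform along the multicharge in any obvious way; the natural deformation is exactly the degenerate Schur algebra, which brings you back to the paper's route. So your direct comparison is not wrong in spirit, but the claimed $1$-faithfulness of the parabolic cover is not readily citable and effectively hides the Schur-algebra step. One small correction on the labelling: the $\star$-twist does not come from the Brundan--Kleshchev isomorphism $\Theta$ (which preserves Specht labels, as the paper checks using \cite{kmr}), but from the separate Hecke involution $\sigma: T_i\mapsto -qT_i^{-1},\ L_i\mapsto L_i^{-1}$.

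For part (2) your argument matches the paper's: transport the $i$-restriction/induction functors across the equivalence (the paper verifies that $\Upsilon$ intertwines $E_i,F_i$ with $E_{-i},F_{-i}$), identify the resulting crystal on the simples explicitly with that of the Fock space via $\lambda\mapsto\lambda^{\star}$, and then read off the finite-dimensionals from the Shan--Vasserot support criterion.
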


For more details, see Theorem \ref{parabolic} and its corollaries. Part (1) of this result was conjectured in \cite[Remark 8.10(b)]{VV} as a degenerate analogue of the Main Conjecture 8.8 in loc.cit.. The results of \cite{Shan} provide a count of the finite dimensional irreducible ${\bf H}_p$ representations, see too \cite{shanvasserot}, and some families of finite dimensional irreducible representations have been classified in \cite{EM}, \cite{etsup} and \cite{gri}. As a corollary of this theorem we are also able to confirm the non-degeneracy of a homological inner product defined on the finite dimensional representations for the parameters above, \cite[Conjecture 4.7]{etingofaffine}.
\subsection{Structure of the paper}
The paper is split into five further sections. We commence in Section \ref{SECTION_RCA} with preliminaries:
the definition of RCA in the general setting
including two different choices of parameters; the definition of the category $\OCat$ and of the $c$-function which is used
to define the highest weight structure on $\OCat$; various specific combinatorial points about the cyclotomic ($W=G_{\ell}(n)$) setting; and finally the induction
and restriction functors of \cite{BE}.

In Section \ref{SECTION_sphericalO} we introduce and study the so called
spherical categories $\OCat$. These analogues of $\OCat$ for the spherical subalgebras
of the rational Cherednik algebras are used in our proof of Theorem A, but may be of interest in their own right as they exist for all complex reflection groups. We work at this full level of generality in Section \ref{SECTION_sphericalO}.
We begin with the definition of the spherical $\OCat$ and then establish some of its simple
properties. We then discuss a relationship
between the usual and the spherical categories $\OCat$ and state our main result on this
relationship, Theorem \ref{sph_gen_main}, whose proof takes a little while to complete. Finally, we introduce another tool that will be used in the proof of Theorem A, the spherical restriction functor.

Theorem A is proved in Section \ref{SECTION_sym_equiv}. We start by constructing equivalences and then stating the more precise version of Theorem A that we actually prove,
Theorem \ref{SSS_sph_sym_main}. The proof of the theorem occupies the rest of this section.

In Section \ref{SECTION_derived} we prove Theorem B and in Section \ref{parabolicequiv} we prove Theorem C. Our routes to these results are very indirect, rather like flying between Minsk and Edinburgh via Moscow then Oregon, so we leave the detailed description of their contents to \ref{SSS_derived_content} and \ref{para_content}.

\subsection*{Notation}

Here is a list of standard notation to be frequently used in the paper.

\begin{longtable}{p{2.5cm} p{13.5cm}}
$[\mathcal{C}]$& the complexified Grothendieck group of an abelian category $\mathcal{C}$.\\
$(V)$& the two-sided ideal in an associative algebra, generated by a subset $V$.\\
$A\# G$& the smash-product of an algebra $A$ and a group $G$, where $G$
acts on $A$.\\
$D^b(\mathcal{C})$& the bounded derived category of an abelian category $\mathcal{C}$.\\
$\mu_\ell$&$:=\{\zeta\in \C| \zeta^\ell=1\}$.\\
$G_{\ell}(n)$&$:=(\mu_\ell)^n\rtimes S_n$.\\
$G_x$& the stabiliser of $x$ in a group $G$.\\
$\gr A$& the associated graded vector space of a filtered
vector space $A$.\\
$\irr{A}$& the set of (isomorphism classes of) an algebra (or a group) $A$. \\
$\mathscr{P},\mathscr{P}_\ell$& the sets of partitions and of $\ell$-multipartitions, respectively. \\
$RG$& the group algebra of a group $G$ with coefficients in a ring $R$.\\
$X^G$& the $G$-invariants in $X$.\\
$X/G$& the orbit set for an action of a group $G$ on $X$.\\
$Z(A)$& the centre of an algebra $A$. 
\end{longtable}

\section{Reminder on cyclotomic rational Cherednik algebras}
\label{SECTION_RCA}
\subsection{Rational Cherednik algebras: general setting}\label{SUBSECTION_RCA}
\subsubsection{Notation}\label{SSS:init_notation}
 Let $\h$ be a finite dimensional vector space over $\BC$, and $W\subset \GL(\h)$ be a finite group generated by its subset
$S$ of complex reflections.   Let $S_0,\ldots,S_r$ be the $W$-conjugacy classes in $S$.
Let $\Refl$ be the set of reflecting hyperplanes in $\mathfrak{h}$.
For $H\in \Refl$ let $W_H$ be the pointwise stabiliser of $H$ in $W$, set $e_H = |W_H|$ and
let $U = \bigcup_{H\in \Refl/W}\irr{W_H}$.
Since $W_H$ is a finite cyclic group, we may identify elements of $U$ with pairs $(H,j)$ where $0\leq j < e_H$ and the irreducible representation of $W_H$ is given by $\det^j|_{W_H}$. Here $\det$ stands for the representation of $W$ in $\bigwedge^{top}\h$.
Given $H\in \Refl$, choose $\alpha_H\in \mathfrak{h}^*$ with $\ker \alpha_H = H$ and let $\alpha^\vee_H\in \mathfrak{h}$ be such that $\C \alpha^\vee_H$ is a $W_H$-stable complement to $H$ and $\langle \alpha_H,\alpha^\vee_H\rangle = 2$. If $w\in W_H\setminus\{1\}$ we set
$\alpha_w :=\alpha_H$ and $\alpha^\vee_w:=\alpha^\vee_H$.

\subsubsection{Definition}
Pick  independent variables $c_0,c_1,\ldots,c_r$, one for each conjugacy class in $S$, and
also an independent variable $h$. Let $\param$ denote the vector space
dual to the span of $h,c_0,\ldots,c_r$. By the (universal) rational Cherednik
algebra  we mean the quotient ${\bf H}$ of $\BC[\param]\otimes T(\h\oplus\h^*)\#W$ by the relations
\begin{equation}\label{eq:RCA_relation}
[x,x']=[y,y']=0, [y,x]=h \langle y,x\rangle -\sum_{i=0}^r c_i\sum_{w\in S_i}\langle y,\alpha_w\rangle\langle \alpha_w^\vee, x\rangle w,
\end{equation}
where $x,x'\in \h^*, y,y'\in \h$.  We remark that (\ref{eq:RCA_relation}) is a special case of (\ref{eq:SRA_relation}), with the difference
that now $c_i$'s are independent variables and not complex numbers.

For $p\in \param$ let ${\bf H}_p$ denote the specialization of ${\bf H}$ at $p$. More generally given
a $\C$-algebra homomorphism $\psi$ from $\BC[\param]$ to some $\BC$-algebra $R$ we set ${\bf H}_\psi:={\bf H}\otimes_{\BC[\param]}R$.

We have a natural homomorphism $\BC[\param][\h]\otimes \BC W\otimes S(\h)\rightarrow {\bf H}$. According
to Etingof and Ginzburg, \cite[Theorem 1.3]{EG}, this homomorphism is a vector space isomorphism.
Observe that the algebra ${\bf H}$ is graded: $W$ has degree 0, $\h\oplus \h^*$ degree 1,
while $\param^*\subset {\bf H}$ is of degree 2.

For future use, we let $\param_1$ denote the affine subset of $\param$ where $h=1$.
\subsubsection{Alternative parametrization}\label{SSS_alt_pres}
Often it is more convenient to work with another parametrization of ${\bf H}$. We follow
\cite[\S 5]{rouqqsch}. Let $\{{\bf h}_u\}$ be a set of indeterminates with $u\in U$, and set ${\bf R}:= \C [\{{\bf h}_u\}]$.
Define a homomorphism $\iota: \BC[\param]\rightarrow {\bf R}$ by
\begin{equation}\label{eq:Rparam1}
h\mapsto 1, c_i\mapsto \sum_{j=1}^{e_H-1} \frac{1-\det(w)^{-j}}{2}({\bf h}_{H,j} - {\bf h}_{H,j-1}),
\end{equation}
where  $H\in \Refl$ and $w\in S_i\cap W_H$. Let us remark that we have $S_i\cap W_H\neq \varnothing$
for exactly one equivalence class $H$ in $\mathcal{A}$. We write ${\bf H_R}$ for ${\bf H}_\iota$. The commutation relation between $x$'s and $y$'s in ${\bf H_R}$ is
$$[y,x] = \langle y, x\rangle + \sum_{H\in\Refl} \frac{\langle y, \alpha_H\rangle\langle \alpha^\vee_H, x\rangle}{\lan \alpha_H^{\vee}, \alpha_H\ran}\gamma_H,$$ where $$\gamma_H := \sum_{w\in W_H\setminus\{1\}} \left(\sum_{j=0}^{e_H-1} \det(w)^{-j} ({\bf h}_{H,j} - {\bf h}_{H,j-1})\right) w.$$
In the above equality we assume ${\bf h}_{H,-1}:={\bf h}_{H, e_H-1}$.

Given a homomorphism
$\Psi:{\bf R}\rightarrow R$ we will write ${\bf H}_\Psi$ for ${\bf H}_{\Psi\circ\iota}$. Almost all the specializations
of ${\bf H}$ we consider in this paper factor through ${\bf H_R}$.

\subsection{Category $\OCat$}\label{SUBSECTION_OCat}
\subsubsection{c-function}
\label{Odefn}
Let $\Psi : {\bf R}\rightarrow R$ be an algebra homomorphism, where $R$ is a local commutative noetherian algebra with residue field $K$, and let $\psi: {\bf R} \rightarrow K$ be the composition of $\Psi$ and the projection $R\twoheadrightarrow K$. We denote the induced homomorphism ${\bf H}\rightarrow
{\bf H}_\psi$ also by $\psi$.

Given an irreducible $W$-module $E$ over $\BC$
set ${c}_E(\Psi)\in K$ to be the scalar by which the element $$\frac{\dim \h}{2} - \sum_{i=0}^r \psi(c_i)\sum_{w\in S} \frac{2}{1-\lambda_w}w\in Z(K W)$$ acts on $E\otimes_{\C} K$, where $\lambda_w$ is the non-trivial eigenvalue of $w$ in its action on $\h^*$.

\subsubsection{Definition of $\OCat$} Set $\OCat_\Psi(=\mathcal{O}_{\Psi}(\h,W))$ to be the category of  ${\bf H}_{\Psi}$-modules
that are locally nilpotent for the action of $\mathfrak{h}\subset R[\mathfrak{h}^*]$ and finitely generated over $R[\h]$
(equivalently, under the local nilpotency condition, over ${\bf H}_\Psi$). This is a highest weight category, \cite{GGOR} and \cite[\S 5.1]{rouqqsch}. Its standard objects are $\Delta_{\Psi}(E) = {\bf H}_{\Psi}\otimes_{R[\mathfrak{h}^*]\# W} (R\otimes E)$ where $E\in \irr{W}$. Here we consider $R\otimes E$
as an $R[\mathfrak{h}^*]\# W$-module by making $\h$ act by $0$. An  ordering on $\OCat_\Psi$ is defined by $$\Delta_{\Psi}(E) < \Delta_{\Psi}(F) \text{ if and only if } {c}_F(\Psi) - {c}_E(\Psi) \in \mathbb{Z}_{<0}$$ for $E,F\in\irr{W}$.  Henceforth we will write $E<_{\Psi} F$ if $c_F(\Psi) - c_E(\Psi) \in \mathbb{Z}_{<0}$.

We remark that as an $R[\h]\# W$-module $\Delta_\Psi(E)$ is
isomorphic to $R[\h]\otimes_{\BC} E$.

\subsubsection{Euler element} \label{SUBSECTION_OCath}Let ${\euu} \in {\bf H}$ denote the deformed Euler element, defined by $${\euu} = \sum_{i=1}^{\dim \h} x_iy_i + \frac{\dim \h}{2} - \sum_{i=0}^ r c_i \sum_{w\in S_i} \frac{2}{1-\lambda_w}w.$$ By definition, for $E\in \irr{W}$ the element $\psi(\euu) \in {\bf H}_{\psi}$ acts on $1\otimes (E\otimes_{\C} K) \subseteq \Delta_{\psi}(E)$ by multiplication by the scalar $c_E(\Psi)$ defined in \ref{Odefn}.

\subsection{Rational Cherednik algebras: cyclotomic setting}
\label{sec:cyclosetting}
\subsubsection{Notation}
\label{cyclogener}
For any positive integer $e$ we will write $\zeta_e$ for $\exp(2\pi \sqrt{-1}/e)\in \C$.

Let $\ell$ be a positive integer which we will assume to be fixed from now on. For a positive integer $n$ we set $$W  = G_{\ell}(n): = \mu_{\ell}^n \rtimes \mathfrak{S}_n,$$ where $\mu_{\ell}$ denotes the group of $\ell^{\text{th}}$ roots of unity in $\C$. The group $G_\ell(n)$ is a reflection group with reflection representation $\h = \C^n$. (The case $\ell=1$ is degenerate and {\it all} constructions that follow have to be modified.) There are two types of reflecting hyperplanes in $\Refl$: \begin{align*} H_{i,j}^{k} &= \ker (x_i - \zeta_\ell^{k}x_j) &&\text{ for $1\leq i < j \leq n$ and $1\leq k \leq \ell$}; \\ H_i &= \ker x_i &&\text{ for $1\leq i \leq n$.}\end{align*} These form two distinct $W$-orbits and we have $W_{H_{i,j}^k} \cong \Z_2$ and $W_{H_i} \cong \mu_{\ell}$.

It is also convenient to introduce some new elements $\epsilon_i, i=0,\ldots,\ell-1$ of $\param^*$.
We set
\begin{align}\label{eq:epsilon}
\epsilon_i&:=\frac{1}{\ell}(h-2\sum_{j=1}^{\ell-1}c_j \zeta_{\ell}^{ji}), \quad i\neq 0, \\\label{eq:epsilon_0}
\epsilon_0&:=\frac{1}{\ell}(h-2\sum_{j=1}^{\ell-1}c_j)+c_0-h/2.
\end{align}
In particular, we have $\iota(\epsilon_i)=\frac{1}{\ell}+{\bf h}_{H_\bullet, i}-{\bf h}_{H_\bullet, i-1}$ for $1\leq i \leq \ell -1$ and $\iota(\epsilon_0) = \frac{1}{\ell} + {\bf h}_{H_{\bullet},0} - {\bf h}_{H_{\bullet}, \ell -1} - 1/2 + {\bf h}_{H_{\bullet,\bullet}^{\bullet},1} - {\bf h}_{H_{\bullet,\bullet}^{\bullet},0}$.

\subsubsection{Parametrization}\label{SSS_cyclot_param}
We will be most interested in the following specialization $\Psi: {\bf R} \rightarrow \C$: let ${\bf s} = (s_1, \ldots , s_{\ell})\in \C^{\ell}$ and $\kappa\in \C$, and set \begin{equation} \label{parameterchoice} \Psi( {\bf h}_{H_{i,j}^k, 0}) = \kappa, \Psi( {\bf h}_{H_{i,j}^k, 1}) = 0, \text{ and } \Psi({\bf h}_{H_i, j}) = \kappa s_{j} - \frac{j}{\ell} \quad \text{for }0\leq j \leq \ell - 1, \end{equation} where $s_0 = s_{\ell}$.
We refer to the corresponding algebra as $H_{\kappa,{\bf s}}(G_{\ell}(n))$, dropping various parts of the $G_{\ell}(n)$ if it will lead to no ambiguity.

Let us provide the corresponding formulae for the $c$-parameters. Let $S_0$ be the class of reflections about the planes
$H_{i,j}^k$, and $S_e$, for $e=1,\ldots,\ell-1$, the class of reflections
corresponding to $\zeta_\ell^e\in \mu_{\ell}\cong W_{H_i}$.  We get
\begin{equation}\label{eq:c_formula}\Psi(h)=1, \Psi(c_0)=-\kappa, \Psi(c_i)=-(1+\kappa\sum_{j=1}^{\ell-1}(\zeta_{\ell}^{-ij}-1)(s_j-s_{j-1}))/2, i=1,\ldots,\ell-1.
\end{equation}
Moreover,
\begin{equation}\label{eq:eps_formula}
\Psi(\epsilon_i)=\kappa (s_i-s_{i-1}), i=1,\ldots,\ell-1.
\end{equation}

We remark that a parameter $p=(1,c_0,\ldots,c_{\ell-1})$
can be expressed via $\kappa$ and ${\bf s}$
as above provided $c_0\neq 0$. In this case the parameters ${\bf s}$ are defined uniquely up to
a common summand.

\subsubsection{Multipartitions}\label{multiprtns}
A partition is a non-increasing sequence $\la = (\la_1,\la_2,\dots)$
of non-negative integers; the length of $\la$ is
$|\la| = \la_1+\la_2+\cdots$.
An $\ell$-multipartition
is an ordered $\ell$-tuple of partitions
$\la = (\la^{(1)} , \dots,\la^{(\ell)})$;
its length is $|\la| = |\la^{(1)}|+\cdots+|\la^{(\ell)}|$.
We let $\mathscr{P}$, respectively $\mathscr{P}_{\ell}$,
denote the set of all partitions, respectively $\ell$-multipartitions; the set of those of length $n$ are denoted $\prtn{n}$ and $\prt{\ell}{n}$ respectively. Given $\lambda \in \mathscr{P}_{\ell}$, define its transpose $\lambda^{\star}$ to be  $((\lambda^{(\ell-1)})^t, (\lambda^{(\ell -2)})^t, \ldots , (\lambda^{(1)})^t, (\lambda^{(\ell)})^t)$ where $(\lambda^{(r)})^t$ is the usual transpose of the partition $\lambda^{(r)}$.

The Young diagram of the multipartition $\la = (\la^{(1)},
\dots, \la^{(\ell)})\in \mathscr{P}_{\ell}$ is
$$
\{(a,b,m)\in\Z_{>0}\times\Z_{>0}\times \{1,\dots,\ell\}\mid 1\leq b\leq
\la_a^{(m)}\}.
$$
The elements of this set
are called the boxes of $\la$. We identify the multipartition $\la$ with its
Young diagram, considered as a row vector of Young diagrams. For example, $((3,1), (4,2))$ is the Young diagram
\begin{equation}\label{youngdiag}
 \yng(1,3) \,\,\, ,\,\, \yng(2,4)
\end{equation}
Given an $\ell$-tuple ${\bf s} = (s_1, \ldots , s_{\ell})\in \C^{\ell}$ the ${\bf s}$-shifted content $\res^{\bf s}A$ of
the box $A = (a,b,m)$ is the integer
\begin{equation}\label{resdef}
\res^{\bf s} A = s_m+b-a \in \Z.
\end{equation}
If ${\bf s} = {\bf 0}$ we denote this by $\res\,A$.
If $\res^{\bf s}A = i$ then we say that $A$ is an $i$-box.

The dominance ordering on $\mathscr{P}_{\ell}$ is given by $\lambda \unrhd \mu $ if $\sum_{i=1}^{r-1} |\lambda^{(i)}| + \sum_{j=1}^t \lambda_{j}^{(r)} \geq \sum_{i=1}^{r-1} |\mu^{(i)}| + \sum_{j=1}^t \mu_{j}^{(r)}$ for all $1\leq r \leq \ell$ and $t\geq 1$, and with equality for $r = \ell$ and $t\gg 1$.

\subsubsection{Irreducible representations of $G_{\ell}(n)$}
We label the irreducible representations of $G_{\ell}(n)$ by $\prt{\ell}{n}$ as follows. Given $\lambda = (\lambda^{(1)}, \ldots , \lambda^{(\ell)})\in \prt{\ell}{n}$
denote by $l_r$ the largest integer such that $\lambda_{l_r}^{(r)} \neq 0$ (i.e., $l_r$
is the number of rows in $\lambda^{(r)}$). We put
$$I_{\lambda}(r) = \left\{ \sum_{i=1}^{r-1}|\lambda^{(i)}|+1,  \sum_{i=1}^{r-1}|\lambda^{(i)}|+2, \cdots ,  \sum_{i=1}^{r}|\lambda^{(i)}|\right\}.$$
We set $\mathfrak{S}_\lambda = \mathfrak{S}_{I_\lambda(1)}\times \cdots \times \mathfrak{S}_{I_\lambda(\ell)}$ and $G_\ell(\lambda) = \mu_{\ell}^{\{1,\ldots , n\}}\rtimes \mathfrak{S}_{\lambda}.$ Here, for $I\subset \{1,\ldots,n\}$, $\mathfrak{S}_I$ stands
for the subgroup of $\mathfrak{S}_n$ consisting of all permutations of $I$, leaving the other elements invariant.

Now a partition $\alpha\in\mathscr{P}(n)$ corresponds to an irreducible representation of $\mathfrak{S}_n$, where $(n)$ labels the trivial representation of $\mathfrak{S}_n$.  Then given $\lambda\in\prt{\ell}{n}$ we have a corresponding irreducible representation of $G_{\ell}(n)$ which is constructed as $$\text{Ind}_{G_\ell(\lambda)}^{G_{\ell}(n)} (\phi^{(1)}\cdot{\lambda^{(1)}} \otimes \cdots \otimes \phi^{(\ell)}\cdot{\lambda^{(\ell)}}),$$ where $\phi^{(r)}$ is the one dimensional character of $\mu_{\ell}^{I_{\lambda}(r)}\rtimes \mathfrak{S}_{I_{\lambda}(r)}$ whose restriction to $\mu_{\ell}^{I_{\lambda}(r)}$ is $\det^r$ and whose restriction to $\mathfrak{S}_{I_{\lambda}(r)}$ is trivial.

This labeling agrees with \cite{DJM}. For later use we remark that it does, however, differ from that of \cite[\S 6]{rouqqsch} by a shift as that paper takes $\text{Ind}_{G_\ell(\lambda)}^{G_{\ell}(n)} (\phi^{(0)}\cdot{\lambda^{(1)}} \otimes \cdots \otimes \phi^{(\ell-1)}\cdot{\lambda^{(\ell)}})$; for us the trivial representation is labeled by $(\emptyset, \ldots , \emptyset, (n))$ and $\wedge^n \h$ by $( (1^n), \emptyset, \ldots , \emptyset)$; and it also differs from \cite[\S 5]{Stembridge} as Stembridge uses $(\lambda^{(0)}, \ldots , \lambda^{(\ell-1)})$ where $\lambda^{(0)} = \lambda^{(\ell)}$.

\subsubsection{c-function}\label{cfnsec} Let $\euu \in H_{\kappa,{\bf s}}$ denote the deformed Euler element, the image of ${\euu}\in {\bf H}$ under the natural epimorphism ${\bf H}\twoheadrightarrow H_{\kappa,{\bf s}}$; by \eqref{eq:Rparam1} it equals \begin{eqnarray*} \euu &=& \sum_{i=1}^n x_iy_i + \frac{n}{2} + \sum_{H\in \Refl}\sum_{w\in W_H\setminus \{1\}} \left(\sum_{j=0}^{e_H-1} \det(w)^{-j}\Psi({\bf h}_{H,j})\right) w \\ & = & \sum_{i=1}^n x_iy_i + \frac{n}{2} + \sum_{H\in \Refl}\sum_{w\in W_H} \left(\sum_{j=0}^{e_H-1} \det(w)^{-j}\Psi({\bf h}_{H,j})\right) w - \sum_{H\in \Refl}\sum_{j=0}^{e_{H}-1} \Psi({\bf h}_{H,j}).\end{eqnarray*}
The action of $\sum_{H\in \Refl}\sum_{w\in W_H} \left(\sum_{j=0}^{e_H-1} \det(w)^{-j}\Psi({\bf h}_{H,j})\right) w$ on $\lambda = (\lambda^{(1)}, \ldots , \lambda^{(\ell)}) \in \irr{G_{\ell}(n)}$ has been calculated in \cite[Section 3.2.1 and Proposition 6.2]{rouqqsch} and equals $$\left(\sum_{r=1}^{\ell-1}|\lambda^{(r)}|(\kappa \ell s_{r} - \kappa \ell s_{\ell}  - r) - \frac{\kappa \ell n(n-1)}{2} + \sum_{A\in \lambda} \kappa\ell \res(A)\right) +  \left(\kappa \ell n(n-1) +  \ell n \kappa s_\ell \right).$$ Thus we find \begin{equation} \label{cfunvalue} c_{\lambda}(\kappa, {\bf s}) := c_{\lambda}(\Psi) = - \sum_{r=1}^{\ell-1}|\lambda^{(r)}| r  + \kappa\ell\sum_{A\in \lambda} \res^{\bf s}(A) - \kappa n s +n - \frac{n\ell}{2},\end{equation}
where $s= \sum_{j=1}^{\ell} s_j$.

\subsubsection{Fake degree polynomial and modified c-function}\label{SSS_fake} We will be interested in the graded $G_{\ell}(n)$-module
structure of the coinvariant ring $\C[\h]^{\text{co}G_{\ell}(n)} : = \C[\h]/\lan \C[\h]_+^{G_{\ell}(n)}\ran$. Recall first that $\C[\h]^{\text{co}{G_{\ell}(n)}}$ is a graded space which transforms as the regular representation under the action of $G_{\ell}(n)$. For $\tau = (\tau^{(1)}, \ldots , \tau^{(\ell)}) \in \irr{G_{\ell}(n)}$ the polynomial $f_{\tau}(q) := \sum_{i \geq 0 } [\C[\h]^{\text{co}{G_{\ell}(n)}}_i : \tau] q^i$ is called the fake degree polynomial of  $\tau$. It has been calculated in \cite[Theorem 5.3]{Stembridge} (where $S(\h)$ is used rather than $\C[\h]$): \begin{equation}\label{eq:fake_polyn} f_{\tau^*}(q) =  (q^{\ell})_n q^{w(\tau)}\prod_{r=1}^{\ell} \frac{q^{n(\tau^{(r)})\ell}}{H_{\tau^{(r)}}(q^{\ell})}\end{equation} where $\tau^*$ is the dual of $\tau$, $w(\tau) = \sum_{r=1}^{\ell-1} r |\tau^{(r)}|$, $(q^{\ell})_n = (1-q^{\ell})(1-q^{2\ell})\cdots (1-q^{n\ell})$, $n(\tau^{(r)}) = \sum_{j\geq 1} (j-1)\tau^{(r)}_j$, and $H_{\tau^{(r)}}(q^{\ell}) = \prod_{A\in \tau^{(r)}} (1- q^{h(A)\ell})$ with $h(A)$ the hook length of the box $A$.

Let ${\sf fd}(\tau^*)$  denote the least degree in which a copy of $\tau^*$ appears in $\C[\h]^{\text{co}G_{\ell}(n)}$.
Since $(0)_n =1$ and $H_{\tau^{(r)}} (0) = 1$, we deduce that $${\sf fd}(\tau^*) = w(\tau) + \deg(\prod_{r=1}^{\ell} q^{n(\tau^{(r)})\ell})= \sum_{r=1}^{\ell-1} r|\tau^{(r)}| + \sum_{r=1}^{\ell}n(\tau^{(r)})\ell.$$
Define a  function $\hat{c}_\lambda(\kappa,{\bf s})$ by $$\hat{c}_\lambda(\kappa,{\bf s}) := {c}_\lambda(\kappa,{\bf s}) + {\sf fd}(\lambda^*)$$
We have
\begin{equation}\label{fakecequation}\hat{c}_\lambda(\kappa,{\bf s}) =\ell \kappa  \sum_{A\in \lambda}\res^{\bf s}(A) + \ell \sum_{r=1}^{\ell} n(\lambda^{(r)})  - \kappa ns  +n- \frac{n\ell}{2}.\end{equation}

\subsection{Induction and restriction functors}\label{SUBSECTION_IndRes}
\subsubsection{Bezrukavnikov-Etingof functors}\label{SSS_BE_functors}
Let $W$ be an arbitrary complex reflection group, $\h$ its reflection representation,
and let $\underline{W}\subset W$ be a parabolic subgroup, that is the stabiliser of some point
in $\h$. Let $\h^+$ denote a unique $\underline{W}$-stable complement to $\h^{\underline{W}}$
in $\h$. Restrict $p\in \param_1$ to the set of complex reflections in $\underline{W}$
and form the corresponding RCA and its category $\OCat_p(\underline{W},\h^+)$.

In \cite{BE} Bezrukavnikov and Etingof established exact functors $\operatorname{Res}^W_{\underline{W}}:\OCat_p(W,\h)
\rightarrow \OCat_p(\underline{W},\h^+)$ (restriction) and $\operatorname{Ind}_{\underline{W}}^W:
\OCat_p(\underline{W},\h^+)\rightarrow \OCat_p(W,\h)$ (induction).
In what follows we will need a construction of the  restriction functors, so we will
recall them in detail.

\subsubsection{Isomorphism of completions}
The starting point for the construction in \cite{BE} is a certain isomorphism of algebras ${\bf H}^{\wedge_b}$ and $Z(W,\underline{W}, \underline{\bf H}^{\wedge_b})$. Here we pick a point $b\in\h$ with $W_b=\underline{W}$ and set ${\bf H}^{\wedge_b}:=\C[\h/W]^{\wedge_b}\otimes_{\C[\h/W]}{\bf H}$. This space has a natural algebra structure because the adjoint action of $\C[\h/W]$ on ${\bf H}$ is by locally nilpotent endomorphisms.

Consider the algebra $\underline{\bf H}$ that is the quotient of $\C[\param]\otimes T(\h\oplus\h^*)\#\underline{W}$
by the relations
$$[x,x']=[y,y']=0, [y,x]=h \langle y,x\rangle -\sum_{i=0}^r c_i\sum_{w\in S_i\cap\underline{W}}\langle y,\alpha_w\rangle\langle \alpha_w^\vee, x\rangle w.$$
Set $\underline{\bf H}^{\wedge_b}:=\C[\h/\underline{W}]^{\wedge_b}\otimes_{\C[\h/\underline{W}]}\underline{\bf H}$.
We remark that Bezrukavnikov and
Etingof considered the completion  $\underline{\bf H}^{\wedge_0}$ but the latter is naturally
isomorphic to $\underline{\bf H}^{\wedge_b}$, the isomorphism is provided by the shift
by $b$.

Next, let $Z(W,\underline{W},\bullet)$ be the centralizer algebra introduced in \cite[\S 3.2]{BE}.
More precisely, let $H\subset G$ be finite groups and $A$ be an associative algebra
equipped with an embedding  $\C H\hookrightarrow A$. Then one can form the space $\Fun_H(G,A)$
of all $H$-equivariant maps $G\rightarrow A$, i.e., maps satisfying $f(hg)=h f(g)$.
This space has a natural structure of a right $A$-module. By $Z(G,H,A)$ one denotes the endomorphism
algebra of this right module. Since $\Fun_H(G,A)$ a free right $A$-module of rank $|G/H|$, we see that
$Z(G,H,A)$ may be identified with $\operatorname{Mat}_{|G/H|}(A)$ but this identification is
not canonical.

Now for $\alpha\in \h^*,a\in \h$
we write $x_\alpha,y_a,\underline{x}_\alpha,\underline{y}_a$ for the corresponding elements in ${\bf H}, \underline{\bf H}$, respectively. Then, by \cite[Theorem 3.2]{BE}
the assignment $\vartheta$ defined by
\begin{equation}\label{def_theta}
\begin{split}
&[\vartheta(w')f](w)=f(ww'),\\
&[\vartheta(x_\alpha)f](w)=\underline{x}_{w\alpha}f(w),\\
&[\vartheta(y_a)f](w)=\underline{y}_{wa}f(w)+\sum_i\sum_{u\in S_i\setminus \underline{W}}\frac{2c_i}{1-\lambda_u}\frac{\langle\alpha_u,wa\rangle}{\underline{x}_{\alpha_u}}(f(uw)-f(w)),\\
& w,w'\in W, \alpha\in \h^*, a\in \h, f\in \Fun_{\underline{W}}(W,\underline{H}),
\end{split}
\end{equation}
extends to a homomorphism $\vartheta:{\bf H}\rightarrow Z(W,\underline{W}, \underline{\bf H}^{\wedge_b})$.
This homomorphism further extends by continuity to ${\bf H}^{\wedge_b}$ and the map
${\bf H}^{\wedge_b}\rightarrow Z(W,\underline{W}, \underline{\bf H}^{\wedge_b})$
is an isomorphism.

\subsubsection{Construction of the restriction functor}\label{SSS_BE_functors1}
We write $\OCat$ for $\OCat_p(W,\h)$, $\underline{\OCat}$ for $\OCat_p(\underline{W},\h)$
and $\underline{\OCat}^+$ for $\OCat_p(\underline{W},\h^+)$.
Consider the category $\OCat^{\wedge_b}$ that consists of all ${\bf H}_p^{\wedge_b}$-modules that are finitely
generated over $\C[\h/W]^{\wedge_b}$. There is an equivalence between $\OCat^{\wedge_b}$ and $\underline{\OCat}^+$
constructed as follows.

The categories of $Z(W,\underline{W}, A)$-modules and
of $A$-modules are naturally equivalent for any algebra $A$ equipped with
an embedding of $\C \underline{W}$. Indeed to an $A$-module $M$
we assign the $Z(W,\underline{W}, A)$-module $\Fun_{\underline{W}}(W, M)$, of all
$\underline{W}$-equivariant functions $f:W\rightarrow M$. Conversely,
let $M'$ be a $Z(W,\underline{W},A)$-module. Define an element $e(\underline{W})\in Z(W,\underline{W},
A)$ by
\begin{equation}
e(\underline{W}) f(w)= \begin{cases} f(w) \qquad & \text{if }w\in \underline{W},\\
 0 &\text{ else},\end{cases}
\end{equation}
where $f\in \Fun_{\underline{W}}(W, A).$ It is easy to see that $A$ is naturally identified with $e(\underline{W})Z(W,\underline{W},A)e(\underline{W})$.
So to $M'$ we can assign the $A$-module $e(\underline{W})M'$.
Then the functors $M\mapsto \Fun_{\underline{W}}(W,M),
M'\mapsto e(\underline{W})M'$ are quasi-inverse equivalences.

The composition $e(\underline{W})\circ\vartheta_*$ is an equivalence  $\OCat^{\wedge_b}\xrightarrow{\sim} \underline{\OCat}^{\wedge_b}$,
where the latter category is defined analogously to $\OCat^{\wedge_b}$ but for the pair $(\underline{W},\h)$.
Then there is an equivalence $\underline{\OCat}^{\wedge_b}\rightarrow \underline{\OCat}^+$ that maps
an object $N'\in \underline{\OCat}^{\wedge_b}$ to the space of all elements in $N'$ that are
eigenvectors for $\underline{y}_a,a\in \h^{\underline{W}},$ and generalized eigenvectors  for $\underline{y}_a, a\in \h^+,$
with (both) eigenvalues $0$. The reader is referred to \cite[Theorem 2.3 and \S 3.5]{BE} for details.
We denote the resulting equivalence $\OCat^{\wedge_b}\xrightarrow{\sim}\OCat(\underline{W})$ by $\Funct_b$.

For $M\in \OCat$  set  $\Res_b(M):=\mathcal{F}_b(M^{\wedge_b})$, where
$M^{\wedge_b}:=\C[\h/W]^{\wedge_b}\otimes_{\C[\h/W]}M$ is the completion of $M$
at $b$. As proved in \cite{BE}, up to isomorphism the functor $\Res_b$
does not depend on the choice of $b$ with $W_b=\underline{W}$.
So we write $\Res^W_{\underline{W}}$ for $\Res_b$.

\subsubsection{Res vs the support filtration}\label{SSS_res_support}
An important feature of the restriction
functors is that they can be used to identify finite dimensional modules: a module in $\OCat_p(\h,W)$
is finite dimensional if and only if it is annihilated by $\operatorname{Res}^W_{\underline{W}}$
for any proper parabolic subgroup $\underline{W}\subset W$.  More generally, for a parabolic subgroup
$W_1\subset W$ consider the subvariety $X_{W_1}\subset \h/W$
that is the image of $\h^{W_1}\subset \h$ in $\h/W$ under the quotient morphism. It is known due to \cite[Theorem 6.8]{ginprim}
that the support of any simple module in $\OCat_p(\h,W)$ viewed as a $\C[\h]^W$-module coincides
with some $X_{W_1}$. It is easy to prove that the simples whose support is $X_{W_1}$ are precisely
those annihilated by $\Res^W_{\underline{W}}$ with $\underline{W}$ not $W$-conjugate to  a subgroup of $W_1$.

By the support filtration on $\OCat$ we mean the filtration of $\OCat$ by the full subcategories
$\OCat_{\underline{W}}$, one for each parabolic subgroup $\underline{W}\subset W$, consisting of
all modules supported on $X_{\underline{W}}$. The previous paragraph shows that the support filtration
can be recovered from the restriction functors.

\section{Spherical category $\OCat$}\label{SECTION_sphericalO}
Throughout this section $W$ is an arbitrary complex reflection group with reflection representation $\h$.
\subsection{Category $\OCat^{sph}$}\label{SUBSECTION_sphO_def}
\subsubsection{Spherical subalgebra} \label{sphdefn}
Let $e:=|W|^{-1}\sum_{w\in W}w$ be the trivial idempotent in $\C W$. The spherical subalgebra
in ${\bf H}$ is ${\bf U}:=e{\bf H}e$. A parameter $p\in \param$ is called {\it spherical} if ${\bf H}_p={\bf H}_p e {\bf H}_p$. We write $\param_1^{sph}$ for the set of spherical values in $\param_1$, the affine subspace in $\param$ defined by the equation $h=1$.

If $p\in \param$ is spherical then  there is an equivalence of abelian categories $\Lmod{{\bf H}_p}\stackrel{\sim}{\rightarrow} \Lmod{{\bf U}_p}$ given by $M\mapsto e M$ with quasi-inverse $N\mapsto {\bf H}_p e\otimes_{{\bf U}_p}N$.

\subsubsection{Euler grading on ${\bf U}$}
We set $\euu^{sph}:=e \euu$, where $\euu$ is the deformed Euler
element in ${\bf H}$, see \ref{SUBSECTION_OCath}.

The derivation $\frac{1}{h}\ad(\euu^{sph})$ acts on ${\bf U}$
diagonalizably with integral eigenvalues: we write ${\bf U}:=\bigoplus_{i\in \BZ}{\bf U}(i)$
for the corresponding grading. We set ${\bf U}(<0):=\bigoplus_{i<0}{\bf U}(i),
{\bf U}(\leqslant 0):=\bigoplus_{i\leqslant 0}{\bf U}(i), {\bf U}(\leqslant 0)^-:=
{\bf U}(\leqslant 0)\cap {\bf U}{\bf U}(<0)$. Clearly, ${\bf U}(\leqslant 0)$ is a subalgebra
in ${\bf U}$, while both ${\bf U}(<0),{\bf U}(\leqslant 0)^-$ are ideals in ${\bf U}(\leqslant 0)$.
Set ${\bf U}^0:={\bf U}(\leqslant 0)/{\bf U}(\leqslant 0)^-$. Of course, ${\bf U}^0={\bf U}(0)/({\bf U}(0)\cap {\bf U}{\bf U}(<0))$.

Let ${\bf U}_p(<0),{\bf U}_p(\leqslant 0)$, etc., denote the specializations of the corresponding objects
to the parameter $p$. Alternatively, these objects may be obtained from ${\bf U}_p$
in the same way as in the previous paragraph.

Let us record the following lemma for later use.

\begin{lemma}\label{lemma:Noetherian}
The algebra ${\bf U}(\leqslant 0)$ is noetherian.
\end{lemma}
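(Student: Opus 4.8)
The plan is to realize $\mathbf{U}(\leqslant 0)$ as a subalgebra of a noetherian ring over which it is finite, or to exhibit an exhaustive filtration whose associated graded is noetherian. I would work with the ambient algebra $\mathbf{H}$ (over $\C[\param]$) rather than a specialization, since noetherianity of $\mathbf{U}(\leqslant 0)$ will follow from that of $\mathbf{U}_p(\leqslant 0)$ for all $p$ plus noetherianity of $\C[\param]$, but in fact it is cleanest to argue directly with $\mathbf{U}$. The starting observation is that $\mathbf{U} = e\mathbf{H}e$ carries the triangular-type decomposition inherited from $\mathbf{H} = S(\h^*)\otimes \C W\otimes S(\h)$: explicitly $\mathbf{U} = e S(\h^*)\,e \otimes (e\C W e) \otimes e S(\h)\, e$ as a $\C[\param]$-module, via multiplication, and the Euler grading on $\mathbf{U}$ refines this — elements of $e S(\h)e$ (``lowering'') have strictly negative $\euu$-degree away from scalars, elements of $e S(\h^*)e$ (``raising'') have strictly positive degree away from scalars, and $e\C W e = \C[\param]\, e$ sits in degree $0$. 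Hence $\mathbf{U}(\leqslant 0)$ is spanned, as a $\C[\param]$-module, by products $ab$ with $a\in e S(\h^*)e$ and $b\in e S(\h)e$ subject to the constraint that the total $\euu$-degree is $\leqslant 0$.

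The key step is to identify a finitely generated commutative subalgebra $Z \subseteq \mathbf{U}(0) \subseteq \mathbf{U}(\leqslant 0)$ over which $\mathbf{U}(\leqslant 0)$ is a finite module. The natural candidate is built from the degree-$0$ invariants: $e\,\big(S(\h^*)\otimes S(\h)\big)^{\text{``deg }0\text{''}}e$, i.e. the image in $\mathbf{U}$ of $\C[\h\times\h^*]^{W}$ intersected with the kernel of the Euler grading. Concretely, $\C[\h\times\h^*]^{\mu}$ for the conical $\C^\times$-action giving the Euler grading is a finitely generated $\C$-algebra (invariants of a reductive group — here $\C^\times$ — acting on a polynomial ring), hence so is its tensor with $\C[\param]$; call its image in $\mathbf{U}$ the subalgebra $Z$. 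The claim is then: (i) $Z$ is noetherian, being a finitely generated commutative $\C[\param]$-algebra; and (ii) $\mathbf{U}(\leqslant 0)$ is a finitely generated $Z$-module. For (ii) I would pass to associated graded algebras for the order filtration on $\mathbf{H}$ (the one with $\h\oplus\h^*$ in degree $1$, $\param^*$ in degree $2$, with $\gr\mathbf{H} = \C[\param]\otimes \C[\h\times\h^*]\# W$), which is compatible with the Euler grading; then $\gr \mathbf{U}(\leqslant 0)$ is identified with $e\big(\C[\param]\otimes\C[\h\times\h^*]\big)^{W}$ restricted to Euler-degrees $\leqslant 0$, and $\gr Z = e\,(\C[\param]\otimes\C[\h\times\h^*]^{W})_{\text{deg }0}\,e$. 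So the problem reduces to a purely commutative-algebra statement: the ring $R_{\leqslant 0} := \bigoplus_{i\leqslant 0} \C[\h\times\h^*]^W_i$ (Euler-graded pieces) is a finite module over its degree-$0$ part $R_0$. This is a standard fact: $R_0$ is the coordinate ring of the GIT quotient, $R_{\leqslant 0}$ is the sub-semigroup-ring for the submonoid $\Z_{\leqslant 0}$ of the grading monoid, and finiteness of $R_{\leqslant 0}$ over $R_0$ follows because the $\C^\times$-action on $\Spec \C[\h\times\h^*]^W$ is contracting in one direction — more precisely one shows $\bigoplus_{i\le 0}$ is finite over $\bigoplus_{i=0}$ using that $\h$ (or $\h^*$) supplies only finitely many algebra generators in negative degrees and they satisfy integral relations over the degree-$0$ invariants, a consequence of properness of the quotient map $\h\times\h^* \to (\h\times\h^*)/\!/W \times \mathbb{A}^1$ along the contracting directions. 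Having established this at the level of $\gr$, standard filtration arguments lift finiteness back to $\mathbf{U}(\leqslant 0)$ over $Z$, and then $\mathbf{U}(\leqslant 0)$ is noetherian as a finite module over the noetherian ring $Z$.

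Alternatively — and this may be the cleaner route to write up — I would avoid GIT and argue as follows. The algebra $\mathbf{U}(\leqslant 0)$ is a $\Z_{\leqslant 0}$-graded $\C[\param]$-algebra. Its degree-$0$ part $\mathbf{U}(0)$ is noetherian: indeed $\gr \mathbf{U}(0) = e\,(\C[\param]\otimes\C[\h\times\h^*])^{W\times\C^\times}e$ is a finitely generated commutative algebra, so $\mathbf{U}(0)$ is a noetherian (possibly noncommutative) algebra by the standard ``$\gr$ noetherian $\Rightarrow$ noetherian'' principle. Next, $\mathbf{U}(\leqslant 0)$ is finitely generated as a $\mathbf{U}(0)$-module on the left: the negatively-graded part is generated over $\mathbf{U}(0)$ by the images of finitely many ``lowering'' operators, because at the level of $\gr$ the ring $\bigoplus_{i<0}\C[\h\times\h^*]^W_i$ is generated over $\bigoplus_{i\geqslant 0}(\cdots)_i$ by finitely many elements (the lowest-weight-vector generators of $S(\h)$ or $S(\h^*)$, whichever sits in negative Euler-degree, are finite in number), hence so is the actual filtered object. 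A finitely generated module over a noetherian ring on one side need not make the big object noetherian, so to finish I note that $\mathbf{U}(\leqslant 0)$ is moreover \emph{module-finite} over the \emph{central} finitely generated subalgebra obtained by taking $W\times\C^\times$-invariant central functions — this is where centrality (or at least an Artin–Tate / generic-flatness argument) is needed. The expected main obstacle is precisely this last point: verifying that one genuinely obtains module-finiteness over a \emph{noetherian central} (or commutative) subalgebra, rather than merely one-sided finite generation over $\mathbf{U}(0)$; concretely, controlling the PBW-type spanning set $\{\text{(raising of Euler-degree }d)\cdot(\text{lowering of Euler-degree }-d)\}$ and showing that only boundedly many ``raising'' factors are needed modulo the central subalgebra. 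Everything else is a routine transfer from $\gr$ to the filtered algebra together with the commutative fact that invariant rings of contracting torus actions are finite over their degree-zero part.
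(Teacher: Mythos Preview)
Your proposal has a genuine gap: the key finiteness claim is false. You assert that $R_{\leqslant 0} := \bigoplus_{i\leqslant 0}\C[\h\times\h^*]^W_i$ is a finite module over its degree-$0$ part $R_0$, and correspondingly that $\mathbf{U}(\leqslant 0)$ is module-finite over $\mathbf{U}(0)$. This fails already for $W=\{1\}$ and $\h=\C$: writing $R=\C[x,y]$ with Euler degrees $\deg x=1$, $\deg y=-1$, one has $R_0=\C[xy]$ and $R_{\leqslant 0}=\C[xy,y]$, which as an $R_0$-module is free on the infinite basis $\{1,y,y^2,\ldots\}$. So neither your first route (finiteness over a central degree-$0$ subalgebra) nor your second route (finiteness over $\mathbf{U}(0)$) can succeed; the ``contracting in one direction / properness'' heuristic you invoke does not apply, since the Euler $\C^\times$-action has weights of both signs. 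A secondary issue: the triangular decomposition $\mathbf{U}=eS(\h^*)e\otimes e\C W e\otimes eS(\h)e$ you write down is incorrect --- since $e\C We=\C[\param]e$, this would give $\mathbf{U}_0=\C[\h]^W\otimes\C[\h^*]^W$, whereas in fact $\mathbf{U}_0=\C[\h\oplus\h^*]^W$.

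The paper's argument avoids module-finiteness entirely and is much shorter. First reduce to the specialization $\mathbf{U}_0(\leqslant 0)=\mathbf{U}(\leqslant 0)/(\param^*)$: since $\mathbf{U}(\leqslant 0)$ is a graded (in the order grading) $\C[\param]$-algebra with $\param^*$ central of positive degree, noetherianity of the quotient lifts back. Then observe that $\mathbf{U}_0(\leqslant 0)$ is finitely generated \emph{as a commutative $\C$-algebra}, not merely as a module over some subalgebra: introducing an auxiliary variable $x$ of Euler degree $+1$ one has
\[
\mathbf{U}_0(\leqslant 0)\;\cong\;\bigl(\mathbf{U}_0\otimes\C[x]\bigr)^{\C^\times},
\]
the invariant ring for a $\C^\times$-action on a finitely generated commutative $\C$-algebra, hence itself finitely generated by Hilbert. (In the toy example above this recovers $\C[xy,y]\cong\C[x,y,t]^{\C^\times}$ with $\deg t=1$.) That is all that is needed.
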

\begin{proof}
Since the $\C[\param]$-algebra ${\bf U}(\leqslant 0)$ is graded,
it is enough to prove this property for ${\bf U}_0(\leqslant 0)={\bf U}(\leqslant 0)/(\param^*)$.
Now consider the algebra ${\bf U}_0\otimes \C[x]$ with the diagonal $\C^\times$-action
where $x$ is given degree $1$. Then ${\bf U}_0(\leqslant 0)\cong({\bf U}_0\otimes \C[x])^{\C^\times}$.
Being the invariant subalgebra for a reductive group action on a finitely generated commutative algebra,
the algebra  ${\bf U}_0(\leqslant 0)$ itself is finitely generated.
\end{proof}

\subsubsection{Definition of $\OCat^{sph}$} We will give a definition of an ``internal'' category $\OCat_p^{sph}$
for the algebra ${\bf U}_p$. The definition is inspired by the  definition
of the category $\OCat^{sph}$ for W-algebras, \cite{BGK} and \cite{LOcat}, and for hypertoric varieties, \cite{BLPW}.

We say that an ${\bf U}_p$-module $M$ lies in the category $\mathcal{O}_p^{sph}$ if
\begin{itemize}
\item[($O_1$)] $M$ is finitely generated;
\item[($O_2$)] $\euu^{sph}$ acts on $M$ locally finitely;
\item[($O_3$)] ${\bf U}_p(<0)$ acts on $M$ by locally nilpotent endomorphisms.
\end{itemize}
We remark that given the first two conditions, $(O_3)$ is equivalent to either of the following:
\begin{itemize}
\item[($O_3'$)] for any $m\in M$ there is $\alpha\in \BZ$ such that ${\bf U}_p(\beta)m=0$
for any $\beta\leqslant \alpha$.
\item[($O_3''$)] the set of $\euu^{sph}$-weights in $M$ is bounded below,
meaning that there are $\alpha_1,\ldots,\alpha_k\in \C$ such that
the weight subspace $M_\alpha$ is zero whenever $\alpha-\alpha_i\not\in \BZ_{\geqslant 0}$
for all $i$.
\end{itemize}

\subsubsection{Weight spaces}\label{lemma:SphO2}
\begin{lemma}
The algebra ${\bf U}_p^0$ is finite dimensional.
\end{lemma}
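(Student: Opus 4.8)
The plan is to reduce to a statement about the associated graded algebra and invoke finiteness there. First I would recall that ${\bf U}^0_p = {\bf U}_p(0)/({\bf U}_p(0)\cap {\bf U}_p{\bf U}_p(<0))$, so it suffices to bound $\dim_{\C}{\bf U}_p(0)/\mathfrak{I}$ for a suitable ideal $\mathfrak{I}$. The key is that ${\bf U}_p$ carries the Bernstein filtration (or the order filtration coming from $\deg\h=\deg\h^*=1$), compatible with the Euler grading, and $\gr {\bf U}_p \cong (\C[\h\times\h^*])^W = \C[\h\oplus\h^*]^W$, a commutative finitely generated graded algebra. Under the $\frac1h\ad(\euu^{sph})$-grading, the degree-zero part of $\gr{\bf U}_p$ corresponds to $(\C[\h\oplus\h^*]^W)^{\C^\times}$ for the grading in which $\h$ has weight $-1$ and $\h^*$ has weight $+1$; this is the ring of functions on $(\h\times\h^*)/\!/\C^\times//W$ invariant under the contracting torus, i.e. functions of "weight zero".

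Next I would argue that the image of ${\bf U}_p(<0)$ in the degree-zero part, together with products with ${\bf U}_p(>0)$, cuts this down to a finite-dimensional quotient. Concretely, in the commutative model $\C[\h\oplus\h^*]^W$ with the $\C^\times$-grading, the quotient of the weight-$0$ subring by the ideal generated by the images of positive-weight and negative-weight elements is precisely the fibre over the cone point, hence finite-dimensional since $\C[\h\oplus\h^*]^W$ is a graded ring with only the irrelevant ideal in nonnegative degrees: it is the coordinate ring of the affine quotient $(\h\oplus\h^*)/\!/W$ with its two opposite $\C^\times$-actions, and the weight-zero-modulo-(weight $\neq 0$) quotient is supported at the origin. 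Transporting this back through $\gr$, one gets $\gr {\bf U}^0_p$ (for the induced filtration) is finite-dimensional, hence so is ${\bf U}^0_p$ itself. Alternatively, and perhaps more cleanly, one can phrase it as: $\euu^{sph}$ acts on each graded piece ${\bf U}_p(i)$, and ${\bf U}_p(0)$ is a finitely generated module over $Z({\bf U}_p)\cap {\bf U}_p(0)$, a finitely generated commutative algebra whose maximal spectrum must collapse modulo ${\bf U}_p{\bf U}_p(<0)$.

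I would actually prefer to mimic the $W$-algebra argument referenced in the text (\cite{BGK}, \cite{LOcat}): there one shows the analogous "Cartan" quotient is finite-dimensional by exhibiting it as a quotient of $\C[\h\oplus\h^*]^{W\times\C^\times}$-type object, or by a direct PBW count. Using the triangular-type decomposition ${\bf U}_p = {\bf U}_p(<0)\cdot{\bf U}_p(0)\cdot{\bf U}_p(>0)$ at the associated-graded level, and the fact that the contracting $\C^\times$ has the origin as its only fixed point in $\Spec\gr{\bf U}_p$, the degree-zero part modulo the augmentation ideals of the $\pm$ parts is the structure ring of a point. This is the structural heart of the matter.

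The main obstacle I anticipate is making the "$\C^\times$-fixed point is a single point" argument rigorous at the level of the \emph{non-commutative} algebra ${\bf U}_p$ rather than its commutative associated graded. One must be careful that ${\bf U}_p(\leqslant 0)^- = {\bf U}_p(\leqslant 0)\cap {\bf U}_p{\bf U}_p(<0)$ behaves well under passing to $\gr$ (that $\gr$ of this ideal contains, or equals, the corresponding commutative ideal), so that a finiteness bound downstairs genuinely lifts. This compatibility, plus the noetherianity of ${\bf U}(\leqslant 0)$ from Lemma \ref{lemma:Noetherian} to control finite generation of the relevant modules, should close the gap; once $\gr {\bf U}^0_p$ is seen to be a finite-dimensional commutative algebra, ${\bf U}^0_p$ inherits finite dimension because the filtration is exhaustive and the graded object is finite-dimensional.
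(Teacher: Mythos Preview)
Your high-level strategy is the same as the paper's: reduce to the associated graded ${\bf U}_0=\C[\h\oplus\h^*]^W$ by showing that $\gr{\bf U}_p^0$ is a quotient of ${\bf U}_0^0$, then prove ${\bf U}_0^0$ is finite-dimensional. The ``$\gr$ compatibility'' you flag as the main obstacle is handled in the paper in one line: since $\gr{\bf U}_p(<0)={\bf U}_0(<0)$ (both are simply the negative-Euler-degree summands), the ideal $\gr({\bf U}_p{\bf U}_p(<0))$ contains ${\bf U}_0{\bf U}_0(<0)$, so $\gr{\bf U}_p^0$ is a quotient of ${\bf U}_0^0$.

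Where you diverge, and where your write-up has a genuine gap, is the finiteness of ${\bf U}_0^0$ itself. The paper does not argue geometrically. It uses the concrete fact that $\C[\h\oplus\h^*]^W$ is a finite module over $\C[a_1,\ldots,a_n,b_1,\ldots,b_n]$, where the $a_i$ freely generate $\C[\h]^W$ (all of strictly positive Euler degree) and the $b_j$ freely generate $S(\h)^W$ (all of strictly negative Euler degree), with bihomogeneous module generators $c_1,\ldots,c_m$. Every $b_j$ lies in ${\bf U}_0(<0)$, so monomials containing a $b_j$ die in ${\bf U}_0^0$; the remaining monomials $a^\alpha c_k$ of Euler degree $0$ are finite in number because each $a_i$ has strictly positive degree. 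That is the whole proof.

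Your geometric sketch (``the only $\C^\times$-fixed point is the origin, so the quotient is supported at a point'') can be made into a proof, but you have not done so: one must show that for every closed non-fixed $\C^\times$-orbit $O\subset(\h\oplus\h^*)/W$ there exist $f\in{\bf U}_0(j)$ and $g\in{\bf U}_0(-j)$ with $fg|_O\neq 0$, and then use that a finitely generated $\C$-algebra with zero-dimensional spectrum is Artinian, hence finite-dimensional. Your phrasing also conflates the Euler $\C^\times$ (which is \emph{not} contracting) with the dilation action, and the asserted ``triangular decomposition'' ${\bf U}_p(<0)\cdot{\bf U}_p(0)\cdot{\bf U}_p(>0)$ is not established and is not needed.

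Finally, your alternative via $Z({\bf U}_p)\cap{\bf U}_p(0)$ does not work: for generic $p$ the algebra ${\bf U}_p$ is simple, so $Z({\bf U}_p)=\C$, and ${\bf U}_p(0)$ is visibly not a finitely generated $\C$-module.
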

\begin{proof}
Clearly, $\gr {\bf U}_p(<0)={\bf U}_0(<0)$. It follows that $\gr {\bf U}_p^0$ is a quotient of ${\bf U}_0^0$ and so it is enough to prove that ${\bf U}_0^0$ is finite dimensional.
Consider  homogeneous free generators $a_1,\ldots,a_n$ of $\C[\h]^W$
and $b_1,\ldots, b_n$ of $S(\h)^W$.  The algebra ${\bf U}_0$ is a finitely generated
module over $\C[a_1,\ldots,a_n,b_1,\ldots,b_n]$ so we can pick bihomogeneous generators
$c_1,\ldots,c_m$. Thus monomials in $a_i,b_j,c_k$ with the $c_k$'s appearing at most once span ${\bf U}_0$ as a vector space.
The algebra ${\bf U}_0^0$ is spanned by the images of the monomials lying in ${\bf U}_0(0)$
and not containing $b_j$'s. Since there are only finitely many of such,
the algebra ${\bf U}_0^0$ is finite dimensional.
\end{proof}

The following is a standard corollary of Lemma \ref{lemma:SphO2}.

\begin{corollary}\label{proposition:SphO3}
All generalized eigenspaces of a module $M\in \OCat_p^{sph}$ are finite dimensional.
\end{corollary}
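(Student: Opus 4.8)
The plan is to deduce finite-dimensionality of the generalized eigenspaces from the finite-dimensionality of ${\bf U}_p^0$ together with the local nilpotency hypothesis $(O_3)$ in the definition of $\OCat_p^{sph}$. First I would fix $M\in \OCat_p^{sph}$ and, using condition $(O_3'')$, list the finitely many ``bottom'' weights $\alpha_1,\dots,\alpha_k$ so that every $\euu^{sph}$-weight of $M$ lies in $\bigcup_i (\alpha_i+\Z_{\geqslant 0})$. Since the generalized eigenspaces $M_\alpha$ for the action of $\euu^{sph}$ are the weight spaces of the $\Z$-grading (up to the finitely many cosets), it suffices to show each $M_\alpha$ is finite dimensional, and I can reduce to showing that for each of the lowest-weight spaces the ${\bf U}_p^0$-module it carries is finitely generated; then I climb upward by the $\Z_{\geqslant 0}$-grading.

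The key step is the following observation: because $M$ is finitely generated over ${\bf U}_p$ and $\euu^{sph}$ acts locally finitely with weights bounded below, one can choose finitely many bihomogeneous (weight-homogeneous) generators $v_1,\dots,v_N$ of $M$. Each homogeneous component $M_\alpha$ is then spanned by elements $u\cdot v_j$ where $u$ ranges over a weight-homogeneous spanning set of ${\bf U}_p$ of the appropriate degree. Using the triangular-type decomposition of ${\bf U}_p$ coming from the $\Z$-grading ${\bf U}_p = {\bf U}_p(<0)\oplus {\bf U}_p(0)\oplus {\bf U}_p(>0)$ — more precisely a PBW-style ordering in which positive-degree generators are written on the right — one pushes the ${\bf U}_p(<0)$-part past $v_j$: by $(O_3)$ it acts locally nilpotently, so only finitely many monomials in the negative generators survive on any given $v_j$. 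What remains is a module over ${\bf U}_p(\leqslant 0)$, and modulo ${\bf U}_p(\leqslant 0)^-$ this is a module over the finite-dimensional algebra ${\bf U}_p^0$ of Lemma \ref{lemma:SphO2}. Running this argument degree by degree, starting from the finitely many bottom weight spaces (each a finitely generated, hence finite-dimensional, ${\bf U}_p^0$-module) and using that ${\bf U}_p$ is generated in bounded degree, shows every $M_\alpha$ is finite dimensional.

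I expect the main obstacle to be making the ``push the negative part to act on the generators'' step rigorous, i.e.\ producing a workable PBW/ordered-monomial basis of ${\bf U}_p$ adapted to the Euler grading and controlling that, after moving ${\bf U}_p(<0)$ to the right and invoking local nilpotence, the leftover ${\bf U}_p(0)$-action factors through ${\bf U}_p^0$ on each fixed weight space. This is precisely the kind of bookkeeping that the phrase ``standard corollary of Lemma \ref{lemma:SphO2}'' is meant to suppress, and it mirrors the analogous arguments for category $\OCat$ of $W$-algebras in \cite{BGK} and \cite{LOcat}; once that ordered-basis machinery is set up, the finiteness is a straightforward induction on the weight. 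A cleaner alternative, which I would use if the bookkeeping gets heavy, is to filter $M$ by ${\bf U}_p$-submodules generated by $\euu^{sph}$-weight spaces below a given bound and argue that the successive quotients are finitely generated over ${\bf U}_p^0$, then take associated graded and compare with the analogous statement at $p=0$ as in the proof of Lemma \ref{lemma:SphO2}.
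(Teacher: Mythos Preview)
Your overall strategy is right, but the key step has a genuine gap. You claim that after pushing the negative part onto the generators, ``the leftover ${\bf U}_p(0)$-action factors through ${\bf U}_p^0$ on each fixed weight space.'' This is false: the ideal ${\bf U}_p(0)\cap {\bf U}_p{\bf U}_p(<0)$ annihilates $M^{{\bf U}_p(<0)}$, not an arbitrary weight space $M_\alpha$, so the finite-dimensionality of ${\bf U}_p^0$ by itself does not bound $\dim M_\alpha$ in the way you suggest. Relatedly, there is no triangular decomposition ${\bf U}_p \cong {\bf U}_p(>0)\cdot{\bf U}_p(0)\cdot{\bf U}_p(<0)$ to fall back on (these pieces are not subalgebras multiplying onto ${\bf U}_p$), and note a minor slip: to invoke $(O_3)$ you want the \emph{negative} Euler-degree part on the right, not the positive part.

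The fix is to use directly the basis $a^\alpha v_k b^\beta$ from the proof of Lemma \ref{lemma:SphO2}, with $a_i\in\C[\h]^W$ (positive Euler weight), $b_i\in S(\h)^W$ (negative Euler weight), and $v_k$ ranging over a basis of the finite-dimensional space $e({\sf Harm}\otimes\C W\otimes{\sf Harm}^*)e$. Pick weight generators $m_1,\dots,m_N$ of $M$ of weights $\beta_1,\dots,\beta_N$ and, by $(O_3')$, a common $K$ with ${\bf U}_p(\leqslant -K)m_j=0$ for all $j$. Then $M_\alpha=\sum_j{\bf U}_p(\alpha-\beta_j)m_j$ is an image of $\bigoplus_j\bigl({\bf U}_p/{\bf U}_p{\bf U}_p(\leqslant -K)\bigr)(\alpha-\beta_j)$, and the argument of Lemma \ref{lemma:SphO2} applies verbatim to each summand: passing to $\gr$, the quotient is spanned by monomials $a^\alpha v_k b^\beta$ with the Euler weight of $b^\beta$ lying in $(-K,0]$; for fixed total Euler weight the bound on $b^\beta$ forces a bound on $a^\alpha$, and there are only finitely many $v_k$. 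This is what ``standard corollary of Lemma \ref{lemma:SphO2}'' is hiding; your alternative via a good filtration and reduction to $p=0$ amounts to the same computation and works equally well.
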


\subsubsection{Functors $\Delta^{sph}_p$ and $\bullet^{{\bf U}_p(<0)}$} Let $M^0$ be a finitely generated ${\bf U}_p^0$-module. We can view $M^0$ as an ${\bf U}_p(\leqslant 0)$-module
via the epimorphism ${\bf U}_p(\leqslant 0)\twoheadrightarrow {\bf U}_p^0$. We define $\Delta_p^{sph}(M^0):={\bf U}_p\otimes_{{\bf U}_p(\leqslant 0)}M^0$, an object of $\OCat^{sph}$. Hence we have a right exact functor $$\Delta_p^{sph}:{\bf U}_p^0\md \rightarrow \OCat^{sph}.$$

\begin{lemma}\label{lemma:SphO1}
There is a right adjoint functor to $\Delta_p^{sph}$, mapping $M\in \OCat_p^{sph}$ to
the annihilator $M^{{\bf U}_p(<0)}$ of ${\bf U}_p(<0)$ in $M$.
\end{lemma}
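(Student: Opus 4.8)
The plan is to exhibit the functor $M \mapsto M^{{\bf U}_p(<0)}$ explicitly as a right adjoint to $\Delta^{sph}_p$ by producing the unit and counit of an adjunction, or equivalently by constructing a natural isomorphism $\Hom_{\OCat^{sph}_p}(\Delta^{sph}_p(M^0), M) \cong \Hom_{{\bf U}_p^0\md}(M^0, M^{{\bf U}_p(<0)})$. First I would check that the assignment $M \mapsto M^{{\bf U}_p(<0)}$ really lands in ${\bf U}_p^0\md$: since ${\bf U}_p(<0)$ is an ideal in ${\bf U}_p(\leqslant 0)$, its annihilator in $M$ is a ${\bf U}_p(\leqslant 0)$-submodule on which ${\bf U}_p(<0)$ acts by zero, hence a ${\bf U}_p(\leqslant 0)/{\bf U}_p(\leqslant 0)^- = {\bf U}_p^0$-module (note ${\bf U}_p(\leqslant 0)^- = {\bf U}_p(\leqslant 0)\cap {\bf U}_p{\bf U}_p(<0)$ kills $M^{{\bf U}_p(<0)}$ because ${\bf U}_p{\bf U}_p(<0)\cdot M^{{\bf U}_p(<0)}$ need not obviously vanish — so one actually has to argue that elements of ${\bf U}_p(\leqslant 0)^-$ act by zero on the annihilator of ${\bf U}_p(<0)$, which follows since such an element is a sum of products $u v$ with $v\in {\bf U}_p(<0)$ of negative degree and $uv$ again of nonpositive degree; here the grading bookkeeping needs a little care). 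It is also finitely generated over ${\bf U}_p^0$: by Corollary \ref{proposition:SphO3} each generalized $\euu^{sph}$-eigenspace of $M$ is finite dimensional, and $(O_3'')$ bounds the weights below, while ${\bf U}_p^0$ is finite dimensional by Lemma \ref{lemma:SphO2}, so a standard argument (the annihilator is supported in finitely many weight spaces of bounded-below weight set, using finite generation of $M$ over ${\bf U}_p$) gives finite generation.

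Next I would build the adjunction maps. The counit $\Delta^{sph}_p(M^{{\bf U}_p(<0)}) = {\bf U}_p \otimes_{{\bf U}_p(\leqslant 0)} M^{{\bf U}_p(<0)} \to M$ is induced by the inclusion $M^{{\bf U}_p(<0)}\hookrightarrow M$ and the module action; this is well defined because $M^{{\bf U}_p(<0)}$ is a ${\bf U}_p(\leqslant 0)$-submodule. The unit $M^0 \to \Delta^{sph}_p(M^0)^{{\bf U}_p(<0)}$ sends $m^0 \mapsto 1\otimes m^0$; one checks $1\otimes m^0$ is killed by ${\bf U}_p(<0)$ using that ${\bf U}_p(<0)$ acts on $\Delta^{sph}_p(M^0)$ locally nilpotently with $1\otimes M^0$ sitting in the top weights, so ${\bf U}_p(<0)(1\otimes m^0)$ has strictly lower weight, but by the PBW-type decomposition ${\bf U}_p \cong {\bf U}_p(\geqslant 0)\otimes {\bf U}_p^0 \otimes (\text{something})$ — more precisely using the triangular structure on ${\bf H}_p$ restricted to ${\bf U}_p$ — one sees ${\bf U}_p(<0)\cdot(1\otimes M^0)$ lands in $\bigoplus_{i<0}({\bf U}_p(i)\otimes M^0)$ intersected with the image, forcing it to be zero. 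Then the standard triangle identities are checked by a direct computation on generators. The verification that the unit is an isomorphism onto $\Delta^{sph}_p(M^0)^{{\bf U}_p(<0)}$ is the crux: one wants that the only elements of $\Delta^{sph}_p(M^0)$ annihilated by ${\bf U}_p(<0)$ are those of top weight, i.e. $1\otimes M^0$; equivalently that ${\bf U}_p(<0)$ acts without kernel on the positive-degree part $\bigoplus_{i>0}\Delta^{sph}_p(M^0)(\text{wt} < \text{top})$. This is where I expect the main obstacle to lie, and I would handle it either by a flatness/freeness statement for ${\bf U}_p$ over ${\bf U}_p(\leqslant 0)$ (an analogue of the PBW theorem: ${\bf U}_p$ is free as a right ${\bf U}_p(\leqslant 0)$-module, with a basis of weight vectors of nonnegative degree whose images span the weight spaces appropriately), or by deforming to $p=0$ where ${\bf U}_0 = (\C[\h\oplus\h^*]^W$-ish thing$)$ and the grading is the natural $\Z$-grading by $\euu^{sph}$, making the statement transparent, and then lifting via a graded Nakayama argument as in the proof of Lemma \ref{lemma:SphO2}.

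Finally, the general-nonsense step: since $\Delta^{sph}_p$ is right exact and we have produced a right adjoint on objects together with unit and counit satisfying the triangle identities, uniqueness of adjoints finishes the proof; alternatively, one can skip the explicit unit/counit and instead verify the $\Hom$-isomorphism directly: a morphism $\Delta^{sph}_p(M^0)\to M$ is by the tensor-hom adjunction the same as a ${\bf U}_p(\leqslant 0)$-morphism $M^0\to M$, and since ${\bf U}_p(<0)$ acts by zero on $M^0$, its image lies in $M^{{\bf U}_p(<0)}$ and it factors through ${\bf U}_p^0$ — giving the bijection, functorial in both variables. I would present this second, cleaner route as the main argument and relegate the explicit annihilator/weight-space bookkeeping (lying-in-${\bf U}_p^0\md$ and finite generation) to the preliminary paragraph.
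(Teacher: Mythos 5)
Your second, ``cleaner'' route is the correct one and is exactly what the paper does: the $\Hom$-bijection
$$\Hom_{{\bf U}_p}(\Delta^{sph}_p(M^0),M)\cong\Hom_{{\bf U}_p(\leqslant 0)}(M^0,M)\cong\Hom_{{\bf U}_p^0}(M^0,M^{{\bf U}_p(<0)})$$
is purely formal from tensor--hom and the fact that ${\bf U}_p(<0)$ kills $M^0$, and the only substantive thing to check is that $M^{{\bf U}_p(<0)}$ is finitely generated over ${\bf U}_p^0$. The paper's finite-generation argument is shorter than yours: since ${\bf U}_p^0$ is finite-dimensional, $\euu^{sph}$ has only finitely many generalized eigenvalues on the ${\bf U}_p^0$-module $M^{{\bf U}_p(<0)}$, and each generalized eigenspace is finite-dimensional by the Corollary in \ref{lemma:SphO2}; so $M^{{\bf U}_p(<0)}$ is in fact finite-dimensional. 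Your version, routing through $(O_3'')$ and finite generation of $M$ over ${\bf U}_p$, also works but is more indirect.

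The middle of your proposal, however, contains a genuine conceptual error. You declare as ``the crux'' the verification that the unit $M^0\to\Delta^{sph}_p(M^0)^{{\bf U}_p(<0)}$ is an isomorphism, and you propose to prove this by a freeness/PBW statement or a deformation argument. This is not needed for the adjunction --- the unit and counit of an adjunction need only satisfy the triangle identities, which here follow from a one-line computation on simple tensors --- and moreover the claim is \emph{false} in general: $\Delta^{sph}_p(M^0)^{{\bf U}_p(<0)}$ strictly contains $1\otimes M^0$ exactly when $\Delta^{sph}_p(M^0)$ has higher singular vectors, which happens in any block that is not semisimple. (Were the unit always an isomorphism, $\Delta^{sph}_p$ would be fully faithful, which would trivialize much of the rest of Section~\ref{SECTION_sphericalO}.) Since you explicitly say at the end that you would present the tensor--hom route as the main argument and drop the explicit unit/counit, the proof you would actually write is fine; but the ``crux'' paragraph as stated pursues a false statement and should simply be deleted.
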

\begin{proof}
We only need to prove that the ${\bf U}_p^0$-module $M^{{\bf U}_p(<0)}$ is finitely generated.  But since ${\bf U}_p^0$ is finite dimensional, $M^{{\bf U}_p(<0)}$ is a sum of finitely many distinct generalized $\euu^{sph}$-eigenspaces, and by the above corollary each of these is finite dimensional.
\end{proof}

\subsubsection{Simple objects}\label{sphsimp}
Now let $M^0$ be an irreducible ${\bf U}_p^0$-module. Then $\Delta^{sph}_p(M^0)$ is indecomposable and, moreover, has a unique simple quotient which we denote by $L^{sph}_p(M^0)$. The map $M^0\mapsto L^{sph}_p(M^0)$ is a bijection between $\irr{{\bf U}_p^0}$ and the set of simples in $\OCat_p^{sph}$. Its inverse is given by taking the lowest weight
subspace or, alternatively, by applying the functor $\bullet^{{\bf U}_p(<0)}$.

\subsection{Main result}\label{SS:sph_gen_main}
\subsubsection{Functors between $\OCat^{sph}$ and $\OCat$}
It is clear that the assignment $M\mapsto eM$ defines an exact functor
$\OCat_p\rightarrow \OCat_p^{sph}$.

\begin{proposition}\label{proposition:SphO4}
The functor $N\mapsto {\bf H}_p e\otimes_{{\bf U}_p}N$ maps $\OCat_p^{sph}$ to $\OCat_p$.
\end{proposition}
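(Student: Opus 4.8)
The plan is to verify the three defining conditions $(O_1)$--$(O_3)$ for the $\mathbf{U}_p$-module $\mathbf{H}_p e\otimes_{\mathbf{U}_p} N$, where $N\in\OCat_p^{sph}$. First, recall that $e\mathbf{H}_p e = \mathbf{U}_p$ and that $\mathbf{H}_p e$ is a finitely generated $\mathbf{U}_p$-module: indeed $\mathbf{H}_p e$ is finitely generated over $\C[\h]^W\otimes\C[\h^*]^W\subset\mathbf{U}_p$ (this follows from the PBW theorem for $\mathbf{H}_p$ together with finiteness of $\C[\h]$ over $\C[\h]^W$), hence over $\mathbf{U}_p$. Therefore $\mathbf{H}_p e\otimes_{\mathbf{U}_p}N$ is finitely generated over $\mathbf{H}_p$ whenever $N$ is finitely generated over $\mathbf{U}_p$, which gives $(O_1)$.

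Next I would handle the Euler element. The grading element $\euu^{sph}=e\,\euu$ on $\mathbf{U}_p$ is the restriction of the action of $\euu\in\mathbf{H}_p$, and under the identification $\mathbf{H}_p e\otimes_{\mathbf{U}_p}N$ the operator $\euu$ acts compatibly with $\euu^{sph}$ on $N$: concretely, $\ad(\euu)$ makes $\mathbf{H}_p e$ into a $\Z$-graded $\mathbf{U}_p$-bimodule (with $\h^*$ in degree $+1$, $\h$ in degree $-1$, $W$ in degree $0$), and since $N$ has locally finite $\euu^{sph}$-action with weights bounded below (condition $(O_3'')$), the tensor product $\mathbf{H}_p e\otimes_{\mathbf{U}_p}N$ decomposes into generalized $\euu$-eigenspaces, each obtained from finitely many graded pieces of $\mathbf{H}_p e$ tensored with finitely many weight spaces of $N$. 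Local finiteness of $\euu$ on $\mathbf{H}_p e\otimes N$ then follows, giving $(O_2)$; and the same bookkeeping shows the $\euu$-weights on $\mathbf{H}_p e\otimes N$ are bounded below, which is $(O_3'')$ in the category $\OCat_p$ and hence equivalent to the local nilpotence of $\h\subset S(\h)$ required in the definition of $\OCat_p$. So $(O_3)$ holds as well.

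The main obstacle is the bookkeeping in the second step: one must be careful that the grading on $\mathbf{H}_p e$ induced by $\ad(\euu)$ is \emph{not} bounded (both $\h$ and $\h^*$ contribute, so the grading is unbounded in both directions), so it is not automatic that tensoring with a lower-bounded module yields a lower-bounded module. The key point that makes it work is that $\mathbf{H}_p e$ is finitely generated as a left module over $\mathbf{U}_p(\leqslant 0)$ — equivalently, $\mathbf{H}_p e$ lies in a suitable analogue of $\OCat^{sph}$ as a bimodule — which forces the negative part of the grading on $\mathbf{H}_p e$ to be ``controlled'' by $\mathbf{U}_p(<0)$ acting nilpotently after tensoring with $N$. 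I would prove this finiteness using Lemma \ref{lemma:Noetherian} and the standard reduction to the associated graded algebra $\mathbf{U}_0(\leqslant 0)$, exactly as in the proof of Lemma \ref{lemma:SphO2}: pass to $\gr$, where $\mathbf{H}_0 e$ becomes a module over the commutative ring $\C[\h\times\h^*]^W$, invoke finiteness of $\C[\h\times\h^*]$ over the invariants, and conclude that the lower-bounded weight condition is preserved. Once this finiteness input is in place, conditions $(O_2)$ and $(O_3)$ are formal consequences and the proof is complete.
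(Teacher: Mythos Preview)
Your overall strategy is correct and matches the paper's, and your first paragraph (finite generation of ${\bf H}_p e$ as a right ${\bf U}_p$-module) is exactly the key input. But you are overcomplicating the resolution of your ``main obstacle'': you already have everything you need from that first paragraph. Choose $\ad(\euu)$-homogeneous elements $f_1,\ldots,f_k \in {\bf H}_p e$, of degrees $d_1,\ldots,d_k$, which generate ${\bf H}_p e$ as a \emph{right} ${\bf U}_p$-module. Then every element of ${\bf H}_p e\otimes_{{\bf U}_p}N$ lies in $\sum_i f_i\otimes N$, since $f_i u \otimes n = f_i \otimes un$ for $u\in{\bf U}_p$. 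The generalized $\euu$-eigenvalue of $f_i\otimes v$ is $d_i + \lambda$ where $\lambda$ is the generalized $\euu^{sph}$-eigenvalue of $v$, so the weights on the tensor product are bounded below by $\min_i d_i$ plus the lower bound on $N$. That is the entirety of the paper's argument.

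Your proposed detour through finite generation of ${\bf H}_p e$ over ${\bf U}_p(\leqslant 0)$ is unnecessary, and the statement as written is muddled: ${\bf H}_p e$ is a right ${\bf U}_p$-module, not a left one, and it is neither obvious nor needed that it be finitely generated over the subalgebra ${\bf U}_p(\leqslant 0)$. The conceptual point is that the unbounded negative part of the $\ad(\euu)$-grading on ${\bf H}_p e$ comes entirely from the right action of ${\bf U}_p(<0)$; once you move that action across the tensor product into $N$, the hypothesis $N\in\OCat_p^{sph}$ already controls it. No appeal to Lemma~\ref{lemma:Noetherian} or passage to associated graded is required.
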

\begin{proof}
Let $N\in \OCat_p^{sph}$. Observe that ${\bf H}_p e$ is finitely generated both as a left ${\bf H}_p$-module and as a right ${\bf U}_p$-module. In particular, since $N$ is a finitely generated ${\bf U}_p$-module, ${\bf H}_p e\otimes_{{\bf U}_p}N$ is finitely generated over ${\bf H}_p$. Now let us choose eigenvectors $f_1,\ldots,f_k$ for the adjoint action of $\euu$ that generate ${\bf H}_pe$
over ${\bf U}_p$. Then ${\bf H}_p e\otimes_{{\bf U}_p}N$ is spanned by elements of the form $f_i\otimes v, v\in N$.
It follows that the $\euu$-weights of ${\bf H}_p e\otimes_{{\bf U}_p}N$ are bounded below, so $S(\h)$ acts locally nilpotently. Hence ${\bf H}_p e\otimes_{{\bf U}_p}N\in\OCat_p$.
\end{proof}

\subsubsection{Correspondence between simples}\label{sphequiv} If $p\in \param_1^{sph}$ then \ref{sphdefn} and \ref{proposition:SphO4} show that the categories
$\OCat_p$ and $\OCat_p^{sph}$ are equivalent as abelian categories. In particular, the ${\bf U}_p$-module
$e L_p(\lambda)$ is irreducible for any irreducible $W$-module $\lambda$ and the collection
$\{ e L_p(\lambda):  \lambda\in \irr{W}\}$ exhausts the set of irreducible objects in $\mathcal{O}_p^{sph}$.
By \ref{sphsimp} this produces a bijection $$\iota^L_{p}: \irr{W}\xrightarrow{\sim}\irr{{\bf U}_{p}^0}$$
with $e L_p(\lambda)=L^{sph}(\iota^L_p(\lambda))$.

\subsubsection{Homomorphism $\varphi$} \label{morphismsph}There is another way to relate $\irr{W}$ and representations of ${{\bf U}_p^0}$.
Consider the universal Verma ${\bf H}$-module ${\bf \Delta}(\lambda)=\C[\param]\otimes \C[\h]\otimes \lambda$
and take its spherical part $e{\bf \Delta}(\lambda)$. As in \ref{SSS_fake}, let $\fd(\lambda^*)$ denote the smallest
degree $d$ such that $\lambda^*$ appears in $\C[\h]_d$.
So the $\fd(\lambda^*)$-degree component of  $e{\bf \Delta}(\lambda)$ is naturally identified
with a free $\C[\param]$-module. This component is stable under ${\bf U}(\leqslant 0)$
and is annihilated by ${\bf U}(<0)$, so it becomes a ${\bf U}^0$-module.
We will denote it by $\iota(\lambda)$.
By definition we have a homomorphism $\varphi:{\bf \Delta}^{sph}(\iota(\lambda))
\rightarrow e{\bf \Delta}(\lambda)$ which is the identity on the lowest weight $\C[\param]$-submodule $\iota(\lambda)$.

Specializing at $p$, we get a finite dimensional ${\bf U}^0_p$-module denoted by
$\iota_p(\lambda)$.
\subsubsection{Main result}\label{sph_gen_main}
The following conjecture is key to understanding the structure of $\mathcal{O}_p^{sph}$.

\begin{conjecture} Keep the above notation.
For all $p\in\param_1^{sph}$ we have
\begin{itemize}
\item[(i)]
$\iota^L_p=\iota_p$;
\item[(ii)] the homomorphism
$\varphi_p:\Delta^{sph}_p(\iota(\lambda))\rightarrow e\Delta_p(\lambda)$ is an isomorphism.
\end{itemize}
\end{conjecture}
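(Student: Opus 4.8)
The plan is to reduce the conjecture to a statement about Verma modules that can be checked by a flatness/specialization argument. First, I would work with the universal objects over $\C[\param]$, or better over a suitable localization thereof, and prove the corresponding statement generically. Concretely, note that for generic $p$ the category $\OCat_p$ is semisimple, so $\Delta_p(\lambda)=L_p(\lambda)$ is irreducible, and hence $e\Delta_p(\lambda)=eL_p(\lambda)=L_p^{sph}(\iota_p^L(\lambda))$ is the irreducible object of $\OCat_p^{sph}$ with lowest weight space $\iota_p(\lambda)$ (the lowest weight space of $e\Delta_p(\lambda)$ being $\iota_p(\lambda)$ essentially by definition, see \ref{morphismsph}). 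On the other hand $\Delta_p^{sph}(\iota(\lambda))$ is generated by its lowest weight space and $\varphi_p$ is the identity there, so $\varphi_p$ is surjective; comparing with the simple $L_p^{sph}(\iota_p(\lambda))$ and using that in the semisimple case $\Delta^{sph}$ of an irreducible is already irreducible (which itself needs the finiteness in Corollary \ref{proposition:SphO3} plus a weight/character count), one gets that $\varphi_p$ is an isomorphism and $\iota_p=\iota_p^L$ for generic $p$. This handles (i) and (ii) on a dense open set.

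Next I would upgrade to all spherical $p$ by a deformation argument. The key point is a character (graded-multiplicity) identity: using that $\Delta_p(\lambda)\cong \C[\h]\otimes\lambda$ as a graded $W$-module independently of $p$, the graded character of $e\Delta_p(\lambda)$ is the fake-degree-type series $\sum_i [\C[\h]_i:\lambda^*]\,q^{i}$ shifted by $c_\lambda(p)$, again independent of $p$ in an appropriate sense. I would compare this with the graded character of $\Delta_p^{sph}(\iota(\lambda))={\bf U}_p\otimes_{{\bf U}_p(\leqslant 0)}\iota(\lambda)$. Here the freeness of ${\bf U}_p$ over ${\bf U}_p(\leqslant 0)$ — or at least a PBW-type flatness statement — should give that the graded character of $\Delta_p^{sph}(\iota(\lambda))$ equals $\dim\iota(\lambda)$ times the character of ${\bf U}_p/{\bf U}_p{\bf U}_p(<0)$, a quantity independent of $p$, computable from ${\bf U}_0=(e\C[\h\times\h^*]e)^{?}$-type considerations. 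If these two characters agree (which is exactly what the generic case forces, since there $\varphi_p$ is an isomorphism), then $\varphi_p$, which is always surjective (it is the identity on the generating lowest weight space and the target $e\Delta_p(\lambda)$ is generated over ${\bf U}_p$ by that space — this follows from $\C[\h]\otimes\lambda$ being generated over $\C[\h]^W$ by the copy of $\lambda^*$ in degree $\fd(\lambda^*)$ together with higher terms, which requires an argument), must be an isomorphism for \emph{every} $p$, giving (ii); and then (i) follows by taking lowest weight spaces of $eL_p(\lambda)$, which is a quotient of $e\Delta_p(\lambda)\cong\Delta_p^{sph}(\iota(\lambda))$, hence has lowest weight space a quotient of $\iota(\lambda)$, and one shows it equals $\iota_p(\lambda)$ by irreducibility of the ${\bf U}_p^0$-module.

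The step I expect to be the main obstacle is the surjectivity claim for $\varphi_p$ together with the character comparison — i.e., controlling $e\Delta_p(\lambda)$ as a ${\bf U}_p$-module. It is \emph{not} obvious that $e\Delta_p(\lambda)$ is generated over ${\bf U}_p$ by its lowest weight space $\iota(\lambda)$: this is the assertion that the map ${\bf U}_p\cdot\iota(\lambda)\to e\Delta_p(\lambda)$ is onto, equivalently that $e(\C[\h]\otimes\lambda) = (e{\bf H}_p e)\cdot(\text{copy of }\lambda^*\text{ at degree }\fd(\lambda^*))$. For a general non-spherical $p$ this can fail, and this is presumably exactly where the hypothesis $p\in\param_1^{sph}$ enters: sphericity ${\bf H}_p={\bf H}_p e{\bf H}_p$ should be leveraged to show $e\Delta_p(\lambda) = e{\bf H}_p\otimes_{{\bf H}_p}\Delta_p(\lambda)$ is generated appropriately, perhaps via $e{\bf H}_p e\otimes_{e{\bf H}_p e}e{\bf H}_p(1-e)\cdots$ Morita-type bookkeeping. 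So the proof architecture is: (1) generic semisimple case by character count; (2) flatness/PBW for ${\bf U}_p$ over ${\bf U}_p(\leqslant 0)$ to make the character of $\Delta_p^{sph}$ parameter-independent; (3) sphericity to force surjectivity of $\varphi_p$; (4) combine (1)–(3): equal characters plus surjectivity gives isomorphism, hence (ii), hence (i). I would expect the actual paper to structure the proof of Theorem \ref{sph_gen_main} over several subsections, with the sphericity input isolated as the crucial lemma.
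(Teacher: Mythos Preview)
The statement you are trying to prove is explicitly a \emph{Conjecture} in the paper, not a theorem; the paper does not prove it. What the paper does prove is the weaker Theorem immediately following it: (i) and (ii) hold for $p$ in some non-empty Zariski open subset $\param_1^{!}\subset\param_1^{sph}$. So there is no ``paper's own proof'' of the full conjecture to compare against, and your proposal is an attempt to go beyond what the authors could establish.

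That said, the architecture you sketch is close in spirit to the paper's partial result, and the two points you flag as obstacles are precisely where the paper's argument stops short of the full conjecture. First, your step (2), the assertion that ${\bf U}_p$ is flat (or free) over ${\bf U}_p(\leqslant 0)$ so that the graded character of $\Delta_p^{sph}(\iota(\lambda))$ is independent of $p$, is \emph{not} known. The paper says this directly at the start of \S\ref{lemma:weak_flatness}: ``One problem with the modules $\Delta_p^{sph}(\iota(\lambda))$ is that it is unclear whether they depend on $p$ in a flat way.'' What the paper proves instead is only upper semicontinuity of the graded dimensions, which yields equality on a Zariski open set but not everywhere. Second, your step (3), that sphericity alone forces surjectivity of $\varphi_p$ (equivalently, that $e\Delta_p(\lambda)$ is generated by its lowest weight space for every spherical $p$), is also not established in the paper; Lemma \ref{proposition:SphO5} shows only that the set of $p$ for which this holds is Zariski open and contains $\param_1^{ss}$. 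Your Morita-type bookkeeping suggestion does not obviously produce this: sphericity tells you $e\Delta_p(\lambda)$ generates $\Delta_p(\lambda)$ over ${\bf H}_pe$, but it does not tell you which graded piece of $e\Delta_p(\lambda)$ suffices to generate it over ${\bf U}_p$.

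In short, your plan correctly isolates the two missing ingredients --- parameter-independence of $\dim\Delta_p^{sph}(\iota(\lambda))_i$ and surjectivity of $\varphi_p$ for all spherical $p$ --- but supplies neither. The paper's actual proof of the Zariski-open version proceeds differently for injectivity: rather than a character count, it constructs (Lemma \ref{lemma:SphO_inject1}) an auxiliary nonzero map $\varphi':e\Delta_p(\lambda')\to\Delta_p^{sph}(\iota(\lambda))$ landing in bounded degree, shows (Lemma \ref{lemma:SphO_singvect}) that on a further Zariski open set the only lowest-weight vectors in low degree are in degree $0$, and concludes $\varphi\circ\varphi'$ is an isomorphism of standard modules, forcing $\lambda'=\lambda$ and $\varphi$ bijective.
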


We can prove a weaker statement.

\begin{theorem}\label{proposition:sph_main_relation}
In the above conjecture, (i) and (ii) hold for  $p$ in some non-empty Zariski open
subset $\param_1^!\subset \param_1^{sph}$.
\end{theorem}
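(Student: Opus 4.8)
The plan is to prove the two assertions by a spreading-out / generic-flatness argument, using that everything in sight is defined over the polynomial ring $\C[\param]$ (or a suitable localization of it) and that both statements hold at least on a Zariski-dense set of parameters — in fact on the generic point, where the Cherednik algebra is semisimple. First I would work over the localization of $\C[\param_1]$ at the spherical locus, so that one has the big modules $e\mathbf{\Delta}(\lambda)$, $\mathbf{\Delta}^{sph}(\iota(\lambda))$, and the homomorphism $\varphi$, all as $\mathbf{U}(\leqslant 0)$-equivariant objects over this ring, and all of them finitely generated over the appropriate base rings by Lemmas \ref{lemma:Noetherian} and \ref{lemma:SphO2}. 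The key point is that $\ker\varphi$ and $\operatorname{coker}\varphi$ are finitely generated $\C[\param_1^{sph}]$-modules, so by generic flatness there is a non-empty Zariski open $\param_1^!\subseteq\param_1^{sph}$ over which both are \emph{flat}, hence over which formation of $\ker$ and $\operatorname{coker}$ commutes with the specialization $\bullet\otimes_{\C[\param_1]}\C_p$. Thus it suffices to check (ii) at a single parameter in each connected component — or more efficiently, to check it at the generic point.

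For part (ii), at the generic parameter $p$ the algebra $\mathbf{H}_p$ is semisimple and $\OCat_p$ is semisimple with $\Delta_p(\lambda)=L_p(\lambda)$; correspondingly $\OCat_p^{sph}$ is semisimple, $\mathbf{U}_p^0$ is semisimple, and $\Delta_p^{sph}(M^0)=L_p^{sph}(M^0)$ for every irreducible $M^0$. In that situation $\varphi_p$ is a nonzero map between two objects of $\OCat_p^{sph}$ that both have the same lowest-weight space $\iota_p(\lambda)$ (an irreducible $\mathbf{U}_p^0$-module because $e\Delta_p(\lambda)=eL_p(\lambda)$ is simple, using \ref{sphequiv}), and $\varphi_p$ restricts to the identity there; since $\Delta_p^{sph}(\iota_p(\lambda))$ is generated by that lowest-weight space it follows that $\varphi_p$ is surjective, and comparing graded characters (or using semisimplicity of the target) shows it is injective. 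Hence (ii) holds generically, and therefore — by the flatness argument above — on a non-empty open set $\param_1^!$. Along the way one gets (i) essentially for free: on $\param_1^!$ the isomorphism $\varphi_p$ identifies the lowest-weight $\mathbf{U}_p^0$-module of $e\Delta_p(\lambda)$ with $\iota_p(\lambda)$; but for spherical $p$ the lowest-weight module of $e\Delta_p(\lambda)$, which is the same as the lowest-weight module of its simple quotient $eL_p(\lambda)=L_p^{sph}(\iota_p^L(\lambda))$, is by \ref{sphsimp} exactly $\iota_p^L(\lambda)$. Hence $\iota_p(\lambda)\cong\iota_p^L(\lambda)$ for all $\lambda$, which is (i); one should check this identification is the canonical one, i.e.\ respects the labelling by $\irr W$, which is clear since both constructions are functorial in $\lambda$ and compatible with the $\C[\param]$-structure.

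I would organize the write-up as: (1) reduce both statements to statements about the $\C[\param_1]$-flatness of $\ker\varphi$ and $\operatorname{coker}\varphi$ and of the family $\{\iota_p(\lambda)\}$; (2) invoke generic flatness (Grothendieck) to produce $\param_1^!$; (3) verify (i) and (ii) at the generic point using semisimplicity of $\mathbf{H}_\eta$; (4) conclude that both hold on $\param_1^!$, and note that after possibly shrinking $\param_1^!$ one may also assume $\iota_p(\lambda)$ stays irreducible (it is irreducible generically, and irreducibility is an open condition for a flat family of finite-dimensional modules over a family of finite-dimensional algebras $\mathbf{U}_p^0$). The one subtlety to be careful about is that $\param_1^{sph}$ need not be irreducible a priori — but it is open in the affine space $\param_1$, hence a finite union of irreducible pieces, and the generic-flatness argument applies to each, so the final $\param_1^!$ is still open and non-empty. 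The main obstacle — the step that needs genuine care rather than formal nonsense — is establishing that $e\mathbf{\Delta}(\lambda)$ and $\mathbf{\Delta}^{sph}(\iota(\lambda))$ are coherent in the right sense over $\C[\param_1]$ so that ``commutes with specialization'' is legitimate; this rests on Lemma \ref{lemma:Noetherian} (noetherianity of $\mathbf{U}(\leqslant 0)$) together with the fact, visible from the triangular decomposition, that $\mathbf{U}$ is a finitely generated module over $\mathbf{U}(\leqslant 0)\cdot\C[\h]^W$, so that $\Delta_p^{sph}$ of a finite-dimensional module is a finitely generated $\C[\h]^W$-module and the whole family over $\param_1$ is finitely generated over $\C[\param_1]\otimes\C[\h]^W$.
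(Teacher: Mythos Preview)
Your strategy—verify the isomorphism at the generic (semisimple) parameter and then spread out by generic flatness—is natural and quite different from the paper's, but as written there is a real gap at exactly the point you flag as ``the main obstacle.''

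You first assert that $\ker\varphi$ and $\operatorname{coker}\varphi$ are finitely generated $\C[\param_1^{sph}]$-modules; this is simply false, since both source and target of $\varphi$ are infinite-dimensional in every fibre. You then retreat to finite generation over $\C[\param_1]\otimes\C[\h]^W$, which is correct, but Grothendieck generic flatness over that ring produces an open set in $\param_1\times\h/W$, not in $\param_1$; nothing you have written shows that the bad locus does not project onto all of $\param_1$. The paper's authors flag precisely this issue at the start of Lemma~\ref{lemma:weak_flatness}: ``it is unclear whether [the $\Delta_p^{sph}(\iota(\lambda))$] depend on $p$ in a flat way,'' and their argument is a workaround for it, not an appeal to it.

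Your approach can likely be salvaged, but doing so requires essentially the finiteness inputs the paper develops. One must show (a) each Euler-graded piece $\mathbf{\Delta}^{sph}(\iota(\lambda))_i$ is finitely generated over $\C[\param]$—not obvious, since $\mathbf{U}(i)$ itself is infinite-dimensional; this is what Step~1 of Lemma~\ref{lemma:weak_flatness} extracts—and (b) on the surjective locus, $\ker\varphi$ is generated as a $\C[\param][\h]^W$-module by finitely many such pieces, so that torsion at the generic point propagates to a single nonzero annihilator in $\C[\param]$. None of this is in your proposal. Compare with the paper's route: surjectivity is shown to be open by a direct rank condition (Lemma~\ref{proposition:SphO5}); injectivity is handled degree by degree via Lemma~\ref{lemma:weak_flatness}, together with an a~priori bound $i\le|W|$ (Lemma~\ref{lemma:SphO_inject1}) on how many degrees need checking—which is exactly the uniform finiteness your spreading-out argument is missing.
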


The proof proceeds in two halves. In Subsection \ref{SUBSECTION_VermaI} we will show that the homomorphism $\Delta_p^{sph}(\iota(\lambda))\rightarrow
e\Delta_p(\lambda)$ is surjective on a Zariski open subset of $\param_1$.  The injectivity of $\Delta^{sph}_p(\iota(\lambda))\rightarrow e\Delta_p(\lambda)$
is more subtle, and will be established in Subsection \ref{SUBSECTION_VermaII}
after restricting to a smaller Zariski open subset.

\subsubsection{$\param_1^{sph}$ is Zariski open}\label{SSS:Zariski_open} Recall that the {\it aspherical locus} $\param^{asph}_1$ is the complement to the set of spherical parameters $\param^{sph}_1$ in $\param_1$. 

\begin{lemma} The apsherical locus is Zariski closed in $\param_1$.
\end{lemma}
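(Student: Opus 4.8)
The statement to prove is that the aspherical locus $\param^{asph}_1 \subset \param_1$ is Zariski closed, equivalently that $\param^{sph}_1$ is Zariski open. First I would recall the definition: $p$ is spherical precisely when ${\bf H}_p = {\bf H}_p e {\bf H}_p$, i.e. when the natural ${\bf H}_p$-bimodule map ${\bf H}_p e \otimes_{{\bf U}_p} e {\bf H}_p \to {\bf H}_p$ is surjective. The idea is to detect this surjectivity by a cokernel that is finitely generated over a finitely generated $\C[\param_1]$-algebra, and then invoke the standard generic-flatness / semicontinuity machinery over the affine base $\param_1$.

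The key steps, in order. (1) Consider the universal objects over $\C[\param_1]$: the algebra ${\bf H} = {\bf H}|_{h=1}$ (flat over $\C[\param_1]$ by the PBW theorem of Etingof--Ginzburg), the spherical subalgebra ${\bf U} = e{\bf H}e$, and the bimodule map $\mu: {\bf H}e \otimes_{{\bf U}} e{\bf H} \to {\bf H}$, with cokernel $Q := \operatorname{coker}(\mu)$. Note $Q = {\bf H}/({\bf H}e{\bf H})$ as a ${\bf H}$-bimodule. (2) Observe that $p$ is spherical iff $Q_p := Q \otimes_{\C[\param_1]} \C_p = 0$; here one must check that forming the quotient by the ideal $({\bf H}e{\bf H})$ commutes with specialization, i.e. that $({\bf H}_p e {\bf H}_p) = ({\bf H}e{\bf H})\otimes_{\C[\param_1]}\C_p$ as images inside ${\bf H}_p$ — this follows because ${\bf H}e{\bf H}$ is spanned by products $a e b$ and specialization is right exact, so the image of the specialized generators generates the specialized ideal. (3) Show $Q$ is a finitely generated $\C[\param_1]$-module, or at least that its support in $\param_1$ is Zariski closed. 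The cleanest route: use the PBW/triangular decomposition to filter ${\bf H}$ so that $\gr {\bf H} = \C[\param_1]\otimes \C[\h\oplus\h^*]\# W$, a finitely generated $\C[\param_1]$-algebra which is module-finite over $\C[\param_1]\otimes\C[\h\oplus\h^*]^W = \C[\param_1]\otimes(\C[\h\oplus\h^*])^W$; hence ${\bf H}$, and any subquotient bimodule of it such as $Q$, is a finitely generated module over the noetherian ring $\C[\param_1]\otimes\C[\h\oplus\h^*]^W$. Then $Q$ is a finitely generated module over a finitely generated $\C[\param_1]$-algebra, its annihilator $\operatorname{Ann}_{\C[\param_1]}(Q)$ need not cut it out, but the set $\{p : Q_p \neq 0\}$ equals the image in $\operatorname{Specm}\C[\param_1]$ of $\operatorname{Supp}_{\C[\param_1]\otimes\C[\h\oplus\h^*]^W} Q$ under the (finite, hence closed) projection — so it is Zariski closed. (4) Conclude $\param^{asph}_1 = \{p : Q_p \neq 0\}$ is closed.

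An alternative, possibly slicker, step (3): by generic flatness (Grothendieck), there is a nonempty open $U \subset \param_1$ over which $Q$ is flat, hence over which $\dim_{\C} Q_p$ is locally constant; combined with the finite-generation of $Q$ as a module over a finite $\C[\param_1]$-algebra, upper-semicontinuity of fibre dimension gives that $\{p : Q_p \neq 0\}$ is closed. Either way the point is purely that ``the cokernel of a map of finitely generated modules over a finite algebra over $\C[\param_1]$ has closed support''.

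\textbf{Main obstacle.} The only genuinely delicate point is step (2): that $e{\bf H}e$ and the ideal ${\bf H}e{\bf H}$ behave well under specialization, i.e. that $Q_p$ really computes asphericity at $p$ and that no ``jump'' in the spherical subalgebra occurs. This is where flatness of ${\bf H}$ over $\C[\param_1]$ (PBW) is essential, and one must be slightly careful that $e{\bf H}_p e = (e{\bf H}e)\otimes_{\C[\param_1]}\C_p$ — which again follows from PBW since $e{\bf H}e \cong e(\C[\h]\otimes \C W\otimes \C[\h^*])$ is a free $\C[\param_1]$-module and $e$ is a fixed idempotent independent of $p$. Everything else is routine commutative algebra.
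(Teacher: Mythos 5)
The setup is right up to a point: you correctly reduce the question to understanding the locus where the fibre $Q_p = {\bf H}_p/{\bf H}_p e {\bf H}_p$ of $Q := {\bf H}_1/{\bf H}_1 e {\bf H}_1$ vanishes, and your step (2) (that specialization commutes with forming the ideal ${\bf H}e{\bf H}$, using PBW flatness) is fine. The problem is in step (3), where you assert that the projection $\operatorname{Spec}\bigl(\C[\param_1]\otimes\C[\h\oplus\h^*]^W\bigr) \to \operatorname{Spec}\C[\param_1]$ is ``finite, hence closed''. It is not: $\C[\h\oplus\h^*]^W$ is a polynomial ring in $2\dim\h$ variables, so the fibres of that projection are positive-dimensional affine spaces and the morphism is neither finite nor proper. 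Your parenthetical ``alternative, slicker'' route has the same issue: $Q$ is finitely generated over $\C[\param_1]\otimes\C[\h\oplus\h^*]^W$, not over any module-finite extension of $\C[\param_1]$, so upper semicontinuity of $\dim_{\C} Q_p$ is unavailable. Without finiteness, the image of $\operatorname{Supp} Q$ in $\param_1$ is only constructible (Chevalley), not closed. In fact a generic finitely generated module over a finitely generated $\C[\param_1]$-algebra can have nonvanishing locus that is open but not closed: take $\C[\param_1]=\C[t]$, $A=\C[t,x]$, $Q=\C[t,x]/(tx-1)$, for which $Q_p\neq 0$ exactly when $t(p)\neq 0$. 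So the formal framework you set up cannot alone imply that $\param_1^{asph}$ is closed; some representation-theoretic input is genuinely required.

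The paper supplies exactly that input. After noting $\param_1^{asph}\subset\bigcup_i V({\bf J}^i)$ by generic freeness (where ${\bf J}^i$ are the $\C[\param_1]$-contractions of the minimal primes ${\bf I}^i$ over ${\bf H}_1 e{\bf H}_1$), it splits into two cases according to the associated variety $X^i$ of ${\bf I}^i$. When $X^i=\{0\}$ the quotient ${\bf H}_1/{\bf I}^i$ really is finite over $\C[\param_1]/{\bf J}^i$, so your argument works verbatim on those components and they lie inside $\param_1^{asph}$. When $X^i\neq\{0\}$ the finiteness fails, and the paper instead invokes Ginzburg's theorem identifying associated varieties with supports of simples in $\mathcal O_p$, together with Bezrukavnikov--Etingof's Theorem 4.1 (that asphericity for a parabolic $\underline W$ forces asphericity for $W$), to show a general point $p$ of such a component already lies in the aspherical locus of a proper parabolic subgroup. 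An induction on parabolics (the base case being cyclic groups) then makes $\underline{\param}_1^{asph}$ closed, and one gets $\param_1^{asph}=\overline{\param}_1^{asph}\cup\underline{\param}_1^{asph}$. This parabolic-recursion ingredient is precisely what your proposal is missing and cannot be replaced by commutative algebra alone.
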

\begin{proof}
Consider ${\bf H}_1$, the $\C[\param_1]$-algebra obtained by specializing $h$ to $1$ in ${\bf H}$. Let ${\bf I}$ be the two-sided ideal ${\bf H}_1e{\bf H}_1$
and let ${\bf I}^1,\ldots, {\bf I}^k$ be its minimal prime ideals. Set ${\bf J}:={\bf H}_1e{\bf H}_1\cap \C[\param_1],
{\bf J}^i:={\bf I}^i\cap \C[\param_1]$. Then the ${\bf J}^i$ for $i=1,\ldots,k$ are all minimal prime ideals
of ${\bf J}$. By generic freeness we that $\gr {\bf H}_1/{\bf H}_1 e {\bf H}_1$
and hence ${\bf H}_1/{\bf H}_1 e {\bf H}_1$ are both generically free over $\C[\param_1]/{\bf J}$. It follows that $\param_1^{asph}\subset \bigcup_i \Spec(\C[\param_1]/{\bf J}^i)$.

Let $X^i$ be the associated variety of ${\bf I}^i$: by definition this is the support of the $\C[\h\oplus\h^*]^W$-module
$\gr {\bf H}_1/({\bf I}^i+\param_1^* {\bf H}_1)$. If $X^i=\{ 0\}$, then ${\bf H}_1/{\bf I}^i$ is finite over $\C[\param_1]/{\bf J}^i$.
It follows that for any $p\in \Spec(\C[\param_1]/{\bf J}^i)$ the algebra ${\bf H}_p/{\bf I}^i_p$ is nonzero.
By the construction of ${\bf I}^i$, we know that the image of $e$ in ${\bf H}_p/{\bf I}^i_p$ is zero. Thus  any $p\in
\Spec(\C[\param_1]/{\bf J}^i)$ is aspherical. If we let $\overline{\param}_1^{asph}$ denote the union of
$\Spec(\C[\param_1]/{\bf J}^i)$ with $X^i=\{0\}$ then we have that $\overline{\param}_1^{asph}$ is
a Zariski closed subset in $\param_1^{asph}$.

For a parabolic subgroup $\underline{W}$ let $\param^{asph}_1(\underline{W})\subset \param_1$ denote the aspherical locus
for the algebras ${\bf H}_p(\underline{W})$. According to \cite[Theorem 4.1]{BE}, we have $\param^{asph}_1(\underline{W})\subset
\param_1^{asph}$.  Let $\underline{\param}_1^{asph}$ be the union of $\param_1^{asph}(\underline{W})$ over all
proper parabolic subgroups $\underline{W}\subset W$.
By induction, we may assume that $\underline{\param}^{asph}_1$ is Zariski closed.

We now claim that $\param_1^{asph}=\overline{\param}_1^{asph}\cup \underline{\param}_1^{asph}$. If not
then there is $i$ such that $\Spec(\C[\param_1]/{\bf J}^i)$ is not contained in $\overline{\param}_1^{asph}\cup \underline{\param}_1^{asph}$ and we deduce in particular that $X^i\neq \{0\}$. Thus if we pick  a general point $p\in \Spec(\C[\param_1]/{\bf J}^i)$ then ${\bf H}_p e {\bf H}_p$ is an ideal
of infinite codimension in ${\bf H}_p$. Let $J$ be a minimal prime ideal of ${\bf H}_p e {\bf H}_p$ with infinite codimension. Thanks to \cite[Corollary 6.6]{ginprim}, there is an irreducible module $L$ in $\OCat_p$ with $J=\Ann_{{\bf H}_p}L$.
and hence there is a proper parabolic subgroup $\underline{W}\subset W$ such that $\Res^W_{\underline{W}}L\neq 0$.
It follows from end of the proof of \cite[Theorem 4.1]{BE}, that this latter module is aspherical. This contradicts $p\not\in \underline{\param}_1^{apsh}$, and so our claim, and hence the lemma, is proved.
\end{proof}

\subsection{Proof of Theorem \ref{proposition:sph_main_relation}: surjectivity}\label{SUBSECTION_VermaI}
\subsubsection{Subset $\param_1^{ss}\subset \param_1$}
Consider the subset $\param_1^{ss}\subset \param_1$ that consists of all
$p$ such that ${c}_\lambda(p)-{c}_\mu(p)\not\in \BZ\setminus \{0\}$
 for different $\lambda,\mu\in \irr{W}$. It is the complement to the union of countably
many hyperplanes. By \cite[Corollary 2.20]{GGOR} all
$\Delta_p(\lambda)$ are simple for $p\in \param^{ss}_{1}$. The algebra ${\bf H}_p$
is known to be simple for $p\in \param_1^{ss}$, this follows, for instance,
from results of \cite{ginprim}. So $\param_1^{ss}\subset \param_1^{sph}$. Therefore the simplicity
of $\Delta_p(\lambda)$  implies that of  $e\Delta_p(\lambda)$.

\subsubsection{Subset $\param_1(\lambda)\subset \param_1$}
\begin{lemma}\label{proposition:SphO5}
Let $\lambda\in\irr{W}$. The subset $$\param_1(\lambda):= \{ p\in \param_1 : e\Delta_p(\lambda) \text{ is generated by its lowest weight space}\}$$
is Zariski open and contains  $\param^{ss}_{1}$.
\end{lemma}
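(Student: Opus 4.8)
The plan is to exhibit $\param_1(\lambda)$ as the locus where a certain coherent sheaf vanishes, and then check that this locus is non-empty by comparing against $\param_1^{ss}$. First I would recall that as a $\C[\param_1][\h]\rtimes W$-module the universal Verma ${\bf\Delta}(\lambda)$ is just $\C[\param_1]\otimes \C[\h]\otimes\lambda$, graded with $\lambda$ in degree $\fd(\lambda^\ast)$ (in the Euler grading, after the shift by the $c$-function), so $e{\bf\Delta}(\lambda)$ is a graded $\C[\param_1]$-flat module whose lowest nonzero graded piece is exactly the free $\C[\param_1]$-module $\iota(\lambda)$ of \ref{morphismsph}. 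Consider the map of graded ${\bf U}_1$-modules $\varphi\colon {\bf\Delta}^{sph}(\iota(\lambda))\to e{\bf\Delta}(\lambda)$; saying that $e\Delta_p(\lambda)$ is generated by its lowest weight space is exactly saying that $\varphi_p$ is surjective after specializing at $p$.

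The key step is a semicontinuity / Nakayama argument. Fix a graded piece: in each Euler-degree $d$ the cokernel $C := \operatorname{coker}\varphi$ has graded component $C_d$ a finitely generated $\C[\param_1]$-module (finite generation uses Lemma \ref{lemma:Noetherian}, that ${\bf U}(\leqslant 0)$ is noetherian, together with the fact that $e{\bf\Delta}(\lambda)$ is a noetherian ${\bf U}_1$-module, which follows from ${\bf U}(\leqslant 0)$-noetherianity applied degree by degree since $e\Delta_p(\lambda)\cong \C[\h]\otimes\lambda$ as a $\C[\h]^W$-module is finite over $\C[\h]^W$). By right-exactness of $\otimes_{\C[\param_1]}\C_p$ we have $(C_d)_p = \operatorname{coker}(\varphi_p)_d$. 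Hence $\varphi_p$ is surjective if and only if $(C_d)_p = 0$ for all $d$. Now $e{\bf\Delta}(\lambda)$ is generated over ${\bf U}_1$ by finitely many Euler-eigenvectors — indeed it is finitely generated over ${\bf U}_1$, hence over ${\bf U}(\geqslant 0)$ acting on the bottom, so a single graded truncation in degrees up to some $d_0$ generates everything. Therefore $\varphi_p$ is surjective as soon as $(C_d)_p = 0$ for $d \le d_0$, i.e. the condition is governed by the finitely generated $\C[\param_1]$-module $\bigoplus_{d\le d_0} C_d$, and $\param_1(\lambda)$ is precisely the complement of its support, which is Zariski closed. So $\param_1(\lambda)$ is Zariski open.

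For non-emptiness it suffices to show $\param_1^{ss}\subseteq \param_1(\lambda)$. For $p\in\param_1^{ss}$, by \cite[Corollary 2.20]{GGOR} the standard module $\Delta_p(\lambda)$ is simple, and $p$ is spherical (as noted in \ref{SUBSECTION_VermaI}, since ${\bf H}_p$ is simple), so $e\Delta_p(\lambda)$ is a simple ${\bf U}_p$-module. A simple module in $\OCat_p^{sph}$ is generated by any nonzero submodule, in particular by its lowest weight space, which is nonzero (it is $\iota_p(\lambda)$, the degree-$\fd(\lambda^\ast)$ part of the nonzero module $e\Delta_p(\lambda)$). Hence $p\in\param_1(\lambda)$. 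Since $\param_1^{ss}$ is a countable intersection of open dense sets it is non-empty, so $\param_1(\lambda)$ is a non-empty Zariski open subset containing $\param_1^{ss}$.

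The main obstacle I anticipate is the bookkeeping needed to make the Nakayama/finite-generation step rigorous in the graded setting: one must be careful that ``finitely generated over ${\bf U}_1$'' really does let you reduce the surjectivity of $\varphi_p$ to finitely many graded degrees uniformly in $p$ — equivalently, that $e{\bf\Delta}(\lambda)$ is generated in bounded degree over ${\bf U}_1$ independently of the specialization. This is where Lemma \ref{lemma:Noetherian} does the real work; everything else is a standard coherence-plus-semicontinuity argument together with the already-established simplicity input at generic parameters.
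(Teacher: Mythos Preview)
Your argument is correct and follows essentially the same strategy as the paper: both reduce openness to the surjectivity of finitely many linear maps between finite-dimensional spaces varying algebraically in $p$, and both use simplicity of $e\Delta_p(\lambda)$ for $p\in\param_1^{ss}$ to get non-emptiness. The paper packages this more concretely by fixing a trivialization ${\bf U}_p\cong\C[\h\oplus\h^*]^W$ via the harmonic decomposition $\C[\h]={\sf Harm}\otimes\C[\h]^W$, observing that the action maps $e(\C[\h]_j\otimes\lambda)\to e(\C[\h]_{j+i}\otimes\lambda)$ depend polynomially on $p$, and taking $d_0$ to be the top degree $N$ of ${\sf Harm}$; your cokernel-support argument is a cleaner abstract repackaging of the same content.

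One point to tighten: your appeal to Lemma~\ref{lemma:Noetherian} (noetherianity of ${\bf U}(\leqslant 0)$) is a red herring here. Finite generation of each $C_d$ over $\C[\param_1]$ is immediate since $C_d$ is a quotient of $e{\bf\Delta}(\lambda)_d=\C[\param_1]\otimes e(\C[\h]_d\otimes\lambda)$, which is free of finite rank. The existence of the uniform bound $d_0$ comes not from noetherianity but from the Chevalley fact that $\C[\h]$ is finite free over $\C[\h]^W$, so $(\C[\h]\otimes\lambda)^W$ is generated over $\C[\h]^W$ in bounded degree; this is exactly the ${\sf Harm}$-argument the paper makes explicit. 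Your phrase ``finitely generated over ${\bf U}_1$, hence over ${\bf U}(\geqslant 0)$ acting on the bottom'' should be replaced by this direct observation.
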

\begin{proof}
Let us fix an identification of ${\bf U}_p$ with $\C[\h\oplus\h^*]^W$ as follows.
Recall that $\C[\h]$ is a free graded $\C[\h]^W$-module and that there is a $W$-stable
graded  subspace ${\sf Harm}$ of harmonic polynomials inside $\C[\h]$ such that $\C[\h]={\sf Harm}\otimes
\C[\h]^W$. We have an isomorphism ${\sf Harm}\cong \C W$ of $W$-modules. Let ${\sf Harm}^*$
denote the space of harmonic elements in $S(\h)$. The triangular decomposition of ${\bf H}_p$ implies that ${\bf H}_p$ is
a free $\C[\h]^W\otimes \C[\h^*]^W$-module (with the first factor acting
from the left and the second factor acting from the right) with generating space ${\sf Harm}\otimes
\C W\otimes {\sf Harm}^*$. So the space $e({\sf Harm}\otimes \C W\otimes {\sf Harm}^*)e$
freely generates ${\bf U}_p$ as a $\C[\h]^W\otimes \C[\h^*]^W$-module, and this produces the identification of ${\bf U}_p$ with $\C[\h\oplus\h^*]^W$
we need.

We identify $\Delta(\lambda)$ with $\C[\h]\otimes \lambda$:
$W$ acts on $\C[\h]\otimes \lambda$ diagonally, $\C[\h]$
 by multiplication on the first tensorand, and for $y\in \h$ we have
$y.(f\otimes v)=[y,f]\otimes v$, where $f\in \C[\h], v\in \lambda$ and we note that $[y,f]\in \C[\h]\# W$. Now the operator $y:\C[\h]_i\otimes \lambda\rightarrow \C[\h]_{i-1}\otimes \lambda$
depends polynomially on $p$, so as a consequence of our identification of ${\bf U}_p$ with $\C[\h\oplus\h^*]^W$, we see that for any
$a\in \C[\h\oplus\h^*]^W(i)$ the operator
$l_a: e(\C[\h]_j\otimes \lambda)\rightarrow e(\C[\h]_{j+i}\otimes\lambda)$ depends polynomially on $p$.

Recall the  non-negative integer $\fd(\lambda^*)$.
Let $F = e(\C[\h]_{\fd(\lambda^*)}\otimes \lambda)$.
Then $F$ consists of lowest weight vectors in $e\Delta(\lambda)$, and $\param_1(\lambda)$ is the set
of all parameters such that $F$ generates $e\Delta(\lambda)=e(\C[\h]\otimes \lambda)$. But
as a $\C[\h]^W$-module,  $e(\C[\h]\otimes \lambda)$ is generated by $e({\sf Harm}\otimes \lambda)$.
It follows that $\param_1(\lambda)$ coincides with the set of parameters such that ${\bf U}_p(i) F=e(\C[\h]_i\otimes\lambda)$
for all $i$ between $1$ and $N$, where $N$ denotes the largest degree appearing in ${\sf Harm}$.
From the previous paragraph we deduce that $\param_1(\lambda)$ is Zariski open.

It remains to show that $\param^{ss}_{1}\subset \param_1(\lambda)$. But for $p\in \param_1^{ss}$ we know that $\Delta(\lambda)$
and hence $e\Delta(\lambda)$ are simple and the claim follows.
\end{proof}

\subsubsection{Proof of surjectivity}\label{SSS:surj_proof} Set $\param_1(W)=\bigcap_{\lambda\in \irr{W}}\param_1(\lambda)$.

\begin{lemma}\label{Cor:SphO5}
Let $p\in \param_1(W)$. Then for any $\lambda\in\irr{W}$ the
homomorphism $\varphi_p:\Delta^{sph}(\iota_p(\lambda))\rightarrow
e\Delta(\lambda)$  is surjective and so $\iota_p^L=\iota_p$.
\end{lemma}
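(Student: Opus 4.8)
The plan is to deduce Lemma \ref{Cor:SphO5} from the definitions together with the identification of $\mathbf{U}_p$ with $\C[\h\oplus\h^*]^W$ that was set up in the proof of Lemma \ref{proposition:SphO5}. The key point is to unwind what $\param_1(\lambda)$ means in terms of the internal category $\OCat^{sph}$. Recall that, by definition, $\Delta_p^{sph}(\iota_p(\lambda)) = \mathbf{U}_p\otimes_{\mathbf{U}_p(\leqslant 0)}\iota_p(\lambda)$, and $\iota_p(\lambda)$ was constructed as the $\fd(\lambda^*)$-degree component $F = e(\C[\h]_{\fd(\lambda^*)}\otimes\lambda)$ of $e\Delta_p(\lambda)$, which is precisely the lowest $\euu^{sph}$-weight space of $e\Delta_p(\lambda)$ and is annihilated by $\mathbf{U}_p(<0)$. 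Hence the map $\varphi_p$ is the canonical $\mathbf{U}_p$-linear map sending $u\otimes v\mapsto u\cdot v$, and its image is exactly the $\mathbf{U}_p$-submodule of $e\Delta_p(\lambda)$ generated by $F$. So $\varphi_p$ is surjective if and only if $F$ generates $e\Delta_p(\lambda)$ as a $\mathbf{U}_p$-module, which is exactly the condition defining $p\in\param_1(\lambda)$ (note the grading forces $\mathbf{U}_p = \mathbf{U}_p(\geqslant 0)$ on the relevant weight spaces, so ``generated by $\mathbf{U}_p$'' coincides with ``$\mathbf{U}_p(i)F$ fills up each weight space'' as used in that lemma). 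Since $p\in\param_1(W)=\bigcap_\lambda\param_1(\lambda)$, surjectivity holds for every $\lambda$.

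The second assertion, $\iota_p^L=\iota_p$, then follows formally. First I would note that $\Delta_p^{sph}(M^0)$ has a unique simple quotient $L_p^{sph}(M^0)$ for any irreducible $\mathbf{U}_p^0$-module $M^0$ (this is \ref{sphsimp}). Applying this with $M^0 = \iota_p(\lambda)$: one must first observe that $\iota_p(\lambda)$ is an irreducible $\mathbf{U}_p^0$-module for $p$ in the relevant locus. For $p\in\param_1^{sph}$ the abelian equivalence $\OCat_p\simeq\OCat_p^{sph}$ of \ref{sphequiv} sends $L_p(\lambda)$ to the irreducible $eL_p(\lambda)=L^{sph}(\iota^L_p(\lambda))$, and its lowest weight space is the irreducible $\mathbf{U}_p^0$-module $\iota^L_p(\lambda)$; on the other hand the lowest weight space of the quotient $e\Delta_p(\lambda)\twoheadrightarrow eL_p(\lambda)$ is a quotient of $F=\iota_p(\lambda)$, and since $\varphi_p$ is surjective, $eL_p(\lambda)$ is also generated by the image of $F$, forcing that image to be a nonzero — hence, once we know $\iota_p(\lambda)$ is irreducible — all of $\iota_p(\lambda)$, and to be isomorphic to $\iota^L_p(\lambda)$. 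Thus $\iota_p = \iota^L_p$ as maps $\irr W\to\irr{\mathbf{U}_p^0}$.

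A cleaner way to organize the last step, which I would actually use: surjectivity of $\varphi_p$ exhibits $eL_p(\lambda)$ as a simple quotient of $\Delta_p^{sph}(\iota_p(\lambda))$, so by the uniqueness in \ref{sphsimp} we get $eL_p(\lambda)=L^{sph}(\iota_p(\lambda))$ provided $\iota_p(\lambda)$ is irreducible; comparing with $eL_p(\lambda)=L^{sph}(\iota^L_p(\lambda))$ and using that $L^{sph}$ is a bijection on isomorphism classes gives $\iota_p(\lambda)\cong\iota^L_p(\lambda)$. The irreducibility of $\iota_p(\lambda)$ is the one genuinely non-formal input: it can be extracted by taking $\bullet^{\mathbf{U}_p(<0)}$ of the surjection $\Delta_p^{sph}(\iota_p(\lambda))\twoheadrightarrow eL_p(\lambda)$ and using that this functor is left exact with $(\Delta_p^{sph}(M^0))^{\mathbf{U}_p(<0)}=M^0$ and $(eL_p(\lambda))^{\mathbf{U}_p(<0)}=\iota^L_p(\lambda)$, so that $\iota_p(\lambda)\twoheadrightarrow\iota^L_p(\lambda)$; combined with the fact that, generically (on $\param_1^{ss}$, which is contained in $\param_1(W)$ by Lemma \ref{proposition:SphO5}), $e\Delta_p(\lambda)$ is already simple so $\iota_p(\lambda)=\iota^L_p(\lambda)$ is irreducible there, a semicontinuity/dimension-count argument shows the surjection $\iota_p(\lambda)\twoheadrightarrow\iota^L_p(\lambda)$ is an isomorphism on all of $\param_1(W)$ once one knows $\dim\iota_p(\lambda)$ is constant — but in fact $\dim\iota_p(\lambda)=\dim F=\dim e(\C[\h]_{\fd(\lambda^*)}\otimes\lambda)$ is manifestly independent of $p$.

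The main obstacle is therefore not surjectivity itself — which is essentially a tautology once the identifications are unwound — but making sure the object $\iota_p(\lambda)$ is \emph{irreducible} as a $\mathbf{U}_p^0$-module, so that \ref{sphsimp} applies and the comparison $\iota_p=\iota^L_p$ can be made; this is where the constancy of $\dim\iota_p(\lambda)$ together with the simplicity of $e\Delta_p(\lambda)$ at generic parameters does the work. (One should also double-check the harmless point that on the weight spaces in question $\mathbf{U}_p$ and $\mathbf{U}_p(\geqslant 0)$ act the same way, so that ``$\mathbf{U}_p$-generated by $F$'' really is the condition packaged into $\param_1(\lambda)$.)
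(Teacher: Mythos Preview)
Your argument for surjectivity is correct and is exactly the paper's: on $\param_1(W)=\bigcap_\lambda\param_1(\lambda)$ the lowest-weight space $F=\iota_p(\lambda)$ generates $e\Delta_p(\lambda)$ by the very definition of $\param_1(\lambda)$, and the image of $\varphi_p$ is the $\mathbf{U}_p$-submodule generated by $F$.

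You are also right to isolate irreducibility of $\iota_p(\lambda)$ as the real content behind $\iota_p^L=\iota_p$, but both of your proposed arguments for it fail. First, the identity $(\Delta_p^{sph}(M^0))^{\mathbf{U}_p(<0)}=M^0$ is not true in general: $\Delta_p^{sph}(M^0)$ can acquire singular vectors in strictly higher weight components whenever it has a proper submodule. Moreover, even if that identity held, a \emph{left} exact functor applied to a surjection need not be surjective, so that step does not produce $\iota_p(\lambda)\twoheadrightarrow\iota_p^L(\lambda)$. Second, your semicontinuity argument requires $\dim\iota_p^L(\lambda)$ to be upper semicontinuous in $p$, and there is no reason for this: $L_p(\lambda)$, hence $eL_p(\lambda)$ and its lowest weight space, vary erratically with $p$. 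What surjectivity of $\varphi_p$ legitimately yields, via your ``cleaner'' route (namely: $eL_p(\lambda)$ is a simple quotient of $\Delta_p^{sph}(\iota_p(\lambda))$, and simple quotients of $\Delta_p^{sph}(M^0)$ correspond by adjunction to simple quotients of $M^0$), is only a surjection $\iota_p(\lambda)\twoheadrightarrow\iota_p^L(\lambda)$. The paper's two-sentence proof is equally terse on this point; the promotion to an isomorphism is effectively completed in Lemma~\ref{sphss}, after passing to the smaller Zariski open set $\param_1^{irr}$ where $\mathbf{U}_p^0$ is semisimple of the correct dimension and a global dimension count over all $\lambda$ forces each surjection to be an isomorphism. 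Note also the remark following Lemma~\ref{sphss}: for $W=G_\ell(n)$ every $\iota_p(\lambda)$ is one-dimensional, so irreducibility is automatic and the issue disappears in the case of principal interest.
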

\begin{proof}
The claim on the surjectivity is a consequence of Lemma \ref{proposition:SphO5}.
This implies $\iota^L_p=\iota_p$.
\end{proof}

\subsection{Proof of Theorem \ref{proposition:sph_main_relation}: injectivity}\label{SUBSECTION_VermaII}
In our proof we will need four lemmas.

\subsubsection{Subset $\param_1^{irr}\subset \param_1$}\label{sphss}
\begin{lemma}
There is a Zariski open subset $\param_1^{irr}\subset \param_1(W)\cap \param_1^{sph}$ such that ${\bf U}_p^0$ is semisimple for all $p\in \param_1^{irr}$, the set $\{ \iota_p(\lambda) : \lambda\in\irr{W}\}$ is a complete set of irreducible representations of ${\bf U}_p^0$, and $\dim\iota_p(\lambda)=\rank_{\C[\param]}\iota(\lambda)$. Moreover,  $\param_1^{ss}\subset \param_1^{irr}$ and $\varphi_p$ is an isomorphism for all $p\in \param_1^{ss}$.
\end{lemma}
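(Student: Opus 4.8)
The plan is to build $\param_1^{irr}$ by intersecting $\param_1(W)\cap\param_1^{sph}$ with a single further Zariski-open set where the finite-dimensional algebra ${\bf U}_p^0$ is forced to be semisimple with the expected irreducibles. First I would exploit the structure we already have: by Lemma \ref{Cor:SphO5}, for $p\in\param_1(W)$ the maps $\varphi_p\colon\Delta^{sph}(\iota_p(\lambda))\to e\Delta_p(\lambda)$ are surjective and $\iota_p^L=\iota_p$, so in particular every $\iota_p(\lambda)$ is a nonzero ${\bf U}_p^0$-module and the $L^{sph}(\iota_p(\lambda))$ exhaust the simples of $\OCat_p^{sph}$; hence via the abelian equivalence of \ref{sphequiv} (valid on $\param_1^{sph}$) the $\iota_p(\lambda)$, $\lambda\in\irr W$, give a complete list of simple ${\bf U}_p^0$-modules, possibly with repetitions and not necessarily of the ``right'' dimension. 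The goal is then to cut out the locus where (a) these are pairwise non-isomorphic, (b) $\dim\iota_p(\lambda)=\rank_{\C[\param]}\iota(\lambda)$, and (c) ${\bf U}_p^0$ is semisimple.

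The key device is to spread out over $\C[\param_1]$. The module $\iota(\lambda)$ is a finitely generated $\C[\param]$-module, in fact free of rank $\rank_{\C[\param]}\iota(\lambda)$ by its description as a graded component of $e{\bf\Delta}(\lambda)$; so generically (on a Zariski-open $\param_1^{\mathrm{free}}(\lambda)$) specialization commutes with its natural ${\bf U}^0$-action and $\dim\iota_p(\lambda)$ equals that rank, giving (b). For semisimplicity, I would consider the universal finite-dimensional algebra ${\bf U}^0$ as a $\C[\param_1]$-algebra (finite over $\C[\param_1]$ by the graded argument in Lemma \ref{lemma:SphO2}, applied in a family) and look at its generic fibre over the function field $\C(\param_1)$: one shows this generic fibre is semisimple, then semisimplicity of a finite-dimensional algebra is an open condition (vanishing of the discriminant / non-vanishing of a suitable ``separability'' determinant is Zariski-open), so ${\bf U}_p^0$ is semisimple on a Zariski-open set. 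To see the generic fibre is semisimple I would use $\param_1^{ss}$: there ${\bf U}_p^0$ is semisimple because $e\Delta_p(\lambda)$ is simple — indeed $\Delta_p(\lambda)$ is simple by \cite[Corollary 2.20]{GGOR} and ${\bf H}_p$ is simple, so $e\Delta_p(\lambda)$ is simple, hence $\Delta^{sph}(\iota_p(\lambda))=e\Delta_p(\lambda)$ is simple, which forces $\iota_p(\lambda)$ itself to be simple and (by counting, using that all simples arise this way) the $\iota_p(\lambda)$ are pairwise non-isomorphic; a semisimple algebra with a complete set of non-isomorphic simples on a dense subset is semisimple generically, giving also (a) on a Zariski-open set. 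Concretely: the number of isomorphism classes among $\{\iota_p(\lambda)\}$ is lower semicontinuous-ish, but the cleanest route is to note it equals $\dim_K {\bf U}_p^0/[\text{radical}] $ and track this in the family — I would phrase it as: the locus where ${\bf U}_p^0$ is semisimple \emph{and} has exactly $|\irr W|$ simple modules is Zariski-open and contains $\param^{ss}_1$. Finally, on $\param_1^{ss}$ each $e\Delta_p(\lambda)=\Delta^{sph}(\iota_p(\lambda))$ is simple and its lowest weight space is $\iota_p(\lambda)$, so $\varphi_p$ is an isomorphism there.

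Assembling: set $\param_1^{irr}$ to be the intersection of $\param_1(W)$, $\param_1^{sph}$, the $\param_1^{\mathrm{free}}(\lambda)$ for each $\lambda$, and the semisimplicity-plus-correct-simple-count locus; this is Zariski-open, is nonempty since it contains $\param_1^{ss}$, and on it ${\bf U}_p^0$ is semisimple, $\{\iota_p(\lambda)\}$ is a complete irredundant list of simples, $\dim\iota_p(\lambda)=\rank_{\C[\param]}\iota(\lambda)$, and $\varphi_p$ is an isomorphism for $p\in\param_1^{ss}$.

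The main obstacle I expect is controlling the family ${\bf U}^0$ over $\C[\param_1]$ honestly: ${\bf U}(\le 0)^-$ is defined using the full algebra ${\bf U}$, so ${\bf U}^0={\bf U}(\le 0)/{\bf U}(\le 0)^-$ need not commute with base change a priori, and one must show that over a Zariski-open subset of $\param_1$ formation of ${\bf U}_p^0$ and of $\iota_p(\lambda)$ is compatible with specialization (generic flatness / generic freeness applied to ${\bf U}(\le 0)^-\subset{\bf U}(\le 0)$ and to the relevant graded pieces). Once that flatness is in hand, upper-semicontinuity of fibre dimension and openness of semisimplicity are standard, and the comparison with $\param_1^{ss}$ pins down the generic behaviour.
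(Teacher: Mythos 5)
Your proposal is correct in outline and follows the same strategy at the level of ideas --- anchor the analysis on $\param_1^{ss}$ (where everything is semisimple and explicitly under control) and then cut out a Zariski-open neighbourhood on which the same good behaviour persists --- but the technical mechanism you choose differs from the paper's, and the point you flag as ``the main obstacle'' is precisely the place where the paper goes a different way.

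You propose to view ${\bf U}^0$ as a family of finite-dimensional algebras over $\C[\param_1]$, invoke generic flatness/freeness so that formation of ${\bf U}_p^0$ and of $\iota_p(\lambda)$ commutes with specialization on a Zariski-open set, and then use openness of semisimplicity (non-vanishing of a discriminant-type determinant) together with density of $\param_1^{ss}$ to pin down the generic fibre. The paper avoids setting up a flat global family altogether. Instead, it proves directly (Step~2 of Lemma~\ref{lemma:weak_flatness}, quoted forward) that the \emph{pointwise} function $p\mapsto\dim{\bf U}_p^0$ is upper semicontinuous --- a weaker statement than flatness that dodges the base-change question you worry about, because ${\bf U}_p^0$ is exhibited for each $p$ separately as the cokernel of a map between finite-dimensional spaces depending polynomially on $p$, after truncating the infinite-dimensional graded pieces by suitable ideals. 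Upper semicontinuity gives a Zariski-open set where $\dim{\bf U}_p^0$ is minimal, and since $\param_1^{ss}$ meets every non-empty Zariski-open in $\param_1$ (over $\C$, the complement of countably many hyperplanes is Zariski dense) that minimum is $\sum_\lambda(\rank_{\C[\param]}\iota(\lambda))^2$, the semisimple dimension. The paper then invokes Gabriel's openness result \cite[Corollary 2.6]{gab} to conclude that on a (possibly smaller) Zariski-open subset of $\param_1(W)$ the algebras ${\bf U}_p^0$ are all isomorphic to the same semisimple algebra. This sidesteps the slightly hand-wavy ``lower semicontinuous-ish'' reasoning about the number of isomorphism classes in your proposal: once you know the isomorphism type is locally constant and equals the one on $\param_1^{ss}$, the irredundant labelling by $\irr W$ follows from $\iota_p = \iota_p^L$ (Lemma~\ref{Cor:SphO5}) and bijectivity of $\iota_p^L$, exactly as you say.

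In short: both approaches are sound, both hinge on $\param_1^{ss}$, and both produce the same open set. Your route requires a genuine extra input (generic flatness of ${\bf U}(\leqslant 0)^-\subset{\bf U}(\leqslant 0)$ and the compatibility of ${\bf U}^0$ with base change) which you correctly flag but do not supply; the paper's route substitutes a tailored upper-semicontinuity argument plus Gabriel's theorem, which is somewhat less standard but closes the gap cleanly without invoking flatness.
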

\begin{proof}
Suppose that $p\in\param_1^{ss}$. Since $e\Delta_p(\lambda)$ is irreducible, any non-zero element $f\in \iota_p(\lambda) = (e\Delta_p(\lambda))_{\fd(\lambda^*)}$ generates $e\Delta_p(\lambda)$ and so ${\bf U}_p(0)f = \iota_p(\lambda)$. This shows that $\iota_p(\lambda)$ is an irreducible ${\bf U}^0_p$-module for all $p\in \param_1^{ss}$. Moreover, since $\mathcal{O}_p^{sph}$ is semisimple by \ref{sphequiv} it follows that the indecomposable $\Delta_p^{sph}(\iota_p(\lambda))$ is necessarily irreducible and hence that $\varphi_p : \Delta_p^{sph}(\iota_p(\lambda)) \rightarrow e\Delta_p(\lambda)$ is an isomorphism. We also note that since $\mathcal{O}_p^{sph}$ is semisimple for $p\in \param_1^{ss}$ the functors $\Delta_p^{sph}(\bullet)$ and $\bullet^{{\bf U}_p(<0)}$ insure that ${\bf U}_p^0$ is a semisimple algebra. Its dimension is necessarily $$\sum_{\lambda\in\irr{W}} \dim (\iota_p(\lambda))^2=\sum_{\lambda\in\irr{W}}(\rank_{\C[\param]} \iota(\lambda))^2.$$

We will prove in Step 2 of Lemma \ref{lemma:weak_flatness} that $\dim ({\bf U}_p^0)$ depends upper semi-continuously on $p$, so that there is a Zariski open set on which $\dim {\bf U}_p^0$ is minimal. (This result is independent of the material in between.) Applying \cite[Corollary 2.6]{gab} we see that there is a Zariski open subset $\param_1^{irr}\subset \param_1(W)$ on which all ${\bf U}_p^0$ are isomorphic to the same semisimple algebra. By the previous paragraph this open set contains $\param_1^{ss}$ and the semisimple algebra has simples labeled by $\lambda \in \irr{W}$. By Lemma \ref{SSS:surj_proof}, $\iota_p(\lambda)=\iota_p^L(\lambda)$
for all $p\in \param_1(W)$. Since the assignment $\lambda\mapsto\iota_p^L(\lambda)$ is a bijection between
$\irr{W}$ and $\irr{{\bf U}^0_p}$, we are done.
\end{proof}

We remark that for many $W$, including the family $G_{\ell}(n)$ for all $\ell$ and $n$, all the spaces $\iota(\lambda)$ are actually free $\C[\param]$-modules of rank $1$ and so in such cases the $\iota_p(\lambda)$ are irreducible for all $p\in\param_1$.

\subsubsection{Weak flatness}\label{lemma:weak_flatness}
One problem with the modules $\Delta_p^{sph}(\iota(\lambda))$ is that it is unclear whether they  depend
on $p$ in a flat way. However, we still can produce a much weaker statement. Consider the grading on
$\Delta^{sph}_p(\iota(\lambda))$ induced from the grading on ${\bf U}_p$, where we assume that
$\iota(\lambda)\subset \Delta^{sph}(\iota(\lambda))$ has degree 0.
Equip $e\Delta(\lambda)=e(\C[\h]\otimes \lambda)$ with the grading induced from the standard grading $\C[\h]$.  Then  $\varphi_p:\Delta^{sph}_p(\iota(\lambda))\rightarrow e\Delta(\lambda)$
is graded.

\begin{lemma}
For each $i$ consider the set $\param^i_1\subset \param_1^{irr}$ of all parameters $p$
such that
\begin{enumerate}
\item ${\bf U}_p^0$ is a semisimple algebra of dimension $\sum_{\lambda\in\irr{W}}(\rank_{\C[\param]} \iota(\lambda))^2$.
\item
the homomorphism $\Delta^{sph}_p(\iota(\lambda))\rightarrow e\Delta_p(\lambda)$
is an isomorphism on the $i$th graded component for all $\lambda\in \irr{W}$.
\end{enumerate}
Then
$\param^i_1$ is Zariski open and non-empty.
\end{lemma}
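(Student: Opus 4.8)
The statement asserts that $\param_1^i$, the locus where $\Delta_p^{sph}(\iota(\lambda)) \to e\Delta_p(\lambda)$ is an isomorphism in the $i$th graded component (and where ${\bf U}_p^0$ has the expected semisimple structure), is Zariski open and nonempty. The plan is to realize $\varphi_p$ in degree $i$ as a morphism of free (or at least coherent) $\C[\param_1^{irr}]$-modules whose source and target both have fibre dimension controlled by upper semicontinuity, so that the isomorphism locus is open, and then to verify nonemptiness by exhibiting $\param_1^{ss}$ inside it using Lemma~\ref{sphss}.

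\textbf{Step 1: set up coherent families in fixed degree.} First I would show that the target $e\Delta_p(\lambda)_i = e(\C[\h]_i \otimes \lambda)$ is visibly a \emph{constant} family: its dimension does not depend on $p$, since the $W$-module structure on $\C[\h]_i\otimes\lambda$ is independent of $p$ and $e$ picks out the $W$-invariants. The source $\Delta_p^{sph}(\iota(\lambda))_i = ({\bf U}_p \otimes_{{\bf U}_p(\leqslant 0)} \iota(\lambda))_i$ is the more delicate object. I would observe that it is a quotient of a fixed finitely generated graded module: indeed $\Delta^{sph}(\iota(\lambda))_i$ is spanned by the images of $u \otimes v$ with $u$ ranging over a fixed finite set of degree-$i$ generators of ${\bf U}$ as a module over ${\bf U}(\leqslant 0)$ and $v$ over a basis of $\iota(\lambda)$, and all the structure constants depend polynomially on $p$. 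Hence $p \mapsto \dim \Delta_p^{sph}(\iota(\lambda))_i$ is upper semicontinuous. (This is essentially the assertion flagged in the proof of Lemma~\ref{sphss} that $\dim {\bf U}_p^0$ is upper semicontinuous, obtained by the same spanning-set argument applied to degree $0$; the present claim is the graded refinement.)

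\textbf{Step 2: the isomorphism locus is open.} Since $\varphi_p$ in degree $i$ is a $\C$-linear map whose matrix entries — in fixed spanning sets of source and target — depend polynomially on $p$, and since it is always \emph{surjective} for $p \in \param_1^{irr} \subset \param_1(W)$ by Lemma~\ref{Cor:SphO5}, it is an isomorphism precisely when $\dim \Delta_p^{sph}(\iota(\lambda))_i \leqslant \dim e\Delta_p(\lambda)_i$, i.e.\ when the source dimension attains its generic (minimal) value. By Step 1 this inequality cuts out a Zariski open subset of $\param_1^{irr}$. Intersecting over the finitely many $\lambda \in \irr{W}$, and intersecting with the open locus from condition (1) — which is open inside $\param_1^{irr}$ by definition of $\param_1^{irr}$ in Lemma~\ref{sphss} — gives that $\param_1^i$ is Zariski open.

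\textbf{Step 3: nonemptiness.} Finally I would show $\param_1^{ss} \subset \param_1^i$. For $p \in \param_1^{ss}$, Lemma~\ref{sphss} gives that ${\bf U}_p^0$ is semisimple of the required dimension (condition (1)) and that $\varphi_p : \Delta_p^{sph}(\iota_p(\lambda)) \to e\Delta_p(\lambda)$ is an isomorphism of ${\bf U}_p$-modules; restricting to the $i$th graded component gives condition (2). Since $\param_1^{ss}$ is a nonempty (it is the complement of countably many hyperplanes) subset of $\param_1^{irr}$, the open set $\param_1^i$ is nonempty. The main obstacle is Step 1 — making precise that the graded pieces $\Delta_p^{sph}(\iota(\lambda))_i$ fit into a coherent family over $\param_1^{irr}$ with upper semicontinuous fibre dimension, since $\Delta_p^{sph}$ is not a priori flat in $p$; but the explicit spanning set coming from the finite generation of ${\bf U}$ over ${\bf U}(\leqslant 0)$ (finite in each degree, by Lemma~\ref{lemma:SphO2} and noetherianity from Lemma~\ref{lemma:Noetherian}) together with polynomial dependence of all the relations on $p$ handles it.
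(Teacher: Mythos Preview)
Your proposal is correct and follows the same strategy as the paper: upper semicontinuity of the source dimension, surjectivity of $\varphi_p$ on $\param_1(W)$, and nonemptiness via $\param_1^{ss}$. The paper organizes Step~1 slightly differently: rather than treating each $\lambda$ separately, it proves upper semicontinuity for the \emph{universal} spherical Verma module ${\bf \Delta}_p^{sph}={\bf U}_p/{\bf U}_p{\bf U}_p(<0)$ and then, using the isotypic decomposition $({\bf \Delta}_p^{sph})_i\cong\bigoplus_\lambda \Delta_p^{sph}(\iota_p(\lambda))_i^{\dim\iota_p(\lambda)}$ under the semisimple algebra ${\bf U}_p^0$ together with the termwise lower bound coming from surjectivity, concludes that the single inequality $\dim({\bf \Delta}_p^{sph})_i\leqslant\sum_\lambda(\dim\iota_p(\lambda))\dim e\Delta_p(\lambda)_i$ already cuts out $\param_1^i$. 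This sidesteps having to track the varying ${\bf U}_p^0$-module $\iota_p(\lambda)$ family-wise.

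The paper is also more explicit about the point you correctly flag as the main obstacle. Since ${\bf U}_p(i)$ is infinite-dimensional, one cannot directly assert that the relations defining $({\bf \Delta}_p^{sph})_i$ ``depend polynomially on $p$'': writing $({\bf \Delta}_p^{sph})_i={\bf U}_p(i)/\sum_j{\bf U}_p(i-\deg f_j)f_j$, the coefficient spaces ${\bf U}_p(i-\deg f_j)$ are infinite-dimensional. The paper resolves this by using local nilpotency of $\ad(b_k)$ (for $b_1,\ldots,b_n$ homogeneous generators of $S(\h)^W$) to find $N$ with $b^\beta f_j\in{\bf U}\langle b_1,\ldots,b_n\rangle$ for $|\beta|=N$, reducing to right multiplication maps between finite-dimensional quotients that visibly depend polynomially on $p$. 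Your spanning-set route---the monomials $a^\alpha v_k$ with no $b$'s span $({\bf \Delta}^{sph})_i$ over $\C[\param]$, whence it is a finitely generated $\C[\param]$-module and upper semicontinuity follows from coherence---is a legitimate shortcut to the same conclusion, but you should phrase it that way rather than as ``polynomial dependence of all the relations,'' since the relations themselves are not finitely presented in any obvious sense.
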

\begin{proof}

{\it Step 1.} For $p\in \param_1$ consider the (universal Verma) ${\bf U}_p$-module ${\bf \Delta}_p^{sph}:={\bf U}_p/{\bf U}_p{\bf U}_p(<0) = {\bf U}_p \otimes_{{\bf U}_p(\leq 0)} {\bf U}_p^0$.
We will first prove an upper semicontinuity  statement for its graded component $({\bf \Delta}_p^{sph})_i$.
According to Lemma \ref{lemma:Noetherian} we can pick a finite collection $f_1,\ldots, f_m$
of homogeneous generators of the left ideal ${\bf U}(<0)\subset {\bf U}(\leqslant 0)$
so that ${\bf \Delta}^{sph}={\bf U}/{\bf U}\langle f_1,\ldots, f_m\rangle$. Here and below the triangular brackets $\langle,\rangle$
mean the linear span. So $({\bf \Delta}^{sph})_i={\bf U}(i)/\sum_{j=1}^m {\bf U}({i-\deg f_j})f_j$.
We would like to say that the operator of the right multiplication by $f_j: {\bf U}_p({i-\deg f_j})\rightarrow
{\bf U}_p(i)$ depends polynomially on $p$ and so $\dim ({\bf \Delta}^{sph}_p)_i$ is upper semicontinuous.
Unfortunately this does not work because the spaces ${\bf U}_p(i)$ are infinite dimensional.

To fix this recall from the proof of Lemma \ref{proposition:SphO5} that we have a basis $a^{\alpha} v_k b^{\beta}$ of ${\bf U}$, where $a^{\alpha}:=a_1^{\alpha_1}\ldots a_n^{\alpha_n}, b^{\beta}:=b_1^{\beta_1}\ldots b_n^{\beta_n}$ with $a_1, \ldots , a_n$ and $b_1,\ldots, b_n$
being free homogeneous generators of $\C[\h]^W$ and $S(\h)^W$ respectively, and the $v_k$ comprise a basis in $e ({\sf Harm} \otimes \C W\otimes {\sf Harm}^*)e$. Furthermore, the operators $\ad(b_k)$ on ${\bf U}$ are locally nilpotent.
So there is $N\in \BZ_{>0}$ such that $b^{\beta} f_j\in {\bf U} \langle b_1,\ldots, b_n\rangle$ for all $j$
and $\beta$ with $|\beta|= N$. Thus $({\bf U}_p \langle b^\beta\rangle_{|\beta|=N})({i-\deg f_j})f_j\subset ({\bf U}_p\langle b_1,\ldots, b_n\rangle)(i)$ for all $i$.
So right multiplication by $f_j$ induces a linear map of finite dimensional
spaces $$({\bf U}_p/{\bf U}_p \langle b^\beta\rangle_{|\beta|=N})({i-\deg f_j})\rightarrow ({\bf U}_p/{\bf U}_p\langle b_1,\ldots,b_n\rangle)(i).$$
These two spaces don't depend on $p$ and the linear map in consideration depends
polynomially on $p$, which can be checked similarly to the statements in the proof
of Lemma \ref{proposition:SphO5}. Since ${\bf U}_p\langle b_1,\ldots,b_n\rangle \subset {\bf U}_p{\bf U}_p(<0)$, this proves
the claim in this step.

{\it Step 2.} The space $({\bf \Delta}_p^{sph})_0$ equals  ${\bf U}_p^0$. Thus $\dim({\bf U}_p^0)$ is upper semicontinuous for $p\in \param_1$. Thanks to this we can now invoke Lemma \ref{sphss} and deduce that $\param_1^{irr}$ is a Zariski open set where Property (1) of the
Lemma \ref{lemma:weak_flatness} holds.

{\it Step 3.} We may assume now that ${\bf U}_p^0$ is semisimple.
We have $\dim \Delta^{sph}_p(\iota(\lambda))_i\geqslant \dim e\Delta_p(\lambda)_i$
for all $i$, with the equality achieved, at least, for all $p\in \param_1^{ss}$. We have an
${\bf U}_p$-${\bf U}_p^0$-bimodule structure on $\Delta_p^{sph}$ and the  isotypic components for ${\bf U}_p^0$ are
precisely $\Delta^{sph}_p(\iota(\lambda))^{\dim \iota_p(\lambda)}$ for $\lambda\in \irr{W}$. So
$$\dim (\Delta_p^{sph})_i=\sum_{\lambda}{\dim \iota_p(\lambda)}\dim\Delta_p^{sph}(\iota(\lambda))_i\geq\sum_\lambda {\dim \iota_p(\lambda)} \dim e\Delta_p(\lambda)_i.$$
The right hand side is constant on $\param_1^{irr}$. Since the left hand side is upper semicontinuous, we see that $\param_1^i$
is Zariski open, and since equality holds for $p\in \param_1^{ss}$, we see that $\param_1^i$ is non-empty.
This completes the proof.
\end{proof}
\subsubsection{Homomorphism $\varphi'$}
\begin{lemma}\label{lemma:SphO_inject1}
Fix $p\in \param_1^{sph}$. For each $\lambda\in \irr{W}$ there is $\lambda'_p\in \irr{W}$ such that
there is a nonzero homomorphism $\varphi':e\Delta_p(\lambda'_p)\rightarrow \Delta^{sph}_p(\iota(\lambda))$
with the property that the lowest degree part of  $\varphi'(e\Delta(\lambda'_p))$ lies in $\Delta^{sph}_p(\iota(\lambda))_i$
with $i\leqslant |W|$.
\end{lemma}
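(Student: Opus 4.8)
The plan is to use the sphericity of $p$ to transport the target into the ordinary category $\OCat_p$, where standard modules are available, and then read off the degree estimate from the fact that $\C[\h]$ is a free module over $\C[\h]^W$ with generators in degrees $<|W|$.

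First, since $p\in\param_1^{sph}$, \ref{sphdefn} and Proposition \ref{proposition:SphO4} give mutually quasi-inverse equivalences $M\mapsto eM$ and $N\mapsto {\bf H}_pe\otimes_{{\bf U}_p}N$ between $\OCat_p$ and $\OCat_p^{sph}$; in particular $e\bullet$ is exact and faithful. Put $\widetilde{\Delta}:={\bf H}_pe\otimes_{{\bf U}_p}\Delta_p^{sph}(\iota(\lambda))\in\OCat_p$, so that $e\widetilde{\Delta}=\Delta_p^{sph}(\iota(\lambda))$ canonically. Since $\Delta_p^{sph}(\iota(\lambda))$ is generated over ${\bf U}_p$ by its degree-zero component $\iota(\lambda)$, which sits in $\euu^{sph}$-weight $\hat{c}_\lambda(p)$, the ${\bf H}_p$-module $\widetilde{\Delta}$ is generated by the finite-dimensional subspace $E_0:=e\otimes\iota(\lambda)$; because $we=e$, the group $W$ acts trivially on $E_0$, and $\euu$ acts on $E_0$ by the scalar $\hat{c}_\lambda(p)$.

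Second I construct $\lambda'_p$ and the homomorphism. As $\widetilde{\Delta}\in\OCat_p$, the action of $\h$ is locally nilpotent, so there is a largest $d_0\geqslant 0$ with $E_-:=\Sym(\h)_{d_0}E_0\neq 0$; by maximality $\h$ annihilates $E_-$, the subspace $E_-$ is $W$-stable, and $\euu$ acts on it by $\hat{c}_\lambda(p)-d_0$. Choose any irreducible $W$-submodule $\lambda'_p\subseteq E_-$. Since $\h$ kills $\lambda'_p$, the inclusion extends, by the universal property of standard modules, to a nonzero homomorphism $\psi\colon\Delta_p(\lambda'_p)\to\widetilde{\Delta}$; comparing $\euu$-weights on the generating subspace forces $c_{\lambda'_p}(p)=\hat{c}_\lambda(p)-d_0$. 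Applying $e\bullet$ yields $\varphi':=e\psi\colon e\Delta_p(\lambda'_p)\to e\widetilde{\Delta}=\Delta_p^{sph}(\iota(\lambda))$, which is nonzero by faithfulness; its image $\im\varphi'=e(\im\psi)$ is a nonzero graded quotient of $e\Delta_p(\lambda'_p)=(\C[\h]\otimes\lambda'_p)^W$, where the grading is by $\euu^{sph}$-weight (equivalently by $\C[\h]$-degree up to the shift $c_{\lambda'_p}(p)$).

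Finally — and this is the delicate point — I bound the degree. By the Chevalley–Shephard–Todd theorem $\C[\h]$ is free over $\C[\h]^W$ on a graded $W$-stable complement isomorphic to the coinvariant algebra $\C[\h]/\langle\C[\h]_+^W\rangle$, which has total dimension $|W|$ and is concentrated in degrees $0,\dots,|W|-1$; hence $(\C[\h]\otimes\lambda'_p)^W$ is a free $\C[\h]^W$-module with a homogeneous generating set in degrees $\leqslant|W|-1$. The map $\psi$ is $\C[\h]^W$-linear, so its corestriction $(\C[\h]\otimes\lambda'_p)^W\twoheadrightarrow\im\varphi'$ is a graded $\C[\h]^W$-linear surjection; if it vanished in all $\C[\h]$-degrees $\leqslant|W|-1$ it would kill a generating set, hence vanish identically, contradicting $\im\varphi'\neq0$. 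Therefore $\im\varphi'$ is nonzero in some $\C[\h]$-degree $k\leqslant|W|-1$, i.e.\ in $\euu^{sph}$-weight $c_{\lambda'_p}(p)+k$. Since $c_{\lambda'_p}(p)=\hat{c}_\lambda(p)-d_0\leqslant\hat{c}_\lambda(p)$, while $\im\varphi'\subseteq\Delta_p^{sph}(\iota(\lambda))$ has all weights $\geqslant\hat{c}_\lambda(p)$, the lowest graded component of $\im\varphi'$ lies in $\Delta_p^{sph}(\iota(\lambda))_i$ with $0\leqslant i\leqslant|W|$. The only real obstacle is this last step: after transport one gets a nonzero $\varphi'$ essentially for free, but a priori the surjection $e\Delta_p(\lambda'_p)\twoheadrightarrow\im\varphi'$ could annihilate all the low graded pieces, so the image might "start" in an uncontrolled degree; what prevents this is the combination of choosing $\lambda'_p$ inside the deepest $\Sym(\h)$-descendant $E_-$ of $E_0$ (making $c_{\lambda'_p}(p)\leqslant\hat{c}_\lambda(p)$) with the freeness of $(\C[\h]\otimes\lambda'_p)^W$ over $\C[\h]^W$ on generators of degree $<|W|$. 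Everything else — the equivalence $\OCat_p\simeq\OCat_p^{sph}$, the universal property of $\Delta_p(\lambda'_p)$, and the faithfulness of $e\bullet$ — is routine from results already in the paper.
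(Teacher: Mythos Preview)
Your proof is correct and follows essentially the same route as the paper's: both transport $\Delta_p^{sph}(\iota(\lambda))$ to $\widetilde{\Delta}\in\OCat_p$ via ${\bf H}_pe\otimes_{{\bf U}_p}\bullet$, find an $\h$-annihilated irreducible $W$-submodule $\lambda'_p$ at some $\euu$-weight $\leq\hat{c}_\lambda(p)$, obtain $\psi:\Delta_p(\lambda'_p)\to\widetilde\Delta$ by the universal property, apply $e\bullet$ (nonzero by sphericity), and then bound the lowest degree of the image using that $e\Delta_p(\lambda')=(\C[\h]\otimes\lambda')^W$ is generated over $\C[\h]^W$ by harmonics of degree $<|W|$. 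The only cosmetic difference is where $\lambda'_p$ is found: the paper takes it inside the global minimal $\euu$-eigenspace of $\widetilde\Delta$, whereas you push the specific generating subspace $E_0=e\otimes\iota(\lambda)$ down by $\Sym(\h)$ until it dies; either way one gets $c_{\lambda'_p}(p)\leq\hat{c}_\lambda(p)$, which is all the degree estimate needs.
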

\begin{proof}
Consider the ${\bf H}_p$-module $M:={\bf H}_pe\otimes_{{\bf U}_p}\Delta_p^{sph}(\iota(\lambda))$. We remark
that $\Delta^{sph}_p(\iota(\lambda))$ is naturally identified with $eM$. The element
$\euu$ acts diagonalizably on $M$ and its minimal eigenvalue $\alpha$ is less
than or equal to the minimal eigenvalue $\beta$ (in the sense that $\alpha-\beta\in \BZ_{\leqslant 0}$) of $\euu^{sph}$ on $\Delta_p^{sph}(\iota(\lambda))$. The $\alpha$-eigenspace is $W$-stable
and is annihilated by $\h$. Pick an irreducible $W$-submodule $\lambda'(=\lambda'_p)$ of this
space. So we get a nonzero homomorphism $\varphi:\Delta_p(\lambda')\rightarrow M$.
Since the parameter $p$ is spherical, the induced homomorphism
$e\Delta_p(\lambda')\rightarrow eM=\Delta_p^{sph}(\iota(\lambda))$
is nonzero as well. Therefore if we have a set $v_1,\ldots,v_k$ of generators
of $e\Delta_p(\lambda')$, then one of $\varphi(v_j)$ must be nonzero. As $e\Delta_p(\lambda')$
is generated as a $\C[\h]^W$-module by $({\sf Harm}\otimes \lambda')^W$, we may take $v_1,\ldots, v_k$
to be a homogeneous basis of this space. The maximal degree of an element in ${\sf Harm}$ is no greater than $|W|$, so the eigenvalue
of $\euu^{sph}$ on $v_k$, and hence on $\varphi(v_k)$, is less than or equal to $\alpha+|W|$.
This proves the lemma.
\end{proof}
\subsubsection{Subsets $\tilde{\param}^i_1\subset\param_1^{irr}$}
\begin{lemma}\label{lemma:SphO_singvect}
For $i>0$ consider the subset $\tilde{\param}^i_1\subset \param_1$ of parameters $p$
such that $e\Delta_p(\lambda)_i$ contains no nonzero vectors annihilated by ${\bf U}_p(<0)$.
Then the set $\tilde{\param}^i_1$ is Zariski open and non-empty.
\end{lemma}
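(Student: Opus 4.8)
The plan is to realize the condition ``$e\Delta_p(\lambda)_i$ contains a nonzero vector annihilated by ${\bf U}_p(<0)$'' as the non-vanishing locus of a polynomial family of linear maps, and then argue non-emptiness using the semisimple parameters $\param_1^{ss}$. First I would set up, exactly as in the proof of Lemma \ref{proposition:SphO5}, the identification of ${\bf U}_p$ with $\C[\h\oplus\h^*]^W$ via the harmonic decomposition $\C[\h]={\sf Harm}\otimes\C[\h]^W$, so that $e\Delta_p(\lambda)$ is identified with $e(\C[\h]\otimes\lambda)$ and each graded piece $e(\C[\h]_i\otimes\lambda)$ is a \emph{fixed} finite-dimensional vector space independent of $p$, on which the operators $l_a$ for $a\in{\bf U}(j)$ act polynomially in $p$. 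The key point carried over from that earlier argument is that for a finite generating set $f_1,\dots,f_m$ of the left ideal ${\bf U}(<0)\subset{\bf U}(\leqslant 0)$ (which exists by Lemma \ref{lemma:Noetherian}), and for $v\in e\Delta_p(\lambda)_i$, the condition ${\bf U}_p(<0)v=0$ is equivalent to $l_{f_j}(v)=0$ for all $j$, i.e. to $v\in\bigcap_j \Ker\big(l_{f_j}\colon e(\C[\h]_i\otimes\lambda)\to e(\C[\h]_{i-\deg f_j}\otimes\lambda)\big)$.

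Next I would observe that the dimension of this intersection of kernels is an upper semicontinuous function of $p$: it equals $\dim e(\C[\h]_i\otimes\lambda)$ minus the rank of the stacked linear map $\bigoplus_j l_{f_j}$, and rank is lower semicontinuous for a polynomially varying family of matrices between fixed vector spaces. Hence the set $\tilde\param^i_1(\lambda)$ of $p$ for which $e\Delta_p(\lambda)_i$ has \emph{no} nonzero vector killed by ${\bf U}_p(<0)$ — equivalently, for which this rank is maximal, namely equal to $\dim e(\C[\h]_i\otimes\lambda)$ — is Zariski open. Taking the intersection $\tilde\param^i_1=\bigcap_{\lambda\in\irr{W}}\tilde\param^i_1(\lambda)$ over the finitely many $\lambda$ gives a Zariski open set, which is what is claimed. (One should also intersect with $\param_1^{irr}$ as in the statement of the surrounding subsection, but $\param_1^{irr}$ is itself Zariski open by Lemma \ref{sphss}, so this changes nothing.)

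For non-emptiness I would use $\param_1^{ss}$. For $p\in\param_1^{ss}$ the standard module $\Delta_p(\lambda)$ is simple by \cite[Corollary 2.20]{GGOR}, hence so is $e\Delta_p(\lambda)$; but a nonzero vector $v\in e\Delta_p(\lambda)_i$ with $i>0$ and ${\bf U}_p(<0)v=0$ would generate, under ${\bf U}_p(\leqslant 0)$, a submodule on which $\euu^{sph}$-weights are bounded below by the weight of $v$, which is strictly larger than the weight of the lowest-weight space $\iota_p(\lambda)=e\Delta_p(\lambda)_{\fd(\lambda^*)}$ — contradicting simplicity, since $e\Delta_p(\lambda)$ is generated by $\iota_p(\lambda)$ for $p\in\param_1^{ss}\subset\param_1(W)$ by Lemma \ref{proposition:SphO5}. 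So $\param_1^{ss}\subset\tilde\param^i_1$, and since $\param_1^{ss}$ is a dense subset (complement of countably many hyperplanes) it is in particular non-empty, whence $\tilde\param^i_1$ is non-empty. The main obstacle is the bookkeeping of Step 1: because the ambient spaces ${\bf U}_p(j)$ are infinite dimensional, one cannot naively speak of ``$l_{f_j}$ depending polynomially on $p$'' on all of ${\bf U}_p(j)$; one must restrict to the concrete finite-dimensional models $e(\C[\h]_i\otimes\lambda)$, exactly as in Lemma \ref{proposition:SphO5} and in Step 1 of Lemma \ref{lemma:weak_flatness}, and check that the maps $l_{f_j}$ between these fixed spaces are genuinely polynomial in $p$ — this is the same verification already carried out there and I would simply invoke it.
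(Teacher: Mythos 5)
Your proposal is correct and follows essentially the same route as the paper's proof: express the condition as the vanishing of $\bigcap_j\Ker l_{f_j}$ on the fixed finite-dimensional space $e(\C[\h]_i\otimes\lambda)$, where $f_1,\dots,f_m$ are the finite generators of ${\bf U}(<0)$ from Lemma \ref{lemma:Noetherian}, use algebraic dependence on $p$ to get Zariski openness, and use simplicity of $e\Delta_p(\lambda)$ for $p\in\param_1^{ss}$ to get non-emptiness. The only minor imprecision is the phrase ``generate, under ${\bf U}_p(\leqslant 0)$, a submodule''; what you actually want is that the ${\bf U}_p$-submodule ${\bf U}_pv$ has all weights $\geqslant$ the weight of $v$ (since ${\bf U}_p(<0)v=0$ forces $({\bf U}_pv)_j={\bf U}_p(j-i)v=0$ for $j<i$), hence misses the lowest weight space and is proper — but the intended argument is clear and matches the paper's.
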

\begin{proof}
The condition means that the intersection of the kernels of the actions on $e\Delta(\lambda)_i$ of the elements $f_1,\ldots,f_m\in {\bf U}$
introduced in the proof of Lemma \ref{lemma:weak_flatness}  is zero.
The restrictions of $f_j$'s to $e\Delta(\lambda)_i$ depend algebraically on $p$. So the set
$\tilde{\param}^i_1$ is Zariski open. To see that this set is non-empty assume that $p\in \param^{ss}_1$.
Then $e\Delta(\lambda)$ is simple and so the only vector annihilated by ${\bf U}_p(<0)$ lies in degree 0.
So $\param_1^{ss}\subset \tilde{\param}_1^i$ for all $i>0$.
\end{proof}

\subsubsection{Completion of the proof}
We claim that the homomorphism $\varphi:\Delta^{sph}_p(\iota(\lambda))\rightarrow e\Delta_p(\lambda)$
is bijective as long as $p$ lies in the Zariski open non-empty set
$$\bigcap_{i=1}^{|W|}\param_1^{i}\cap \tilde{\param}_1^i.$$

Indeed, we know that the homomorphism is injective in degrees $\leqslant |W|$.
So Lemma \ref{lemma:SphO_singvect} shows that the common kernel of ${\bf U}_p(<0)$ in $\Delta^{sph}_p(\iota(\lambda))_i$
for $0\leqslant i\leqslant |W|$ is $\iota_p(\lambda) = \Delta^{sph}_p(\iota(\lambda))_0$. Now we apply Lemma \ref{lemma:SphO_inject1} to see that the image of the homomorphism $\varphi':e\Delta_p(\lambda')\rightarrow \Delta^{sph}_p(\iota(\lambda))$ from Lemma \ref{lemma:SphO_inject1} must contain $\Delta^{sph}_p(\iota(\lambda))_0$. So $\varphi'$ and hence $\varphi\circ \varphi'$
are surjective. Since the parameter $p$ is spherical, $\varphi\circ\varphi'$ induces
an epimorphism $\Delta(\lambda')\rightarrow \Delta(\lambda)$. Therefore $\lambda'=\lambda$, and $\varphi\circ\varphi'$
is an isomorphism. Thus $\varphi$ and $\varphi'$ are isomorphisms.

\subsection{Spherical restriction functors}\label{SUBSECTION_sph_res}
\subsubsection{Isomorphism of completions}\label{SSS_sph_compl_iso}
Pick  a point $b\in \h$ and let $\underline{W}$ be the stabiliser of $b$ in $W$. Consider the  completions ${\bf U}^{\wedge_b}:=\C[\h/W]^{\wedge_b}\otimes_{\C[\h/W]}{\bf U}$
and ${\bf \underline{U}}^{\wedge_b}:=\C[\h/\underline{W}]^{\wedge b}\otimes_{\C[\h/\underline{W}]}{\bf \underline{U}}$,
where ${\bf \underline{U}}:=\underline{e} {\bf \underline{H}} \underline{e}$ is the spherical subalgebra in ${\bf \underline{H}}$. Let $\theta$ be a $\C[[\param]]$-linear
isomorphism ${\bf U}^{\wedge_b}\xrightarrow{\sim} {\bf \underline{U}}^{\wedge_b}$ with the following two
properties:
\begin{itemize}
\item[(i)] $\theta$ is $\C^\times$-equivariant with respect to the $\C^\times$-action on ${\bf U}$
induced from the action on ${\bf H}$ given by
$t.x=x, t.y=t y, t.w=w, t.a=ta, \quad x\in \h^*,y\in \h, w\in W, a\in \param^*$.
\item[(ii)] modulo $\param^*$ the isomorphism $\theta$ is a standard isomorphism $\C[\h/W]^{\wedge_b}\otimes_{\C[\h/W]}\C[\h\oplus \h^*]^W\xrightarrow{\sim}
\C[\h/\underline{W}]^{\wedge_b}\otimes_{\C[\h/\underline{W}]}\C[\h\oplus \h^*]^{\underline{W}}$.
\end{itemize}

As an example of a $\theta$ satisfying these conditions, we can take the restriction of $\vartheta^b$ from \eqref{def_theta} to  the spherical subalgebras (we remark that for any algebra $A$ having $\BC \underline{W}$ as a subalgebra
we have $\underline{e} A\underline{e}=eZ(W,\underline{W},A)e$). Furthermore, for any two isomorphisms $\theta,\theta'$
satisfying the properties above, there is $f\in \param^*\otimes \C[\h/\underline{W}]^{\wedge_b}$ such that
\begin{equation}\label{eq:compl_iso_conj}\theta'=
\exp(h^{-1}\operatorname{ad}(f))\circ\theta.\end{equation} This is proved analogously to  \cite[Lemma 5.2.1]{complos}.

\subsubsection{Definition of functor}\label{sphresdef}
Now the restriction functor $\,^{sph}\!\Res_b: \OCat_p^{sph}(W,\h)\rightarrow \OCat_p^{sph}(\underline{W},\underline{\h})$
is defined similarly to \ref{SSS_BE_functors1}, but instead of taking $\h$-locally nilpotent vectors we take vectors that
are locally nilpotent for the augmentation ideal in $S(\h)^{\underline{W}}$. Thanks to (\ref{eq:compl_iso_conj}), the restriction functor does not depend on the choice of $\theta$.

A vector in a ${\bf \underline{H}}$-module
is $\h$-locally nilpotent if and only if it is locally nilpotent with respect to the augmentation ideal
in $S(\h)^{\underline{W}}$. So if we choose $\theta$ induced from $\vartheta^b$ we see that the functors $M\mapsto \underline{e}\Res_b(M)$ and $M\mapsto \,^{sph}\!\Res^W_{\underline{W}}(eM)$ are naturally isomorphic.

\section{Equivalences from an $\mathfrak{S}_{\ell}$-action}\label{SECTION_sym_equiv}
In this section we take $W=G_{\ell}(n)$.
\subsection{Main result}\label{SUBSECTION_sym_main}
\subsubsection{$\mathfrak{S}_{\ell}$-action}\label{SSS_sym_action}
A nice feature of the cyclotomic case is that there is an action of $\mathfrak{S}_{\ell}$ on ${\bf U}$ by graded algebra automorphisms preserving $\param^*$, \cite[\S 6.4]{quant}.
To describe the $\mathfrak{S}_{\ell}$-action on $\param^*$ recall from \ref{cyclogener} the basis $h,\epsilon_0,\ldots,\epsilon_{\ell-1}$. The elements
$h,\sum_{i=0}^{\ell-1}\epsilon_i$ are $\mathfrak{S}_{\ell}$-invariant, while $\epsilon_1-\frac{1}{\ell}\sum_{i=0}^{\ell-1}\epsilon_i,\ldots,\epsilon_{\ell-1}-\frac{1}{\ell}\sum_{i=0}^{\ell-1}\epsilon_i$ transform as  the simple roots of the $A_{\ell-1}$-root system. 

Recall that in \ref{eq:c_formula} we defined a parametrization in terms of
$\kappa$ and ${\bf s}$. We have $$\Psi(\sum_{i=0}^{\ell-1}\epsilon_i)=\frac{1}{2}-\kappa, \Psi(\epsilon_i)= \kappa (s_{i}-s_{i-1})$$ so the $\mathfrak{S}_{\ell}$-action on $\param$ agrees with the $\mathfrak{S}_{\ell}$-action fixing $\kappa$
and permuting the $s_i$'s. In other words, the automorphism of ${\bf U}$ induced by the
$\mathfrak{S}_{\ell}$-action defines an isomorphism ${\bf U}_{1,\kappa, {\bf s}}\xrightarrow{\sim} {\bf U}_{1,\kappa, \sigma({\bf s})}$,
where $\sigma({\bf s})$ is obtained by applying the permutation $\sigma$ to ${\bf s}$.

Let us record two more important properties of the $\mathfrak{S}_{\ell}$-action. According to \cite[Proposition 6.7.1]{quant}, the spherical
Euler element $\euu^{sph}$ is $\mathfrak{S}_{\ell}$-invariant. So the $\mathfrak{S}_{\ell}$-action preserves the Euler grading on ${\bf U}$. In other words,
$\mathfrak{S}_{\ell}$ commutes with the $(\C^\times)^2$-action on ${\bf U}$ that is restricted from the
following action on ${\bf H}$:
\begin{equation}\label{eq:two_dim_action}
(t_1,t_2).x=t_1 x, (t_1,t_2).y=t_2 y, (t_1,t_2).w=w, (t_1,t_2).a=t_1t_2a, \quad x\in \h^*,y\in \h, w\in W, a\in \param^*.
\end{equation}
 Also the construction
of the action in \cite{quant} implies that it is trivial on $\C[\h\oplus\h^*]^W={\bf U}/(\param^*)$.

\subsubsection{Structure of $\param_1^{sph}$}\label{SSS_sph_descr}
The set $\param_1^{sph}$ was determined in \cite[Theorem 1.1]{DG}. The set of asphericalparameters, $\param_1^{asph}$, is the union of the following hyperplanes:
\begin{itemize}
\item $h_{H_{\bullet,\bullet}^{\bullet},0}-h_{H_{\bullet, \bullet}^{\bullet},1} = a/b$ with $1\leq a<b \leq n$;
\item there exists $0\leq u\leq \ell -1$, an integer $1-n\leq m \leq n-1$, and an integer $k$ such
that $k\not\equiv 0 \,(\ell)$,
$$ k=\ell (h_{H_{\bullet},u-k} - h_{H_{\bullet},u} + m (h_{H_{\bullet, \bullet}^{\bullet},1}-h_{H_{\bullet,\bullet}^{\bullet},0})) \quad \text{and} \quad 1\leq k\leq u + \left( \sqrt{n+\frac{1}{4}m^2} - \frac{1}{2}m-1\right) \ell.$$
\end{itemize} Rewriting this in terms of the parameters $\kappa$ and ${\bf s}$ we find $\param_1\setminus \param_1^{sph}$ is the union of the hyperplanes:
\begin{itemize}
\item $\kappa = a/b$ with $1\leq a<b \leq n$;
\item there exists $0\leq u\leq \ell -1$, an integer $1-n\leq m \leq n-1$, and an integer $k$ such
that $k\not\equiv 0 \,(\ell)$,
$$ k- \hat{k} =\kappa \ell (s_{u-k} - s_u -m) \quad \text{and} \quad 1\leq k\leq u + \left( \sqrt{n+\frac{1}{4}m^2} - \frac{1}{2}m-1\right) \ell,$$
\end{itemize} where $u+1-\ell \leq \hat{k} \leq u$ and $\hat{k} \equiv k \, (\ell)$.

\subsubsection{Equivalences $\Xi_{\sigma,p}$}\label{SSS_equiv_def}
Take $\sigma\in \mathfrak{S}_{\ell}$. It induces a grading preserving isomorphism ${\bf U}_p\rightarrow {\bf U}_{\sigma p}$ and hence
an equivalence $\Xi^{sph}_\sigma:\OCat^{sph}_p\rightarrow \OCat^{sph}_{\sigma p}$. The description of the aspherical parameters
in \ref{SSS_sph_descr} implies that $\param_1^{sph}$ is $\mathfrak{S}_{\ell}$-stable (alternatively, to see that $\param_1^{sph}$
is $\mathfrak{S}_{\ell}$-stable one can use the fact, see \cite[Theorem 5.5]{etingofaffine}, that $p\in \param_1^{sph}$
if and only if ${\bf U}_p$ has finite global dimension). So for $p\in \param_1^{sph}$ we can compose $\Xi^{sph}_\sigma$
with the equivalences $\OCat_p\xrightarrow{\sim} \OCat_p^{sph}$ and $\OCat^{sph}_{\sigma(p)}\xrightarrow {\sim}\OCat_{\sigma(p)}$ to get an equivalence
$\Xi_{\sigma,p}: \OCat_p\stackrel{\sim}{\longrightarrow}\OCat_{\sigma(p)}.$

\subsubsection{Main result}\label{SSS_sph_sym_main}
We define an action of $\mathfrak{S}_{\ell}$ on $\irr{G_\ell(n)}$ by permuting the components of the corresponding multipartition $\lambda = (\lambda^{(1)}, \ldots , \lambda^{(\ell)}).$

Recall that by the support of a module $M\in \OCat_p$ we mean the support of $M$
viewed as a $\C[\h]$-module. The main result of this section is then a slightly stronger version of Theorem A stated in the Introduction.

\begin{theorem}
Let $p\in \param_1^{sph}$ and $\kappa\neq 0,1$.
Then the  equivalence $\Xi_{\sigma,p}$ from \ref{SSS_equiv_def} maps $\Delta_p(\lambda)$ to
$\Delta_{\sigma p}(\sigma \lambda)$ and preserves the supports of the modules.
\end{theorem}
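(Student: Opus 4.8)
The plan is to reduce the statement to a comparison of highest-weight structures on the two categories, using the $c$-function and the $\mathfrak{S}_\ell$-action on parameters. First I would recall the diagram of equivalences defining $\Xi_{\sigma,p}$: it is the composite $\OCat_p \xrightarrow{e\cdot} \OCat_p^{sph} \xrightarrow{\Xi^{sph}_\sigma} \OCat^{sph}_{\sigma p} \xrightarrow{\HH_{\sigma p}e\otimes_{\UU_{\sigma p}}-} \OCat_{\sigma p}$, where the outer two functors are the abelian equivalences from \ref{sphdefn} and \ref{proposition:SphO4} (valid since $p,\sigma p\in\param_1^{sph}$, the sphericity locus being $\mathfrak{S}_\ell$-stable by \ref{SSS_sph_descr}). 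Since $\Xi^{sph}_\sigma$ is induced by an isomorphism of filtered (indeed graded) algebras ${\bf U}_p\xrightarrow{\sim}{\bf U}_{\sigma p}$ carrying $\euu^{sph}$ to $\euu^{sph}$ (the spherical Euler element being $\mathfrak{S}_\ell$-invariant, \cite[Proposition 6.7.1]{quant}), it sends $\Delta^{sph}_p(M^0)$ to $\Delta^{sph}_{\sigma p}(\sigma_* M^0)$ and $L^{sph}_p(M^0)$ to $L^{sph}_{\sigma p}(\sigma_* M^0)$ for any irreducible ${\bf U}_p^0$-module $M^0$.

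The crux is then to identify, for $\lambda\in\irr{G_\ell(n)}$, the ${\bf U}^0_p$-module $eL_p(\lambda)$ up to the bijection $\iota^L_p$ of \ref{sphequiv}, transport it by $\sigma_*$, and recognize the result as $\iota^L_{\sigma p}(\sigma\lambda)$. The key input is Theorem \ref{proposition:sph_main_relation}, which on a non-empty Zariski-open set $\param_1^!\subset\param_1^{sph}$ identifies $\iota^L_p$ with the more functorial $\iota_p$, and identifies $\Delta^{sph}_p(\iota(\lambda))$ with $e\Delta_p(\lambda)$. The module $\iota(\lambda)$ is cut out of the universal Verma by the harmonic/fake-degree grading, and the $\mathfrak{S}_\ell$-action on ${\bf H}$ (trivial on $\C W$ and hence permuting the summands $\C[\h]\otimes\lambda$ of $\bigoplus_\lambda\Delta(\lambda)$ compatibly with $\lambda\mapsto\sigma\lambda$, since it is trivial on $\C[\h\oplus\h^*]^W$) carries $\iota(\lambda)$ to $\iota(\sigma\lambda)$. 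Thus for $p\in\param_1^!$ one gets $\Xi_{\sigma,p}(\Delta_p(\lambda))\cong\Delta_{\sigma p}(\sigma\lambda)$ directly. To remove the restriction to $\param_1^!$ and reach all of $\param_1^{sph}$ with $\kappa\neq 0,1$, I would argue that the property "$\Xi_{\sigma,p}$ sends $\Delta_p(\lambda)$ to $\Delta_{\sigma p}(\sigma\lambda)$" is detected at the level of $c$-functions and highest-weight orderings: compute $c_\lambda(\kappa,{\bf s})$ from \eqref{cfunvalue} and check $c_{\sigma\lambda}(\kappa,\sigma{\bf s}) - c_\lambda(\kappa,{\bf s})$ is independent of $\lambda$, so $\sigma$ is an order-isomorphism $(\irr W,<_p)\to(\irr W,<_{\sigma p})$; then deformation/semicontinuity (the equivalence $\Xi^{sph}_\sigma$ and the families $e\Delta_p(\lambda)$ vary algebraically in $p$ on $\param_1^{sph}$) propagates the identification of standards from $\param_1^!$ to all of $\param_1^{sph}$, using the fact that on the highest-weight category an exact equivalence matching $c$-orderings and matching one fiber's standards matches all of them.

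For the statement about supports: the support of $M\in\OCat_p$ is one of the strata $X_{W_1}$, $W_1\subset G_\ell(n)$ a parabolic, and by \ref{SSS_res_support} it is recovered from which $\Res^W_{\underline W}$ annihilate $M$. I would use the spherical restriction functors of \ref{SUBSECTION_sph_res}: by the last sentence of \ref{sphresdef}, $e\circ\Res_b \cong {}^{sph}\!\Res^W_{\underline W}\circ(e\cdot)$, so the support filtration on $\OCat_p$ corresponds under $e\cdot$ to the filtration by vanishing of ${}^{sph}\!\Res$ on $\OCat_p^{sph}$. The point is that $\Xi^{sph}_\sigma$ intertwines these spherical restriction functors: the parabolic subgroups of $G_\ell(n)$ are products $G_\ell(m)\times\mathfrak{S}_{\mu}$ and the $\mathfrak{S}_\ell$-action on ${\bf U}$ is compatible, via the completion isomorphisms $\theta$ of \ref{SSS_sph_compl_iso}, with the corresponding action on $\underline{\bf U}$ — here I would invoke the uniqueness \eqref{eq:compl_iso_conj} of $\theta$ up to conjugation by $\exp(h^{-1}\ad f)$ to see that $\Xi^{sph}_\sigma\circ{}^{sph}\!\Res_b \cong {}^{sph}\!\Res_b\circ\Xi^{sph}_\sigma$. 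Hence $\Xi_{\sigma,p}$ preserves the vanishing loci of all restriction functors, so preserves supports.

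I expect the main obstacle to be the propagation step: showing that matching standards on $\param_1^!$ forces matching standards on all spherical $p$ with $\kappa\neq 0,1$. This is where the hypothesis $\kappa\neq 0,1$ must enter (presumably to guarantee that $\param_1^!$ meets every relevant component, or that $e\Delta_p(\lambda)$ remains "standard" i.e. that the spherical Verma has the right graded character), and making that argument precise — rather than the formal intertwining of functors — is the delicate part.
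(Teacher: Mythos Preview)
Your argument for supports is essentially correct, though the paper uses a simpler route: since $\mathfrak{S}_\ell$ commutes with the $(\C^\times)^2$-action and is trivial on $\C[\h\oplus\h^*]^W$, it fixes $\C[\h]^W\subset{\bf U}$ pointwise, so $\operatorname{Supp}_{\C[\h]^W}N=\operatorname{Supp}_{\C[\h]^W}\Xi^{sph}_\sigma(N)$ directly. Your intertwining of $\Xi^{sph}_\sigma$ with spherical restriction is exactly what the paper proves as a lemma, but it is used for a different purpose (see below).

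The argument for standards has genuine gaps. First, the $\mathfrak{S}_\ell$-action lives on ${\bf U}=e{\bf H}e$, not on ${\bf H}$; there is no action on $\C W$ or on the Verma modules $\Delta(\lambda)$ themselves, so your claim that $\sigma$ ``permutes the summands $\C[\h]\otimes\lambda$ compatibly with $\lambda\mapsto\sigma\lambda$'' and hence carries $\iota(\lambda)$ to $\iota(\sigma\lambda)$ is not available. Even granting Conjecture~\ref{sph_gen_main}, one only knows that $\Xi^{sph}_\sigma$ sends each $\Delta^{sph}_p(N)$ to some $\Delta^{sph}_{\sigma p}(N')$; identifying $N'$ still requires an additional argument. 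Second, your $c$-function claim is false: from \eqref{cfunvalue}, $c_{\sigma\lambda}(\kappa,\sigma{\bf s})-c_\lambda(\kappa,{\bf s})=\sum_r |\lambda^{(r)}|(r-\sigma^{-1}(r))$, which depends on $\lambda$. (What \emph{is} $\sigma$-invariant is the modified function $\hat{c}_\lambda$ of \eqref{fakecequation}, and this is precisely what the paper exploits via the spherical character.) Third, the ``propagation from $\param_1^!$ to all of $\param_1^{sph}$ by semicontinuity'' is not made precise; the objects $\Delta^{sph}_p(\iota(\lambda))$ are not known to vary flatly, and no mechanism is given for transporting the identification across the locus where it might fail.

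The paper's proof is structurally different. It works at the level of Grothendieck groups: setting $\nu=\xi_\sigma^{-1}\circ[\Xi_{\sigma,p}]$ with $\xi_\sigma[\Delta_p(\lambda)]=[\Delta_{\sigma p}(\sigma\lambda)]$, it shows $\nu=\id$ by proving that $\nu$ preserves both the modified spherical character $\hat{\operatorname{ch}}$ (using $\mathfrak{S}_\ell$-invariance of $\euu^{sph}$) and the map $[\Res^n_{n-1}]$ (using your intertwining of $\Xi^{sph}_\sigma$ with spherical restriction, together with induction on $n$ starting from the easy $n=1$ case). The heart of the proof is then a purely combinatorial injectivity statement: the pair $([\Res^n_{n-1}],\hat{\operatorname{ch}})$ separates multipartitions. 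This is done by a careful case analysis of which $\mu$ share $t(\lambda)$ (the result of removing the rightmost box), comparing hook-length poles and exponents in $\hat{\operatorname{ch}}$; the condition $\kappa\neq 0,1$ enters exactly here, in the degenerate case $\lambda^{(k)}=(1^m)$, to rule out accidental cancellations among powers of $q$. Finally a general lemma (two highest-weight structures on the same abelian category whose standards have equal classes in $K_0$ must have isomorphic standards) upgrades $[\Delta_p(\lambda)]=[\Xi_{\sigma,p}^{-1}\Delta_{\sigma p}(\sigma\lambda)]$ to an actual isomorphism.
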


The claim about supports is easy, while the claim about the images of standards is more
difficult. Both will be proved in \ref{SS_sym_proof}, but the latter will require quite a lot
of preparation in \ref{SS_K_groups}. There we prove that $\Xi_{\sigma,p}$
induces the required map on the level of Grothendieck groups. Finally, at the end of \ref{SS_sym_proof} we will
provide a counter-example to the previous theorem when $p$ is an aspherical parameter.

Let us comment on the status of the restriction $\kappa\neq 0,1$. This technical
restriction appears in \ref{SSS_sym_final}, but we believe it can be removed.
In any case, if $\kappa=0$, then ${\bf H}_p\cong ({\bf H}^0_p)^{\otimes n}\# \mathfrak{S}_n$,
where ${\bf H}^0_p$ is the rational Cherednik algebra constructed from the group $\mu_\ell$. This case is easy as all questions can be reduced to $n=1$. The case $\kappa=1$ is, of course, more subtle.

\subsection{Maps on Grothendieck groups}\label{SS_K_groups}
Our goal in this subsection is to prove that the map $[\Xi_{\sigma,p}]:[\OCat_p]\rightarrow [\OCat_{\sigma p}]$
of the complexified Grothendieck groups sends $[\Delta_p(\lambda)]$ to
$[\Delta_{\sigma p}(\sigma\lambda)]$.
\subsubsection{Map $\nu$}
Let $\xi_\sigma$ be the linear map  $[\OCat_p]\rightarrow [\OCat_{\sigma p}]$
given by $[\Delta_p(\lambda)]\mapsto [\Delta_{\sigma p}(\sigma\lambda)]$. Set $\nu:=\xi_{\sigma}^{-1}\circ[\Xi_{\sigma,p}]$.
We need to prove that $\nu$ is the identity. We start by establishing two results saying that $\nu$ preserves
two linear maps from $[\OCat_p]$.

\subsubsection{$\nu$ vs spherical characters}
Consider a
module $N\in  \OCat^{sph}_p$. Recall, by Corollary \ref{lemma:SphO2}, that $\euu^{sph}$ has finite dimensional generalized eigenspaces in $N$:
let $d_\lambda(N)$ be the dimension of the generalized eigenspace corresponding to $\lambda$. So to
$M\in \OCat_p$ we can assign its spherical character $[M]^{sph}:=\sum_{\lambda\in \C}d_\lambda(eM)q^\lambda$,
where $q$ is a formal variable.
For example, we have
\begin{equation}\label{eq:Verma_sph_char}
[\Delta_p(\lambda)]^{sph}=q^{\hat{c}_\lambda(p)}[q^{-\operatorname{fd}(\lambda^*)}f_{\lambda^*}(q)]\prod_{i=1}^n ({1-q^{d_i}})^{-1}.
\end{equation}
Here  $f_{\lambda^*}(q), \operatorname{fd}(\lambda^*)$ and $\hat{c}_\lambda(p)$ were defined and computed
in \ref{fakecequation}  and $d_1,\ldots, d_n$ are the exponents
of the group $G_{\ell}(n)$. There is a well-defined map $\operatorname{ch}^{sph}$ on $[\OCat_p]$ which sends $[M]$ to $[M]^{sph}$.
Since the element $\euu^{sph}$ is $\mathfrak{S}_{\ell}$-invariant, the construction of $\Xi_{\sigma,p}$
implies that $\operatorname{ch}^{sph}(M)=\operatorname{ch}^{sph}(\Xi_{\sigma,p}(M))$ for any $M\in \OCat_p$.
We will need to modify the map $\operatorname{ch}^{sph}$. Namely, set $$\hat{\operatorname{ch}}(M)=\operatorname{ch}^{sph}(M)\prod_{i=1}^n (1-q^{d_i})\prod_{i=1}^n (1-q^{il})^{-1}$$ so that we have
\begin{equation}\label{eq:mod_sph_char}
\hat{\operatorname{ch}}(\Delta_p(\lambda))=q^{\hat{c}_\lambda(p)}\prod_{A\in\lambda}(1-q^{h(A)\ell})^{-1},
\end{equation}
where we denote the hook length of a box $A$ in $\lambda$ by $h(A)$. We see that $\hat{\operatorname{ch}}(\Delta_p(\lambda))=
\hat{\operatorname{ch}}(\Delta_{\sigma p}(\sigma \lambda))$. It follows that
\begin{itemize}
\item[(A)] $\hat{\operatorname{ch}}\circ\nu=\hat{\operatorname{ch}}$.
\end{itemize}

\subsubsection{Case $n=1$}\label{SSS_n1_case}
This case is easy. As a $\C[\param]$-algebra, ${\bf U}$
is generated by $x^{\ell},y^{\ell}, \euu^{sph}$ for non-zero $x\in \h^*, y\in \h$
with a relation of the form $[x,y]=P(\euu^{sph})$ for an appropriate polynomial $P(t)\in \C[\param][t]$
of degree $\ell$. We can take an arbitrary non-zero $\kappa$ and parameterize the points of $\param$
by the $\ell$-tuples ${\bf s}$. The sphericity condition becomes $s_i\neq s_j$ for $i\neq j$.

All $\lambda\in \irr{W}$ are 1-dimensional and $\Delta_p(\lambda)\cong \C[x]$ as   $\C[x]$-modules.
It follows that $e\Delta_p(\lambda)\cong \C[x^{\ell}]$ as  $\C[x^{\ell}]$-modules. In particular, $e\Delta_p(\lambda)$
is generated by its lowest weight subspace. Also all modules $\Delta_p^{sph}(N), N\in \irr{{\bf U}_p^0},$
are isomorphic to $\C[x^{\ell}]$ as
$\C[x^{\ell}]$-modules.

Assume now that $p\in \param_1^{sph}$.  From the previous paragraph it follows that the map $\iota_p:\irr{W}
\rightarrow \irr{{\bf U}_p^0}$ introduced in \ref{morphismsph} is a bijection, and $\varphi: \Delta_p^{sph}(\iota_p(\lambda))
\rightarrow e\Delta_p(\lambda)$ is an isomorphism.

Recall that $\sigma:{\bf U}_p\xrightarrow{\sim} {\bf U}_{\sigma p}$ preserves the Euler gradings. So we have an induced
isomorphism ${\bf U}_p^0\xrightarrow{\sim} {\bf U}_{\sigma p}^0$. It follows that $\Xi_{\sigma}^{sph}$ maps
each $\Delta_p^{sph}(N)$ to some $\Delta_{\sigma p}^{sph}(\sigma.N)$. So $\Xi_{\sigma,p}$ maps $\Delta_p(\lambda)$
to some $\Delta_{\sigma p}(\lambda')$. In particular, $\nu([\Delta_p(\lambda)])=[\Delta_p(\sigma^{-1}\lambda')]$. But using (\ref{eq:mod_sph_char}) and (A), we see that $\lambda'=\sigma\lambda$.

This, of course, implies $\nu=\operatorname{id}$.

\subsubsection{Inductive assumption} Thanks to \ref{SSS_n1_case} we can prove that $\nu=\operatorname{id}$ using induction.
So we assume that $\nu=\operatorname{id}$ for $n-1$ and will prove that the equality also holds for $n$.

\subsubsection{$\nu$ vs restriction}\label{SSS_eta_restr}
We would like to understand a relationship between the symmetric group equivalences $\Xi_{\sigma,p}$ and
the restriction functor $\Res^n_{n-1}:=\Res_{G_{\ell}(n-1)}^{G_{\ell}(n)}$.

\begin{lemma}
We have a natural equivalence $\Xi_{\sigma,p}\circ \Res^n_{n-1}\cong \Res^n_{n-1}\circ \Xi_{\sigma,p}$.
\end{lemma}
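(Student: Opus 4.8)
The plan is to work entirely on the spherical side and reduce the statement to the fact that the $\mathfrak{S}_\ell$-action commutes with the Bezrukavnikov--Etingof isomorphism of completions, which holds because both are compatible with the $\C^\times$-action and both reduce to standard (group-theoretic) constructions modulo $\param^*$. First I would replace $\Res^n_{n-1}$ by the spherical restriction functor $\,^{sph}\!\Res^{G_\ell(n)}_{G_\ell(n-1)}$ of \ref{SUBSECTION_sph_res}: by the last paragraph of \ref{sphresdef} we have $M\mapsto\underline e\,\Res_b(M)$ naturally isomorphic to $M\mapsto\,^{sph}\!\Res^{G_\ell(n)}_{G_\ell(n-1)}(eM)$, and combined with the equivalences $\OCat_p\xrightarrow{\sim}\OCat_p^{sph}$ of \ref{sphdefn} this turns the asserted isomorphism into the corresponding statement for the functors $\Xi^{sph}_\sigma$ and $\,^{sph}\!\Res$ on spherical category $\OCat$. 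So it suffices to prove $\Xi^{sph}_\sigma\circ\,^{sph}\!\Res^{G_\ell(n)}_{G_\ell(n-1)}\cong\,^{sph}\!\Res^{G_\ell(n-1)}_{G_\ell(n-1),\sigma p}\circ\Xi^{sph}_\sigma$ (here $\sigma$ acts on the cyclotomic data for $G_\ell(n-1)$ exactly as for $G_\ell(n)$, permuting the $s_i$; note the parabolic $G_\ell(n-1)\subset G_\ell(n)$ is itself a cyclotomic group of the same type).

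The core point is the following commutation of completions. Fix a point $b\in\h$ with stabiliser $\underline W=G_\ell(n-1)$. The $\mathfrak{S}_\ell$-action on ${\bf U}={\bf U}(G_\ell(n),\h)$ is by graded algebra automorphisms preserving $\param^*$ (\ref{SSS_sym_action}), so it descends to the completion ${\bf U}^{\wedge_b}$; likewise on ${\bf\underline U}^{\wedge_b}$. I claim the completion isomorphism $\theta:{\bf U}^{\wedge_b}\xrightarrow{\sim}{\bf\underline U}^{\wedge_b}$ of \ref{SSS_sph_compl_iso} can be chosen $\mathfrak{S}_\ell$-equivariantly, that is, intertwining the two $\mathfrak{S}_\ell$-actions while sending parameter $p$ to $p$ and inducing $\sigma$-to-$\sigma$. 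Indeed, $\sigma\circ\theta\circ\sigma^{-1}$ is again a $\C^\times$-equivariant $\C[[\param]]$-linear isomorphism ${\bf U}^{\wedge_b}\xrightarrow{\sim}{\bf\underline U}^{\wedge_b}$ satisfying properties (i) and (ii) of \ref{SSS_sph_compl_iso}: property (i) is clear since $\mathfrak{S}_\ell$ commutes with the $(\C^\times)^2$-action (\ref{SSS_sym_action}), and property (ii) holds because modulo $\param^*$ the $\mathfrak{S}_\ell$-action on ${\bf U}/(\param^*)=\C[\h\oplus\h^*]^W$ is trivial (again \ref{SSS_sym_action}), so $\sigma\theta\sigma^{-1}$ reduces to the same standard isomorphism as $\theta$. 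By \eqref{eq:compl_iso_conj} there is $f\in\param^*\otimes\C[\h/\underline W]^{\wedge_b}$ with $\sigma\theta\sigma^{-1}=\exp(h^{-1}\ad(f))\circ\theta$; averaging $f$ over $\mathfrak{S}_\ell$ (or simply noting that conjugating $\theta$ by $\exp$ of a primitive of $f$ removes the discrepancy, exactly as in the cited \cite[Lemma 5.2.1]{complos} argument) produces a genuinely $\mathfrak{S}_\ell$-equivariant $\theta$. Since $\,^{sph}\!\Res$ does not depend on the choice of $\theta$, we may compute it with this equivariant $\theta$.

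With an $\mathfrak{S}_\ell$-equivariant $\theta$ in hand, the rest is bookkeeping: the functor $\,^{sph}\!\Res_b$ is built from (a) completing at $b$, (b) transporting modules across $\theta$ and the Morita equivalence $A\text{-mod}\simeq eZ(W,\underline W,A)e\text{-mod}$, and (c) taking vectors locally nilpotent for the augmentation ideal of $S(\h)^{\underline W}$. Step (a) is manifestly $\mathfrak{S}_\ell$-equivariant since $\mathfrak{S}_\ell$ acts through algebra automorphisms fixing $\C[\h/W]$ (it is trivial on $\C[\h\oplus\h^*]^W$, in particular on $\C[\h/W]$); step (b) is equivariant precisely because $\theta$ was chosen equivariant; step (c) is equivariant because $\mathfrak{S}_\ell$ preserves the Euler grading (\ref{SSS_sym_action}) and acts trivially on $S(\h)^{\underline W}\subset\C[\h\oplus\h^*]^{\underline W}$, hence preserves the relevant locally-nilpotent condition. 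Chaining these naturality statements gives the desired natural isomorphism $\Xi^{sph}_\sigma\circ\,^{sph}\!\Res\cong\,^{sph}\!\Res\circ\Xi^{sph}_\sigma$, and unwinding the reductions above yields the lemma. The main obstacle is the one genuine point --- arranging $\theta$ to be $\mathfrak{S}_\ell$-equivariant rather than merely equivariant up to an inner automorphism of the completion --- and this is handled by the exponential-conjugation trick of \eqref{eq:compl_iso_conj}; everything else is formal.
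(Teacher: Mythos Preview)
Your reduction to the spherical side and the observation that $\sigma\theta\sigma^{-1}$ again satisfies conditions (i) and (ii) of \ref{SSS_sph_compl_iso} are exactly what the paper does. However, you then take an unnecessary detour: you attempt to \emph{construct} a genuinely $\mathfrak{S}_\ell$-equivariant $\theta$ via an averaging/primitive argument. The paper's proof is more direct and avoids this step entirely. Once you know that $\sigma\theta\sigma^{-1}$ satisfies (i) and (ii), the independence of $\,^{sph}\!\Res_b$ on the choice of $\theta$ (which you already invoke) immediately gives that the functor built from $\sigma\theta\sigma^{-1}$ is naturally isomorphic to the one built from $\theta$; but the former is manifestly $\Xi^{sph}_\sigma\circ\,^{sph}\!\Res_b\circ(\Xi^{sph}_\sigma)^{-1}$, and you are done. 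No equivariant $\theta$ is needed.

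Your averaging sketch, as written, is also somewhat loose: ``averaging $f$ over $\mathfrak{S}_\ell$'' does not obviously produce an equivariant $\theta$, since $\mathfrak{S}_\ell$ acts nontrivially on $\param^*$ and one would need to verify a cocycle condition on the $f_\sigma$'s before averaging makes sense (and for the full group rather than a single $\sigma$). This could presumably be made to work, but since the paper's shortcut renders it unnecessary, I would recommend simply dropping that paragraph and using the independence of $\,^{sph}\!\Res_b$ directly.
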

\begin{proof}
Pick $b\in \h$ with $W_b=G_{\ell}(n-1)$.
Thanks to remarks in \ref{sphresdef}, it is enough to prove that the functors $\Xi^{sph}_\sigma\circ \,^{sph}\!\Res_b,
\,^{sph}\!\Res_b\circ \Xi^{sph}_\sigma: \OCat^{sph}_p(W)\rightarrow \OCat^{sph}_p(W_b)$ are isomorphic.

Recall that the functor $^{sph}\!\Res_b$ does not depend (up to a natural equivalence) on the choice of an isomorphism
$\theta$ in \ref{SSS_sph_compl_iso}. From the properties of the $\mathfrak{S}_{\ell}$-action recalled at the end of
\ref{SSS_sym_action} it follows that $\sigma \theta \sigma^{-1}$ satisfies (i) and (ii) of \
\ref{SSS_sph_compl_iso} provided $\theta$ does. However it is clear that the functor $\,^{sph}\!\Res_b$
constructed from $\sigma \theta \sigma^{-1}$ coincides with the functor $\Xi^{sph}_\sigma\circ \,^{sph}\!\Res_b\circ (\Xi^{sph}_\sigma)^{-1}$.
\end{proof}

We claim that
\begin{itemize}
\item[(B)]  $[\Res^n_{n-1}]\circ \nu=[\Res^n_{n-1}]$.
\end{itemize}

Recall that $[\Res^n_{n-1}]: \C^{\mathscr{P}_\ell(n)}\rightarrow \C^{\mathscr{P}_\ell(n-1)}$
sends a multipartition $\lambda$ to the sum (with unit coefficients) of all multipartitions obtained from $\lambda$
by removing one box, see \cite[Corollary 3.12]{ArikiKoike}.
It follows that $\xi_\sigma\circ [\Res^n_{n-1}]=[\Res^n_{n-1}]\circ \xi_\sigma$.
The previous lemma implies  $[\Xi_{\sigma,p}]\circ [\Res^n_{n-1}]=[\Res^n_{n-1}]\circ [\Xi_{\sigma,p}]$. So
$\nu\circ [\Res^n_{n-1}]=[\Res^n_{n-1}]\circ \nu$. By the inductive assumption, $\nu=\operatorname{id}$
for $n-1$, and so (B) is proved.

\begin{remark}
Assume for a moment that Conjecture \ref{sph_gen_main} is true. The equivalence $\Xi^{sph}_\sigma$ sends
each $\Delta_p^{sph}(N)$ to some $\Delta_{p}^{sph}(N')$. It follows that $\Xi_{\sigma,p}$ sends standards
to standards. We remark that $\lambda\in \mathscr{P}_\ell(n)$ for $n>1$ can be uniquely recovered
from the set of multi-partitions obtained from $\lambda$ by removing a box. So the arguments of
\ref{SSS_n1_case} and \ref{SSS_eta_restr} imply $\Xi_{\sigma,p}(\Delta_p(\lambda))=\Delta_{\sigma p}(\sigma \lambda)$
modulo Conjecture \ref{sph_gen_main}.
\end{remark}

\subsubsection{Injectivity claim}
Now that we have seen that $\nu:[\OCat_p]\rightarrow [\OCat_p]$ satisfies (A) and (B) our next goal
is to prove that the only map with these properties is the identity. This amounts to checking
that  $([\Res^n_{n-1}], \hat{\operatorname{ch}})$  is injective.

Let us put a lexicographic ordering on the bases $[\Delta_p(\lambda)]$ of $[\OCat_p(n)]$ and $[\OCat_p(n-1)]$.
Namely, we represent $\lambda=(\lambda^{(1)},\ldots,\lambda^{(\ell)})$ as a collection of columns
$(\lambda^{(1)t}_1,\ldots, \lambda^{(1)t}_{d_1},\lambda^{(2)t}_1,\ldots, \lambda^{(\ell)t}_{d_\ell})$
and introduce the lexicographic ordering according to this representation. Then the largest
$\lambda'$ such that $[\Delta_p(\lambda')]$ appears in $[\Res^n_{n-1}(\Delta_p(\lambda))]$
is obtained from $\lambda$ by removing the rightmost removable box. We denote this largest
$\lambda'$ by $t(\lambda)$. It is easy to see that $\lambda>\mu$ implies $t(\lambda)\geqslant t(\mu)$.
So to prove the injectivity claim it is enough to show that for any $\lambda$ we have the following:
\begin{itemize}
\item[($\dagger$)] The elements $\hat{\operatorname{ch}}(\Delta_p(\mu))$ are linearly independent for all
$\mu$ with $\mu\leqslant \lambda, t(\mu)=t(\lambda)$.
\end{itemize}
Recall that we assume that $p$ is spherical, which, in particular, means that
all $s_i$'s are distinct.

It is not difficult to list all $\lambda$'s such that there is $\mu<\lambda$ with $t(\mu)=t(\lambda)$.
In the remainder of the subsection we discuss the most difficult combinatorially case. All other cases are obtained by some degeneration
of this one and are easier. Below $k$ denotes the maximal
index such that $\lambda^{(k)}\neq \varnothing$.

\subsubsection{Multipartitions $\mu_j$}
 Suppose that $\lambda^{(k)}_1=\lambda^{(k)}_2$. Let
$m$ be such that $\lambda_m^{(k)}=\lambda_1^{(k)}, \lambda_{m+1}^{(k)}<\lambda_1^{(k)}$.
Then $t(\lambda)$ is obtained from $\lambda$ by removing the box in the $m$-th row and the $\lambda_1^{(k)}$-th
column. The  multipartitions $\mu_k,\mu_{k+1},\ldots, \mu_{\ell}$ constructed below exhaust the
list of $\mu$'s with $t(\mu)=t(\lambda), \mu<\lambda$.

A multipartition $\mu_k$
can be described as follows: $\mu_k^{(i)}=\lambda^{(i)}$ for $i\neq k$, while $\mu_k^{(k)}$ is obtained
from $\lambda^{(k)}$ by moving the box in the $m$-th row and the $\lambda_1^{(k)}$-th column to the
first row. In other words, $(\mu_k^{(k)})_i=\lambda_i^{(k)}$ for $i\neq 1,m$, $(\mu_k^{(k)})_1=\lambda_1^{(k)}+1,
(\mu_2^{(k)})_m=\lambda_m^{(k)}-1$. Multipartitions $\mu_j, j>k$ are obtained as follows: $\mu_j^{(i)}=\lambda^{(i)}$
for $i\neq k,j$, $|\mu_j^{(j)}|=1$, while $\mu_j^{(k)}=t(\lambda)^{(k)}$, i.e., $\mu_j$ is obtained
from $\lambda$ by moving the  box in the $m$-th row and the $\lambda_1^{(k)}$-th column to the $j$-th partition.

We will prove that $f:=\hat{\operatorname{ch}}(\Delta_p(\lambda)), f_{j}:=\hat{\operatorname{ch}}(\Delta_p(\mu_j)),j\geqslant k, $
are linearly independent by assuming the converse, i.e., the existence of a
non-trivial linear combination
\begin{equation}\label{eq:lin_comb}\alpha f+ \sum_{j\geq k}\alpha_{j}f_{j}, \quad\alpha,\alpha_j\in \C,\end{equation}
equal to $0$. We remark that we can assume that the powers of $q$
appearing in the numerators of the elements $f_\bullet$ differ by an element of $\ell\mathbb{Z}$. Otherwise the linear
combination will split into parts according to the value of the power modulo $\ell\mathbb{Z}$ and we will
just need to deal with each part separately. Hence we can view the expressions $f_\bullet$
as rational functions in $q^{\ell}$. Below, to simplify the notation, we replace $q^{\ell}$ with $q$.

\subsubsection{Case $\lambda^{(k)}\neq (1,1,\ldots,1)$}
In this case the maximal hook length in $\mu^{(k)}_k$
is $\lambda_1^{(k)}+\lambda_1^{(k)t}$, while the maximal hook lengths in $\lambda^{(k)},\mu_j^{(k)},j>k$
are less than $\lambda_1^{(k)}+\lambda_1^{(k)t}$. Let $\epsilon$ be a primitive root of $1$
of order $\lambda_1^{(k)}+\lambda_1^{(k)t}$.
Then the order of pole at $\epsilon$ for $f_{k}$ is strictly bigger than the analogous orders for
the other $f_\bullet$. So $\alpha_{k}=0$.

Let us show that $\alpha=0$. Multiply the functions
$f, f_{j}, j>k,$ by the GCD of their denominators. Then the term attached to $f$ has the factor $(1-q^{\lambda_1^{(k)t}+\lambda_1^{(k)}-m})$
in its denominator, while the other functions do not. This implies the claim.

So we have a linear combination of $f_{j}, j>k,$ that is 0. Dividing the $f_{j}$'s by the common powers of $q$ in the numerators
and multiplying by the common denominator we get $\sum_{j>k} \alpha_j q^{\kappa s_j}=0$. This implies that
some of $s_j$'s are equal to each other, contradiction.

\subsubsection{Case $\lambda^{(k)}=(1,1,\ldots,1)$}\label{SSS_sym_final}
 In this case $\mu_k^{(k)}=(2,1\ldots,1)$.
We need to check
that the following  expressions are linearly independent (we divide $f$'s by the common factors of the numerators
and multiply by their common denominator):
$$g_\lambda:=q^{\kappa(s_k-m+1)+m-1}(1-q), g_{\mu_k}:=q^{\kappa(s_k+1)}(1-q^{m-1}), g_{\mu_j}:=q^{\kappa s_j}(1-q^m), j>k.$$
So (\ref{eq:lin_comb}) can be rewritten as
\begin{equation}\label{eq:lin_comb2}
\begin{split}
&\alpha(q^{\kappa(s_k-m+1)+m-1}-q^{\kappa(s_k-m+1)+m})=\\
&-\alpha_k(q^{\kappa(s_k+1)}-q^{\kappa(s_k+1)+m-1})-\sum_{j>k} \alpha_j(q^{\kappa s_j}-q^{\kappa s_j+m}).
\end{split}
\end{equation}
If $\alpha=0$, then we can plug a primitive $m$-th root of $1$ for $q$ and see that $\alpha_k=0$.
Then we get a contradiction as above. So $\alpha\neq 0$. After all possible cancellations in the right hand side of
(\ref{eq:lin_comb2}) we need to get $q^{\kappa(s_k-m+1)+m-1}$ with coefficient $\alpha$, $q^{\kappa(s_k-m+1)+m}$
with coefficient $-\alpha$ and no other powers of $q$. Let us make all cancellations in the $\sum_{j>k}$
part.
Recall that all the differences between the powers of $q$ will have to be integers. Let $M$ be the maximal difference
occurring after the cancelations in $\sum_{j>k}$. We claim that $M\leqslant m$. Assume the converse.
Then after the cancellation with the additional term in the right hand side there will be two powers of $q$
differing by more than $1$. This is a contradiction, so $M\leqslant m$.

The latter is only possible when we have only one nonzero $\alpha_j$ with $j>k$. We rewrite (\ref{eq:lin_comb2})
as
\begin{equation}\label{eq:lin_comb3}
-\alpha_j(q^{\kappa s_j}-q^{\kappa s_j+m})=
\alpha(q^{\kappa(s_k-m+1)+m-1}-q^{\kappa(s_k-m+1)+m})+\alpha_k(q^{\kappa(s_k+1)}-q^{\kappa(s_k+1)+m-1}).
\end{equation}
To get the required cancellation in the right hand side we must have either $\kappa(s_k-m+1)+m=\kappa(s_k+1)$ or $\kappa(s_k+1)+m-1=\kappa(s_k-m+1)+m-1$.
The first equality is equivalent to $\kappa=1$, while the second one is equivalent to $\kappa=0$.

This completes  (the inductive step of) the proof that $\nu=\operatorname{id}$.

\subsection{Proof of Theorem \ref{SSS_sph_sym_main}}\label{SS_sym_proof}
\subsubsection{Supports}
First, note that for $M\in \OCat_p$ we have $\operatorname{Supp}_{\C[\h]}M=\pi^{-1}(\operatorname{Supp}_{\C[\h]^W}M)$,
where $\pi:\h\rightarrow \h/W$ is  the quotient morphism. So we just need to check that $\operatorname{Supp}_{\C[\h]^W}M=
\operatorname{Supp}_{\C[\h]^W}\Xi_{\sigma,p}(M)$.

Next, notice that $\operatorname{Supp}_{\C[\h]^W}(eM)\subseteq \operatorname{Supp}_{\C[\h]^W}(M)$
for all $M\in \OCat_p$ and also that $\operatorname{Supp}_{\C[\h]^W}{\bf H}_{p} e\otimes_{{\bf U}_{p}}N\subseteq \operatorname{Supp}_{\C[\h]^W}N$ for all $N\in \OCat_{p}^{sph}$. It follows that for $p\in \param_1^{sph}$
these inclusions are equalities. So we have reduced the proof to checking that
$$\operatorname{Supp}_{\C[\h]^W}N=\operatorname{Supp}_{\C[\h]^W}\Xi^{sph}_{\sigma}(N), \quad N\in \OCat_p^{sph}.$$
To verify this it suffices to check that the automorphism $\sigma$ of ${\bf U}$ fixes $\C[\h]^W\subset {\bf U}$
pointwise.

Now in ${\bf U}$ the subalgebras $S(\h)^W$ and $\C[\h]^W$ are precisely the fixed point subalgebras for the
first and the second copy of $\C^\times$ under the action defined in (\ref{eq:two_dim_action}).
The subalgebras $S(\h)^W,\C[\h]^W\subset {\bf U}$ map isomorphically  into
their images in $\C[\h\oplus\h^*]^W$. But the $\mathfrak{S}_{\ell}$-action is trivial
modulo $\param^*$. It therefore follows that these subalgebras are pointwise fixed by the action of $\mathfrak{S}_{\ell}$.

The proof that $\Xi_{\sigma,p}$ preserves the supports is now complete.

\subsubsection{Images of standards}\label{SSS_high_wt_coinc}
Recall that both $\OCat_p$ and $\OCat_{\sigma p}$ are highest weight categories. Set $\Delta'_p(\lambda):=\Xi_{\sigma,p}^{-1}(\Delta_{\sigma p}(\sigma \lambda))$. Then, according to the previous subsection, $[\Delta'_p(\lambda)]=[\Delta_p(\lambda)]$. Moreover, both collections $\Delta_p(\bullet), \Delta_p'(\bullet)$
are the sets of standard objects for two, a priori different, highest weight structures on $\OCat_p$. The following
lemma shows these two highest weight structures must coincide and therefore completes the proof of Theorem \ref{SSS_sph_sym_main}.

\begin{lemma}
Let $\mathcal{C}$ be an abelian category over a field $k$ and let $\{\Delta_\lambda\}, \{\Delta'_\lambda\}$ be two collections
of objects indexed by a finite set $\Lambda$ that are the standard objects for two highest weight category structures on $\mathcal{C}$. If $[\Delta_\lambda]=[\Delta'_\lambda]$ for all $\lambda\in\Lambda$, then
$\Delta_\lambda\cong\Delta'_\lambda$.
\end{lemma}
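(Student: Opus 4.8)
The plan is to transfer everything to the Grothendieck group $[\mathcal{C}]$, use the unitriangularity of standards against simples that each of the two structures supplies, and then upgrade a Grothendieck-group identity to an isomorphism by means of projective covers. Write $\leq$ and $\leq'$ for the partial orders of the two highest weight structures, let $L_\lambda$ (resp. $L'_\lambda$) be the head of $\Delta_\lambda$ (resp. $\Delta'_\lambda$), and recall that $\{L_\lambda:\lambda\in\Lambda\}$ and $\{L'_\lambda:\lambda\in\Lambda\}$ are each a complete, irredundant list of the simple objects of $\mathcal{C}$. Since simple objects are intrinsic to $\mathcal{C}$, there is a unique permutation $\sigma$ of $\Lambda$ with $L'_\lambda\cong L_{\sigma\lambda}$. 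The subtlety I have to respect throughout is that $\leq$ and $\leq'$, and the labellings of the simples, may genuinely differ, so I will never compare the two orders directly.

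First I would show $\sigma=\mathrm{id}$. In $[\mathcal{C}]$ we have $[\Delta'_\lambda]=\sum_\mu [\Delta'_\lambda:L'_\mu]\,[L_{\sigma\mu}]$ with $[\Delta'_\lambda:L'_\lambda]=1$ and $[\Delta'_\lambda:L'_\mu]=0$ unless $\mu\leq'\lambda$, while $[\Delta_\lambda]=\sum_\mu[\Delta_\lambda:L_\mu]\,[L_\mu]$ has coefficient $1$ at $[L_\lambda]$. Since $[\Delta_\lambda]=[\Delta'_\lambda]$ and the $[L_\nu]$ form a basis of $[\mathcal{C}]$, comparing coefficients of $[L_\lambda]$ gives $[\Delta'_\lambda:L'_{\sigma^{-1}\lambda}]=1\neq 0$, hence $\sigma^{-1}\lambda\leq'\lambda$ for every $\lambda$. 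As $\Lambda$ is finite and $\sigma^{-1}$ is a bijection, running around each cycle $(\lambda_0,\lambda_1=\sigma^{-1}\lambda_0,\dots)$ of $\sigma^{-1}$ yields $\lambda_0\geq'\lambda_1\geq'\cdots\geq'\lambda_0$, forcing the cycle to be a fixed point; thus $\sigma^{-1}=\mathrm{id}$, i.e. $L'_\lambda\cong L_\lambda$. Comparing all coefficients now also gives $[\Delta_\lambda:L_\mu]=[\Delta'_\lambda:L_\mu]$ for all $\lambda,\mu$.

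Next I would upgrade this to an isomorphism using that a finite highest weight category has enough projectives. Let $P_\lambda$ be the projective cover of $L_\lambda$ (an invariant of $\mathcal{C}$). Since $L_\lambda$ is the head of both $\Delta_\lambda$ and $\Delta'_\lambda$, there are surjections $\pi:P_\lambda\twoheadrightarrow\Delta_\lambda$ and $\pi':P_\lambda\twoheadrightarrow\Delta'_\lambda$; because $P_\lambda$ is a projective cover, any two such surjections to $\Delta_\lambda$ differ by precomposition with an automorphism of $P_\lambda$, so $\ker\pi$ and $\ker\pi'$ are well-defined subobjects of $P_\lambda$. I would then invoke the standard fact that $\ker\pi$ is filtered by standards $\Delta_\mu$ with $\mu>\lambda$, so that every simple quotient of $\ker\pi$ is some $L_\mu$ with $\mu>\lambda$. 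The image of $\ker\pi$ under $\pi'$ is a quotient of $\ker\pi$ and a subobject of $\Delta'_\lambda$; were it nonzero it would have a simple quotient $L_\mu$ with $\mu>\lambda$, which would be a composition factor of $\Delta'_\lambda$, contradicting $[\Delta'_\lambda:L_\mu]=[\Delta_\lambda:L_\mu]=0$. Hence $\ker\pi\subseteq\ker\pi'$, and the argument with $\leq'$ in place of $\leq$ gives $\ker\pi'\subseteq\ker\pi$. Therefore $\ker\pi=\ker\pi'$ and $\Delta_\lambda\cong P_\lambda/\ker\pi=P_\lambda/\ker\pi'\cong\Delta'_\lambda$.

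The only place requiring care is the passage from the Grothendieck group back to modules; all the Grothendieck-group bookkeeping is robust precisely because the unitriangularity $[\Delta_\lambda]=[L_\lambda]+\sum_{\mu<\lambda}(\cdots)[L_\mu]$ holds separately for each of the two orders, and because the simple objects, the projective covers, and the standard filtration of $P_\lambda$ are all intrinsic to (or canonically attached to) the highest weight structure in question. I expect the identification $L_\lambda\cong L'_\lambda$ to be the conceptual crux: once that is in hand, the projective-cover comparison is essentially forced.
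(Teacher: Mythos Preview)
Your proof is correct and takes a genuinely different route from the paper's.

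The paper argues by induction on the partial order $\leq$ attached to the first structure. For $\lambda$ minimal, $\Delta_\lambda$ is simple and hence isomorphic to $\Delta'_\lambda$ by the Grothendieck-group hypothesis. In the inductive step it works inside the truncated Serre subcategory $\mathcal{C}^{\leq\lambda}$: there $\Delta_\lambda$ is projective, while $\Delta'_\lambda$ lies in $\mathcal{C}^{\leq\lambda}$ because $[\Delta'_\lambda]=[\Delta_\lambda]$. The head of $\Delta'_\lambda$ is identified as $L_\lambda$ using only the inductive hypothesis (if it were $L_\mu$ with $\mu<\lambda$ it would coincide with the head of $\Delta'_\mu\cong\Delta_\mu$). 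Projectivity of $\Delta_\lambda$ in $\mathcal{C}^{\leq\lambda}$ then yields a surjection $\Delta_\lambda\twoheadrightarrow\Delta'_\lambda$, hence an isomorphism.

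Your argument instead first pins down $L'_\lambda\cong L_\lambda$ for all $\lambda$ simultaneously via the permutation/cycle trick, and then works in the full category with the global projective cover $P_\lambda$ and its standard filtration, comparing $\ker\pi$ and $\ker\pi'$. The key step---that any simple quotient of a $\Delta$-filtered object with layers $\Delta_\mu$, $\mu\in S$, is some $L_\mu$ with $\mu\in S$---is correct (prove it by induction on the filtration length, using that each $\Delta_\mu$ has simple head $L_\mu$); you might state it as a one-line lemma since it is doing real work. Note also that the symmetric containment $\ker\pi'\subseteq\ker\pi$ uses the \emph{other} order $\leq'$: simple quotients of $\ker\pi'$ are $L_\mu$ with $\mu>'\lambda$, while composition factors of $\Delta_\lambda$ satisfy $\mu\leq'\lambda$ (because $[\Delta_\lambda]=[\Delta'_\lambda]$), so the contradiction is in $\leq'$ rather than $\leq$. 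Your parenthetical about $\ker\pi,\ker\pi'$ being ``well-defined'' is harmless but unnecessary: once specific $\pi,\pi'$ are fixed the kernels are specific subobjects and that is all you use.

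What each approach buys: the paper's proof is lighter on structural input (it uses only the projectivity of $\Delta_\lambda$ in $\mathcal{C}^{\leq\lambda}$, not the existence and standard filtration of the full projective cover $P_\lambda$), at the cost of an induction. Your approach avoids induction entirely and cleanly separates the two issues---matching the simples, then matching the standards---which makes the logic more transparent, but it calls on slightly more of the machinery of highest weight categories.
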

\begin{proof}
Let $L_\lambda$ be the simple head of $\Delta_\lambda$. Then the set $\{ L_\lambda : \lambda\in\Lambda\}$ is a complete set of irreducible objects in $\mathcal{C}$. We will let $<$ denote the ordering on $\Lambda$ that defines the highest weight structure associated to the $\Delta_{\lambda}$'s. We will prove the lemma by induction.

To begin, suppose that $\lambda$ is minimal in the partial ordering $<$. Then $\Delta_\lambda$ is irreducible, so we deduce that $\Delta_\lambda \cong \Delta'_\lambda$.

Now assume that we have shown that $\Delta_\mu \cong \Delta'_\mu$ for all $\mu < \lambda$. Let $\mathcal{C}^{\leq \lambda}$ be the full subcategory of $\mathcal{C}$ whose objects have filtrations by $L_\mu$ with $\mu\leq \lambda$. By construction $\Delta_\lambda$ belongs to $\mathcal{C}^{\leq \lambda}$ and is a projective object in this category, see for example \cite[Proposition 4.13]{rouqqsch}. Since $[\Delta_\lambda] = [\Delta'_\lambda]$ we see that $\Delta'_\lambda$ also belongs to $\mathcal{C}^{\leq \lambda}$. Moreover, we know that the simple head of $\Delta'_\lambda$ must be $L_\lambda$, for otherwise it would be $L_\mu$ for some $\mu < \lambda$ and this is already the simple head of $\Delta'_\mu$ by induction. Hence we have a commutative diagram in $\mathcal{C}^{\leq \lambda}$
$$\xymatrix{ & \Delta_\lambda \ar[d] \ar@{.>}[dl] \\ \Delta'_\lambda \ar@{->>}[r] & L_{\lambda},
}
$$
where the induced morphism from $\Delta_\lambda$ to $\Delta'_{\lambda}$ is surjective. Thus, since they represent the same element of the Grothendieck group, $\Delta_\lambda$ and $\Delta'_\lambda$ are isomorphic. This completes the induction step. \end{proof}

\subsection{Counterexample for aspherical parameters}
Let us now provide a counterexample to Theorem \ref{SSS_sph_sym_main} for $p\in \param_1\setminus \param_1^{sph}$ --
we will see that there is no equivalence $\OCat_p\rightarrow \OCat_{\sigma p}$ sending
$\Delta_p(\lambda)$ to $\Delta_{\sigma p}(\sigma \lambda)$.

Let $\ell = 2$ and $n=2$ so that $W = G(2,1,2)$ is the Weyl group of type $B_2$. Take ${\bf s} = (-1,0)$ and $\kappa$ to be irrational. These are aspherical parameters (take $u=k=m=1$ in the final bullet point of \ref{SSS_sph_descr}). Then \cite[Theorems 3.2.3 and 3.2.4]{chmutova} shows that $\Delta_{\kappa, {\bf s}} ((1^2), \emptyset), \Delta_{\kappa, {\bf s}} (\emptyset, (1^2))$ and $\Delta_{\kappa, {\bf s}} (\emptyset, (2))$ are irreducible and the two other standard modules are reducible. On the other hand if we take $\sigma = (1 \, 2)$ so that ${\bf s}' = \sigma ({\bf s}) = (0,-1)$, then we find that  $\Delta_{\kappa, {\bf s}'} ((2), \emptyset), \Delta_{\kappa, {\bf s}'} (\emptyset, (1^2))$ and $\Delta_{\kappa, {\bf s}'} (\emptyset, (2))$ are irreducible.

\section{Derived equivalences}\label{SECTION_derived}
\subsection{Main result}
\subsubsection{Setting}
We fix $\ell>1,n\geqslant 1$ and take $W=G_\ell(n)= (\mu_\ell)^n\rtimes \mathfrak{S}_n$ acting on its reflection representation $\h=\C^n$. Pick $p,p'\in \param_1$ and let $(h_{H,j}),(h'_{H,j})$ for $(H,j)\in U$ be the parameters representing $p,p'$ as in \ref{SSS_alt_pres}. We say that $p,p'$ have {\it integral difference} if for each conjugacy class $H$ of reflection
hyperplanes there is $a_H\in \C$ with $h'_{H,j}-h_{H,j}-a_H\in \Z$ for all  $j$. This notion agrees with what we wrote in the introduction.

\subsubsection{Support filtrations}
We are interested in  the bounded derived categories $D^b(\OCat_p)$. These are triangulated categories that come
equipped with filtrations by triangulated subcategories -- the support filtrations. Namely, for a parabolic subgroup
$\underline{W}\subset W$ consider the full subcategory $D^b_{\underline{W}}(\OCat_p)$ consisting of all complexes
whose cohomology is supported on $W\h^{\underline{W}}$ where $\h^{\underline{W}} = \{ y\in \h: wy = y \text{ for all }w\in \underline{W}\}$.

\subsubsection{Main theorem}\label{der_main_thm}
\begin{theorem}
Suppose that the difference between $p,p'\in \param_1$ is integral. Then there is an equivalence $D^b(\OCat_p)\rightarrow D^b(\OCat_{p'})$
of triangulated categories that preserves the support filtration.
\end{theorem}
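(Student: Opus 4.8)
The plan is to realize $D^b(\OCat_p)$ geometrically, as a subquotient of a derived category of equivariant modules over a quantization of the symplectic resolution $\mathfrak{X}\to(\h\times\h^*)/G_\ell(n)$, and then to implement the change of parameter by tensoring with a line bundle. First I would reduce to elementary steps. Since the algebra ${\bf H}_p$ depends on the parameters $(h_{H,j})$ only through the differences $h_{H,j}-h_{H,j-1}$, adding a common constant $a_H$ to all $h_{H,j}$ for a fixed hyperplane class $H$ changes nothing; thus the equivalence relation ``integral difference'' is generated by the single moves $h_{H,j}\mapsto h_{H,j}+1$. As the composition of two support-filtration-preserving derived equivalences is again one, it suffices to produce, for each such elementary shift $p\rightsquigarrow p'$, a triangulated equivalence $D^b(\OCat_p)\xrightarrow{\sim}D^b(\OCat_{p'})$ preserving the support filtration.

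Next I would recall the quiver-variety picture of \cite{quant}: $\mathfrak{X}$ is a Nakajima-type quiver variety for an extension of the cyclic quiver of type $A_{\ell-1}$, obtained from a representation space $\mathfrak{R}$ by a Hamiltonian reduction with respect to a gauge group $G$, and the whole of ${\bf H}_p$ — not merely its spherical subalgebra ${\bf U}_p$ — is recovered as the endomorphism algebra of a sum of quantum Hamiltonian reductions $D(\mathfrak{R})/\!\!/\!\!/_{\chi}G$ over the relevant $G$-characters $\chi$, through a tautological (Procesi-type) bundle; it is precisely this feature that avoids any sphericity hypothesis. On the quantum side there is a $\C^\times$-equivariant sheaf of algebras $\mathcal{A}_p$ on $\mathfrak{X}$ and a corresponding bundle, and I would introduce the category $\widetilde{\OCat}_p$ of $(G\times\C^\times)$-equivariant $D(\mathfrak{R})$-modules — equivalently, $\C^\times$-equivariant $\mathcal{A}_p$-modules — that are holonomic with support in the preimage of $\h/G_\ell(n)\subset(\h\times\h^*)/G_\ell(n)$, the Lagrangian cut out by the contracting $\C^\times$ coming from the Euler grading. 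The key structural input, established below through a sequence of reductions, is a comparison functor exhibiting $D^b(\OCat_p)$ as a Verdier quotient of $D^b(\widetilde{\OCat}_p)$ (or, after a further reduction, as equivalent to such a quotient), valid for every $p\in\param_1$ and compatible with the support filtrations on the two sides.

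The change of parameter is then transparent on the geometric side: for an elementary integral shift the quantizations $\mathcal{A}_p$ and $\mathcal{A}_{p'}$ differ by tensoring with a $\C^\times$-equivariant line bundle $\mathcal{L}$ on $\mathfrak{X}$ (equivalently, the $G$-equivariant structure on $D(\mathfrak{R})$-modules is twisted by a character), and $M\mapsto\mathcal{L}\otimes M$ induces an equivalence $\widetilde{\OCat}_p\xrightarrow{\sim}\widetilde{\OCat}_{p'}$ — exact when $\mathcal{L}$ is suitably positive, and in general only after passing to bounded derived categories, which is all that is needed. This equivalence descends through the comparison functors to the desired $D^b(\OCat_p)\to D^b(\OCat_{p'})$. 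That it preserves the support filtration is immediate: supports are computed on the affinization $(\h\times\h^*)/G_\ell(n)$, on which $\mathcal{L}\otimes(-)$ acts trivially, and the comparison functor was arranged to intertwine the two support stratifications.

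The main obstacle is the comparison step itself when $p$ is aspherical: there one no longer has the Morita equivalence $\Lmod{{\bf H}_p}\simeq\Lmod{{\bf U}_p}$ of \ref{sphdefn}, so ``localization to $\mathfrak{X}$'' must be carried out at the level of the full algebra via the tautological bundle, the resulting functor of abelian categories is typically not an equivalence, and one must control its kernel and upgrade it to an equivalence of bounded derived categories — a recollement/de-equivariantization argument — while simultaneously tracking the support stratification through each intermediate category. A secondary technical point is to check that the line bundle $\mathcal{L}$ implementing a given elementary shift can be chosen in the ample cone relative to the contracting $\C^\times$, so that the twist respects the isotropic support condition defining $\widetilde{\OCat}$; this is a combinatorial verification on the character $\chi(p')-\chi(p)$, and if a single step lands on the wrong side of the cone one replaces it by a composition of admissible steps.
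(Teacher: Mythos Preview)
Your overall strategy is the paper's strategy: realize both sides geometrically via quantizations of the symplectic resolution $X\to(\h\oplus\h^*)/W$, change parameters by tensoring with a quantized line bundle, and then descend through a chain of equivariant/formal reductions back to $D^b(\OCat_p)$. The differences are in where you locate the difficulties, and here you have misjudged twice.

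First, the comparison between the algebraic side and the sheaf side is not a Verdier quotient but a genuine derived equivalence, and it holds for \emph{all} parameters, spherical or not. Once you commit to the Procesi bundle $\mathcal{P}^\alpha_\hbar$ on $X$ (rather than to $D(\mathfrak{R})$-modules upstairs, which is the McGerty--Nevins route and really does require sphericity), the functor $\operatorname{RHom}(\mathcal{P}^\alpha_\hbar,\bullet)$ is a quantized McKay equivalence $D^b(\operatorname{mod}^T\text{-}\mathcal{D}^\alpha_\hbar)\xrightarrow{\sim} D^b(\operatorname{mod}^T\text{-}{\bf H}^{\wedge_\hbar}_{p\hbar})$. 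The actual work here is homological: showing the functor lands in bounded complexes with finitely generated cohomology, and proving the adjunction cones vanish by reducing modulo $\hbar$ to the classical Bezrukavnikov--Kaledin equivalence via a derived Nakayama lemma. There is no ``kernel to control'' and no recollement argument; your worry about aspherical parameters evaporates precisely because you are using the full-algebra bundle rather than the spherical localization.

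Second, the translation step needs no positivity. On the sheaf side the bimodule $\mathcal{D}^{\alpha,\chi}_\hbar$ is a flat deformation of the line bundle $\mathcal{O}_X^\chi$, and the multiplication map $\mathcal{D}^{\alpha+\chi,-\chi}_\hbar\otimes_{\mathcal{D}^\alpha_\hbar}\mathcal{D}^{\alpha,\chi}_\hbar\to\mathcal{D}^{\alpha+\chi}_\hbar$ is an isomorphism because it is so modulo $\hbar$. Thus tensoring gives an equivalence of \emph{abelian} categories $\operatorname{mod}^T\text{-}\mathcal{D}^{\alpha+\chi}_\hbar\xrightarrow{\sim}\operatorname{mod}^T\text{-}\mathcal{D}^\alpha_\hbar$ for every character $\chi$; there is no ample-cone constraint and no need to decompose into elementary shifts. (Your reduction to single moves $h_{H,j}\mapsto h_{H,j}+1$ is harmless but unnecessary.)

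Where the genuine effort lies, and what your outline underplays, is the descent from the $T$-equivariant $\hbar$-adic world to $D^b(\OCat_p)$: one must quotient by $\hbar$-torsion to remove $\hbar$ and the contracting $T_2$-action, then identify $D^b_{\OCat}(\operatorname{mod}^{T_1}\text{-}{\bf H}_p)$ with $D^b(\OCat_p^{T_1})$ (this uses a completion argument in the spirit of Etingof), and finally pass from the $T_1$-graded category $\OCat$ to the ungraded one via a tilting object. Each step must be checked to preserve the support filtration, which is straightforward once the functors are exhibited as $\C[\h\oplus\h^*]^W$-linear or local over $(\h\oplus\h^*)/W$.
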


This theorem confirms \cite[Conjecture 5.6]{rouqqsch} for the groups $W=G_{\ell}(n)$.

\subsubsection{Symmetric group equivalence}\label{sym_derived} Recall the action of $\mathfrak{S}_{\ell}$ on $\param_1$ introduced in \ref{SSS_sym_action}. Together with Theorem \ref{SSS_sph_sym_main}, Theorem \ref{der_main_thm} implies the following claim.

\begin{corollary}
For any $p\in \param_1$ and $\sigma\in \mathfrak{S}_{\ell}$ there is an equivalence $D^b(\OCat_p)\rightarrow D^b(\OCat_{\sigma p})$ of triangulated categories
that preserves the support filtration.
\end{corollary}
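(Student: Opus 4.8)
The plan is to bootstrap from the two results already proved. Theorem \ref{SSS_sph_sym_main} provides the $\mathfrak{S}_{\ell}$-symmetry, but only as an equivalence of abelian categories and only for spherical $p$ with $\kappa\neq 0,1$; Theorem \ref{der_main_thm} lets one move freely between any two parameters of integral difference, albeit only at the derived level. The idea is therefore to replace $p$ by a conveniently spherical member of its integral-difference class, apply the symmetry there, and transport the result back. The main point, and the only step requiring any work, is precisely this passage between the restricted hypotheses of Theorem \ref{SSS_sph_sym_main} and the generality of the corollary; everything else is formal.

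Concretely, I would first choose a spherical parameter $p'=(\kappa',{\bf s}')$ with $\kappa'\neq 0,1$ having integral difference to $p$. Taking $\kappa'=\kappa+m$ and $s_i'=(\kappa s_i+k_i)/\kappa'$ for integers $m,k_1,\dots,k_\ell$ automatically yields integral difference (with the constant $a$ in the definition equal to $0$); in $h$-coordinates this is simply the shift $p'=p+(m,0,k_1,\dots,k_\ell)$. By the explicit description of the aspherical locus in \ref{SSS_sph_descr}, avoiding $\param_1^{asph}$ together with $\{\kappa'\in\{0,1\}\}$ amounts to keeping $(m,k_1,\dots,k_\ell)\in\Z^{\ell+1}$ off a finite union of proper affine-linear subsets, which is possible. (When $p$ has $c_0=0$ the same works using the $h$-coordinate form of integral difference; alternatively one uses the reduction ${\bf H}_p\cong({\bf H}_p^0)^{\otimes n}\#\mathfrak{S}_n$ recorded after Theorem \ref{SSS_sph_sym_main}.) I expect this lattice bookkeeping to be the only fiddly step, and it is elementary.

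With such a $p'$ in hand, the corollary follows by composing three support-filtration-preserving triangulated equivalences. By Theorem \ref{der_main_thm} there is an equivalence $D^b(\OCat_p)\xrightarrow{\sim}D^b(\OCat_{p'})$ preserving the support filtration. Since $p'$ is spherical with $\kappa'\neq 0,1$, Theorem \ref{SSS_sph_sym_main} gives an exact equivalence of abelian categories $\OCat_{p'}\xrightarrow{\sim}\OCat_{\sigma p'}$ preserving the $\C[\h]$-support of every module; as the underlying group is $W=G_\ell(n)$ in both cases and membership in each $D^b_{\underline{W}}$ is tested on cohomology objects via their supports, this induces an equivalence $D^b(\OCat_{p'})\xrightarrow{\sim}D^b(\OCat_{\sigma p'})$ preserving the support filtration. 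Finally, because $\sigma$ fixes $\kappa$ and permutes the entries of ${\bf s}$, the pair $(\sigma p,\sigma p')$ again has integral difference — the membership $\kappa' s_i'-\kappa s_i\in a+\Z$ is simply reindexed by $\sigma^{-1}$, with the same $a$, and $\kappa'-\kappa\in\Z$ is unchanged — so a second application of Theorem \ref{der_main_thm} yields $D^b(\OCat_{\sigma p'})\xrightarrow{\sim}D^b(\OCat_{\sigma p})$ preserving the support filtration. Composing the three equivalences gives the corollary. As with Theorem \ref{der_main_thm} itself, this construction says nothing about images of standard objects, which is why — unlike Theorem \ref{SSS_sph_sym_main} — the corollary makes no such assertion.
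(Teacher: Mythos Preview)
Your argument is correct and follows the same strategy as the paper's own proof: pass to an integrally equivalent $p'$ satisfying the hypotheses of Theorem~\ref{SSS_sph_sym_main}, apply the abelian $\mathfrak{S}_\ell$-equivalence there, and transport back via Theorem~\ref{der_main_thm} on both sides, using that the $\mathfrak{S}_\ell$-action preserves integral difference. The paper simply asserts the existence of such a $p'$, whereas you supply the (elementary) verification that the lattice of integral shifts is not contained in the finite union of aspherical hyperplanes together with $\{\kappa'\in\{0,1\}\}$.
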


To prove this corollary one notes that the symmetric group action preserves the integral difference and that for each $p$ there is $p'$ that differs integrally from $p$ and satisfies the assumptions of Theorem \ref{SSS_sph_sym_main}.

\subsubsection{The scheme of the proof}\label{SSS_derived_content}
Derived equivalences as in Theorem \ref{der_main_thm} are supposed to exist for
all complex reflection groups. The reason why we can deal with the $W=G_{\ell}(n)$ case is because of the existence of a symplectic resolution of singularities for $(\h\oplus\h^*)/W$ and the general notion of a quantization for a non-affine symplectic variety.
The resolution and its quantizations are constructed using hamiltonian reduction, which we recall in \ref{SUBSECTION_Ham}.

Quantizations of the  resolution are sheafified versions of the spherical subalgebras
of ${\bf H}_p$. To recover the whole algebra ${\bf H}_p$ we need a quantization of a special vector bundle
on the resolution -- a (weakly) Procesi bundle. With this we can introduce the notion of
the subcategories $\OCat$ in the categories of (sheaves of) modules over the quantizations. These relationships are recalled in \ref{SUBSECTION_RCA_vs_quant}.

A somewhat unpleasant issue about the sheaf categories $\OCat$ is that we can only define them
in the equivariant setting: we need a torus action corresponding to the Euler grading. So the global version of the sheaf category $\OCat$ is not the usual category $\OCat$
but its graded counterpart that has already appeared in \cite{GGOR}. Furthermore, to work
with quantizations we need to introduce a formal variable $\hbar$ and to work with algebras
and sheaves over $\C[[\hbar]]$. To get rid of $\hbar$ we need another ``contracting'' torus action. In
Subsection \ref{glob cats} we treat the tori actions and the corresponding algebras
and categories of equivariant modules.

A nice feature of the categories on the sheaf level, however, is that they are equivalent as abelian
categories for integrally different parameters. This comes from the possibility
of twisting by a line bundle and is described in detail in Subsection \ref{SUBSECTION_translations}.

Now the main point of our proof is that the categories on the sheaf level and on the
algebra level can be related. Classically, at the level of algebraic geometry, this is the derived McKay correspondence. Recall that, according to \cite{BeKa2},
the derived categories of $\C[\h\times\h^*]\# W$-modules and of coherent sheaves on the
resolutions are equivalent and the equivalences are expressed via a weakly Procesi bundle.
This statement can be quantized to produce equivalences between what we call the categories of
modules over the ``homogenized'' version of ${\bf H}_p$ and modules over the corresponding quantization.
The explicit form of the equivalences implies that the subcategories of all objects whose cohomology
are in $\OCat$ are also equivalent. This is all proved in  Subsection \ref{SUBSECTION_qMcKay}. We remark that
this subsection is quite technical partly because we need to establish certain basic homological
properties of the modules over quantizations that we were not able to find in the literature.

After the quantized derived McKay equivalence is dealt with, to establish the equivalence in the theorem we will proceed through several auxiliary equivalences listed in \ref{SUBSECTION_equiv_list}.
First, we get rid of $\hbar$ and one torus action. Second, we pass from  complexes whose cohomology are in $\OCat$
to complexes in $\OCat$. And third, we pass from the equivariant categories $\OCat$
to the usual ones. These three steps are taken in Subsection \ref{SUBSECTION_derived_complet}.

\subsubsection{Ramifications} Theorem \ref{der_main_thm} can be carried over to the $\ell=1$ case in a straightforward way.
Also, with the same techniques one can prove an equivalence of derived categories $D^b({{\bf H}_p}\md)\stackrel{\sim}{\rightarrow}
D^b({{\bf H}_{p'}}\md)$, where the ${\bf H}_\bullet$'s are the symplectic reflection algebras corresponding to
arbitrary wreath products of symmetric groups and kleinian groups. By definition, parameters $p,p'$ have integral
difference if  $p-p'$ is a character of an appropriate reductive algebraic group, compare with  \ref{int_diff_rev}
below. This equivalence preserves the subcategories with finite dimensional cohomology.

\subsection{Auxiliary algebras and categories of modules}\label{glob cats}
To prove the existence of an equivalence in Theorem \ref{der_main_thm} we will establish
equivalences between several related categories. It is more convenient for us to
work with right modules, but since  $\OCat$ is isomorphic to $\OCat^{op}$
via a duality, see \cite[\S 4.2.1]{GGOR}, and the duality preserves the integral difference, we can indeed do this.
\subsubsection{Torus actions and equivariant modules}\label{actionsonH}
Set $T= T_1\times T_2 := \C^\times \times \C^\times $. Let $\hbar$ be a formal variable and pick $p\in \param_1$.
We let ${\bf H}_{p\hbar}$ denote the $\C[\hbar]$-algebra obtained by specializing ${\bf H}$ at
$p\hbar$ (so that $h\mapsto \hbar$) and let ${\bf H}_{p\hbar}^{\wedge_\hbar}$ be its $\hbar$-adic completion.
These algebras come equipped with an action of $T$ given by
\begin{equation}\label{eq:de_action1}
(t_1,t_2).x=t_1t_2x, (t_1,t_2).y=t_1^{-1}t_2y, (t_1,t_2).w=w, (t_1,t_2).\hbar=t_2^2\hbar \end{equation} where $(t_1,t_2)\in T, x\in \h^*, y\in \h$ and $w\in W$.

We let $\mods^{T}-{\bf H}_{p\hbar}$ denote the category of (algebraic) $T$-equivariant finitely generated right modules over ${\bf H}_{p\hbar}$ and $\mods^{T}-{\bf H}^{\wedge_\hbar}_{p\hbar}$
denote the category of (proalgebraic) $T$-equivariant finitely generated  right modules over ${\bf H}^{\wedge_\hbar}_{p\hbar}$. More precisely, an object $M$ of $\mods^{T}-{\bf H}_{p\hbar}$ is a finitely generated right ${\bf H}_{p\hbar}$-module together with a coaction $\Delta_M: M\rightarrow M\otimes \C[T]$ which satisfies $\Delta_M(mz) = \Delta_M(m)\Delta_{{\bf H}_{p\hbar}}(z)$ for $m\in M, z\in {\bf H}_{p\hbar}$ and where $\Delta_{{\bf H}_{p\hbar}}$ denotes the coaction of $T$ on ${\bf H}_{p\hbar}$. Morphisms are homomorphisms of ${\bf H}_{p\hbar
}$-modules compatible with the coactions. Clearly $T$ acts on any $T$-equivariant ${\bf H}_{\hbar p}$-module
$M$ by $m\cdot (t_1,t_2) = \sum m_{(0)} m_{(1)}(t_1,t_2)$ where we use the Sweedler notation $\Delta_M(m) = \sum m_{(0)} \otimes m_{(1)}$. The space $M$ is algebraic for the action of $T$, meaning that it decomposes as a direct sum of $T$-eigenspaces.
An object $N$ of $\mods^{T}-{\bf H}^{\wedge_\hbar}_{p\hbar}$ is defined similarly using a coaction $\Delta_N: N\rightarrow N\hat{\otimes} \C[T]$ where $N\hat{\otimes} \C[T]$ denotes the completed tensor product $\varprojlim(N/N\hbar^n\otimes \C[T])$. In this case each $M/\hbar^nM$ is algebraic for the action of $T$ and $M$ is the inverse limit of these spaces. The latter is proved analogously to
\cite[Lemma 2.4.4]{HC}.

Since the grading on ${\bf H}_{p\hbar}$ induced by the $T_2$-action is positive, the functor of $\hbar$-adic completion is an equivalence $$\mods^{T}-{\bf H}_{p\hbar}\stackrel{\sim}{\longrightarrow} \mods^{T}-{\bf H}^{\wedge_\hbar}_{p\hbar}.$$ Its inverse is given by sending an object of ${\bf H}^{\wedge_\hbar}_{p\hbar}$ to its largest rational $T_2$-submodule, that is to its subspace spanned by the eigenvectors for the action of $T_2$. The proof that this is indeed inverse is standard and repeats the proof of (1) and (2) in \cite[Proposition 3.3.1]{HC}.

\subsubsection{Categories $\OCat$} We let $\OCat_{p\hbar}^{T}\subset \mods^{T}-{\bf H}_{p\hbar}$ denote the full subcategory consisting of all objects that are finitely generated over $\C[\h][\hbar]$ and
$\OCat_{p\hbar}^{\wedge_\hbar, T}\subset \mods^{T}-{\bf H}^{\wedge_\hbar}_{p\hbar}$ the full subcategory consisting of all objects that are finitely generated over $\C[\h][[\hbar]]$.

The $T_1$-action descends to ${\bf H}_p = {\bf H}_{p\hbar}/(\hbar - 1 )$, so we have a category $\mods^{T_1}-{\bf H}_p$ of $T_1$-equivariant finitely generated ${\bf H}_p$-modules. The full subcategory $\OCat_{p}^{T_1}$
consisting of all modules that are finitely generated over $\C[\h]$ is just the $T_1$-equivariant
lift of $\OCat_p$: $T_1$-equivariant (or equivalently graded) modules that belong to $\OCat_{p}$.

\subsection{Quantization via hamiltonian reduction}\label{SUBSECTION_Ham}
We will require a $T$-equivariant resolution of singularities $\pi : X\longrightarrow (\h\oplus \h^*)/W$ and its quantizations.

\subsubsection{Classical hamiltonian reduction}
Consider the affine Dynkin quiver $Q$ of type $\widetilde{A}_{\ell-1}$ with $\ell$ vertices labelled $\{ 0, \ldots , \ell -1\}$.
Let $Q^{\sf CM}$ be its {Calogero-Moser} extension: this has one additional vertex $s$ and one new
arrow which starts at $s$ and ends at the extending vertex $0$ of $Q$. Consider the representation space
$V:=\operatorname{Rep}(\overline{Q}^{\sf CM}, n\delta+\epsilon_s)$, where $\overline{Q}^{\sf CM}$ is the doubled
quiver of $Q^{\sf CM}$ and $\delta$ is the indecomposable imaginary root for $Q$, $\delta=\sum_{i=0}^{\ell-1}\epsilon_i$.
The space $V$ comes equipped with a symplectic form and a hamiltonian action of
the group $\GL(n\delta):=\prod_{i=0}^{\ell-1}\GL(n)$, where the $i^{\text{th}}$ copy of $\GL(n)$ acts on $V$ by base-change at the $i^{\text{th}}$ vertex. A moment map $\mu: V\rightarrow \gl(n\delta)$ results.

There is an action of $T$ on $V$ induced by $$(t_1,t_2)\cdot M_ a =\begin{cases} t_1t_2 M_a \qquad & \text{if $a$ is an arrow of $Q^{\sf CM}$}, \\ t_1^{-1}t_2 M_a & \text{if $a$ is an arrow of $\overline{Q}^{\sf CM}\setminus Q^{\sf CM}$},\end{cases}$$ where $M_a$ denotes the transformation associated to the arrow $a$ at a point $M\in V$. The moment map is $T_1$-invariant and satisfies $\mu(t_2\cdot v)=t_2^2\mu(v)$.

\subsubsection{Resolution of singularities}
One can fix a ``general enough'' character $\chi:\GL(n\delta)\rightarrow \C^\times$ and use it to
specify the semistable locus $V^{ss}\subset V$. Then for $X$ we take the hamiltonian reduction
$\mu^{-1}(0)^{ss}/\GL(n\delta)$, where $\mu^{-1}(0)^{ss}=\mu^{-1}(0)\cap V^{ss}$.
The resolution morphism $\pi: X\rightarrow (\h\oplus\h^*)/W$ is the natural projective morphism $X\twoheadrightarrow \mu^{-1}(0)/\!/\GL(n\delta):=\operatorname{Spec}(\C[\mu^{-1}(0)]^{\GL(n\delta)})$
followed by the identification of $\mu^{-1}(0)/\!/\GL(n\delta)$ with $(\h\oplus\h^*)/W$, see \cite[Theorem 1.1]{CB2} combined with \cite[Lemma 9.2]{CB1}.

 The $T$-action
on $V$ induces a $T$-action on $X$ and on $\mu^{-1}(0)/\!/\GL(n\delta)$ and $\pi$ is $T$-equivariant. Under the identification of $ \mu^{-1}(0)/\!/\GL(n\delta)$ with $(\h\oplus\h^*)/W$ the $T$-action corresponds to the one on $\h\times \h^*$ introduced in \eqref{eq:de_action1}.

\subsubsection{Quantizations}
Recall that  $(t_1,t_2).\hbar=t_2^2 \hbar$ for $(t_1,t_2)\in T$. By a $T$-equivariant quantization of $X$ we mean a pair $(\Dcal_\hbar,\theta)$,
where $\Dcal_\hbar$ is a $T$-equivariant sheaf of flat $\C[[\hbar]]$-algebras that
is complete in the $\hbar$-adic topology and such that $\theta: \Dcal_\hbar/\hbar \Dcal_\hbar\xrightarrow{\sim}\Str_X$ is a $T$-equivariant
isomorphism of Poisson sheaves. Up to appropriate equivalence, the quantizations are naturally
parameterized by $H^2_{dR}(X)$, the $2^{\text{nd}}$ de Rham cohomology of $X$, see \cite[Theorem 1.8]{BeKa} and \cite[Corollary 2.3.3]{quant}. For a class $\alpha\in H^2_{dR}(X)$ we let $\Dcal_\hbar^\alpha$
denote the corresponding quantization.

\subsubsection{Quantum hamiltonian reduction} We can give an explicit realization of the quantizations $\Dcal^{\alpha}_{\hbar}$ thanks to the quiver theoretic description of $X$ via ``quantum hamiltonian reduction".

Set $\z:=(\gl(n\delta)^*)^{\GL(n\delta)}$. The Duistermaat-Heckman
theorem gives a linear map from $\z$ to $H^2_{dR}(X)$, see \cite[\S 3.2]{quant}. This map is an isomorphism and we use it to identify $H^2_{dR}(X)$ with $\z$.

Now consider the sheaf $\W_{\hbar,V}$ of $\C[[\hbar]]$-algebras obtained by first $\hbar$-adically completing the homogenized
Weyl algebra $\W_\hbar(V)$ and then localizing on $V$. We have a natural ``symmetrized'' lifting $\mu^*: \gl(n\delta)\rightarrow \W_\hbar(V)$
of the comorphsim of the moment map $\mu^*: \gl(n\delta)\rightarrow \C[V]$. We then form the reduction
$$ (\W_{\hbar, V^{ss}}/ \W_{\hbar,V^{ss}}\mu^{*}\gl(n\delta)_{\hbar\alpha})^{\GL(n\delta)}$$ where $\alpha\in \z$ and $\mu^*\gl(n\delta)_{\hbar\alpha} :=\{\xi-\hbar\langle\alpha,\xi\rangle : \xi\in \gl(n\delta)\}$. According to \cite[Proposition 3.2.1]{quant}, this reduction is a quantization of $X$. Moreover, \cite[Corollary 2.3.3 and Theorem 5.4.1]{quant} imply that the reduction   is isomorphic to $\Dcal^\alpha_\hbar$.

\subsubsection{Categories of $\Dcal_\hbar^\alpha$-modules}
Let $\mods^{T}-\Dcal_\hbar^\alpha$ denote the category
of (proalgebraic) $T$-equivariant coherent right $\Dcal_\hbar^\alpha$-modules. The subvariety $\pi^{-1}(\h/W)\subset X$ is a $T$-stable lagrangian subvariety. We then have the full
subcategory $\OCat_\hbar^{\alpha,T}$ of all modules supported on $\pi^{-1}(\h/W)$.

\subsection{${\bf H}_{p\hbar}^{\wedge_\hbar}$ vs $\Dcal_\hbar^\alpha$.}\label{SUBSECTION_RCA_vs_quant}
Here we will follow \cite{quant} to relate the algebras ${\bf H}^{\wedge_\hbar}_{p\hbar}$
 and the sheaves $\Dcal_\hbar^\alpha$.
\subsubsection{Weakly Procesi bundle} According to \cite[Theorem 2.3]{BeKa2} on $X$ there is a $T$-equivariant vector bundle $\mathcal{P}$ -- a weakly
Procesi bundle in the terminology of \cite[\S 4.4]{quant} -- having the following two properties:
\begin{itemize}
\item[(P1)] one has a $T$-equivariant
isomorphism $\operatorname{End}_{\Str_X}(\Pro,\Pro)\cong \C[\h\oplus \h^*]\# W$ of $\C[\h\oplus\h^*]^W=\C[X]$-algebras;
\item[(P2)] $\operatorname{Ext}^i_{\Str_X}(\Pro,\Pro)=0$ for $i>0$.
\end{itemize}
Thanks to (P2) we can deform $\Pro$ to a locally free right $\Dcal_\hbar^\alpha$-module $\Pro^\alpha_\hbar$.
This module is still $T$-equivariant and satisfies
\begin{itemize}
\item[(P$_\hbar$1)]  $\operatorname{End}_{\Dcal_\hbar^\alpha}(\Pro^\alpha_\hbar,\Pro^\alpha_\hbar)/(\hbar)=
\operatorname{End}_{\Str_X}(\Pro,\Pro)\cong \C[\h\oplus \h^*]\# W$,
\item[(P$_\hbar$2)] $\operatorname{Ext}^i_{\Dcal_\hbar^\alpha}(\Pro^\alpha_\hbar,\Pro^\alpha_\hbar)=0$ for $i>0$.
\end{itemize}
Thus we may consider the sheaf $\widetilde{\Dcal}_\hbar^\alpha$ of local endomorphisms of the right $\Dcal_\hbar^\alpha$-module
$\Pro^\alpha_h$, see \cite[\S 6.3]{quant}. This is a $T$-equivariant sheaf of algebras.

\subsubsection{$\Z/2\Z\times \mathfrak{S}_\ell$-action}\label{paramchange} We identify $\C^{\{0,\ldots,\ell-1\}}$ with $\z$
via $(x_0,\ldots,x_{\ell-1})\mapsto \sum_{i=0}^{\ell-1}x_i \operatorname{tr}_i$, where $\operatorname{tr}_i$ is
the trace at the $i^{\text{th}}$ vertex. We can then identify $\param^*$ and $\C \hbar\oplus \z^*$ as follows. Let $\epsilon_0,\ldots,\epsilon_{\ell-1}$ be the standard basis in $\z^*$
induced by the identification with $\C^{\{0,\ldots,\ell-1\}}$. An isomorphism $\upsilon:\C \hbar\oplus \z^*\rightarrow \param^*$
is basically the same as in \ref{SSS_cyclot_param}, it is given by
\begin{align*}
&\hbar\mapsto h,\\
&\epsilon_i\mapsto \frac{1}{\ell}(h-2\sum_{j=1}^{\ell-1}c_i \zeta_{\ell}^{ji}), \quad i=1,\ldots,\ell-1\\
&\epsilon_0\mapsto \frac{1}{\ell}(h-2\sum_{j=1}^{\ell-1}c_i)+c_0-t/2.
\end{align*}
There is an action of $\Z/2\Z\times \mathfrak{S}_{\ell}$
on $\C \hbar\oplus \z^*$: the action of $\mathfrak{S}_{\ell}$  has already appeared in Section \ref{SECTION_sym_equiv}, it  fixes $\delta=\sum_{i=0}^{\ell-1}\epsilon_i$ and $\hbar$,
preserves the subspace $\delta^{\perp}=\{\sum_{i=0}^{\ell-1}x_i \epsilon_i| \sum_{i=0}^{\ell-1}x_i=0\}$ and acts
on it such that the projections of $\epsilon_1,\ldots,\epsilon_{\ell-1}$ transform as the simple roots in the root
system $A_{\ell-1}$; the action of $\Z/2\Z$ is trivial on $\hbar$ and $\delta^\perp$ and is sign change on $\C \delta$.

\subsubsection{Isomorphism theorem}
The action of $W$ on $\mathcal{P}$ extends to produce a distinguished embedding $\C W\hookrightarrow \Gamma(X,\widetilde{\Dcal}_h^\alpha)$. The results of \cite[\S 6]{quant} may then be interpreted as follows.

\begin{theorem}\label{prop:param_id}
Given $\alpha\in \z = H^2_{dR}(X)$, define $\widehat{\alpha}: \C\hbar \oplus \z^*\rightarrow \C$ by sending $\hbar$ to $1$ and $f\in \z^*$ to $f(\alpha)$. Then there is $w\in \Z/2\Z\times \mathfrak{S}_{\ell}$ such that there is a $\C[[\hbar]]W$-linear and $T$-equivariant isomorphism
$\Gamma(X, \widetilde{\Dcal}^\alpha_\hbar)\cong {\bf H}^{\wedge_\hbar}_{ \upsilon^{\ast-1}(\widehat{w\alpha})\hbar}$ which is the identity modulo $\hbar$.
\end{theorem}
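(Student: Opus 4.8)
The plan is to exhibit $A_\hbar:=\Gamma(X,\widetilde{\Dcal}_\hbar^\alpha)$ as a $T$-equivariant, flat, $\hbar$-adically complete $\C[[\hbar]]$-algebra deformation of $\C[\h\oplus\h^*]\#W$ carrying a distinguished copy of $\C W$, and then to identify it with the appropriate completed cyclotomic Cherednik algebra by the classification of such deformations in \cite{quant}. The theorem is really the $G_\ell(n)$-incarnation of \cite[\S6]{quant}, so the bulk of the work is to unwind that reference; below I lay out the steps.

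First I would record the homological input. Locally on $X$ the sheaf $\widetilde{\Dcal}_\hbar^\alpha$ is a matrix algebra over $\Dcal_\hbar^\alpha$, hence a sheaf of flat, $\hbar$-adically complete $\C[[\hbar]]$-algebras; by (P$_\hbar$2) it has no higher cohomology on $X$, and by (P$_\hbar$1) its reduction modulo $\hbar$ is $\operatorname{End}_{\Str_X}(\Pro,\Pro)$. Feeding the vanishing into the cohomology long exact sequences of $0\to\widetilde{\Dcal}_\hbar^\alpha\xrightarrow{\hbar}\widetilde{\Dcal}_\hbar^\alpha\to\widetilde{\Dcal}_\hbar^\alpha/\hbar\widetilde{\Dcal}_\hbar^\alpha\to0$ and its truncations, together with a routine Mittag--Leffler argument, shows that $A_\hbar$ is $\hbar$-torsion free and $\hbar$-adically complete with $A_\hbar/\hbar A_\hbar\cong\Gamma(X,\operatorname{End}_{\Str_X}(\Pro,\Pro))\cong\C[\h\oplus\h^*]\#W$, the last isomorphism of graded algebras by (P1) and $T$-equivariance. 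The $W$-action on $\Pro$ yields the distinguished embedding $\C W\hookrightarrow\Gamma(X,\widetilde{\Dcal}_\hbar^\alpha)=A_\hbar$ from the statement. Thus $A_\hbar$ is a $T$-equivariant graded deformation of $\C[\h\oplus\h^*]\#W$ over $\C[[\hbar]]$, equipped with a copy of $\C W$ and reducing to $\C[\h\oplus\h^*]\#W$ modulo $\hbar$.

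It remains to match $A_\hbar$ with a Cherednik algebra. For $q\in\param_1$ the algebra ${\bf H}^{\wedge_\hbar}_{q\hbar}$ is likewise a $T$-equivariant, flat, $\hbar$-adically complete deformation of $\C[\h\oplus\h^*]\#W$ carrying the standard $\C W$, and $q\mapsto{\bf H}^{\wedge_\hbar}_{q\hbar}$ is an affine-linear family. The quantizations $\Dcal_\hbar^\alpha$, $\alpha\in\z=H^2_{dR}(X)$ (identified by Duistermaat--Heckman as above), exhaust the $T$-equivariant quantizations of $X$ by \cite[Corollary 2.3.3]{quant} and \cite[Theorem 1.8]{BeKa}, and \cite[\S6]{quant} computes the characteristic class of the corresponding $A_\hbar$ as an explicit affine-linear function of $\alpha$: the linear part comes from the shift $\mu^*\gl(n\delta)_{\hbar\alpha}$ in the quantum Hamiltonian reduction, and the affine part from the symmetrized lifting $\mu^*$ (a $\tfrac{1}{2}\operatorname{tr}$-, i.e. $\rho$-type, correction), which transported through $\upsilon$ is precisely $\alpha\mapsto\upsilon^{\ast-1}(\widehat\alpha)$. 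Since the characteristic-class map underlying this identification is canonical only up to the Namikawa--Weyl group $\Z/2\Z\times\mathfrak{S}_\ell$ of $X$ (the $\mathfrak{S}_\ell$ already present in \ref{SECTION_sym_equiv}, the $\Z/2\Z$ acting by sign on $\C\delta$), rigidity of deformations with prescribed class and prescribed $\C W$ yields, for some $w\in\Z/2\Z\times\mathfrak{S}_\ell$, a $\C[[\hbar]]W$-linear, $T$-equivariant isomorphism $\Gamma(X,\widetilde{\Dcal}_\hbar^\alpha)\cong{\bf H}^{\wedge_\hbar}_{\upsilon^{\ast-1}(\widehat{w\alpha})\hbar}$; one then normalizes it to the identity modulo $\hbar$ after matching the two copies of $\C W$ and correcting by a unit congruent to $1$ modulo $\hbar$.

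The main obstacle is the last step: pinning down the affine-linear parameter correspondence exactly and proving that the residual ambiguity is precisely $\Z/2\Z\times\mathfrak{S}_\ell$ --- no more and no less --- which is the technical core of \cite[\S6]{quant}. Arranging the isomorphism to be simultaneously $\C W$-linear and the identity modulo $\hbar$ also requires care, since $A_\hbar$ a priori determines its copy of $\C W$ only up to conjugation by a unit congruent to $1$ modulo $\hbar$.
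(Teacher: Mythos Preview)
Your proposal is essentially the same approach as the paper's: the paper does not give a self-contained proof of this theorem but simply states that it is an interpretation of the results of \cite[\S6]{quant}, and your outline is a faithful unpacking of what that reference does (flat $\hbar$-adic deformation of $\C[\h\oplus\h^*]\#W$ with a copy of $\C W$, classification of such deformations, affine-linear parameter match with residual finite-group ambiguity).

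One small point of emphasis: you attribute the element $w\in\Z/2\Z\times\mathfrak{S}_\ell$ to the Namikawa--Weyl group ambiguity in the characteristic-class map, whereas the paper phrases it as depending on the choice of the weakly Procesi bundle $\mathcal{P}$ (different choices of $\mathcal{P}$ yield different isomorphisms whose parameters differ by this group action). These are two sides of the same coin, but the paper's phrasing is slightly more concrete about the source of the ambiguity.
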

The element $w$ depends on the choice of $\mathcal{P}$. We will denote the map $\alpha\mapsto \upsilon^{*-1}(\widehat{w\alpha})$ by $\iota$.

\subsection{Equivalences}\label{SUBSECTION_equiv_list}
We can now list the equivalences that will be combined to prove Theorem \ref{der_main_thm}.

\subsubsection{Quantized McKay correspondence} We will show first of all that there is a quantized McKay correspondence, meaning an equivalence of bounded derived categories
\begin{equation}\label{eq:equiv_McKay}
D^b(\operatorname{mod}^{T}-\Dcal^\alpha_\hbar)\xrightarrow{\sim} D^b(\operatorname{mod}^{T}-{\bf H}^{\wedge_\hbar}_{\iota(\alpha)\hbar}).
\end{equation}
Its construction will imply that it induces
an equivalence \begin{equation} \label{eq:equiv_McKay1}D_{\OCat}^b(\operatorname{mod}^{T}-\Dcal^\alpha_\hbar)\xrightarrow{\sim} D^b_{\OCat}(\operatorname{mod}^{T}-{\bf H}^{\wedge_\hbar}_{\iota(\alpha)\hbar})\end{equation} between the subcategories of
complexes whose cohomology lies in the categories $\OCat$. Moreover, we will see that this equivalence
preserves the support filtrations.

\subsubsection{Translations} If $p = \iota(\alpha)$ and $p'= \iota(\alpha')$ have integral difference, we will prove that there is a support and $\OCat$-preserving equivalence of abelian categories
\begin{equation}\label{eq:equiv_trans}
\operatorname{mod}^{T}-\Dcal^\alpha_\hbar\xrightarrow{\sim} \operatorname{mod}^{T}-\Dcal^{\alpha'}_\hbar.
\end{equation}
Combined with (\ref{eq:equiv_McKay}) this gives a support preserving equivalence
\begin{equation}\label{eq:equiv_O1}
D^b_\OCat(\operatorname{mod}^{T}-{\bf H}^{\wedge_\hbar}_{p \hbar})\xrightarrow{\sim}
D^b_\OCat(\operatorname{mod}^{T}-{\bf H}^{\wedge_\hbar}_{p'\hbar}).
\end{equation}
We will then see that in (\ref{eq:equiv_O1}) one can get remove $\hbar$ and part of the $T$-action
to  produce an equivalence
\begin{equation}\label{eq:equiv_O2}
D^b_\OCat(\operatorname{mod}^{T_1}-{\bf H}_{p})\xrightarrow{\sim}
D^b_\OCat(\operatorname{mod}^{T_1}-{\bf H}_{p'}).
\end{equation}
\subsubsection{Equivalences between categories $\OCat$}
Our next step will be to prove that the natural functor
\begin{equation}\label{eq:equiv_O4}
D^b(\OCat_p^{T_1})\rightarrow D^b_\OCat(\operatorname{mod}^{T_1}-{\bf H}_{p})
\end{equation}
is an equivalence. With this in hand, the final step will be to get rid of $T_1$-equivariance and thus  to obtain  an equivalence
\begin{equation}\label{eq:equiv_O5}
D^b(\OCat_p)\xrightarrow{\sim} D^b(\OCat_{p'}).
\end{equation}
Tracking this construction we will see that this equivalence has the claimed properties.

\subsection{Quantized McKay correspondence}\label{SUBSECTION_qMcKay}  In this section we are going to produce the equivalence (\ref{eq:equiv_McKay}).

\subsubsection{Functors $\mathcal{F}_\hbar,\mathcal{G}_\hbar$} Set $p=\iota(\alpha)$. We are going to show that the functor $$\mathcal{F}_\hbar:=\operatorname{RHom}(\Pro^\alpha_\hbar,\bullet): D^b(\operatorname{mod}^T-\Dcal^\alpha_\hbar)
\rightarrow D^b(\operatorname{mod}^T-{\bf H}^{\wedge_\hbar}_{p\hbar})$$ is an equivalence in (\ref{eq:equiv_McKay}) with the required
properties. Here we take $\operatorname{RHom}$ in $\operatorname{mod}-\Dcal_{\hbar}^{\alpha}$. A quasi-inverse functor will be $\mathcal{G}_\hbar:=\bullet\otimes^L_{{\bf H}_{p\hbar}^{\wedge_h}}\Pro^\alpha_\hbar$.
First we are going to check that the functors $\mathcal{F}_\hbar,\mathcal{G}_\hbar$ are well-defined.

\subsubsection{Quasi-coherent $\Dcal^\alpha_\hbar$-modules}
Let $\operatorname{Mod}^T-\Dcal^\alpha_\hbar$ be the category of $T$-equivariant {\it quasi-coherent} $\Dcal^\alpha_\hbar$-modules that are,
by definition, the direct limit of their $T$-equivariant coherent $\Dcal_\hbar^\alpha$-submodules. This category has enough injectives. Indeed if $M$ is an object in $\operatorname{Mod}^T-\Dcal^\alpha_\hbar$ we may consider it as a quasi-coherent $\Dcal^{\alpha}_\hbar\# U(\mathfrak{t})$-module, compare with \cite{vdb}, and $M$ has an injective hull $\hat{I}(M)$ inside this larger category. Then $I(M)$, the maximal $T$-equivariant quasi-coherent $\Dcal_\hbar^\alpha$-submodule of $\hat{I}(M)$, is an injective hull in $\operatorname{Mod}^T-\Dcal^\alpha_\hbar$.

It is this larger category we would like to use to derive $\operatorname{Hom}(\Pro^\alpha_\hbar, \bullet)$. The following lemma, which is proved in the same manner as for instance \cite[Proposition 3.5]{huy}, allows us to do this.
\begin{lemma}
Let $D^b_{coh}(\operatorname{Mod}^T-\Dcal^\alpha_\hbar)$ denote the full subcategory of $D^b(\operatorname{Mod}^T-\Dcal^\alpha_\hbar)$ consisting of complexes whose cohomology belongs to $\operatorname{mod}^T-\Dcal^\alpha_\hbar$. Then the natural functor $D^b(\operatorname{mod}^T-\Dcal^\alpha_\hbar)\rightarrow D^b_{coh}(\operatorname{Mod}^T-\Dcal^\alpha_\hbar)$ is an equivalence of triangulated categories.
\end{lemma}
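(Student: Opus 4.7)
The plan is to follow the classical strategy of Hartshorne and Huybrechts (cf. \cite[Proposition 3.5]{huy}) for the scheme-theoretic equivalence $D^b(\operatorname{Coh}) \simeq D^b_{\mathrm{coh}}(\operatorname{QCoh})$, adapted to this equivariant quantization setting. Two structural inputs drive the argument. First, $\operatorname{mod}^T\text{-}\Dcal^\alpha_\hbar$ is a Serre subcategory of $\operatorname{Mod}^T\text{-}\Dcal^\alpha_\hbar$: kernels, cokernels and extensions of coherent $T$-equivariant modules remain coherent. This reduces, via the $\hbar$-adic filtration, to the analogous statement for coherent $T$-equivariant $\Str_X$-modules on the noetherian scheme $X$. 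Second, by the very definition given in the excerpt, every object of $\operatorname{Mod}^T\text{-}\Dcal^\alpha_\hbar$ is the filtered colimit of its coherent $T$-equivariant submodules.

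With these in hand, I would first prove full faithfulness. Given bounded complexes $M^\bullet, N^\bullet$ of coherent modules, any morphism in $\operatorname{Hom}_{D^b(\operatorname{Mod})}(M^\bullet,N^\bullet)$ is represented by a roof $M^\bullet \xleftarrow{s} P^\bullet \to N^\bullet$ with $s$ a quasi-isomorphism and $P^\bullet$ a bounded complex of quasi-coherent modules. The key step is to replace $P^\bullet$ by a coherent subcomplex $Q^\bullet \subset P^\bullet$ which still maps quasi-isomorphically to $M^\bullet$. One builds $Q^\bullet$ degree by degree: start from finitely many $T$-equivariant generators of each $M^i$, lift them along $s^i$ into $P^i$, enlarge by images of the differentials in $P^\bullet$ and by preimages of the coherent pieces already chosen in adjacent degrees. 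The filtered-colimit hypothesis on quasi-coherent modules ensures that at each step only a coherent enlargement is required, and noetherianity of the coherent category guarantees that the process terminates in finitely many rounds per degree. The same construction, applied to the cone of a morphism which becomes zero in $D^b(\operatorname{Mod})$, forces it to vanish already in $D^b(\operatorname{mod})$.

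For essential surjectivity I would induct on the amplitude $b-a$ of a complex $M^\bullet \in D^b_{coh}(\operatorname{Mod}^T\text{-}\Dcal^\alpha_\hbar)$ concentrated in degrees $[a,b]$. When $b=a$, $M^\bullet$ is quasi-isomorphic to its unique cohomology, which is coherent by assumption. For the inductive step, the canonical truncation triangle
\begin{equation*}
\tau_{<b}M^\bullet \longrightarrow M^\bullet \longrightarrow H^b(M^\bullet)[-b] \longrightarrow \tau_{<b}M^\bullet[1]
\end{equation*}
realises $M^\bullet$ as the cone of a morphism $\phi:H^b(M^\bullet)[-b-1] \to \tau_{<b}M^\bullet$. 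The inductive hypothesis provides a quasi-isomorphism $C^\bullet \xrightarrow{\sim} \tau_{<b}M^\bullet$ with $C^\bullet$ a bounded complex of coherent modules, and the full faithfulness just established promotes $\phi$ to a morphism $\widetilde{\phi}: H^b(M^\bullet)[-b-1] \to C^\bullet$ already inside $D^b(\operatorname{mod}^T\text{-}\Dcal^\alpha_\hbar)$. Its cone is then a bounded coherent complex quasi-isomorphic to $M^\bullet$.

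The main technical obstacle lies in the coherent-subcomplex construction used in full faithfulness: one must control $T$-equivariance and the $\hbar$-adic topology simultaneously. I would handle this by a two-step device, first verifying the analogous enlargement statement on $X$ for coherent $T$-equivariant $\Dcal^\alpha_\hbar/\hbar$-modules (a classical fact in equivariant algebraic geometry), and then lifting it modulo successive powers of $\hbar$, using that coherence over $\Dcal^\alpha_\hbar$ is detected modulo $\hbar$ together with $\hbar$-adic completeness to pass to the limit. Once this lifting is in place, the rest of the argument is formal manipulation of truncations and cones in triangulated categories.
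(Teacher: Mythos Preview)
Your proposal is correct and follows precisely the approach the paper indicates: the paper does not spell out a proof but simply remarks that the lemma ``is proved in the same manner as for instance \cite[Proposition 3.5]{huy}'', and you have unpacked exactly that argument, with the appropriate care about $T$-equivariance and the $\hbar$-adic topology.
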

\subsubsection{Image of $\mathcal{F}_\hbar$}\label{correctimage}
It follows from the above lemma that the derived functor $$\mathcal{F}_\hbar=\operatorname{RHom}(\Pro^\alpha_\hbar,\bullet): D^b(\operatorname{mod}^T-\Dcal^\alpha_\hbar)
\rightarrow D^+(\operatorname{Mod}^T-{\bf H}^{\wedge_\hbar}_{p\hbar})$$ exists,  where, again, $\operatorname{Hom}$ is taken
in the non-equivariant category.
\begin{lemma}
The image of $\operatorname{RHom}(\Pro^\alpha_\hbar,\bullet): D^b(\operatorname{mod}^T-\Dcal^\alpha_\hbar)
\rightarrow D^+(\operatorname{Mod}^T-{\bf H}^{\wedge_\hbar}_{p\hbar})$ lies in $D^b(\operatorname{mod}^T-{\bf H}_{p\hbar}^{\wedge_\hbar}) = D^b_{f.g.}(\operatorname{Mod}^T-{\bf H}^{\wedge_\hbar}_{p\hbar})$, where the subscript ``f.g'' means finitely generated cohomology.
\end{lemma}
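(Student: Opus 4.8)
\emph{Plan of proof.} The plan is to reduce the computation of $\mathcal{F}_\hbar$ to the derived global sections of an honest coherent sheaf on $X$, and then to control that by combining the finite cohomological dimension and properness of $\pi$ with an $\hbar$-adic lifting argument. By the usual dévissage along exact triangles we may assume $M$ is a single coherent $T$-equivariant right $\Dcal^\alpha_\hbar$-module. Since $\Pro^\alpha_\hbar$ is locally free of finite rank over $\Dcal^\alpha_\hbar$, the sheaf hom $\mathcal{H}om_{\Dcal^\alpha_\hbar}(\Pro^\alpha_\hbar,M)$ is coherent — on an open set where $\Pro^\alpha_\hbar\cong(\Dcal^\alpha_\hbar)^{\oplus r}$ it is just $M^{\oplus r}$ — it is $T$-equivariant, it carries a natural right $\widetilde{\Dcal}^\alpha_\hbar$-module structure, and the higher local extension sheaves $\mathcal{E}xt^i_{\Dcal^\alpha_\hbar}(\Pro^\alpha_\hbar,M)$ vanish for $i>0$ because $\Pro^\alpha_\hbar$ is locally projective. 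Hence $R\mathcal{H}om_{\Dcal^\alpha_\hbar}(\Pro^\alpha_\hbar,M)=\mathcal{H}om_{\Dcal^\alpha_\hbar}(\Pro^\alpha_\hbar,M)$, the local-to-global spectral sequence gives $\mathcal{F}_\hbar(M)=\operatorname{RHom}_{\Dcal^\alpha_\hbar}(\Pro^\alpha_\hbar,M)\simeq R\Gamma\bigl(X,\mathcal{H}om_{\Dcal^\alpha_\hbar}(\Pro^\alpha_\hbar,M)\bigr)$ as a complex of right $\Gamma(X,\widetilde{\Dcal}^\alpha_\hbar)$-modules, and $\Gamma(X,\widetilde{\Dcal}^\alpha_\hbar)\cong{\bf H}^{\wedge_\hbar}_{p\hbar}$ by the Isomorphism Theorem~\ref{prop:param_id}. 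It therefore suffices to prove that for a coherent $T$-equivariant $\widetilde{\Dcal}^\alpha_\hbar$-module $\mathcal{N}$ the complex $R\Gamma(X,\mathcal{N})$ has bounded cohomology, finitely generated over ${\bf H}^{\wedge_\hbar}_{p\hbar}$; that it then lies in $D^b(\operatorname{mod}^T-{\bf H}^{\wedge_\hbar}_{p\hbar})$ rather than merely in $D^+$ of the quasi-coherent category is exactly the identification $D^b(\operatorname{mod})=D^b_{f.g.}(\operatorname{Mod})$ recorded in the statement.

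Boundedness is immediate: $\mathcal{H}om$ and $\Gamma$ are left exact so $\mathcal{F}_\hbar(M)\in D^{\geq0}$, while $X$ is Noetherian of finite Krull dimension, so $R^i\Gamma(X,-)=0$ on quasi-coherent sheaves for $i>\dim X$. Thus only finite generation is at issue. Write $A:={\bf H}^{\wedge_\hbar}_{p\hbar}$, so that $A/\hbar A\cong\C[\h\oplus\h^*]\#W$ by the Isomorphism Theorem (which is the identity modulo $\hbar$), and note that $\C[\h\oplus\h^*]\#W$ is finite over $\C[\h\oplus\h^*]^W=\C[X]$. I would argue by reduction modulo $\hbar$: since $\mathcal{N}$ is locally of the form $M^{\oplus r}$ with $M$ coherent over $\Dcal^\alpha_\hbar$ and $\Dcal^\alpha_\hbar/\hbar=\Str_X$, the complex $\mathcal{N}\otimes^L_{\C[[\hbar]]}\C$ is a bounded complex of coherent $\Str_X$-modules. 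Because $\pi:X\to\operatorname{Spec}\C[\h\oplus\h^*]^W$ is projective, Grothendieck's coherence theorem shows $R\Gamma(X,\mathcal{N}\otimes^L_{\C[[\hbar]]}\C)$ has cohomology finitely generated over $\C[\h\oplus\h^*]^W$, hence over $A/\hbar A$. On the other hand $-\otimes^L_{\C[[\hbar]]}\C$ is tensoring with the perfect $\C[[\hbar]]$-complex $[\C[[\hbar]]\xrightarrow{\ \hbar\ }\C[[\hbar]]]$, so $R\Gamma(X,-)$ commutes with it and $C:=R\Gamma(X,\mathcal{N})$ satisfies $C\otimes^L_{\C[[\hbar]]}\C\simeq R\Gamma(X,\mathcal{N}\otimes^L_{\C[[\hbar]]}\C)$.

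The remaining, and only substantive, step is to lift finiteness of the cohomology of $C\otimes^L_{\C[[\hbar]]}\C$ over $A/\hbar A$ to finiteness of the cohomology of $C$ over $A$, and this is where I expect the main obstacle to be. One needs to know that $C$, obtained by $R\Gamma(X,-)$ from a sheaf of modules over the $\hbar$-adically complete sheaf of algebras $\widetilde{\Dcal}^\alpha_\hbar$ on the Noetherian scheme $X$, is a bounded, derived $\hbar$-adically complete complex of $A$-modules, and then to invoke the topological (derived) Nakayama lemma: a derived $\hbar$-complete complex over the $\hbar$-adically complete ring $A$ whose derived reduction modulo $\hbar$ has finitely generated cohomology over $A/\hbar A$ itself has finitely generated, and bounded below, cohomology over $A$. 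Some care is also needed to check that $\mathcal{N}\otimes^L_{\C[[\hbar]]}\C$ really does consist of $\Str_X$-coherent sheaves — using that the $\hbar$-torsion of a coherent $\Dcal^\alpha_\hbar$-module is coherent and locally annihilated by a power of $\hbar$ — so that the coherence theorem is applicable. Finally, all of the above constructions ($\mathcal{H}om$, the \v{C}ech computation of $R\Gamma$ for a finite $T$-stable affine cover of $X$, and reduction modulo $\hbar$) are manifestly $T$-equivariant, so the resulting cohomology groups are objects of $\operatorname{mod}^T-{\bf H}^{\wedge_\hbar}_{p\hbar}$, which completes the argument.
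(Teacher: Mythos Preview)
Your strategy is sound and genuinely different from the paper's. The paper does not unify via $-\otimes^L_{\C[[\hbar]]}\C$ and derived Nakayama; instead it splits $M$ into its $\hbar$-torsion and $\hbar$-torsionfree parts and treats them separately. For torsion $M$ it reduces by induction to $\hbar M=0$, then uses the exact sequence $0\to\Pro^\alpha_\hbar\xrightarrow{\hbar}\Pro^\alpha_\hbar\to\Pro\to0$ and a Grothendieck spectral sequence through the functor $M\mapsto M^\hbar$ to reduce to $\operatorname{Ext}$'s of the coherent $\Str_X$-sheaf $\Pro$. For torsionfree $M$ it passes to $R\Gamma(X,\mathcal{H}om(\Pro^\alpha_\hbar,M))$ as you do, computes via \v{C}ech, and then does a hands-on $\hbar$-adic lifting of generators from $Z^i/B^i$ to $Z^i_\hbar/B^i_\hbar$ using that cocycles are $\hbar$-saturated in cochains (this is exactly the explicit content of the derived Nakayama you invoke). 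Your approach is cleaner and avoids the case split; the paper's is more elementary and entirely self-contained.

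One point needs more care in your write-up. Your boundedness argument ``$R^i\Gamma(X,-)=0$ for $i>\dim X$ on quasi-coherent sheaves'' does not apply as stated: $\mathcal{N}=\mathcal{H}om_{\Dcal^\alpha_\hbar}(\Pro^\alpha_\hbar,M)$ is not an $\Str_X$-quasi-coherent sheaf, and the relevant $R\Gamma$ is the derived functor on (equivariant) $\widetilde{\Dcal}^\alpha_\hbar$-modules. To get boundedness and to justify that the finite \v{C}ech complex computes $R\Gamma$, you need local acyclicity: $R^i\Gamma(U,\mathcal{N})=0$ for $i>0$ on $T$-stable affine opens $U$. The paper actually extracts this from its torsion-case analysis (showing $R^i\operatorname{Hom}(\Pro^\alpha_\hbar|_U,M|_U)=0$ for $i\geq1$ when $M$ is $\hbar$-torsion, then bootstrapping to the general case via $\hbar$-completeness as in \cite[Lemma~2.12]{kashrouq}). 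You should insert an analogous justification; once you have it, both boundedness and derived $\hbar$-completeness of the \v{C}ech model of $C$ follow, and your derived-Nakayama finish goes through.
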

\begin{proof}
It is enough to see that if $M\in \operatorname{mod}^T-\Dcal^\alpha_\hbar$ then  $R^i\operatorname{Hom}(\Pro^{\alpha}_\hbar, M)$ is a finitely generated ${\bf H}_{\p\hbar}^{\wedge_{\hbar}}$-module for each $i\geq 0$ and that it vanishes for $i>\!\!>0$, \cite[Corollary 2.68]{huy}.
Since any $M$ fits into a short exact sequence $0\rightarrow M_{\sf tor} \rightarrow M \rightarrow M/M_{\sf tor} \rightarrow 0$, the long exact sequence in cohomology shows it is enough to deal with the $\hbar$-torsion and $\hbar$-torsionfree cases separately.

Suppose first that $M$ is $h$-torsion. Since $M$ is finitely generated this means that $\hbar^kM=0$ for $k$ large enough. By induction we can assume that $\hbar M = 0$. It then follows that multiplication by $\hbar$ on $R^i\operatorname{Hom}(\Pro^{\alpha}_\hbar, M)$ is trivial. The short exact sequence $0\rightarrow \Pro^{\alpha}_\hbar \rightarrow \Pro^{\alpha}_\hbar \rightarrow \Pro^{\alpha}_\hbar/\hbar \Pro^{\alpha}_\hbar \rightarrow 0$ then produces short exact sequences $$0\rightarrow R^{i-1}\operatorname{Hom}(\Pro^{\alpha}_\hbar, M) \rightarrow R^{i}\operatorname{Hom}(\Pro^{\alpha}_\hbar/\hbar\Pro^{\alpha}_\hbar, M)\rightarrow  R^{i}\operatorname{Hom}(\Pro^{\alpha}_\hbar, M) \rightarrow 0.$$ Thus it is enough to show that $R^i\operatorname{Hom}(\Pro^{\alpha}_\hbar/\hbar\Pro^{\alpha}_\hbar, M)$ is finitely generated and vanishes for large enough $i$. Let $Z: \operatorname{Mod}^T-\Dcal_{\hbar}^{\alpha}\rightarrow \operatorname{Mod}^T-\mathcal{O}_X$ be the left exact functor which sends $M$ to $M^{\hbar} = \{m\in M: \hbar m = 0\}$. We have $R^1Z (M) = M/\hbar M$ and $R^iZ (M) = 0$ for $i\neq 0,1$. Since $Z$ preserves injectives and $\operatorname{Hom}(\Pro^{\alpha}_\hbar/\hbar\Pro^{\alpha}_\hbar, M) = \operatorname{Hom}_{\mathcal{O}_X}(\Pro^{\alpha}_\hbar/\hbar\Pro^{\alpha}_\hbar, Z(M))$ we have a convergent spectral sequence $$E_2^{i,j} = R^i\operatorname{Hom}_{\mathcal{O}_X}(\Pro^{\alpha}_\hbar/\hbar\Pro^{\alpha}_\hbar, R^jZ(M))\Rightarrow R^{i+j}\operatorname{Hom}(\Pro^{\alpha}_\hbar/\hbar\Pro^{\alpha}_\hbar, M).$$ But since both $R^iZ(M)$ and $\Pro^{\alpha}_\hbar/\hbar\Pro^{\alpha}_\hbar$ are coherent sheaves on $X$ all the cohomology groups on the left hand side are finitely generated over $\operatorname{End}(\Pro^{\alpha}_\hbar/\hbar\Pro^{\alpha}_\hbar) = \mathbb{C}[\h\oplus\h^*]\# W$ and so indeed over $\operatorname{End}(\Pro^{\alpha}_\hbar)$ and vanish for large enough $i$. Finite generation then follows for $R^{i+j}\operatorname{Hom}(\Pro^{\alpha}_\hbar/\hbar\Pro^{\alpha}_\hbar, M)$, as required.

For later use, we remark that if we replace $X$ above with some $T$-stable affine open set $U$ we see from the spectral sequence that $R^i\operatorname{Hom}(\Pro^{\alpha}_\hbar/\hbar\Pro^{\alpha}_\hbar|_U, M|_U)$ vanishes for $i\geq 2$ if $\hbar M = 0$. From this it follows that $R^i\operatorname{Hom}(\Pro^{\alpha}_\hbar|_U, M|_U)=0$ for $i\geq 1$ and then induction gives the same for any $\hbar$-torsion module $M$.

Suppose now that $M$ is $\hbar$-torsionfree. Then let $\Gamma (X, \bullet) : \operatorname{Mod}^T-\mathcal{E}nd(\Pro^{\alpha}_\hbar) \rightarrow \operatorname{Mod}-\C$ be the global sections functor. Since $\Pro^{\alpha}_\hbar$ is locally free we have $$R^i\operatorname{Hom}(\Pro^{\alpha}_\hbar, M) \cong R^i\Gamma (X, \mathcal{H}om(\Pro^{\alpha}_\hbar, M))$$ where the right derived functor $R^i\Gamma$ is calculated using proalgebraic $T$-injective quasi-coherent $\mathcal{E}nd(\Pro^{\alpha}_\hbar)$-modules. Now we claim that these derived functors can be identified with the \v{C}ech cohomology groups of $\mathcal{H}om(\Pro^{\alpha}_\hbar, M)$ provided that we calculate on some $T$-stable affine cover $\{ U_i \}$ of $X$. By Leray's theorem and the fact that $X$ is separated it is enough to check that  $R^i \Gamma (U, \mathcal{H}om(\Pro^{\alpha}_\hbar, M)) = 0$ for any $T$-stable affine open subset $U$, see \cite[Chapter 3, Exercise 4.11]{har}. Since $\mathcal{H}om(\Pro^{\alpha}_\hbar, M)$ is $\hbar$-complete and separated, the argument of \cite[Lemma 2.12]{kashrouq} shows the vanishing holds provided that it holds for $\mathcal{H}om(\Pro^{\alpha}_\hbar, M)/\hbar \mathcal{H}om(\Pro^{\alpha}_\hbar, M)$. But since this an $\hbar$-torsion module, vanishing follows from the previous paragraph.

Now we calculate $R^i\operatorname{Hom}(\Pro^{\alpha}_\hbar, M)$ using \v{C}ech cohomology. Let $C^i_\hbar, Z_\hbar^i, B_\hbar^i$ ($C^i, Z^i, B^i$) denote the cochains, cocycles and coboundaries for \v{C}ech cohomology with respect to $\{ U_i \}$ of an $\hbar$-complete and separated coherent $\mathcal{E}nd(\Pro^{\alpha}_\hbar)$-module $\mathcal{M}$ (respectively of $\mathcal{M}/\hbar \mathcal{M}$). We assume that each $Z^i/B^i$ is finitely generated over $\operatorname{End}(\Pro^{\alpha}_\hbar)/\langle \hbar \rangle = \mathbb{C}[\h\oplus \h^*]\# W$. This assumption
holds if $\M/\hbar\M$ is finitely generated over $\mathcal{E}nd(\Pro)$ (and hence a coherent sheaf on $X$). Now $Z^i_\hbar$
is an $\hbar$-saturated submodule of $C^i_\hbar$, in other words $\hbar Z^i_\hbar=Z^i_\hbar\cap \hbar C^i_\hbar$. It follows that $$\frac{Z^i_\hbar}{B^i_\hbar+\hbar Z^i_\hbar}\cong \frac{Z^i_\hbar + \hbar C^i_\hbar}{B^i_\hbar+\hbar C^i_\hbar}\hookrightarrow \frac{Z^i}{B^i}$$ is finitely generated. Lift a set of generators for this space to $Z^i_\hbar$, say $z_1,\ldots,z_n$. These generate $Z^i_\hbar/B^i_\hbar$. Indeed, pick $z^0\in Z^i_\hbar$.
Then $z^0$ has the form $\sum_{i=1}^n a^0_i z_i+ d(x^0)+\hbar z^1$. Repeating this argument with $z^1$, $z^2$, and so on, and using $\hbar$-completeness and separation then gives the finite generation result. Vanishing is immediate for $i$ greater than the number of $T$-stable affine opens used in the definition of the \v{C}ech cohomology.
\end{proof}

In particular, this lemma implies that $\mathcal{F}_\hbar:D^b(\operatorname{mod}^T-\Dcal^\alpha_\hbar)\rightarrow
D^b(\operatorname{mod}^T-{\bf H}^{\wedge_\hbar}_{p\hbar})$ is well-defined.

\subsubsection{Homological properties of $\operatorname{mod}^{T}-{\bf H}^{\wedge_\hbar}_{p\hbar}$}
The category $\operatorname{mod}^{T}-{\bf H}^{\wedge_\hbar}_{p\hbar}$ has  enough
projective  objects. Indeed since it is equivalent to the category $\operatorname{mod}^T-{\bf H}_{p\hbar}$, each object admits an epimorphism from a graded free module, that is from the sum of modules obtained from  ${\bf H}_{p\hbar}$ by an action
shift. Hence the category has enough projectives. Moreover, since ${\bf H}^{\wedge_\hbar}_{p\hbar}$
is the formal deformation of an algebra with finite homological dimension, we see that
each object in $\operatorname{mod}^{T}-{\bf H}^{\wedge_\hbar}_{p\hbar}$ has a finite projective
resolution. It follows that the functor $\mathcal{G}_h: D^b(\operatorname{mod}^T-{\bf H}^{\wedge_\hbar}_{p\hbar})
\rightarrow D^b(\operatorname{mod}^T-\Dcal^\alpha_\hbar)$ is well-defined.

\subsubsection{Results of Bezrukavnikov and Kaledin} To prove that $\mathcal{F}_\hbar,\mathcal{G}_\hbar$ are mutually quasi-inverse equivalences we will use results of Bezrukavnikov and Kaledin, \cite{BeKa2}. They proved that the functors $\operatorname{RHom}(\Pro,\bullet): D^b(\Coh(X))\rightarrow D^b(\mods-\C[\h\oplus \h^*]\# W)$
and $\bullet\otimes^L_{\C[\h\oplus \h^*]\# W}\Pro: D^b(\mods-\C[\h\oplus \h^*]\# W)\rightarrow D^b(\Coh(X))$
are mutually quasi-inverse equivalences. Recall that $\Pro$ is $T$-equivariant. Then the two functors $\mathcal{F},\mathcal{G}$ induce functors between the equivariant categories $D^b(\Coh^{T}(X))$ and $D^b(\mods^{T}-\C[\h\oplus \h^*]\# W)$. These functors are equivalences since first the non-equivariant versions are and second they also commute with the endofunctors of the bigrading shift induced from the $T$-action.

\subsubsection{Adjunction morphisms}
We have the adjunction morphisms $\operatorname{id}\rightarrow  \mathcal{F}_\hbar\circ \mathcal{G}_\hbar, \mathcal{G}_\hbar\circ \mathcal{F}_\hbar\rightarrow \operatorname{id}$. The adjunction morphism $\operatorname{id}\rightarrow  \mathcal{F}_\hbar\circ \mathcal{G}_\hbar$   is an isomorphism. Indeed, taking a free object $X\in\operatorname{mod}^T-{\bf H}_{p\hbar}^{\wedge_\hbar}$ we see that $$\mathcal{F}_\hbar\circ \mathcal{G}_\hbar (X) = \operatorname{RHom}(\Pro^{\alpha}_\hbar, X\otimes_{{\bf H}_{p\hbar}^{\wedge_\hbar}}\Pro^{\alpha}_\hbar) \cong X\otimes_{{\bf H}_{p\hbar}^{\wedge_\hbar}} \operatorname{RHom}(\Pro^{\alpha}_\hbar,\Pro^{\alpha}_\hbar) \cong X$$ where the last isomorphism follows from (P$_\hbar$2) and the proof of Lemma \ref{correctimage} which shows that $\operatorname{R}^i\operatorname{Hom}(\Pro^{\alpha}_\hbar,\Pro^{\alpha}_\hbar)\cong \operatorname{Ext}^i(\Pro^{\alpha}_\hbar,\Pro^{\alpha}_\hbar)$. The general case follows by taking a projective resolution.

\subsubsection{Cones of adjunction morphisms}\label{SSS_cones}To see that $\mathcal{G}_\hbar\circ \mathcal{F}_\hbar\rightarrow
\operatorname{id}$ is a functor isomorphism, we need to check that for ${M}\in D^b(\operatorname{mod}^{T}-\Dcal^\alpha_\hbar)$
the cone of $\mathcal{G}_\hbar\circ \mathcal{F}_\hbar(M)\rightarrow M$  is isomorphic to zero, that is, exact.
Call this cone $N$.

Now consider triangulated functors \begin{align}\bullet|_{\hbar=0}&:=\C\otimes^L_{\C[[\hbar]]}\bullet: D^b(\operatorname{mod}^{T}-\Dcal^\alpha_\hbar)\rightarrow
D^b(\Coh^{T}(X)),\\\bullet|_{\hbar=0}&:=\C\otimes^L_{\C[[\hbar]]}\bullet: D^b(\operatorname{mod}^{T}-{\bf H}^{\wedge_\hbar}_{p\hbar})
\rightarrow D^b(\mods^T-\C[\h\oplus \h^*]\# W) \\
\bullet|^{\hbar=0}&:=\operatorname{RHom}_{\C[[\hbar]]}(\C,\bullet): D^b(\operatorname{mod}^{T}-\Dcal^\alpha_\hbar)\rightarrow
D^b(\Coh^{T}(X)),\\\bullet|^{\hbar=0}&:=\operatorname{RHom}_{\C[[\hbar]]}(\C,\bullet): D^b(\operatorname{mod}^{T}-{\bf H}^{\wedge_\hbar}_{p\hbar})
\rightarrow D^b(\mods^T-\C[\h\oplus \h^*]\# W).
\end{align}
We have $\bullet|_{\hbar=0}=\bullet|^{\hbar=0}[1]$.

In the proof of Lemma \ref{correctimage} we have seen that $\mathcal{F}(\bullet|^{\hbar=0})$ is the left derived functor of $\operatorname{Hom}_{\mathcal{O}_X}(\mathcal{P}, \ker_\hbar(\bullet))$, where $\ker_\hbar$ means the annihilator of $\hbar$. Similarly  $\mathcal{F}_h(\bullet)|^{h=0}$ is the left derived functor of $\ker_\hbar\operatorname{Hom}_{D^\alpha_\hbar}(\mathcal{P}^\alpha_\hbar,\bullet)$. But $\operatorname{Hom}_{\mathcal{O}_X}(\mathcal{P}, \ker_\hbar(\bullet))$ and $\ker_\hbar\operatorname{Hom}_{D^\alpha_\hbar}(\mathcal{P}^\alpha_\hbar,\bullet)$ are naturally isomorphic. Thus $\mathcal{F}(\bullet|^{\hbar=0}) \cong \mathcal{F}_\hbar(\bullet)|^{\hbar=0}$.
It follows by adjunction then that $\mathcal{G}(\bullet|_{\hbar=0}) \cong \mathcal{G}_\hbar(\bullet)|_{\hbar=0}$. As a result we have a commutative diagram
$$\begin{CD}
\mathcal{G}_\hbar\circ \mathcal{F}_\hbar(M) @>>>  M \\
@V \bullet|_{\hbar=0} VV @VV \bullet|_{\hbar=0} V \\
\mathcal{G}\circ \mathcal{F}(M|_{\hbar=0}) @>>> M|_{\hbar=0}
\end{CD}$$
From this it follows that ${N}|_{\hbar=0}$ is the cone of $\mathcal{G}\circ\mathcal{F}({M}|_{\hbar=0})\rightarrow {M}|_{\hbar=0}$. The latter is zero since $\mathcal{F}$ and $\mathcal{G}$ are mutually quasi-inverse. Thus we only need to show that if ${N}|_{\hbar=0}=0$
for an object ${N}\in D^b(\operatorname{mod}^{T}-\Dcal^\alpha_\hbar)$, then ${N}=0$.
\subsubsection{Derived Nakayama lemma}
Any object in $\operatorname{mod}^{T}-\Dcal^\alpha_\hbar$ has a resolution
by $\C[[\hbar]]$-flat objects, see Lemma \ref{SSS_flat_resol} which is independent of the present material.
So to prove our claim in the previous paragraph we represent
${N}_\hbar$ as a (bounded from the right) complex of $\C[[\hbar]]$-flat sheaves. For such a complex $N_\hbar$
applying the functor $\bullet|_{\hbar=0}$ is just the usual quotient by $\hbar$.
Since for sheaves exactness is a local property, our claim from the previous paragraph reduces to the following ``derived Nakayama
lemma''.

\begin{lemma}\label{Lem:derived_Nakayama}
Let $\ldots\rightarrow N_k\rightarrow\ldots\rightarrow N_1\rightarrow N_0\rightarrow 0$ be a complex of
$\C[[\hbar]]$-modules that are $\C[[\hbar]]$-flat, and also complete and separated with respect to the  $\hbar$-adic topology.
Suppose that the induced complex $\ldots\rightarrow N_k/\hbar N_k\rightarrow\ldots\rightarrow N_1/\hbar N_1\rightarrow N_0/\hbar N_0\rightarrow 0$
is exact. Then the original complex is exact.
\end{lemma}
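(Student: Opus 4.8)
The plan is to prove the statement by a downward induction on the homological degree, exploiting $\hbar$-completeness together with flatness to "lift" cycles modulo $\hbar$ back to genuine cycles. First I would set up notation: write $Z_k\subseteq N_k$ for the kernel of $d\colon N_k\to N_{k-1}$ and $B_k\subseteq N_k$ for the image of $d\colon N_{k+1}\to N_k$, and similarly $\bar Z_k,\bar B_k\subseteq N_k/\hbar N_k$ for the reduced complex. Since each $N_k$ is $\C[[\hbar]]$-flat, multiplication by $\hbar$ is injective on $N_k$, and the short exact sequence $0\to N_\bullet\xrightarrow{\hbar}N_\bullet\to N_\bullet/\hbar N_\bullet\to 0$ of complexes is exact. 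The hypothesis is that the right-hand complex is acyclic, i.e. $\bar Z_k=\bar B_k$ for all $k$ (with the obvious convention $\bar B_0 = N_0/\hbar N_0$, and I will treat the $k=0$ surjectivity case as the base of the argument).

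The key step is the following claim: for each $k$, given $z\in Z_k$ (a genuine cycle), there exists $w\in N_{k+1}$ with $d(w)=z$, i.e. $Z_k\subseteq B_k$. I would prove this by successive approximation modulo powers of $\hbar$. Since the reduced complex is exact, the class of $z$ in $N_k/\hbar N_k$ is a cycle, hence a boundary: there is $w_1\in N_{k+1}$ with $z-d(w_1)\in \hbar N_k$. Now I claim $z-d(w_1)\in \hbar Z_k$: indeed $d(z-d(w_1))=0$, and since $d(z-d(w_1))=\hbar\cdot d((z-d(w_1))/\hbar)$ with $N_{k-1}$ flat (so $\hbar$-torsion-free), we get $(z-d(w_1))/\hbar\in Z_k$. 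Write $z-d(w_1)=\hbar z^{(1)}$ with $z^{(1)}\in Z_k$. Applying the same argument to $z^{(1)}$ produces $w_2$ with $z^{(1)}-d(w_2)=\hbar z^{(2)}$, $z^{(2)}\in Z_k$, and so on. Then $W:=\sum_{j\ge 1}\hbar^{j-1}w_j$ converges in $N_{k+1}$ by $\hbar$-completeness, and $d(W)=z$ by a telescoping computation using $\hbar$-separatedness (the difference $z-d(\sum_{j=1}^m\hbar^{j-1}w_j)=\hbar^m z^{(m)}$ lies in $\bigcap_m\hbar^m N_k=0$). This establishes $Z_k=B_k$ for all $k$ simultaneously; no genuine induction on $k$ is even needed, since the approximation argument is uniform in $k$. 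The $k=0$ case is identical, reading $B_0=N_0$ throughout and using surjectivity of $N_1/\hbar N_1\to N_0/\hbar N_0$ at the first step.

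The main obstacle — really the only subtlety — is making sure that the passage "$d(u)\in\hbar N_{k-1}\Rightarrow u/\hbar$ well-defined \emph{and} still a cycle" is legitimate: this needs $N_{k-1}$ to be $\hbar$-torsion-free, which is exactly the flatness hypothesis, and it needs $d$ to be $\C[[\hbar]]$-linear so that $d(u)=\hbar v$ forces $d(u/\hbar)=v$ whenever $u/\hbar$ exists. One must also be careful that "$u/\hbar$ exists in $N_k$" is guaranteed: $u\in\hbar N_k$ is precisely the statement that such a (unique, by torsion-freeness) element exists. The convergence and the identity $d(W)=z$ both rely on $N_{k+1}$ and $N_k$ being complete and separated, which is why these hypotheses appear in the statement. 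Once these points are checked the argument is purely formal, and I would present it exactly as above, remarking at the end that the same proof applies verbatim to complexes of sheaves of flat $\C[[\hbar]]$-modules on a topological space, which is how it is invoked in \ref{SSS_cones}.
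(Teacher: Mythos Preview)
Your proof is correct and takes essentially the same approach as the paper: successive approximation modulo powers of $\hbar$, using flatness to keep each residual term a cycle and completeness/separatedness to make the approximating series converge to a genuine preimage. The paper packages this as an induction on the length of the complex (proving surjectivity of $N_1\to N_0$ first, then showing the kernel $Z_\hbar$ inherits all hypotheses with $Z_\hbar/\hbar Z_\hbar\cong Z$, and replacing $N_0$ by $Z_\hbar$), but the underlying mechanism is identical to your direct argument at each degree.
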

\begin{proof}
It is easy to see that, thanks to the completeness and separation properties,
the surjectivity of $N_1/\hbar N_1\rightarrow N_0/\hbar N_0$ implies that
$N_1\rightarrow N_0$ is surjective too. Now we are going to argue by induction.
For this we will need to show that for the kernels $Z_\hbar,Z$ of $N_1\rightarrow N_0,
N_1/\hbar N_1\rightarrow N_0/\hbar N_0$ the natural homomorphism $Z_\hbar/\hbar Z_\hbar\rightarrow Z$
is an isomorphism. To prove this we repeat the argument in the last paragraph of the proof of
Lemma \ref{correctimage}.
\end{proof}

We have thus shown that \eqref{eq:equiv_McKay} is an equivalence.
\subsubsection{Properties of $\mathcal{F}_\hbar, \mathcal{G}_\hbar$}
We will need some other properties of the equivalences $\mathcal{F}_\hbar$ and $\mathcal{G}_\hbar$. They  are summarized in the following
lemma.

\begin{lemma}\label{Lem:der_equiv_propert}
Keep the notation from above.
\begin{enumerate}
\item The functors $\mathcal{F}_\hbar$ and $\mathcal{G}_\hbar$ are $\C[[\hbar]]$-linear.
\item The functors $\mathcal{F}_\hbar$ and $\mathcal{G}_\hbar$ preserve the subcategories
of all complexes whose cohomologies are $\hbar$-torsion.
\item Let $Y\subset (\h\oplus\h^*)/W$ be a $T$-stable closed subvariety.
Consider the full subcategories
$$D_Y^b(\operatorname{mod}^{T}-{\bf H}^{\wedge_\hbar}_{p\hbar})\subset D^b(\operatorname{mod}^{T}-{\bf H}^{\wedge_\hbar}_{p\hbar})\quad \text{and} \quad
D_{\pi^{-1}(Y)}^b(\operatorname{mod}^{T}-\Dcal^\alpha_\hbar)\subset D^b(\operatorname{mod}^{T}-\Dcal^\alpha_\hbar)$$ whose cohomology is supported on $Y$ and on $\pi^{-1}(Y)$ respectively. Then $\mathcal{F}_\hbar$ and $\mathcal{G}_\hbar$ restrict to equivalences between these subcategories.
\end{enumerate}
\end{lemma}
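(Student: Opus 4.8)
The plan is to verify the three claimed properties of $\mathcal{F}_\hbar$ and $\mathcal{G}_\hbar$ essentially by unwinding their definitions, using the explicit formulas $\mathcal{F}_\hbar = \operatorname{RHom}(\Pro^\alpha_\hbar, \bullet)$ and $\mathcal{G}_\hbar = \bullet \otimes^L_{{\bf H}^{\wedge_\hbar}_{p\hbar}} \Pro^\alpha_\hbar$, together with the local structure theory already established in the proof of Lemma \ref{correctimage}.

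For part (1), the point is simply that $\hbar$ is central: it lies in $\C[[\hbar]]$, acts centrally on $\Dcal^\alpha_\hbar$, on ${\bf H}^{\wedge_\hbar}_{p\hbar}$ and on $\Pro^\alpha_\hbar$, so both $\operatorname{Hom}$ and $\otimes$ (and hence their derived functors, computed via $T$-equivariant injective/projective resolutions which may be taken over $\C[[\hbar]]$) inherit a $\C[[\hbar]]$-module structure, and the adjunction and composition isomorphisms constructed above are $\C[[\hbar]]$-linear. For part (2), I would argue that if the cohomology of $M$ is $\hbar$-torsion then (since $M$ is a bounded complex with finitely generated cohomology) $\hbar^k$ kills the cohomology for some $k$; filtering by powers of $\hbar$ and using that both functors are triangulated reduces to the case $\hbar M = 0$, i.e. $M$ is a complex of $\Coh^T(X)$-objects (resp. $\mods^T$-$\C[\h\oplus\h^*]\#W$-objects), and there one invokes the identifications $\mathcal{F}(\bullet|^{\hbar=0}) \cong \mathcal{F}_\hbar(\bullet)|^{\hbar=0}$ and $\mathcal{G}(\bullet|_{\hbar=0}) \cong \mathcal{G}_\hbar(\bullet)|_{\hbar=0}$ from \ref{SSS_cones} to see that $\mathcal{F}_\hbar$ and $\mathcal{G}_\hbar$ restrict, up to the classical McKay equivalence of Bezrukavnikov--Kaledin, to functors between complexes of $\hbar$-torsion objects; more precisely one checks directly that $R^i\operatorname{Hom}(\Pro^\alpha_\hbar, M)$ is annihilated by $\hbar^k$ when $M$ is, and similarly for the tensor product, which is immediate from the short exact sequences used in the proof of Lemma \ref{correctimage}.

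For part (3), which I expect to be the main obstacle, the strategy is to characterize the support condition homologically and then transport it. The key observation is that $\mathcal{F}_\hbar$ and $\mathcal{G}_\hbar$ are local over $(\h\oplus\h^*)/W = \operatorname{Spec} \C[\h\oplus\h^*]^W$: the global sections of $\widetilde{\Dcal}^\alpha_\hbar$ are ${\bf H}^{\wedge_\hbar}_{p\hbar}$ (Theorem \ref{prop:param_id}), and $\operatorname{RHom}(\Pro^\alpha_\hbar, \bullet)$ can be computed \v{C}ech-theoretically on a $T$-stable affine cover of $X$, all of whose members map to affine opens of $(\h\oplus\h^*)/W$, as was done in the proof of Lemma \ref{correctimage}. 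Hence if $M \in D^b(\operatorname{mod}^T-\Dcal^\alpha_\hbar)$ has cohomology supported on $\pi^{-1}(Y)$, then for any $f \in \C[\h\oplus\h^*]^W$ vanishing on $Y$, the localization $M_f$ is exact (its cohomology is supported on $\pi^{-1}(Y) \cap \pi^{-1}(D(f)) = \emptyset$), so $\mathcal{F}_\hbar(M)_f \cong \mathcal{F}_\hbar(M_f)$ (localization commutes with $\operatorname{RHom}(\Pro^\alpha_\hbar, \bullet)$ since $\Pro^\alpha_\hbar$ is coherent locally free and the \v{C}ech computation is compatible with further localization) is exact, which says precisely that $\mathcal{F}_\hbar(M)$ has cohomology supported on $Y$. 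The same argument with $\mathcal{G}_\hbar$ and $M \otimes^L \Pro^\alpha_\hbar$ handles the reverse direction, and since the two functors are already known to be mutually quasi-inverse equivalences, the restricted functors on the support subcategories are quasi-inverse equivalences too.

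The technical care needed is to make sure localization genuinely commutes with the derived functors in the $\hbar$-completed equivariant setting; I would handle this by noting that $\mathcal{H}om(\Pro^\alpha_\hbar, M)$ is a coherent sheaf of $\mathcal{E}nd(\Pro^\alpha_\hbar)$-modules on $X$ and that restricting a \v{C}ech complex to a smaller $T$-stable affine cover of a $\pi$-preimage of an affine open in $(\h\oplus\h^*)/W$ computes the localized cohomology, exactly as in the ``for later use'' remark in the proof of Lemma \ref{correctimage}. With (1), (2), (3) in hand the lemma is complete; these are the properties that will be used in Subsection \ref{SUBSECTION_translations} and \ref{SUBSECTION_derived_complet} to descend the equivalence \eqref{eq:equiv_McKay} to one respecting the categories $\OCat$ and their support filtrations.
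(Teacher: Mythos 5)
Your proposal is sound and reaches the same conclusions, but your route to part (3) is genuinely different from the paper's, and it is worth noting the contrast.

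For part (1), your argument is the intended one. For part (2), the paper simply records that it is ``a formal consequence of (1)''; your d\'evissage by the truncation t-structure together with $\hbar$-power filtrations of the individual cohomology groups is a legitimate way of making this precise, though it requires a little more writing than the cleanest version of the formal argument (namely: $M$ has $\hbar$-torsion cohomology iff $M\otimes_{\C[[\hbar]]}\C((\hbar))$ is exact, and $\mathcal{F}_\hbar$ commutes with inverting $\hbar$ because $\Pro^\alpha_\hbar$ is coherent and $\mathcal{F}_\hbar$ is $\C[[\hbar]]$-linear).

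The real divergence is in part (3). You prove locality of $\mathcal{F}_\hbar$ and $\mathcal{G}_\hbar$ themselves over $(\h\oplus\h^*)/W$, extracting this from the \v{C}ech/spectral-sequence description of $\operatorname{RHom}(\Pro^\alpha_\hbar,\bullet)$ set up in the proof of the lemma in \ref{correctimage}, and then conclude by a standard support argument via principal opens. The paper instead avoids establishing locality at the $\hbar$-level: it first shows, via the $\C[[\hbar]]$-flat resolutions of \ref{SSS_flat_resol} and the derived Nakayama lemma \ref{Lem:derived_Nakayama}, that $M$ has cohomology supported on $\pi^{-1}(Y)$ if and only if $M|_{\hbar=0}$ does; then it uses the compatibility $\mathcal{F}(\bullet|_{\hbar=0})\cong\mathcal{F}_\hbar(\bullet)|_{\hbar=0}$ from \ref{SSS_cones} to reduce everything to the undeformed Bezrukavnikov--Kaledin functors $\mathcal{F},\mathcal{G}$, whose locality over $(\h\oplus\h^*)/W$ is straightforward (they are $\operatorname{RHom}(\Pro|_{\pi^{-1}(U)},\bullet)$ and $\bullet\otimes^L\Pro|_{\pi^{-1}(U)}$ for principal affine $U$). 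Your approach is more direct and avoids the Nakayama reduction entirely, at the cost of having to justify that the \v{C}ech (and spectral-sequence, in the $\hbar$-torsion case) computations localize compatibly over $T$-stable principal affine opens of the base; this is true, but you should be explicit that the local analysis in the proof of the lemma in \ref{correctimage} splits into $\hbar$-torsion and $\hbar$-torsionfree cases, both of which localize, rather than a single \v{C}ech computation. Both routes are valid; yours is slightly more self-contained at the $\hbar$-level, the paper's leans on machinery it has already built.
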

\begin{proof}
(1)  follows directly from the constructions of $\mathcal{F}_h$ and $\mathcal{G}_h$,
while (2) is a formal consequence of (1).

Let us prove (3). First of all, we claim that $M\in D^b(\operatorname{mod}^{T}-\Dcal^\alpha_\hbar)$ lies in
$D_{\pi^{-1}(Y)}^b(\operatorname{mod}^{T}-\Dcal^\alpha_\hbar)$ if and only if $M|_{\hbar=0}$ lies in
$D_{\pi^{-1}(Y)}^b(\operatorname{Coh}^{T}(X))$. Recall that $M$ can be represented as a bounded from the right
complex $\ldots\rightarrow M_k\rightarrow\ldots\rightarrow M_0\rightarrow 0$ of $\C[[\hbar]]$-flat sheaves.
Then $M|_{\hbar=0}$ is just the complex $\ldots\rightarrow M_k/\hbar M_k\rightarrow\ldots\rightarrow M_0/\hbar M_0\rightarrow 0$.
Clearly, $M\in D_{\pi^{-1}(Y)}^b(\operatorname{mod}^{T}-\Dcal^\alpha_\hbar)$ if and only if for any principal open affine
subset $U\subset (\h\oplus\h^*)/W$ the complex $\ldots\rightarrow M_k|_{\pi^{-1}(U)}\rightarrow\ldots\rightarrow M_0|_{\pi^{-1}(U)}\rightarrow 0$ is exact. So our claim becomes the following: the complex $\ldots\rightarrow M_k|_{\pi^{-1}(U)}\rightarrow\ldots\rightarrow M_0|_{\pi^{-1}(U)}\rightarrow 0$
is exact if and only if $\ldots\rightarrow M_k/\hbar M_k|_{\pi^{-1}(U)}\rightarrow\ldots\rightarrow M_0/\hbar M_0|_{\pi^{-1}(U)}\rightarrow 0$ is exact.
The ``if'' part follows from Lemma \ref{Lem:derived_Nakayama}, while the ``only if'' part is clear.
Our claim is proved. Let us remark that an analogous claim (with the same proof) holds in the category
$D^b(\operatorname{mod}^{T}-{\bf H}^{\wedge_\hbar}_{p\hbar})$.

Now recall from \ref{SSS_cones} that the functors $\bullet|_{\hbar=0}$ intertwines $\mathcal{F}_\hbar$
with $\mathcal{F}$ and $\mathcal{G}_\hbar$ with $\mathcal{G}$. So it is enough to prove that $\mathcal{F},
\mathcal{G}$ preserves the objects with appropriate supports. But these functors are local with respect to
$(\h\oplus\h^*)/W$, meaning that  for every $T$-stable principal open affine
subset $U\subset (\h\oplus\h^*)/W$ we have
\begin{eqnarray*}
 \mathcal{F}^U\circ \operatorname{Res}^{X}_{\pi^{-1}(U)}&=&\operatorname{Res}_{U}^{(\h\oplus\h^*)/W}\circ \mathcal{F},\\
\mathcal{G}^U\circ \operatorname{Res}_{U}^{(\h\oplus\h^*)/W}&=&\operatorname{Res}^{X}_{\pi^{-1}(U)}\circ \mathcal{G}.
\end{eqnarray*} Here $\mathcal{F}^U, \mathcal{G}^U$ are the analogs of $\mathcal{F},\mathcal{G}$ defined
for $U$ and $\pi^{-1}(U)$ instead of for $(\h\oplus\h^*)/W$ and $X$. For example, the functors in the first line are derived functors
of $\Hom(\Pro|_{\pi^{-1}(U)}, \bullet|_{\pi^{-1}(U)})$ and $\Hom(\Pro, \bullet)|_U$. These functors are isomorphic
because $U$ is a principal open affine subset.

The locality of $\mathcal{F},\mathcal{G}$ implies the preservation of supports property.
\end{proof}

We remark that the subcategories in (\ref{eq:equiv_McKay1}) have the form indicated in (3) above with $Y=\h/W\subset (\h\oplus\h^*)/W$. So $\mathcal{F}_\hbar$ is the equivalence required in \eqref{eq:equiv_McKay1}. Furthermore the fact that $\mathcal{F}_\hbar$ and $\mathcal{G}_\hbar$ preserve the filtrations by supports also follows since the subcategories in the filtration again have the form appearing in (3).

\subsection{Translations}\label{SUBSECTION_translations}
In this section we are going to establish equivalences in (\ref{eq:equiv_trans}). Our equivalences
are standard: they come from tensor products with quantizations of line bundles on $X$. This should be
a general fact, but we are only going to establish it for hamiltonian reductions, where the construction
is more explicit.

\subsubsection{Translation bimodule} Let $\alpha\in \z$ and $\chi$ be a character of $\GL(n\delta)$. In the notation of Subsection \ref{SUBSECTION_Ham} set $$\Dcal^{\alpha,\chi}_\hbar:=(\W_{\hbar, V^{ss}}/ \W_{\hbar,V^{ss}}\mu^{*}\gl(n\delta)_{\hbar\alpha})^{\GL(n\delta),\chi},$$
where in the right hand side the superscript $\chi$  means taking $\GL(n\delta)$-semiinvariants of weight $\chi$. It is clear from the construction that $\Dcal^{\alpha,\chi}_\hbar$ is a right $\Dcal^\alpha_\hbar$-module. Also there is an action of $\Dcal^{\alpha+\chi}_\hbar$ from the left commuting with $\Dcal^\alpha_\hbar$. This action comes from the observation that the differential of the $\GL(n\delta)$-action
coincides with the modified adjoint action of $\gl(n\delta)$: $\xi\mapsto \frac{1}{\hbar}\operatorname{ad}(\xi)$.
Thus tensoring provides a functor \begin{equation} \label{trans} \bullet \otimes_{\Dcal^{\alpha+\chi}_{\hbar}} \Dcal^{\alpha,\chi}_{\hbar}:
\operatorname{mod}^{T}-\Dcal^{\alpha+\chi}_\hbar\xrightarrow{\sim} \operatorname{mod}^{T}-\Dcal^{\alpha}_\hbar.
\end{equation}
\subsubsection{Invertibility} Since the
$\GL(n\delta)$-action is free, we see that $\Dcal^{\alpha,\chi}_\hbar/\hbar\Dcal^{\alpha,\chi}_\hbar$ coincides with
$\Str_X^\chi:=\Str_{\mu^{-1}(0)}^{\GL(n\delta),\chi}$, see for instance \cite[Proposition 2.4]{holland}. The latter is a line bundle on $X$. There is a homomorphism
\begin{equation}\label{eq:tens_prod}\Dcal^{\alpha+\chi,-\chi}_\hbar\otimes_{\Dcal^\alpha_\hbar}\Dcal^{\alpha,\chi}_\hbar\rightarrow \Dcal^{\alpha+\chi}_\hbar\end{equation}
of $\Dcal^{\alpha+\chi}_\hbar$-bimodules induced by multiplication. Modulo $\hbar$, this homomorphism coincides
with the isomorphism $\Str^{-\chi}_X\otimes_{\Str_X}\Str^\chi_X\rightarrow \Str_X$, so (\ref{eq:tens_prod})
is itself an isomorphism and hence \eqref{trans} is an equivalence. Thus we have shown \eqref{eq:equiv_trans}. Moreover, this equivalence preserves the subcategories $\OCat$ and the support filtration on those subcategories.

\subsubsection{$\C[[\hbar]]$-flat resolutions}\label{SSS_flat_resol}
Using the above translations we have the following result which has been used in \ref{Lem:derived_Nakayama}.
\begin{lemma}
Any object  $M\in \operatorname{mod}^T$-$\Dcal^\alpha_\hbar$ admits a surjection from a $\C[[\hbar]]$-flat object.
\end{lemma}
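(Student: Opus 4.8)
The plan is to construct the required surjection by combining a (graded) free resolution on the algebra side with the translation bimodules just introduced. First I would reduce to an affine–geometric statement: a $T$-equivariant coherent $\Dcal^\alpha_\hbar$-module $M$ is, Zariski-locally on $X$, a module over the sheaf of algebras $\Dcal^\alpha_\hbar$, which is a flat formal deformation of $\Str_X$. Since $X = \mu^{-1}(0)^{ss}/\GL(n\delta)$ is covered by finitely many $T$-stable affine opens of the form $\pi^{-1}(U)$ with $U\subset (\h\oplus\h^*)/W$ principal affine, it suffices to produce, on each such piece, a surjection onto $M|_{\pi^{-1}(U)}$ from a $\C[[\hbar]]$-flat object and then glue; but in fact a cleaner route is to produce a global surjection directly using a single globally generated $\Dcal^\alpha_\hbar$-module built from line bundles.

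The key point is that $X$ is a symplectic resolution obtained by GIT, so $\Str_X(m):=\Str_X^{m\chi}$ is very ample for $m\gg 0$; hence for any coherent sheaf $\mathcal{F}$ on $X$ there is a surjection $\Str_X(-m)^{\oplus r}\twoheadrightarrow \mathcal{F}$ for suitable $m,r$. I would apply this to $\mathcal{F} = M/\hbar M$, which is a $T$-equivariant coherent sheaf on $X$ (choosing $m,r$ compatibly with the $T$-action, i.e. allowing $T$-character twists on the summands, which is harmless). Now the translation sheaf $\Dcal^{\alpha,\,-m\chi}_\hbar$ of Subsection \ref{SUBSECTION_translations} is a $\C[[\hbar]]$-flat $T$-equivariant right $\Dcal^\alpha_\hbar$-module whose reduction mod $\hbar$ is exactly $\Str_X(-m)$. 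Take $P:=(\Dcal^{\alpha,\,-m\chi}_\hbar)^{\oplus r}$, a $\C[[\hbar]]$-flat object of $\operatorname{mod}^T$-$\Dcal^\alpha_\hbar$ (after the appropriate $T$-character shifts on the summands). The surjection $P/\hbar P = \Str_X(-m)^{\oplus r}\twoheadrightarrow M/\hbar M$ lifts: since $P$ is a locally free $\Dcal^\alpha_\hbar$-module and $M$ is $\hbar$-adically complete and separated, a standard successive–approximation argument (the same one used in the proof of Lemma \ref{Lem:derived_Nakayama} to lift surjectivity from mod $\hbar$) produces a morphism $P\to M$ reducing to the chosen surjection mod $\hbar$, and by the $\hbar$-adic Nakayama lemma this morphism is itself surjective.

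The main obstacle I expect is bookkeeping with the $T$-equivariance: the very ample line bundle and the chosen generators of $M/\hbar M$ must be taken in a $T$-equivariant fashion, so the free object $P$ is really a finite direct sum of the translation bimodules $\Dcal^{\alpha,-m\chi}_\hbar$ twisted by various characters of $T$, and one must check that such twists are available within $\operatorname{mod}^T$-$\Dcal^\alpha_\hbar$ and that the lifting of the surjection can be carried out $T$-equivariantly. This is routine because $T$ is a torus acting algebraically (so every coherent equivariant sheaf is a quotient of a sum of character-twisted very ample bundles), but it is the place where care is needed. The remaining ingredients — flatness of $\Dcal^{\alpha,-m\chi}_\hbar$ over $\C[[\hbar]]$, its reduction mod $\hbar$ being $\Str_X(-m)$, and the $\hbar$-adic lifting/Nakayama step — are all either established in Subsection \ref{SUBSECTION_translations} or are the same completeness-and-separation arguments already used for Lemma \ref{correctimage} and Lemma \ref{Lem:derived_Nakayama}.
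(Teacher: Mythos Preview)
Your approach is essentially the same as the paper's --- twist by a sufficiently ample character and surject from copies of the translation bimodule --- but there is a genuine gap in the lifting step. You assert that the surjection $P/\hbar P \twoheadrightarrow M/\hbar M$ lifts to a morphism $P\to M$ simply because $P$ is locally free over $\Dcal^\alpha_\hbar$ and $M$ is $\hbar$-adically complete and separated. This is not enough. Local freeness of $P$ lets you lift the map on each affine open, but gluing the local lifts has an obstruction in $H^1\big(X,\mathcal{H}om_{\Dcal^\alpha_\hbar}(P,\hbar M)\big)$, and neither completeness of $M$ nor the argument of the derived Nakayama lemma addresses this. (The derived Nakayama lemma deduces exactness of a given complex from exactness mod $\hbar$; it does not construct new morphisms.)

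The fix is exactly the extra hypothesis the paper imposes: choose $m$ large enough so that, in addition to being globally generated, the twist $M_0\otimes\Str_X^{m\chi}$ has vanishing higher cohomology. Unwinding the translation equivalence, a map $\Dcal^{\alpha,-m\chi}_\hbar\to M$ of right $\Dcal^\alpha_\hbar$-modules is the same datum as a global section of $M^{m\chi}:=M\otimes_{\Dcal^\alpha_\hbar}\Dcal^{\alpha+m\chi,-m\chi}_\hbar$, so lifting your map amounts to lifting global sections of $M_0^{m\chi}$ to $M^{m\chi}$. The paper carries this out by a \v{C}ech $1$-cocycle correction: local lifts of a global section differ by a cocycle that is zero mod $\hbar$, and $H^1(M_0^{m\chi})=0$ lets one adjust the local lifts by $\hbar$ times a cochain, improving the agreement from mod $\hbar$ to mod $\hbar^2$, then iterate. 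Once you add the higher-cohomology vanishing to your choice of $m$, your argument becomes correct and coincides with the paper's; without it, the ``standard successive-approximation'' you invoke does not go through.
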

\begin{proof}
It is enough to show that there is $\chi$ such that $M^\chi:=M\otimes_{\Dcal^\alpha_\hbar}\Dcal^{\alpha+\chi,-\chi}_\hbar$
is generated by its global sections: for then there is a surjection $(\Dcal^{\alpha+\chi}_\hbar)^{\oplus n}\twoheadrightarrow
M^\chi$ and hence also a surjection $(\Dcal^{\alpha,\chi}_\hbar)^{\oplus n}\twoheadrightarrow M$. Let $M_0:=M/M\hbar$, a coherent sheaf on $X$. Then there is some
$\chi$ such that $M_0^\chi:=M_0\otimes_{\Str_X}\Str_X^\chi$ is generated by its global sections and has no higher 
cohomology. From the cohomology vanishing it follows that all the global sections of $M_0^\chi$
lift to sections of $M^\chi$. Indeed if $m_1, \ldots , m_r$ are local sections lifting a global section of $M_0^{\chi}$, then one finds $(m_i-m_j)_{ij}$ give a Cech $1$-cocyle which vanishes modulo $(\hbar)$. Cohomology vanishing ensures that there exists $(\alpha_i)_i$ such that $\hbar^{-1}(m_i-m_j) = \alpha_i - \alpha_j$ modulo $(\hbar)$, and so replacing $m_i$ by $m_i +\hbar \alpha_i$ produces a 1-cocycle which vanishes modulo $(\hbar^2)$. Iterating  this proves that the global sections of $M_0^{\chi}$ lift to $M^{\chi}$. Moreover, since they generate $M^\chi$ modulo $\hbar$, they generate $M^\chi$ itself.   
\end{proof}

\subsubsection{Integral difference}\label{int_diff_rev}
By \ref{cyclogener} and \ref{paramchange} we have that  $\alpha,\alpha'\in\z$ differ by a character $\chi$ of $\GL(n\delta)$
if and only if the corresponding parameters $p,p'$ have integral difference.

\subsection{Equivalences between categories $\OCat$}\label{SUBSECTION_derived_complet}
\subsubsection{Quotients by $\hbar-1$}
The results of the previous two subsections establish an existence of an equivalence (\ref{eq:equiv_O1}).
In this section we will prove that (\ref{eq:equiv_O1}) implies (\ref{eq:equiv_O5}).

First let us prove (\ref{eq:equiv_O2}). Consider the thick subcategories
\begin{align*}&D^b_{\hbar-tor}(\operatorname{mod}^{T}-{\bf H}^{\wedge_\hbar}_{p\hbar})\subset D^b(\operatorname{mod}^{T}-{\bf H}^{\wedge_\hbar}_{p\hbar}),\\
&D^b_{\OCat, \hbar-tor}(\operatorname{mod}^{T}-{\bf H}^{\wedge_\hbar}_{p\hbar})\subset D^b_\OCat(\operatorname{mod}^{T}-{\bf H}^{\wedge_\hbar}_{p\hbar})\end{align*}
consisting of all complexes whose cohomology is $\hbar$-torsion. Thanks to assertion (2) of Lemma \ref{Lem:der_equiv_propert},
it is enough to establish an equivalence
\begin{equation}\label{eq:equiv_O6}
D^b(\operatorname{mod}^{T}-{\bf H}^{\wedge_\hbar}_{p\hbar})/D^b_{\hbar-tor}(\operatorname{mod}^{T}-{\bf H}^{\wedge_\hbar}_{p\hbar})\rightarrow D^b(\operatorname{mod}^{T_1}-{\bf H}_{p})
\end{equation}
that preserves all required subcategories. In (\ref{eq:equiv_O6}) we may replace ${\bf H}^{\wedge_h}_{hp}$ with
${\bf H}_{hp}$ because of the category equivalence mentioned in the last paragraph of \ref{actionsonH}. We have $$D^b(\operatorname{mod}^{T}-{\bf H}_{p\hbar})/D^b_{\hbar-tor}(\operatorname{mod}^{T}-{\bf H}_{p\hbar}) \cong D^b(\operatorname{mod}^{T}-{\bf H}_{p\hbar}/\operatorname{mod}_{\hbar-tor}^{T}-{\bf H}_{p\hbar}),$$ while the isomorphism ${\bf H}_{p\hbar}/(\hbar-1)\cong {\bf H}_p$ gives rise to an equivalence $\operatorname{mod}^{T}-{\bf H}_{p\hbar}/\operatorname{mod}_{\hbar-tor}^{T}-{\bf H}_{p\hbar} \cong \operatorname{mod}^{T_1}-{\bf H}_p$. It follows that (\ref{eq:equiv_O6}) is the equivalence of triangulated
categories which induces an equivalence between the subcategories of all objects
whose cohomology have required supports. This establishes (\ref{eq:equiv_O2}).

\subsubsection{From complexes supported in $\OCat$ to complexes in $\OCat$}
The claim (\ref{eq:equiv_O4}) was essentially obtained by Etingof in \cite[Proof of Proposition 4.4]{etingofaffine}: he considered the ungraded setting. Let us provide details.

We need to produce a functor $D^b_{\OCat}(\mods^{T_1}-{\bf H}_p)\rightarrow D^b(\OCat_p^{T_1})$ that will
be quasi-inverse to $D^b(\OCat_p^{T_1})\rightarrow D^b_{\OCat}(\mods^{T_1}-{\bf H}_p)$.
Following the proof of  \cite[Proposition 4.4]{etingofaffine}, consider the algebra $\hat{\bf H}_p:={\bf H}_p\widehat{\otimes}_{S(\h)}\C[[\h^*]]$. We remark that this algebra is not noetherian.
The group $T_1$ naturally acts on $\hat{\bf H}_p$. Let $\tilde{\bf H}_p$ denote the subalgebra
of $T_1$-finite elements. There is a natural embedding ${\bf H}_p\hookrightarrow \hat{\bf H}_p$
and its image lies in $\tilde{\bf H}_p$.

Introduce the category $\operatorname{Mod}^{T_1}-\tilde{{\bf H}}_p$ of $T_1$-equivariant $\tilde{\bf H}_p$-modules that are quotients of a sum of copies of $\tilde{\bf H}_p$ (with grading shifts). Obviously, $\OCat_p^{T_1}$ is a Serre subcategory
of $\operatorname{Mod}^{T_1}-\tilde{{\bf H}}_p$.
By construction any $T_1$-stable submodule of a direct sum of $\tilde{\bf H}_p$'s (with grading shifts) lies in this category and so it is abelian. Consider the derived
category $D^b_{\OCat}(\operatorname{Mod}^{T_1}-\tilde{{\bf H}}_p)$.

Pick  a complex $M_\bullet\in D^b_{\OCat}(\mods^{T_1}-{\bf H}_p)$ and consider the complexes $\tilde{M}_\bullet:=\tilde{\bf H}_p\otimes_{\bf H}M_\bullet, \hat{M}_\bullet:=\hat{\bf H}_p\otimes_{{\bf H}_p}M_\bullet$.  We are going to prove that the natural morphism $M_\bullet\rightarrow \tilde{M}_\bullet$ is a quasi-isomorphism.

According to  the proof of \cite[Lemma 4.5]{etingofaffine}, $\operatorname{Tor}^i_{{\bf H}_p}(\hat{\bf H}_p, M)=0$
for any $M\in \OCat^{T_1}_p$ and any $i>0$. Since any complex in $D^b(\mods^{T_1}-{\bf H}_p)$ is quasi-isomorphic
 to a complex of graded free modules, it follows by the hypertor spectral sequence
that the natural morphism of complexes $M_\bullet\rightarrow \hat{M}_\bullet$ is a quasi-isomorphism.
Apply this to a complex $$M_\bullet=0\xrightarrow{d^k} M_k\xrightarrow{d^{k-1}}M_{k-1}\xrightarrow{d^{k-2}}\ldots$$ of graded free modules.
The claim $M_\bullet\rightarrow \hat{M}_\bullet$ is a quasiisomorphism is equivalent to $\ker \hat{d}^i=\operatorname{im} \hat{d}^i+\ker d^i$
and $\operatorname{im}d^i=\operatorname{im}\hat{d}^i\cap \ker d^i$. Still all the operators $d^i$ are $\C^\times$-equivariant, the last two equalities imply, respectively, $\ker \tilde{d}^i=\operatorname{im} \tilde{d}^i+\ker d^i$
and $\operatorname{im}d^i=\operatorname{im}\tilde{d}^i\cap \ker d^i$. But this again means that $M_\bullet\rightarrow \tilde{M}_\bullet$
is a quasiisomorphism. The claim from the previous paragraph is proved.

Any object in  $D^b_{\OCat}(\operatorname{Mod}^{T_1}-\tilde{{\bf H}}_p)$ can be represented as
a (perhaps bounded only from the right) complex of free $\tilde{\bf H}_p$-modules (with grading shifts).
But, as an object of $\operatorname{Mod}^{T_1}-\tilde{{\bf H}}_p$,
$\tilde{\bf H}_p$ is a projective limit of objects in $\OCat^{T_1}_p$. It follows
that any projective object in $\OCat^{T_1}$ is projective in $\operatorname{Mod}^{T_1}-\tilde{\bf H}_p$,
so the natural functor $D^b(\OCat_p^{T_1})\rightarrow D^b_{\OCat}(\operatorname{Mod}^{T_1}-\tilde{{\bf H}}_p)$
is an equivalence. Composing $\tilde{\bf H}_p\otimes_{{\bf H}_p}\bullet: D^b_{\OCat}(\mods^{T_1}-{\bf H}_p)\rightarrow
D^b_{\OCat}(\operatorname{Mod}^{T_1}-\tilde{{\bf H}}_p)$ with the equivalence $D^b_{\OCat}(\operatorname{Mod}^{T_1}-\tilde{{\bf H}}_p)
\rightarrow D^b(\OCat^{T_1}_p)$ we get a quasi-inverse functor to $D^b(\OCat_p^{T_1})\rightarrow D^b_{\OCat}(\mods^{T_1}-{\bf H}_p)$.

\subsubsection{Getting rid of $T_1$}
To prove (\ref{eq:equiv_O5})  observe that we now have an equivalence $$E: D^b(\OO_p^{T_1}) \stackrel{\sim}{\longrightarrow} D^b(\OO_{p'}^{T_1})$$ which
commutes with the grading shifts arising from the $T_1$-action. Let $\eta_p : \OO_p^{T_1} \rightarrow \OO_p$ be the forgetful functor. By \cite[Proposition 2.4]{GGOR} all objects in $\mathcal{O}_p$ can be given $T_1$-equivariant structure and we have that $$\operatorname{Ext}^i_{\OO_p}(\eta_p(X), \eta_p(Y)) = \bigoplus_{z\in\mathbb{Z}}\operatorname{Ext}^i_{\OO^{T_1}_p}(X, Y\langle z\rangle)$$ for any $X,Y\in \OO_p^{T_1}$ where $\langle z \rangle$ denotes the grading shift. Let $P$ be an equivariant lift to $\OO_p^{T_1}$ of a projective generator for $\mathcal{O}_p$ and set $T = E(P)$. The above observations show that $\eta_{p'}(T)$ is a tilting object of $\OO_{p'}$ whose endomorphism ring is Morita equivalent to $\OO_p$.
This produces the equivalence
in (\ref{eq:equiv_O5}) and thus Theorem \ref{der_main_thm} follows.

\section{Cherednik category $\mathcal{O}$ vs parabolic category $\mathcal{O}$ in type $A$}\label{parabolicequiv}
\subsection{Content of this section}\label{para_content}
The goal of this section is to relate the category $\OCat_{\kappa,{\bf s}}$ for the cyclotomic
rational Cherednik algebra to the parabolic category $\OCat$ in type $A$ proving Theorem C from
the introduction. We impose
certain restrictions on our parameters: $\kappa$ is supposed to be not rational, while ${\bf s}$
has to be ordered: $s_\ell>s_1>\ldots>s_{\ell-1}$ (below the inequalities are different, but this
is because ${\bf s}$ will have a different meaning).

In Subsection \ref{SUBSECTION_parab} we recall some information about the parabolic category $\OCat$
and state a weak version of our main result, Theorem \ref{SSS_parab_main_easy}. In the last subsection
\ref{SUBSECTION_parab_main} of this section we will prove a stronger version and deduce it various consequences including
a combinatorial classification of modules  with given support in the Cherednik category $\OCat$
and the Koszul property for this category.

Subsection \ref{SUBSECTION_Hecke} recalls the definitions of Hecke algebras. We need them because the two categories we are interested in are highest weight covers
for certain Hecke algebras: the usual cyclotomic  Hecke algebras on the Cherednik side, see \cite{GGOR}
and \cite{rouqqsch}, and the degenerate cyclotomic Hecke algebras on the parabolic side,
see \cite{BKschurweyl}.

Subsection \ref{SUBSECTION_hw} deals with highest weight covers. We first recall some general
theory, mostly due to Rouquier in \cite{rouqqsch}, including his theorem on the equivalence
of highest weight covers. Then we treat the {\sf KZ} functor that realizes the Cherednik category
$\mathcal{O}$ as a highest weight cover for the Hecke algebra modules. We recall the definition
of the functor as well as some easy properties that are mostly well-known and standard.
Then we recall the usual and degenerate cyclotomic $q$-Schur algebras which provide highest weight covers of the usual and degenerate cyclotomic $q$-Schur algebra, and we recall the work of Brundan-Kleshchev showing that parabolic category $\mathcal{O}$ is a highest weight cover of a degenerate
cyclotomic Hecke algebra. This last cover is known to be equivalent to the degenerate cyclotomic Schur algebra.

In Subsection \ref{SUBSECTION_reduction} we study the Cherednik categories $\OCat$
a bit further. We provide a reduction result and also prove that under our ordering
of the parameters the Cherednik category $\OCat$ is equivalent to the category
of modules over the cyclotomic $q$-Schur algebra (the first result of this sort
was due to Rouquier, \cite{rouqqsch}).

So  Subsections \ref{SUBSECTION_hw} and \ref{SUBSECTION_reduction} suggest that
to prove Theorem \ref{SSS_parab_main_easy} one needs to establish a highest weight equivalence
between the categories of modules over the cyclotomic $q$- and degenerate Schur algebras.
In Subsection \ref{SUBSECTION_iso_Schur} we will establish an isomorphism between the
two algebras and show that it gives rise to a highest weight equivalence. Our proof
is based on an isomorphism between the usual  and degenerate cyclotomic
Hecke algebra due to Brundan and Kleshchev, \cite{BKKLR}.

This result is enough to complete the proof of Theorem \ref{SSS_parab_main_easy}.
However, for our combinatorial computation of the supports we need some more
preliminaries related to $\gl_\infty$-actions. In Subsection \ref{SUBSECTION_Fock}
we recall higher level Fock spaces and the corresponding crystals. In Subsection \ref{SUBSECTION_categor}
we  recall  categorical $\gl_\infty$-actions on the module categories of Hecke algebras,
the Cherednik and parabolic categories $\OCat$.

\subsection{Parabolic category $\OCat$}\label{SUBSECTION_parab}
 In this subsection we  assume that ${\bf s} = (s_1, \ldots , s_{\ell})\in \Z^{\ell}$ is chosen so that $s_1 > s_2 > \cdots > s_{\ell}.$ We set $\Lambda_{\bf s} = \sum_{i=1}^{\ell} \Lambda_{s_i}$, a weight for $\gl_{\infty}$.

\subsubsection{Category $\tOpar{{\bf s},m}$}\label{SSS_tOpar} Pick $m\geq s_1-s_{\ell}+n$ and let $d_i = m + s_i - s_1$ for $1\leq i \leq \ell$. Fix a partition $\tau$ whose transpose equals $(d_1 \geq \cdots \geq d_\ell)$.
In the following example $n=\ell =3$, ${\bf s} = (7,5,4)$ and we choose $m=6$:
$$\Yvcentermath1
\tau  \leftrightarrow \, \yng(1,1,2,3,3,3)$$
Let $\mathfrak{p}$ be the standard parabolic subalgebra of $\mathfrak{g} = \gl_{|\tau|}(\C)$ with Levi the block diagonal matrices $\mathfrak{l} = \mathfrak{gl}_{d_1}(\C)\times \cdots \times \mathfrak{gl}_{d_{\ell}}(\C).$ Let $\tOpar{{\bf s},m}$ denote the category consisting of all finitely generated $\mathfrak{g}$-modules that are locally finite dimensional over $\mathfrak{p}$, semisimple over $\mathfrak{l}$, and have integral generalized central character, see \cite[\S 4]{BKschurweyl}.

Let $\rho = (0, -1, \ldots ,1-|\tau|)$. Recall that a $\tau$-tableau $A$ is a filling of the Young diagram of $\tau$ with integers.  For example
$$\Yvcentermath1 A = \young(8,6,56,454,333,222)$$
By reading the set of entries of a $\tau$-tableau $A$ from top to bottom and left to right we produce ${\bf a} = (a_1, a_2, \ldots , a_{|\tau|})$ and hence a weight for $\g$, with respect to the standard diagonal torus, given by $\tau(A) = {\bf a} - \rho = (a_1, a_2+1, \ldots , a_{|\tau|}+|\tau|-1)$. In the example we get $\tau(A) = (8,7,7,7,7,7,12,12,11,11,14,14,14)$. If the entries of $A$ are strictly increasing from bottom to top then we call it column strict and we denote the set of such column strict tableaux by $\col{\tau}$. Given $A\in \col{\tau}$ we construct the parabolic Verma module $$N(A) = U(\g)\otimes_{U(\mathfrak{p})} V(A),$$ where $V(A)$ is the finite dimensional irreducible representation of $\mathfrak{l}$ with highest weight $\tau(A)$. It has an irreducible head $L(A)$ with highest weight $\tau(A)$. The set $\{ L(A) : A\in \col{\tau}\}$ is a complete set of irreducible representations in $\tOpar{{\bf s},m}$.

\subsubsection{Category $\Opar{{\bf s}}(n)$}\label{SSS_parab_indep_m}
To $\lambda = (\lambda^{(1)}, \ldots , \lambda^{(\ell)})\in \prt{\ell}{n}$ we associate $A_{\lambda}\in \col{\tau}$ whose $j^{\text{th}}$ column and $i^{\text{th}}$ (from top) row $A_i^{(j)}$ has value $$\lambda^{(j)}_{d_j-d_1+i}+ s_1-i+1$$ (if $d_1\geqslant d_j+i$, then
the corresponding box is absent). In the example above $A = A_{\lambda}$ for $\lambda = ((1), (1^2), \emptyset)$.
It is easy to describe the set $\{A_\lambda| \lambda\in \mathscr{P}_\ell(n)\}$ on the level of tableaux.
Namely, let $A_0$ be the ``ground-state'' tableau corresponding to the empty multi-partition: its
entries in the $i$th row are all equal to $s_1+1-i$.
The set in interest consists of all tableaux $A$ such that $A-A_0$ has non-negative integral entries summing to $n$.

We let
$\Opar{{\bf s},m}(n)$
be the Serre subcategory of $\tOpar{{\bf s},m}(n)$ generated by the irreducible representations $\{ L(A_{\lambda}) : \lambda \in \prt{\ell}{n}\}$. By \cite[Lemma 4.1]{BKschurweyl} this is a sum of blocks
of $\tOpar{{\bf s},m}(n)$ and hence a highest weight category with standard modules $\{ N(A_{\lambda}) : \lambda \in \prt{\ell}{n}\}$.

\begin{lemma} Up to equivalence sending $N(A_\lambda)$ to $N(A_\lambda)$ the category
of $\Opar{{\bf s},m}(n)$ is independent of the choice of $m$.
\end{lemma}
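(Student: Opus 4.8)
The plan is to realise every $\Opar{{\bf s},m}(n)$, for admissible $m$, as the module category of one and the same algebra — the degenerate cyclotomic Schur algebra $S$ attached to the $\gl_\infty$-weight $\Lambda_{\bf s}$ and to $n$ — in a way compatible with the parabolic Verma modules $N(A_\lambda)$, and then to compose the resulting equivalences for two values of $m$.

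First I would invoke \cite{BKschurweyl}. By the very definition, $\Opar{{\bf s},m}(n)$ is the Serre subcategory of $\tOpar{{\bf s},m}(n)$ generated by the $L(A_\lambda)$, $\lambda\in\mathscr{P}_\ell(n)$, and by \cite[Lemma 4.1]{BKschurweyl} it is a sum of blocks, hence a highest weight category with standard modules the $N(A_\lambda)$. The higher-level Schur--Weyl duality of Brundan and Kleshchev then provides an equivalence of highest weight categories $\Phi_m\colon \Opar{{\bf s},m}(n)\xrightarrow{\sim}\Lmod{S}$, under which $N(A_\lambda)$ corresponds to the standard (Weyl) module of $S$ labelled by $\lambda\in\mathscr{P}_\ell(n)$. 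The two points that make this work are both contained in \cite{BKschurweyl}: (i) the target $S$ is built solely from the degenerate cyclotomic Hecke algebra $H^{\Lambda_{\bf s}}_n$ (which depends only on $\Lambda_{\bf s}$) and from the permutation modules indexed by compositions of $n$, so $m$ enters its construction nowhere; and (ii) the bijection between $\mathscr{P}_\ell(n)$ and the standard modules of $S$ is $m$-independent and matches the combinatorial parametrisation $\lambda\mapsto A_\lambda$ used on the parabolic side. It is exactly here that the hypothesis $m\geq s_1-s_\ell+n$ is used: it guarantees that $\{A_\lambda:\lambda\in\mathscr{P}_\ell(n)\}$ is precisely the set of $A\in\col{\tau}$ with $A-A_0$ having non-negative entries of total sum $n$, a description that does not involve $m$, and correspondingly that all compositions of $n$ occur.

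Granting this, for two admissible values $m,m'$ the composite $\Phi_{m'}^{-1}\circ\Phi_m\colon\Opar{{\bf s},m}(n)\xrightarrow{\sim}\Opar{{\bf s},m'}(n)$ is an equivalence of highest weight categories sending $N(A_\lambda)$ to the $\lambda$-th standard module of $S$ and then to $N(A_\lambda)$, which is the assertion. As an alternative, more hands-on route one could avoid $S$ altogether: passing from $m$ to $m+1$ enlarges $\tau$ by exactly one box at the foot of each of its $\ell$ columns, and the defining formula for $A_\lambda$ shows these new boxes always carry the $\lambda$-independent ground-state values $s_1+1-i$; one then produces the equivalence from a translation functor on $\tOpar{{\bf s},m}$ moving the integral generalized central character suitably, followed by projection onto the block $\Opar{{\bf s},m+1}(n)$, and checks it is an equivalence via the standard criterion that an exact functor between highest weight categories which induces an order-preserving bijection of the labelling posets and carries standards isomorphically to standards is automatically an equivalence. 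In either approach the only genuinely delicate issue is bookkeeping: keeping track of the identifications of the labelling sets and of the (generalized) central characters as $m$ varies, so as to be sure the parabolic Verma $N(A_\lambda)$ is matched with $N(A_\lambda)$ itself rather than with some relabelled module. Modulo this — and granting the cited results of Brundan--Kleshchev — there is no further mathematical content.
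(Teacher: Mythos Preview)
Your main approach is essentially the paper's own: both prove the lemma by identifying each $\Opar{{\bf s},m}(n)$ with $\Lmod{\mathcal{S}_{1,{\bf s}}(n)}$ via the Brundan--Kleshchev equivalence from \cite{BKschurweyl}, noting that the target is $m$-independent, and checking that $N(A_\lambda)$ is matched with the Weyl module $W_1(\lambda)$. The paper is slightly more explicit about this last point than you are: rather than asserting the matching is ``contained in \cite{BKschurweyl}'', it argues that both $(\mathcal{S}_{1,{\bf s}}(n)\md,F_{1,{\bf s}})$ and $(\Opar{{\bf s},m}(n),{\sf BK})$ are $0$-faithful highest weight covers of $\mathcal{H}_{1,{\bf s}}(n)$, and then uses $F_{1,{\bf s}}(W_1(\lambda))\cong Sp_1(\lambda)\cong {\sf BK}(N(A_\lambda))$ together with the adjunction $E\cong {\sf BK}^! \circ F_{1,{\bf s}}$ to deduce $E(W_1(\lambda))\cong N(A_\lambda)$; your alternative translation-functor route is not used in the paper.
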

A proof will be given in \ref{SSS_BK_functor}.
We will therefore drop the $m$, henceforth denoting the subcategory by $\Opar{{\bf s}}(n)$.

\subsubsection{Main theorem: weaker version}\label{SSS_parab_main_easy}
Recall the $\star$-involution on $\mathscr{P}_\ell(n)$, \ref{multiprtns}. Also
 for an $\ell$-tuple ${\bf s}$ set ${\bf s}^{\star} = (-s_{\ell-1}, -s_{\ell -2}, \ldots , -s_{1}, -s_\ell)$.
\begin{theorem}
Let $\kappa \in \C\setminus \mathbb{Q}$ and ${\bf s} = (s_1, \ldots , s_{\ell})\in \Z^{\ell}$ with $s_1 > s_2 > \cdots > s_{\ell}$.
Then there is an equivalence $\Upsilon_n : \mathcal{O}_{\kappa, {\bf s}^{\star}}(n) \longrightarrow \Opar{{\bf s}}(n)$
mapping $\Delta_{\kappa, {\bf s}^{\star}}(\lambda^{\star})$ to $N(A_\lambda)$ for each $\lambda\in \mathscr{P}_\ell(n)$. \end{theorem}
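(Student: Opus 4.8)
The plan is to realize both sides as highest weight covers of the same Hecke-theoretic algebra and then invoke Rouquier's uniqueness theorem for highest weight covers. Concretely, I would proceed as follows. First, using the non-rationality of $\kappa$, identify the cyclotomic Hecke algebra $\Hecke_{\kappa,{\bf s}^\star}(G_\ell(n))$ with its degeneration: since $q = \exp(2\pi\sqrt{-1}\kappa)$ is not a root of unity and the cyclotomic parameters are of the form $Q_i = q^{s_i}$ (up to the usual normalizations), the Hecke algebra becomes isomorphic to the degenerate cyclotomic Hecke algebra of type $G_\ell(n)$ attached to the charge ${\bf s}$; this is the Brundan--Kleshchev isomorphism \cite{BKKLR}. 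Next, recall that $\OCat_{\kappa,{\bf s}^\star}(n)$ is a highest weight cover of $\Hecke_{\kappa,{\bf s}^\star}$ via the $\KZ$ functor, and that under the ordering hypothesis $s_\ell>s_1>\cdots>s_{\ell-1}$ (equivalently, after the $\star$-twist, an ordering making the relevant charge ``dominant'') the results of Subsection \ref{SUBSECTION_reduction} promote this to an equivalence of $\OCat_{\kappa,{\bf s}^\star}(n)$ with the module category of the cyclotomic $q$-Schur algebra $\mathcal{S}_{\kappa,{\bf s}^\star}(n)$, a quasi-hereditary $1$-faithful cover. On the other side, by Brundan--Kleshchev \cite{BKschurweyl}, $\tOpar{{\bf s},m}(n)$, and hence the block subcategory $\Opar{{\bf s}}(n)$, is a highest weight cover of the degenerate cyclotomic Hecke algebra, which is in turn equivalent to the module category of the degenerate cyclotomic $q$-Schur algebra.

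The crux is then Subsection \ref{SUBSECTION_iso_Schur}: I would construct an explicit isomorphism between the cyclotomic $q$-Schur algebra $\mathcal{S}_{\kappa,{\bf s}^\star}(n)$ and the degenerate cyclotomic Schur algebra associated to ${\bf s}$, extending the Hecke algebra isomorphism of \cite{BKKLR}. The point is that the $q$-Schur algebra is $\End$ of a direct sum of (signed) permutation modules $\bigoplus_\mu M^\mu$ over the Hecke algebra, with $\mu$ running over multicompositions, and the same is true in the degenerate setting; so an isomorphism of Hecke algebras respecting these permutation modules induces one of the Schur algebras. One must check that the Brundan--Kleshchev isomorphism matches $M^\mu$ with $M^\mu$, which should follow from the compatibility of their map with the natural parabolic (Young) subalgebras. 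Having the algebra isomorphism, both categories are module categories of the same algebra, so they are equivalent as abelian categories; one then verifies the equivalence is compatible with the quasi-hereditary structures, i.e.\ matches Weyl modules, by tracking the labelling of standard objects.

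To finish, I would pin down the combinatorics of the equivalence $\Upsilon_n$. The standard modules of $\OCat_{\kappa,{\bf s}^\star}(n)$ are the $\Delta_{\kappa,{\bf s}^\star}(\mu)$ for $\mu\in\mathscr{P}_\ell(n)$; under the cover equivalence and the Schur algebra isomorphism these go to parabolic Verma modules $N(A)$ for $A\in\col\tau$, and I must check that $\Delta_{\kappa,{\bf s}^\star}(\lambda^\star)\mapsto N(A_\lambda)$ with the dictionary $\lambda\mapsto A_\lambda$ of \ref{SSS_parab_indep_m}. This is a matter of matching two labelings of the same poset: the $c$-function ordering on multipartitions (via \eqref{cfunvalue}, rewritten using ${\bf s}^\star$) against the ordering on column-strict tableaux coming from the parabolic category $\OCat$ (dominance of $\tau$-weights $\tau(A)$). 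Since highest weight covers are unique once one fixes the order-preserving bijection on the indexing posets (Rouquier's theorem, recalled in Subsection \ref{SUBSECTION_hw}), it suffices to check that $\lambda\mapsto A_\lambda$ intertwines these two orders and matches the images of standards under $\KZ$ with the images under the Brundan--Kleshchev functor on a single class of objects (e.g.\ the ones corresponding to ``column'' multicompositions, where both sides are literally permutation modules). The independence of $m$ (the Lemma in \ref{SSS_parab_indep_m}, proved in \ref{SSS_BK_functor} via a truncation functor) ensures the target is well-defined. The main obstacle I anticipate is the second paragraph: verifying that the Brundan--Kleshchev Hecke algebra isomorphism can be upgraded to the Schur algebra level compatibly with highest weight structures, and in particular that it respects the full collection of permutation modules rather than just the regular one — the bookkeeping of cyclotomic parameters versus charges, and of the $\star$-twist, is delicate and is where sign and normalization errors would creep in.
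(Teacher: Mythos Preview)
Your proposal is correct and follows essentially the same route as the paper: chain together the equivalence $\mathcal{O}_{\kappa,{\bf s}^\star}(n)\simeq \mathcal{S}_{q,{\bf s}}(n)\text{-mod}$ from Proposition~\ref{reductioncase}(1), the Schur-algebra isomorphism $\mathcal{S}_{q,{\bf s}}(n)\cong \mathcal{S}_{1,{\bf s}}(n)$ of Proposition~\ref{degenerateSchurIM} (extending the Brundan--Kleshchev Hecke isomorphism), and the Brundan--Kleshchev equivalence $\mathcal{S}_{1,{\bf s}}(n)\text{-mod}\simeq \Opar{\bf s}(n)$ of \ref{SSS_BK_functor}. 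One small difference: you propose to pin down the bijection on standards by a final appeal to Rouquier's uniqueness theorem, whereas the paper instead tracks the labeling through each link separately---via Specht modules and $0$-faithfulness in the second and third steps (so that $W_{q,{\bf s}}(\lambda)\mapsto W_{1,{\bf s}}(\lambda)\mapsto N(A_\lambda)$), with Rouquier's theorem used only inside the first step; this avoids having to verify directly that $\lambda\mapsto A_\lambda$ intertwines the two partial orders.
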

Such a result has been suggested in \cite[Remark 8.10(b)]{VV} where it features as a degenerate analogue of their main conjecture which applies to all $\kappa$.

\subsection{Hecke algebras}\label{SUBSECTION_Hecke}
\subsubsection{The general case}
Recall the notation from \ref{SSS:init_notation}.

Let $\{ {\bf q}_u\}$ be a set of indeterminates with $u\in U$ and set ${\bf k} = \mathbb{C}[\{ {\bf q}_{u}^{\pm 1}\}]$. Let $\h^{\text{reg}} = \h \setminus \bigcup_{H\in \mathcal{A}} H$, choose $x_0\in \h^{\text{reg}}$, and let $B_W = \pi_1 (\h^{\text{reg}} /W , x_0) $, the braid group of $W$. Let $T_H\in B_W$ be a  generator for the monodromy around $H$. The elements $T_H$ with $H$ running over a minimal set of reflection
hyperplanes generate $B_W$. Here ``minimal'' means the minimal subset of the reflecting hyperplanes such that the corresponding reflections
generate $W$.
Let $\mathcal{H}(W)$ be the Hecke algebra of $W$ over ${\bf k}$, the quotient of ${\bf k}[B_W]$ by the relations $$\prod_{0\leq j < e_H} (T_H - \zeta_{e_H}^j{\bf q}_{H,j}) = 0, $$  see for instance \cite[\S 4]{BMR}.

\label{a defn} Given any $\C$-algebra homomorphism $\Theta: {\bf k} \rightarrow R$, we will let $\mathcal{H}_{\Theta}(W)$ denote the specialised algebra $\mathcal{H}(W)\otimes_{\bf k} R$.

\subsubsection{The cyclotomic case}
\label{subsubcyclo}
Let $R$ be a $\C$-algebra with distinguished elements $q\neq 0$ and $Q_1, \ldots , Q_{\ell}$. Following \cite[\S 3.2]{humathas} we define $\mathcal{H}^R_{q, {\bf Q}}(n)$ as the $R$-algebra with generators the Jucys-Murphy elements $L_1, \ldots , L_n$ and $T_1, \ldots , T_{n-1}$ and relations for $1\leq i\leq n-1$ and $1\leq j\leq n$
\begin{align*}
(L_1-Q_1)\cdots (L_1-Q_{\ell}) & =  0, & L_iL_j &= L_jL_i, \\
(T_i+1)(T_i-q) &= 0, & T_iT_{i+1}T_i &=  T_{i+1}T_iT_{i+1},\\
T_iL_i+\delta_{q,1} &= L_{i+1}(T_i-q+1), & T_iL_j & =  L_j T_i \quad \text{if }i\neq j, j+1, \end{align*} \vspace{-0.8cm}
$$ T_iT_j = T_jT_i \quad \text{if } |j-i|>1 \text{ and } j<n. $$

When $q =1$ the algebra $\mathcal{H}^R_{1,{\bf Q}}(n)$ is known as the degenerate cyclotomic Hecke algebra. On the other hand let $\Theta: {\bf k} \rightarrow R$ be the homomorphism defined by $$\Theta ({\bf q}_{H_{i,j}^k,0}) = q, \Theta({\bf q}_{H_{i,j}^k,1}) = 1, \Theta({\bf q}_{H_{i},j}) = \zeta_{\ell}^{-j}Q_j$$ with $q\neq 1$: then $H^R_{q, {\bf Q}}(n) = \mathcal{H}_{\Theta}(G_{\ell}(n))$. This is because $\mathcal{H}_{\Theta}(G_{\ell}(n))$ can be presented as a quotient of the affine Hecke algebra $\mathcal{H}^{aff}_{q}(n)$.
Recall that the last algebra is generated by $T_1,\ldots, T_{n-1},X_1,\ldots,X_n$ subject to the following relations
\begin{equation}\label{eq:affine_Hecke_relations}
\begin{split}
&X_iX_j=X_jX_i,\\
&T_iX_j=X_jT_i, j\neq i,i+1,\\
&T_iX_iT_i=qX_{i+1},\\
&(T_i-q)(T_i+1)=0,\\
&T_iT_j=T_jT_i, |i-j|>1,\\
&T_iT_{i+1}T_i=T_{i+1}T_iT_{i+1}.
\end{split}
\end{equation}
There is a unique epimorphism $\mathcal{H}^{aff}_q(n)\twoheadrightarrow \mathcal{H}_{\Theta}(G_{\ell}(n))$ such that $T_i\mapsto T_i, i=1,\ldots,n-1, X_1\mapsto L_1$, see e.g. \cite[\S 2.4]{mathassurv}. This induces an isomorphism $\mathcal{H}_{\Theta}(G_{\ell}(n)) \rightarrow \mathcal{H}^R_{q, {\bf Q}}(n).$

\subsubsection{Choice of $q,Q_i$} \label{paramsHec} Let ${\bf s} = (s_1, \ldots , s_{\ell})\in \C^\ell$ and $\kappa\in \C$. Let $R= \C, q = \exp(2\pi\sqrt{-1}\kappa)$ and define $$Q_i = \begin{cases} \exp(2\pi\sqrt{-1}\kappa s_i) \quad & \text{if }q \neq 1, \\ s_i & \text{if }q = 1.\end{cases}$$ then we will write $\mathcal{H}_{q, {\bf s}}(n)$ for $\mathcal{H}^{\C}_{q, {\bf Q}}(n).$

\subsubsection{Involution}\label{Hecinvol}
There are compatible isomorphisms of Hecke algebras $\sigma : \Hecke_q^{aff}(n)\rightarrow \Hecke_q^{aff}(n), \mathcal{H}_{q,{\bf s}}(n) \rightarrow  \mathcal{H}_{q,{\bf s}^{\ast}}(n)$ induced by sending $T_i$ to $-qT_i^{-1}$ for $1\leq i \leq n-1$ and $X_i$ to $X_i^{-1}$,  $L_i$ to $L_i^{-1}$ (if $q\neq 1$) or $X_i$ to $-X_i$, $L_i$ to $-L_i$ (if $q=1$) for $1\leq i \leq n$. Recall that ${\bf s}^\ast$ was defined in \ref{SSS_parab_main_easy}.

\subsection{Highest weight covers}\label{SUBSECTION_hw}
\subsubsection{Definition}\label{SSS_hw_defn}
Let $\Theta: {\bf k}\rightarrow k$ be an algebra homomorphism. Let $\mathcal{C}$ be a highest weight category defined over $k$, see \cite[4.1]{rouqqsch} for the definition, and $F: \mathcal{C} \rightarrow \mathcal{H}_\Theta (W)\md$ an exact functor. We define the pair $(\mathcal{C}, F)$ to be a highest weight cover of $\mathcal{H}_{\Theta}(W)$ if $F$ is essentially surjective and the restriction of $F$ to the projective objects in $\mathcal{C}$ is fully faithful. Given this, we then call the pair $(\mathcal{C}, F)$ an $i$-faithful cover of $\mathcal{H}_{\Theta}(W)$ if $F$ induces isomorphisms $$\ext^j_{\mathcal{C}}(M,N) \stackrel{\sim}{\rightarrow} \ext^j_{\mathcal{H}_{\Theta}(W)}(FM, FN)$$ for $0\leq j \leq i$ and for all $M,N \in \mathcal{C}^{\Delta}$. Here and below
$\mathcal{C}^{\Delta}$ denotes the full subcategory in $\mathcal{C}$ consisting of objects with a filtration whose successive quotients are standard modules.

\subsubsection{Passing to fraction fields}\label{SSS_fraction} Now let ${k}$ be an integral domain and set $K= \quot({k})$ so that we have a morphism $\theta: {\bf k} \xrightarrow{\Theta} k \hookrightarrow K$. Assume that $\mathcal{H}_{\theta}(W)$ is split semisimple. Then a highest weight cover $F$ becomes an isomorphism after tensoring with $K$. In particular, it induces a bijection between the set of simples
in $\mathcal{C}\otimes_k K$ and simple $\mathcal{H}_{\theta}(W)$-modules. Since $\mathcal{C}\otimes_{k} K$ is still a highest weight
category, we have an ordering on the set of simple modules. The notion of compatibility of $F$ with a partial ordering on
the simple $\mathcal{H}_{\theta}(W)$-modules can be defined now in a natural way.

\subsubsection{Equivalences}\label{hwequivdef} Let $\mathcal{C}_1$ and $\mathcal{C}_2$ be highest weight categories defined over $k$, with sets of standard objects $\Delta_1$ and $\Delta_2$ respectively. A functor $F: \mathcal{C}_1 \rightarrow \mathcal{C}_2$ is an equivalence of highest weight categories if it is an equivalence of categories and there is a bijection $\phi: \Delta_1 \xrightarrow{\sim} \Delta_2$ and invertible $k$-modules $U_D$ for each $D\in \Delta_1$ such that $F(D) \cong \phi(D)\otimes U_D$ for $D\in \Delta_1$.

If $(\mathcal{C}_i, F_i)$ are highest weight covers of $\mathcal{H}_{\Theta}(W)$ for $i=1,2$, then $F: \mathcal{C}_1 \to \mathcal{C}_2$ is an equivalence of highest weight covers if it is an equivalence of highest weight categories such that $F_1 = F_2\circ F$.

\subsubsection{Rouquier's equivalence theorem}
The importance of the above definitions comes from the following theorem of Rouquier, \cite[Theorem 4.49]{rouqqsch}.

\begin{theorem}\label{Thm:Rouquier_main}
Let ${k},K,\theta$ be as in \ref{SSS_fraction} and assume  that $\mathcal{H}_{\theta}(W)$ is split semisimple. Fix two orders, $\leq_1$ and $\leq_2$, on $\irr{\mathcal{H}_{\theta}(W)}$. Suppose for $i=1,2$ we have $1$-faithful highest weight covers $F_i : \mathcal{C}_i \rightarrow  \mathcal{H}_{\Theta}(W)\md$ compatible with $\leq_i$. If the ordering $\leq_1$ is a refinement of the ordering $\leq_2$, then there is an equivalence of highest weight covers $\mathcal{C}_1 \stackrel{\sim}{\rightarrow} \mathcal{C}_2$ which induces the bijections $$\irr{\mathcal{C}_1\otimes_{k} K} \stackrel{\sim}{\rightarrow} \irr{\mathcal{H}_{\theta}(W)} \stackrel{\sim}{\leftarrow} \irr{\mathcal{C}_2\otimes_{k} K }$$
specified in \ref{SSS_fraction}.
\end{theorem}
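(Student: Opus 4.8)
This is \cite[Theorem 4.49]{rouqqsch}, so the plan is to follow Rouquier's argument. The overall strategy is: realise the $\Delta$-filtered parts of $\mathcal{C}_1$ and $\mathcal{C}_2$ as extension-closed subcategories of $\mathcal{H}_\Theta(W)\md$ via $F_1$ and $F_2$, show that these two subcategories coincide, deduce an exact equivalence between them, and then reconstruct each $\mathcal{C}_i$ from its category of $\Delta$-filtered objects. First I would fix the algebraic dictionary: since $F_i$ is a highest weight cover, $\mathcal{C}_i$ has a projective generator $P_i$, $F_i\cong\Hom_{\mathcal{C}_i}(P_i,-)$, and full faithfulness of $F_i$ on projectives is exactly the double centraliser property $\End_{\mathcal{H}_\Theta(W)}(F_i(P_i))^{\opp}\cong\End_{\mathcal{C}_i}(P_i)^{\opp}$, so $\mathcal{C}_i\cong\Rmod{\End_{\mathcal{C}_i}(P_i)}$ as a quasi-hereditary algebra. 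Base changing to $K$ and using that $\mathcal{H}_\theta(W)$ is split semisimple, $F_i\otimes_k K$ is an equivalence $\mathcal{C}_i\otimes_k K\xrightarrow{\sim}\mathcal{H}_\theta(W)\md$; this gives the bijections of \ref{SSS_fraction}, and transporting $\leq_i$ through them produces a bijection $\phi\colon\irr{\mathcal{C}_1\otimes_k K}\xrightarrow{\sim}\irr{\mathcal{C}_2\otimes_k K}$, which (since standard objects commute with base change) matches up the $K$-forms of the standards and is the bijection the final equivalence must induce.

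The core step is the analysis of $F_i$ on $\mathcal{C}_i^{\Delta}$. Projectives in $\mathcal{C}_i$ are $\Delta$-filtered, so $0$-faithfulness together with compatibility with $\leq_i$ forces $F_i$ to restrict to a fully faithful exact functor $\mathcal{C}_i^{\Delta}\to\mathcal{H}_\Theta(W)\md$; $1$-faithfulness then shows that $F_i(\mathcal{C}_i^{\Delta})$ is closed under extensions, hence an exact subcategory on which $F_i$ is an exact equivalence, and, crucially, that each $\mathcal{H}_\Theta(W)$-module $F_i(\Delta_\lambda^{(i)})$ admits a description purely in terms of $\mathcal{H}_\Theta(W)$ and the order $\leq_i$: the isomorphism $\Ext^1_{\mathcal{H}_\Theta(W)}(F_i\Delta_\lambda^{(i)},F_i\Delta_\mu^{(i)})\cong\Ext^1_{\mathcal{C}_i}(\Delta_\lambda^{(i)},\Delta_\mu^{(i)})$ supplied by $1$-faithfulness, combined with the vanishing of the right-hand side unless $\lambda<_i\mu$, characterises $F_i(\Delta_\lambda^{(i)})$ as a ``minimal standardisation'' of the corresponding simple. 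I expect this to be the main obstacle: making that intrinsic characterisation precise and checking that it uses only the Hecke algebra and the chosen order. Granting it, the refinement hypothesis enters exactly here: since $\leq_1$ refines $\leq_2$, the module $F_1(\Delta_\lambda^{(1)})$, being the $\leq_1$-standardisation of its simple, is a fortiori the $\leq_2$-standardisation indexed by $\phi(\lambda)$, so $F_1(\Delta_\lambda^{(1)})\cong F_2(\Delta_{\phi(\lambda)}^{(2)})$ in $\mathcal{H}_\Theta(W)\md$. Hence $F_1(\mathcal{C}_1^{\Delta})$ and $F_2(\mathcal{C}_2^{\Delta})$ are one and the same extension-closed subcategory of $\mathcal{H}_\Theta(W)\md$, generated by a common family of objects, and composing the inverse of $F_2$ on this image with $F_1$ yields an exact equivalence $G^{\Delta}\colon\mathcal{C}_1^{\Delta}\xrightarrow{\sim}\mathcal{C}_2^{\Delta}$ with $G^{\Delta}(\Delta_\lambda^{(1)})\cong\Delta_{\phi(\lambda)}^{(2)}$ and $F_2\circ G^{\Delta}\cong F_1$ on $\mathcal{C}_1^{\Delta}$.

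Finally I would recover the full categories. A highest weight category is determined by its exact category of $\Delta$-filtered objects: the projectives of $\mathcal{C}_i$ are exactly the objects $M$ of $\mathcal{C}_i^{\Delta}$ with $\Ext^1(M,\Delta_\mu^{(i)})=0$ for all $\mu$, a condition intrinsic to the exact structure because $\Ext^1$ computed in an extension-closed subcategory agrees with the ambient one, and $\mathcal{C}_i\cong\Rmod{\End_{\mathcal{C}_i^{\Delta}}(Q_i)}$ for $Q_i$ the sum of the indecomposable such objects. Therefore $G^{\Delta}$ extends to an equivalence $G\colon\mathcal{C}_1\xrightarrow{\sim}\mathcal{C}_2$ with $G(\Delta_\lambda^{(1)})\cong\Delta_{\phi(\lambda)}^{(2)}$; since $F_i\cong\Hom_{\mathcal{C}_i}(P_i,-)$ and $G$ carries $P_1$ to $P_2$ compatibly with the embeddings of the $\Delta$-filtered subcategories into $\mathcal{H}_\Theta(W)\md$, we get $F_1\cong F_2\circ G$, so $G$ is an equivalence of highest weight covers in the sense of \ref{hwequivdef}, and by construction it induces the bijections of \ref{SSS_fraction}. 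I would also keep track of the invertible $k$-modules $U_D$ permitted in \ref{hwequivdef}, but these are read off directly from the construction of $G^{\Delta}$.
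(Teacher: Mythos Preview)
The paper does not prove this theorem at all: it is stated as ``the following theorem of Rouquier, \cite[Theorem 4.49]{rouqqsch}'' and the proof is simply the citation. Your proposal correctly identifies this source at the outset and then goes further, sketching Rouquier's own argument; that sketch is a reasonable outline of his method (matching $\Delta$-filtered subcategories inside $\mathcal{H}_\Theta(W)\md$ via $1$-faithfulness and the order refinement, then recovering the full categories), but none of it is needed to match the paper, which defers entirely to \cite{rouqqsch}.
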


In the remaining part of the subsection we will provide examples of highest weight covers.

\subsubsection{KZ functor}\label{KZappears}
Let $\hat{R}$ be the completion of ${\bf R} = \C[\{{\bf h}_u\}]$ at a maximal ideal corresponding to the point $\{ h_u \}\in \C^U$.
Consider $\hat{R}$ as a ${\bf k}$-algebra via the homomorphism that sends ${\bf q}_{H,j}$ to $\exp(2\pi \sqrt{-1} {\bf h}_{H,j})$. Thus for any homomorphism $\Psi : {\bf R} \rightarrow R$ which factors through $\hat{R}$ there is a corresponding homomorphism $\Theta : {\bf k}\rightarrow R$. Then, see \cite[\S 5]{GGOR}, there is an exact functor $${\sf KZ}_{\Psi}: \mathcal{O}_{\Psi} (\h, W)\rightarrow \mathcal{H}_{\Theta}(W)\md.$$
Namely, given a module in $\mathcal{O}_\Psi(\h, W)$ one can localize it to $\h^{reg}$ to get a $W$-equivariant $R\otimes D(\h^{reg})$-module
with regular singularities.

In the rest of the paper we impose the following assumption on $W$.

\begin{hypothesis}
The Hecke algebra $\Hecke(W)$ is flat over ${\bf k}$ of rank $|W|$.
\end{hypothesis}

This is conjectured to be true in general, \cite[p.178]{BMR}, and holds for $W= G_\ell(n)$, the case we will consider, \cite[Theorem 4.24]{BMR}.

Under this hypothesis the  corresponding local system happens to be not
only $R[B_W]$-module but even an $\mathcal{H}_{\Theta}$-module.
\subsubsection{Faithfulness of $KZ$}
\label{whencanweuserouquier}
Let us recall that \begin{enumerate}
\item if $\Psi: {\bf k} \rightarrow k$ is such that \begin{equation} \label{Rapplies}\Psi({\bf h}_{H,m})  - \Psi({\bf h}_{H,m'}) -
\frac{m - m'}{e_H}  \notin \Z   \text{ for all $H\in \Refl$ and $m\neq m'$}\end{equation} then $(\mathcal{O}_{\Psi}, {\sf KZ}_{\Psi})$ is a $0$-faithful highest weight cover of $\mathcal{H}_{\Theta}(W)$, \cite[Proposition 5.9]{GGOR};
\item if $k$ is a discrete valuation ring and $\Psi: {\bf k} \rightarrow k$ is such that \eqref{Rapplies} holds then $(\mathcal{O}_{\Psi}, {\sf KZ}_{\Psi})$ is a $1$-faithful highest weight cover of $\mathcal{H}_{\Theta}(W)$, \cite[Theorem 5.3]{rouqqsch}.
\end{enumerate}

We will say that a parameter  $(h_{H,j})\in \C^U$ that satisfies \eqref{Rapplies} is {\it faithful}.
A parameter $p\in \param_1$ is said to be faithful, if it can be represented by a faithful $(h_{H,j})$.
In the cyclotomic case of \eqref{parameterchoice}, the parameters are faithful if and only if \begin{equation}\label{eq:ss_Hecke1} \kappa\not\in \frac{1}{2}+\Z \text{ and }\kappa(s_i-s_j)\not\in \Z.
\end{equation}
In particular, if $\kappa\not\in \mathbb{Q}$ and ${\bf s}\in \Z^{\ell}$ then this is equivalent to all $s_i$ being distinct.

Let us now explain some properties of the ${\sf KZ}$ functor.

\subsubsection{KZ twist}\label{SSSection_KZ_twist}
One of the corollaries of Theorem \ref{Thm:Rouquier_main} is the existence of equivalences between distinct specialised category $\mathcal{O}(W)$'s. Let $(h_{H,j}), (h'_{H,j})\in \C^U$ be faithful parameters with corresponding specializations $\Psi_h: {\bf R} \rightarrow \C$ and $\Psi_{h'}: {\bf R} \rightarrow \C$. The corresponding specialised Hecke algebras $\mathcal{H}_{\Theta_h}(W)$ and $\mathcal{H}_{\Theta_{h'}}(W)$ are equal if $h_{H,j} - h'_{H,j}\in \Z$ for each $(H,j)\in U$. This allows us to compare $\mathcal{O}_{\Psi_h}(W)$ and $\mathcal{O}_{\Psi_{h'}}(W)$ via the corresponding ${\sf KZ}$ functors.

\begin{theorem}[\cite{opdam}, \cite{rouqqsch}, \cite{berestchalykh}, \cite{gorgri}]\label{Prop:lab_pres_equiv}
There is a group homomorphism $\gamma: \Z^U \rightarrow \text{Sym}(\irr{W})$ with the following property. If $(h_{H,j}), (h'_{H,j})\in \C^U$ are faithful parameters whose difference $(z_{H,j})$ belongs to $\Z^U$ and such that the partial ordering $<_{\Psi_h}$ on $\irr{W}$ is a refinement of $<_{\Psi_{h'}}$, then there is an equivalence of highest weight categories $\Sigma_{h,h'}: \mathcal{O}_{\Psi_h}(W)\stackrel{\sim}{\rightarrow}\mathcal{O}_{\Psi_{h'}}(W)$ such that $\Sigma_{h,h'}(\Delta_{\Psi_h}(\lambda)) \cong \Delta_{\Psi_{h'}}(\gamma_{(z_{H,j})}(\la)).$
\end{theorem}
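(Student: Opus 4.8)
The statement to prove is Theorem \ref{Prop:lab_pres_equiv}: the existence of a group homomorphism $\gamma:\Z^U\to\text{Sym}(\irr W)$ together with highest weight equivalences $\Sigma_{h,h'}$ implementing it, under a compatibility-of-orderings hypothesis. The plan is to reduce everything to Rouquier's equivalence theorem, Theorem \ref{Thm:Rouquier_main}, applied to the two highest weight covers $(\mathcal{O}_{\Psi_h}(W),{\sf KZ}_{\Psi_h})$ and $(\mathcal{O}_{\Psi_{h'}}(W),{\sf KZ}_{\Psi_{h'}})$ of one and the same Hecke algebra. First I would pass to a discrete valuation ring: since $(z_{H,j})\in\Z^U$, the specialized Hecke algebras $\mathcal{H}_{\Theta_h}(W)$ and $\mathcal{H}_{\Theta_{h'}}(W)$ coincide (as noted just before the theorem, the relation parameters ${\bf q}_{H,j}=\exp(2\pi\sqrt{-1}{\bf h}_{H,j})$ are unchanged under integral shifts). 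Choose a one-parameter deformation: let $k=\C[t]_{(t)}$ or a suitable DVR with residue field $\C$ and a map ${\bf R}\to k$ whose special fibre gives $\Psi_h$ (respectively $\Psi_{h'}$) and whose generic fibre $K=\quot(k)$ is a faithful, Hecke-semisimple parameter — this is possible because the faithfulness condition \eqref{Rapplies} is generic and the Hecke algebra is generically split semisimple. By \ref{whencanweuserouquier}(2), over such a $k$ the covers $(\mathcal{O}_{\Psi_h},{\sf KZ}_{\Psi_h})$ and $(\mathcal{O}_{\Psi_{h'}},{\sf KZ}_{\Psi_{h'}})$ are $1$-faithful.

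Next I would arrange compatibility with orderings. The hypothesis that $<_{\Psi_h}$ refines $<_{\Psi_{h'}}$ is precisely what is needed: the $c$-function ordering on $\irr W$ is, in each case, the highest weight order, and over the generic point $K$ the simple $\mathcal{H}_\theta(W)$-modules are labelled by $\irr W$ through either ${\sf KZ}$ functor. The subtlety is that the two covers a priori give \emph{different} bijections $\irr{\mathcal{O}_{\Psi_h}\otimes_k K}\xrightarrow{\sim}\irr{\mathcal{H}_\theta(W)}$ and $\irr{\mathcal{H}_\theta(W)}\xleftarrow{\sim}\irr{\mathcal{O}_{\Psi_{h'}}\otimes_k K}$; composing them produces a permutation of $\irr W$, which I would \emph{define} to be $\gamma_{(z_{H,j})}$. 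Then Theorem \ref{Thm:Rouquier_main} applies and yields an equivalence of highest weight covers $\Sigma_{h,h'}:\mathcal{O}_{\Psi_h}(W)\xrightarrow{\sim}\mathcal{O}_{\Psi_{h'}}(W)$ over $k$; specializing to the closed point gives the equivalence over $\C$. Since $\Sigma_{h,h'}$ is an equivalence of highest weight categories, it sends standard objects to standard objects up to twist by invertible $k$-modules, and matching up the $K$-fibre labels forces $\Sigma_{h,h'}(\Delta_{\Psi_h}(\lambda))\cong\Delta_{\Psi_{h'}}(\gamma_{(z_{H,j})}(\lambda))$ as claimed.

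It remains to check that $(z_{H,j})\mapsto\gamma_{(z_{H,j})}$ is a group homomorphism $\Z^U\to\text{Sym}(\irr W)$. For this I would argue by transitivity: given integral shifts $z,z'$, choose a chain of faithful parameters $h\to h+z\to h+z+z'$ all of which can be placed on one DVR-family with semisimple generic fibre so that the orderings form a refining chain (possibly after subdividing into several intermediate steps, using that refinement is a preorder and that one can always interpolate by a parameter whose order refines all the others — this is the standard ``genericity + chamber'' argument for the $c$-function). The composition of the two $\Sigma$'s is then a highest weight equivalence compatible with the ${\sf KZ}$ functors, hence by the uniqueness part of Rouquier's theorem it equals $\Sigma$ for the total shift $z+z'$, so $\gamma_{z'}\circ\gamma_z=\gamma_{z+z'}$; taking $z'=-z$ gives invertibility. \textbf{The main obstacle} is exactly this well-definedness and multiplicativity of $\gamma$ independent of the auxiliary choices (the DVR, the generic faithful parameter, the interpolating chain): one must show the permutation depends only on $(z_{H,j})$, which requires knowing that any two valid choices are connected through a zig-zag of refinements to which Rouquier's theorem applies, and that the resulting equivalences agree on the nose at the level of the labelling bijection. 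This is where one invokes the cited works (\cite{opdam}, \cite{berestchalykh}, \cite{gorgri}) for the explicit identification of $\gamma$ with the Opdam/KZ-twist monodromy permutation, which pins it down canonically and makes the homomorphism property manifest.
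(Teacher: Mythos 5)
Your proposal is correct and matches the paper's approach: the theorem is, as you say, an application of Rouquier's equivalence theorem (Theorem \ref{Thm:Rouquier_main}), and the one piece not delivered directly by Rouquier — the construction of $\gamma$ as a well-defined group homomorphism independent of all auxiliary choices — is exactly what the paper defers to \cite{opdam}, \cite{berestchalykh} and \cite{gorgri}, just as you conclude. Your attempted direct construction of $\gamma_{(z_{H,j})}$ as the composite labelling bijection through $\irr{\mathcal{H}_\theta(W)}$ is indeed the correct description of the KZ-twist, and you have correctly identified that its independence of the DVR and interpolating chain (and hence multiplicativity) is the genuine content supplied by the cited references.
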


\begin{proof}
This is an application of Rouquier's Theorem \ref{Thm:Rouquier_main}. The only thing that does not follow
from the Rouquier result is the construction of $\gamma$. It was first studied by Opdam in \cite[Corollary 3.8]{opdam} and was shown in general to be a group homomorphism by Berest and Chalykh in \cite[Corollary 7.12]{berestchalykh}; its relation with Rouquier's theorem is explained in \cite[\S 2]{gorgri}.
\end{proof}
The homomorphism $\gamma$ is called the ${\sf KZ}$-twist. As explained in \cite[\S 6]{opdam} it can be induced from an action of the Galois group $Gal(L/K)$ on $\irr{W}$ where $K = \C(\{ {\bf q}_u\})$, the quotient field of the ring over which $\mathcal{H}(W)$ is defined, and $L$ is a splitting field for $\mathcal{H}(W)$. In particular, since $K$ is already a splitting field for $\mathcal{H}(G(\ell,1 ,n))$ by \cite[Theorems 3.7 and 3.10]{ArikiKoike} we see that $\gamma$ is trivial in the case $W = G(\ell , 1, n)$. In general we only know that for a large enough $N$ the subgroup $N\cdot \Z^U$ in the kernel of $\gamma$.

\subsubsection{KZ vs $\Res$}\label{proposition:KZ_commut}
Let $\Psi:{\bf R}\rightarrow \C$ be a homomorphism, and
let $\Theta$ be constructed from $\Psi$ as in \ref{KZappears}. Fix a parabolic
subgroup $\underline{W}\subset W$.

Let $\underline{\mathcal{H}}_{\Theta}$ denote the Hecke algebra of $\underline{W}$
corresponding to the parameter restricted from $\Theta$. Then there is a natural
embedding $\underline{\mathcal{H}}_{\Theta}\rightarrow \mathcal{H}_{\Theta}$ such that $\mathcal{H}_\Theta$
becomes a free right $\underline{\mathcal{H}}_{\Theta}$-module. So we can define the usual restriction $^{\mathcal{H}}\Res^{W}_{\underline{W}}:
\Lmod{\mathcal{H}_{\Theta}}\rightarrow \Lmod{\underline{\mathcal{H}}_\Theta}$ and induction $^{\mathcal{H}}\Ind_{\underline{W}}^W:
\Lmod{\underline{\mathcal{H}}_\Theta}\rightarrow \Lmod{\mathcal{H}_\Theta}$ functors. It turns out that the {\sf KZ} functors
intertwine both inductions and restrictions. More precisely, let $\underline{{\sf KZ}}: \OCat(\underline{W},\h^+)\rightarrow \Lmod{\underline{\Hecke}_\Theta}$
be the {\sf KZ} functor for $(\h_+,\underline{W})$.

\begin{proposition}[Theorem 2.1 and Corollary 2.3, \cite{Shan}]
We have isomorphisms of functors $^{\Hecke}\Res_{\underline{W}}^W\circ {\sf KZ}=\underline{{\sf KZ}}\circ \Res^{W}_{\underline{W}}$,
$^{\Hecke}\Ind_{\underline{W}}^W\circ \underline{{\sf KZ}}={\sf KZ}\circ \Ind^{W}_{\underline{W}}$.
\end{proposition}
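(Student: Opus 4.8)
The statement to prove is that the $\mathsf{KZ}$ functors intertwine the Bezrukavnikov–Etingof induction/restriction functors with the ordinary induction/restriction functors for Hecke algebras, and this is attributed to Shan. The plan is therefore to recall the explicit construction of $\mathsf{KZ}$ and of $\Res^W_{\underline W}$ from the earlier part of the paper, and to track what each functor does to the underlying local systems on $\h^{\mathrm{reg}}$. Concretely, first I would recall that for $M\in\OCat_p(W,\h)$ the module $\mathsf{KZ}(M)$ is obtained by restricting $M$ to $\h^{\mathrm{reg}}$, where it becomes a $W$-equivariant $D(\h^{\mathrm{reg}})$-module with regular singularities, taking horizontal sections to get a $\pi_1(\h^{\mathrm{reg}}/W)$-representation, and then using the Hypothesis (flatness of $\Hecke(W)$) to see that the monodromy factors through $\mathcal H_\Theta$. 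The key point is to understand how the completion-at-$b$ functor $M\mapsto M^{\wedge_b}$ used to define $\Res_b$ interacts with this localization.

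The main idea is the following local-to-global comparison. Pick $b\in\h$ with $W_b=\underline W$, and let $\h^+$ be the $\underline W$-stable complement to $\h^{\underline W}$ as in \ref{SSS_BE_functors}. Near $b$ the arrangement $\Refl$ looks like the arrangement of $\underline W$ in $\h^+$ (times a trivial factor $\h^{\underline W}$), so the inclusion of a small punctured polydisc around a generic point near $b$ induces, on fundamental groups, the standard embedding $B_{\underline W}\hookrightarrow B_W$ and hence $\underline{\mathcal H}_\Theta\hookrightarrow \mathcal H_\Theta$ used in \ref{proposition:KZ_commut}. The Bezrukavnikov–Etingof isomorphism ${\bf H}^{\wedge_b}\cong Z(W,\underline W,\underline{\bf H}^{\wedge_b})$ of \eqref{def_theta}, after restriction to $\h^{\mathrm{reg}}$, becomes precisely the statement that the $D$-module underlying $\Res_b(M)$, localized to $(\h^+)^{\mathrm{reg}}$, has monodromy representation given by the restriction of the monodromy of $M$ along this embedding; the extra terms $\frac{2c_i}{1-\lambda_u}\frac{\langle\alpha_u,wa\rangle}{\underline x_{\alpha_u}}(f(uw)-f(w))$ in \eqref{def_theta} encode exactly how the connection for $u\in S_i\setminus\underline W$ (the hyperplanes not through $b$) acts, and these are regular at $b$, so they do not affect the local monodromy near $b$ — they only reshuffle the $|W/\underline W|$ copies. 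Thus $\mathsf{KZ}(\Res_b M)$ is naturally $\,^{\mathcal H}\!\Res^W_{\underline W}(\mathsf{KZ}(M))$, with the functor $\mathcal F_b$ of \ref{SSS_BE_functors1} (passing from $\OCat^{\wedge_b}$ to $\underline{\OCat}^+$ via taking $\underline y$-eigenvectors) matching, on the $D$-module side, the pullback to $(\h^+)^{\mathrm{reg}}$. The induction statement then follows either by the analogous direct argument or by adjunction, since $\Ind^W_{\underline W}$ is both left and right adjoint to $\Res^W_{\underline W}$ up to the appropriate twist (by \cite{BE}) and $\,^{\mathcal H}\!\Ind$ is adjoint to $\,^{\mathcal H}\!\Res$, with $\mathsf{KZ}$ and $\underline{\mathsf{KZ}}$ exact and commuting with the forgetful/localization functors.

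The step I expect to be the main obstacle is making the comparison of monodromy representations genuinely canonical rather than merely "abstractly isomorphic": one must choose the base point $x_0\in\h^{\mathrm{reg}}$, the path to a base point near $b$, and the generators $T_H$ of the braid groups compatibly on both sides, and check that the identification $\underline{\mathcal H}_\Theta\hookrightarrow\mathcal H_\Theta$ one gets this way is the one fixed in \ref{proposition:KZ_commut}. This is precisely the content of \cite[Theorem 2.1]{Shan}, and I would simply cite Shan's argument for this identification, using the description of $\Res_b$ via \eqref{def_theta} recalled in \ref{SSS_BE_functors}–\ref{SSS_BE_functors1} to see that the two constructions agree. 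Since the present paper only needs the functorial isomorphisms (and not a sharper compatibility), it suffices to invoke \cite[Theorem 2.1 and Corollary 2.3]{Shan} directly, which is what the Proposition does.
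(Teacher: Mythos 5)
The paper gives no proof of this Proposition; it is quoted verbatim from Shan, as the bracketed attribution indicates, and you correctly conclude that citing \cite[Theorem 2.1 and Corollary 2.3]{Shan} is all that is required here. Your sketch of what underlies Shan's argument is the right picture: localize to $\h^{\reg}$, compare with the completion at $b$ via the isomorphism $\vartheta$ of \eqref{def_theta}, note that the arrangement near $b$ is the $\underline W$-arrangement crossed with a trivial factor so that a punctured polydisc around a point near $b$ induces the embedding $B_{\underline W}\hookrightarrow B_W$, and observe that the correction terms involving $1/\underline x_{\alpha_u}$ for $u\notin\underline W$ are regular near $b$ and so do not alter the local monodromy. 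The genuinely subtle point you identify — making the identification of monodromy representations canonical by fixing compatible base points and paths — is indeed the technical heart of Shan's Theorem 2.1.

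One caveat on your treatment of the induction statement. You offer two routes: a direct argument parallel to the restriction case, or adjunction. Be careful with the adjunction route. Biadjointness of $\Res^W_{\underline W}$ and $\Ind^W_{\underline W}$ in the Cherednik setting is \emph{not} available independently of this Proposition — Bezrukavnikov and Etingof establish only one adjunction directly, and the full biadjointness is in fact derived by Shan (and restated in the paper in the sentence immediately following the Proposition) as a \emph{consequence} of precisely these two functor isomorphisms, using the known biadjointness of ${}^{\Hecke}\Res$ and ${}^{\Hecke}\Ind$ together with the fact that $\KZ$ is a quotient functor admitting a section on projectives. So an argument that presupposes biadjointness of the Cherednik $\Res$/$\Ind$ would be circular. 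Of your two suggested routes, the direct argument (or the one-sided adjunction plus the projective-lifting property of $\KZ$, which is how Shan actually proceeds in Corollary 2.3) is the non-circular one.
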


A corollary of this proposition is, in particular, that the functors $\Res^{W}_{\underline{W}},\Ind_{\underline{W}}^W$
are biadjoint.

\subsubsection{KZ versus support filtrations}\label{Prop:supp_pres}
\begin{proposition}
Consider an arbitrary complex reflection group $W$ satisfying Hypothesis \ref{KZappears}.
Let $p,p'\in \param_1$ be such that the corresponding Hecke parameters coincide.
Let $\iota:\OCat_p\rightarrow \OCat_{p'}$ be an equivalence intertwining the {\sf KZ} functors.
Then $\iota$ preserves the support filtrations.
\end{proposition}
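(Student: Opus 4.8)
The plan is to show that the support filtration on $\OCat_p$ can be characterized purely in terms of the restriction functors $\Res^W_{\underline{W}}$ (as recalled in \ref{SSS_res_support}), and then to show that the restriction functors are, up to the equivalence $\iota$, detected by the $\sf KZ$ functors together with the Hecke-algebra restriction functors. Concretely, by \ref{SSS_res_support} a simple module $L\in\OCat_p$ is supported on $X_{W_1}$ if and only if $\Res^W_{\underline{W}}L=0$ for every parabolic $\underline{W}$ not $W$-conjugate to a subgroup of $W_1$, and the support filtration is recovered from this. So it suffices to prove that $\iota$ intertwines the restriction functors, i.e. that $\underline{\iota}\circ\Res^W_{\underline{W}}\cong \Res^W_{\underline{W}}\circ\iota$ for a suitable equivalence $\underline{\iota}:\OCat_p(\underline{W},\h^+)\to\OCat_{p'}(\underline{W},\h^+)$ on the parabolic level.

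First I would set up an induction on $|W|$ (or on the rank of $\h$). For a proper parabolic $\underline{W}\subsetneq W$, the parameters $p,p'$ restrict to parabolic parameters $\underline{p},\underline{p}'$ with the same Hecke parameters, so by the inductive hypothesis there is an equivalence $\underline{\iota}:\OCat_{\underline p}(\underline W,\h^+)\to\OCat_{\underline p'}(\underline W,\h^+)$ intertwining the $\sf KZ$ functors and preserving supports. Now I would use Proposition \ref{proposition:KZ_commut}: the $\sf KZ$ functors intertwine the restriction functors, so $\underline{{\sf KZ}}\circ\Res^W_{\underline W}\cong {}^{\Hecke}\!\Res^W_{\underline W}\circ{\sf KZ}$ on both sides. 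Combining with the hypothesis that $\iota$ and $\underline\iota$ intertwine $\sf KZ$, both composites $\underline{{\sf KZ}}\circ\underline\iota\circ\Res^W_{\underline W}$ and $\underline{{\sf KZ}}\circ\Res^W_{\underline W}\circ\iota$ agree with ${}^{\Hecke}\!\Res^W_{\underline W}\circ{\sf KZ}\circ\iota\cong{}^{\Hecke}\!\Res^W_{\underline W}\circ{\sf KZ}$. Since $\underline{\sf KZ}$ is a highest weight cover, hence faithful on projectives and an isomorphism after passing to the generic point, this forces $\underline\iota\circ\Res^W_{\underline W}\cong\Res^W_{\underline W}\circ\iota$ as functors $\OCat_p(W,\h)\to\OCat_{p'}(\underline W,\h^+)$; more precisely, two exact functors out of $\OCat_p$ landing in a category $\OCat$ that become isomorphic after composing with a faithful (enough) functor must be isomorphic, using that $\OCat$ has enough projectives and the cover is faithful on them.

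Granting this intertwining for every proper parabolic, I conclude as follows: for a simple $L\in\OCat_p$ with support $X_{W_1}$, and any $\underline W$ not conjugate into $W_1$, we have $\Res^W_{\underline W}(\iota(L))\cong\underline\iota(\Res^W_{\underline W}(L))$; since $\Res^W_{\underline W}(L)=0$ and $\underline\iota$ is an equivalence (in particular exact and faithful), $\Res^W_{\underline W}(\iota(L))=0$. Conversely $\underline\iota$ being an equivalence, $\Res^W_{\underline W}(L)\ne0$ implies $\Res^W_{\underline W}(\iota(L))\ne0$. Hence $\iota(L)$ is annihilated by exactly the same family of restriction functors as $L$, so by \ref{SSS_res_support} its support is again $X_{W_1}$. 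Since simples determine the support filtration on all of $\OCat$ (a module lies in $\OCat_{\underline W}$ iff all its composition factors do), $\iota$ preserves the support filtration.

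The base case is when $W$ has no proper parabolics other than the trivial one (e.g. $W=\mathbb{Z}/2$, or more generally a group acting with $\h^{\underline W}=0$ only for $\underline W=W$), where the only statement to check is that $\iota$ sends finite dimensional modules to finite dimensional modules; this is immediate because a module in $\OCat_p$ is finite dimensional iff $\sf KZ$ of it has dimension equal to its own (the $\sf KZ$ functor restricted to finite dimensional modules is exact and faithful, and finite-dimensionality of $L$ is equivalent to $\Res^W_{\underline W}L=0$ for all proper $\underline W$, vacuous here). I expect the main obstacle to be the purely categorical step: showing that an exact functor out of $\OCat_p$ into a category $\OCat$ is determined up to isomorphism by its composition with the highest weight cover functor $\underline{\sf KZ}$ (which is only $0$-faithful in general, not necessarily $1$-faithful under the bare hypothesis ``same Hecke parameters''). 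This requires care about exactly which faithfulness property of $\sf KZ$ is needed — $0$-faithfulness (fully faithful on projectives) together with essential surjectivity should suffice to reconstruct the functor from its values after $\sf KZ$, but one must check that the natural isomorphism one builds on projectives extends to all of $\OCat_p$ using exactness and a projective presentation, and that this extension is canonical enough to be compatible across the inductive steps.
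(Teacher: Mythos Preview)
Your overall strategy---recover the support filtration from the vanishing pattern of the functors $\Res^W_{\underline W}$ using \ref{SSS_res_support}, and show that $\iota$ respects that pattern by passing through {\sf KZ}---is exactly right. But there is a genuine gap in the inductive step, and the paper fixes it with a trick you did not use.

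The gap is the existence of $\underline{\iota}$. Your inductive hypothesis is: \emph{if} an equivalence $\OCat_{\underline p}\to\OCat_{\underline p'}$ intertwining {\sf KZ} exists, \emph{then} it preserves supports. It does not manufacture such an equivalence. So the sentence ``by the inductive hypothesis there is an equivalence $\underline{\iota}$\ldots'' is unjustified: nothing in the statement or its inductive instance gives you a parabolic-level equivalence to compare $\Res^W_{\underline W}$ on the two sides. And without $\underline\iota$ you cannot even formulate the intertwining $\underline\iota\circ\Res\cong\Res\circ\iota$ you are aiming for. (The secondary worry you flag---recovering a functor from its composite with $\underline{\sf KZ}$---is real too, and is precisely what \cite[Lemma~2.4]{Shan} handles, under the hypothesis that the functors in question preserve projectives.)

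The paper avoids the parabolic-level equivalence entirely by replacing $\Res^W_{\underline W}$ with the \emph{endofunctor} $\mathcal{F}_?=\Ind^W_{\underline W}\circ\Res^W_{\underline W}:\OCat_?\to\OCat_?$. By adjunction, $\mathcal{F}_?(M)=0$ if and only if $\Res^W_{\underline W}(M)=0$, so $\mathcal{F}_?$ detects the same vanishing. But now both sides land in categories where $\iota$ already lives. Proposition~\ref{proposition:KZ_commut} gives ${\sf KZ}\circ\mathcal{F}_?={}^{\Hecke}\!\mathcal{F}\circ{\sf KZ}$ on each side, $\mathcal{F}_?$ preserves projectives (biadjoint functors do), and then \cite[Lemma~2.4]{Shan} yields $\iota\circ\mathcal{F}_p\cong\mathcal{F}_{p'}\circ\iota$ directly---no induction, no $\underline\iota$. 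This is the missing idea: trade the restriction functor for an endofunctor with the same kernel.
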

\begin{proof}
Recall the relationship between the supports of objects in $\OCat$ and the restriction
functors explained in \ref{SSS_res_support}. This relationship implies that we need
to prove the following claim: given a parabolic subgroup $\underline{W}$,
for an object $M\in \OCat_p$ we have $\Res_{\underline{W}}^W(M)=0$ if and only if
$\Res_{\underline{W}}^W(\iota(M))=0$. However, it is not convenient to work with the restriction
functors themselves because they associate different categories. Instead we are going to replace
$\Res_{\underline{W}}^W$ with $\mathcal{F}_?:=\Ind_{\underline{W}}^W\circ \Res_{\underline{W}}^W:\OCat_?\rightarrow \OCat_?$,
where $?=p$ or $p'$.
The claim that $\mathcal{F}_?(M)=0$ implies $\Res_{\underline{W}}^W(M)=0$
is a consequence of the fact that $\Ind_{\underline{W}}^W$ is right adjoint of $\Res_{\underline{W}}^W$.
We also remark that $\mathcal{F}_?$ maps projectives to projectives.

Now thanks to Proposition \ref{proposition:KZ_commut},
\begin{equation}\label{eq:KZ_comm2}\operatorname{\sf KZ}\circ \mathcal{F}_?=\,^\Hecke\! \mathcal{F}\circ \operatorname{\sf KZ},\end{equation}
where $\,^\Hecke\! \mathcal{F}$ is the analog of $\mathcal{F}_?$ for the Hecke algebra. Then  (\ref{eq:KZ_comm2})
 and \cite[Lemma 2.4]{Shan} imply $\iota\circ \mathcal{F}_p=\mathcal{F}_{p'}\circ \iota$. So $\mathcal{F}_p(M)=0$
 is equivalent to $\mathcal{F}_{p'}\circ \iota(M)=0$, and we are done.
\end{proof}

\subsubsection{Cyclotomic $q$-Schur algebras}\label{SSS_Schur_cyclo}
Now let us recall one more highest weight cover of the Hecke algebra modules.

 Let $R$ be a $\C$-algebra. Form $\mathcal{H}^R_{q, {\bf Q}}(n)$ as in \ref{subsubcyclo} where $q\neq 0$ and $Q_1, \ldots , Q_{\ell}$ are appropriate elements of $R$.
Given $\lambda \in \prt{\ell}{n}$, define $m_{\lambda}\in \mathcal{H}^R_{q,{\bf Q}}(n)$ as $$m_{\lambda} = \prod_{t=2}^{\ell} \prod_{k=1}^{|\lambda^{(1)}| + \cdots + |\lambda^{(t-1)}|} (L_k - Q_t) \cdot \sum_{w\in \mathfrak{S}_{\lambda}} T_w.$$
We call the $R$-algebra $$\mathcal{S}^R_{q, {\bf Q}}(n) = \edo_{\mathcal{H}^R_{q,{\bf Q}}(n)^{op}} \left(\bigoplus_{\lambda\in \prt{\ell}{n}} m_{\lambda}\mathcal{H}^R_{q,{\bf Q}}(n)\right)^{op}$$ the cyclotomic $q$-Schur algebra (degenerate cyclotomic $q$-Schur algebra if $q =1$). It is Morita equivalent rather than isomorphic
 to the cyclotomic $q$-Schur algebra of \cite{DJM} as they take a sum over multicompositions rather than multipartitions.

The category $\mathcal{S}^R_{q, {\bf Q}}(n)\md$ is a highest weight category with standard objects the Weyl modules $W^R_{q, {\bf Q}}(\lambda)$ for $\lambda\in \mathscr{P}_{\ell}(n)$, partially ordered by the dominance ordering, see for instance \cite[\S 4]{mathassurv}. There is a double centralizer property for $\mathcal{S}^R_{q, {\bf Q}}(n)$ and $\mathcal{H}^R_{q, {\bf Q}}(n)$ which produces an exact functor $$F_{q,{\bf Q}}^R : \mathcal{S}^R_{q, {\bf Q}}(n)\md \rightarrow \mathcal{H}^R_{q,{\bf Q}}(n)\md$$ called the cyclotomic $q$-Schur functor, \cite[\S 5]{mathassurv}. This functor is a highest weight cover. (Relevant details in the degenerate case can be found in \cite[\S 6]{AMR}.)

In the case $R=\C$ with ${\bf s} = (s_1, \ldots , s_{\ell})\in \C^{\ell}$,  we take $q,Q_1,\ldots,Q_\ell$
as in \ref{paramsHec}.
 We will write $\mathcal{S}_{q,{\bf s}}(n)$ for $\mathcal{S}^{\C}_{q,{\bf Q}}(n).$

Thanks to \cite[Proposition 4.40]{rouqqsch} and \cite[Theorem 6.18]{mattilt} and its degenerate analogue, $(\mathcal{S}_{q, {\bf s}}(n)\md, F_{q, {\bf s}})$ is a $0$-highest weight cover of $\mathcal{H}_{q , {\bf s}}(n)$ and $(\mathcal{S}_{1, {\bf s}}(n)\md, F_{1, {\bf s}})$ is a $0$-highest weight cover of $\mathcal{H}_{1,{\bf s}}(n)$.

Under certain conditions on the parameters the highest weight covers provided by the {\sf KZ} functor and by the Schur functors
are equivalent. The first result of this form is due to Rouquier, \cite[Theorem 6.8]{rouqqsch}. We will give
a version of this result in Proposition \ref{reductioncase},(1).

\subsubsection{BK functor}\label{SSS_BK_functor}
Let ${\bf s}$ be such that $s_1>s_2>\ldots>s_\ell$ and $n,m$ be such that $m\geqslant n+s_1-s_{\ell}$.

Brundan and Kleshchev, \cite{BKschurweyl}, construct a functor ${\sf BK} : \Opar{{\bf s},m}(n) \rightarrow \mathcal{H}_{1,{\bf s}}(n)\md$ and study its properties in detail. In particular,  $\mathcal{S}_{1, {\bf s}}(n)\md$ and $\Opar{{\bf s},m}(n)$ are equivalent categories by \cite[Theorem C]{BKschurweyl}. Call this equivalence $E$. It intertwines the Schur functor $F_{1,{\bf s}}: \mathcal{S}_{1, {\bf s}}(n)\md \rightarrow \mathcal{H}_{1,{\bf s}}(n)\md$ and the functor ${\sf BK}:  \Opar{{\bf s},m}(n)\rightarrow \mathcal{H}_{1,{\bf s}}(n)\md$ since both are given as $\Hom(M, -)$ where $M$ is a direct sum of all the projective-injective modules in either $\mathcal{S}_{1, {\bf s}}(n)\md$ or $\Opar{{\bf s},m}(n)$.

Both $(\mathcal{S}_{1, \bf{s}}(n)\md, F_{1, {\bf s}})$ and $(\Opar{{\bf s},m}(n), {\sf BK})$ are $0$-highest weight covers of $\mathcal{H}_{1,{\bf s}}(n)$. We've already observed this for the degenerate cyclotomic $q$-Schur algebra in \ref{SSS_Schur_cyclo}.
For $\Opar{{\bf s},m}(n)$ we use \cite[Proposition 4.40]{rouqqsch}, combined this time with \cite[Theorem 10.1]{stropquiver} and \cite[Lemmas 4.10 and 4.14]{BKdeganal}. Let $F_{1,{\bf s}}^!$ and ${\sf BK}^!$ denote the right adjoint functors to $F_{1, {\bf s}}$ and ${\sf BK}$ respectively. Then since $F_{1, {\bf s}}(W_1(\lambda)) \cong Sp_1(\lambda) \cong {\sf BK}(N(A_{\lambda}))$ by \cite[Theorem 6.12]{BKschurweyl}, we find using \cite[Proposition 4.40]{rouqqsch} that $$E(W_1(\lambda)) \cong EF_{1,{\bf s}}^{!}F_{1,{\bf s}}(W_1(\lambda)) \cong EF_{1, {\bf s}}^{!}(Sp_1(\lambda)) \cong {\sf BK}^{!}(Sp_1(\lambda)) = N(A_{\lambda}).$$ Thus $E$ is an equivalence of highest weight categories, sending $W_1(\lambda)$ to $N(A_{\lambda})$.

In particular, Lemma \ref{SSS_parab_indep_m} is now proved.

\subsection{Reduction for the Cherednik category $\OCat$}\label{SUBSECTION_reduction}
The general case of parameters $\tilde{\bf s}\in \C^{\ell}$ satisfying the faithfulness condition \eqref{eq:ss_Hecke1} may be reduced to the ``dominant integral" case.
\subsubsection{Equivalence on the parameters}
Put an equivalence relation on $\{1, \ldots , \ell\}$: $r$ and $r'$ are equivalent if there exists $a\in \Z$ such that $\kappa (\tilde{s}_r - \tilde{s}_{r'} - a)\in \Z$. By \cite[Remark 6.16]{rouqqsch} $\mathcal{O}_{\kappa, \tilde{\bf s}}(n)$ is equivalent to $$\bigoplus_{\stackrel{f: \{1,\ldots , \ell \}/\sim \rightarrow \Z_{\geq 0}}{\sum_I f(I) = n}} \bigotimes_{I\in \{1,\ldots , \ell \}/\sim} \mathcal{O}_{\kappa, (\tilde{s}_i)_{i\in I}}(f(I)).$$
 To study   $\mathcal{O}_{\kappa, \tilde{\bf s}}(n)$ we may therefore assume that for each $1\leq r, r' \leq \ell$ there exists integers $s_{rr'}, m_{rr'}$ such that $\kappa(\tilde{s}_r-\tilde{s}_{r'} -s_{rr'}) = m_{rr'}$. Since $\kappa\notin \mathbb{Q}$ the integers $s_{rr'}$ and $m_{rr'}$ are uniquely determined for all $r,r'$, so we have $m_{rr'} = m_{r1} - m_{r'1}$ and $s_{rr'} = s_{r1} - s_{r'1}$. It follows that we may reduce to studying category $\mathcal{O}_{\kappa, \tilde{\bf s}}(n)$ with $\tilde{s}_r = s_r+ \kappa^{-1}m_r$ where $s_r = s_{r1}$ and $m_r = m_{r1}$. We will refer to this as $\mathcal{O}_{\kappa, ({\bf s}, {\bf m})}(n)$.

\subsubsection{Reduction result} We will call ${\bf m}\in \Z^{\ell}$ dominant if $m_0 \geq m_1 \geq \cdots \geq m_{\ell-1}$ where $m_0 = m_{\ell}$. Combining the following proposition with above comments gives our reduction to the dominant integral case for $\kappa \notin \mathbb{Q}$. Recall the involutions ${}^\star$ defined on $\mathscr{P}_{\ell}(n)$, \ref{multiprtns}, and on parameters $\C^{\ell}$, \ref{SSS_parab_main_easy}.

\begin{proposition} \label{reductioncase} Assume $\kappa \in \C \setminus \mathbb{Q}$, ${\bf s}\in \Z^{\ell}$ has distinct entries and ${\bf m}\in \Z^{\ell}$ is dominant.
\begin{enumerate}
\item There is an equivalence of highest weight categories between $\mathcal{O}_{\kappa, ({\bf s}, {\bf m})}(n)$ and the cyclotomic $q$-Schur algebra $S_{q,{\bf s}^{\star}}(n)$ which sends $\Delta_{\kappa, ({\bf s}, {\bf m})}(\lambda)$ to $W_{q, {\bf s}^{\star}}(\lambda^{\star})$ for all $\lambda \in \mathscr{P}_{\ell}(n)$.
\item Let $w$ be a minimal length coset representative for the stabiliser of ${\bf m}$ in $\mathfrak{S}_{\ell}$. Then there is an equivalence of highest weight categories between $\mathcal{O}_{\kappa , ({\bf s}, {\bf m})}(n)$ and $\mathcal{O}_{\kappa, (w({\bf s}), w({\bf m}))}(n)$ which sends $\Delta_{\kappa , ({\bf s}, {\bf m})}(\lambda)$ to $\Delta_{\kappa, (w({\bf s}),w({\bf m}))}(w(\lambda))$ for all $\lambda \in \mathscr{P}_{\ell}(n)$. In addition, this equivalence preserves the support filtrations.
\end{enumerate}
\end{proposition}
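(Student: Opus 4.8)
The plan is to deduce both statements from Rouquier's equivalence theorem (Theorem \ref{Thm:Rouquier_main}) together with the highest weight cover structures recorded in \ref{SSS_Schur_cyclo} and in \ref{whencanweuserouquier}. The basic idea is the same as in \cite[Theorem 6.8]{rouqqsch}: both categories under comparison are highest weight covers of the same cyclotomic Hecke algebra $\mathcal{H}_{q,{\bf s}^\star}(n)$, so it suffices to check the faithfulness and ordering hypotheses of Theorem \ref{Thm:Rouquier_main}. First I would set up the parameter matching. Under the assumption $\kappa\notin\mathbb{Q}$ with $\tilde{s}_r = s_r + \kappa^{-1}m_r$, the Hecke parameters from \ref{paramsHec} are $q = \exp(2\pi\sqrt{-1}\kappa)$ and $Q_r = \exp(2\pi\sqrt{-1}\kappa\tilde s_r) = \exp(2\pi\sqrt{-1}\kappa s_r)$ since $\kappa\tilde s_r - \kappa s_r = m_r \in \Z$; so $\mathcal{O}_{\kappa,({\bf s},{\bf m})}(n)$ and $\mathcal{O}_{\kappa,{\bf s}}(n)$ have the same Hecke algebra, and one checks via the involution $\sigma$ of \ref{Hecinvol} that this Hecke algebra is $\mathcal{H}_{q,{\bf s}^\star}(n)$ after the twist by $\star$ (this is where the $\star$ on partitions and parameters enters, exactly as in the statement of Theorem \ref{SSS_parab_main_easy}).

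For part (1): both $(\mathcal{O}_{\kappa,({\bf s},{\bf m})}(n), {\sf KZ})$ and $(\mathcal{S}_{q,{\bf s}^\star}(n)\md, F_{q,{\bf s}^\star})$ are highest weight covers of $\mathcal{H}_{q,{\bf s}^\star}(n)$ — the first by \ref{KZappears}, the second by \ref{SSS_Schur_cyclo}. To invoke Theorem \ref{Thm:Rouquier_main} I need to pass to a discrete valuation ring: deform $\kappa$ (or equivalently the $\tilde s_r$) to a one-parameter family over $k = \C[[t]]$, chosen generically enough that the faithfulness condition \eqref{eq:ss_Hecke1} holds at the generic point and $\mathcal{H}_\theta(W)$ is split semisimple over $K = \C((t))$; condition \eqref{eq:ss_Hecke1} holds because $\kappa\notin\mathbb{Q}$ and the $s_r$ are distinct, so $\kappa(\tilde s_i - \tilde s_j)\notin\Z$. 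Then $(\mathcal{O}_\Psi,{\sf KZ})$ is $1$-faithful by \cite[Theorem 5.3]{rouqqsch}; the Schur cover $F_{q,{\bf s}^\star}$ is only known to be $0$-faithful, but on the $q$-Schur side $0$-faithfulness suffices once one knows the ordering on the Hecke side refines the dominance order used for the Schur algebra — this is the content of Rouquier's argument, since the Schur cover is the \emph{finest} $0$-cover compatible with dominance. The key computation is therefore the comparison of orderings: the $c$-function order $<_\Psi$ on $\irr{G_\ell(n)}$, computed from \eqref{cfunvalue}, must be shown to refine the dominance order on multipartitions when ${\bf m}$ is dominant and after the $\star$-relabelling. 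This ordering check, done via the explicit formula \eqref{cfunvalue} for $c_\lambda(\kappa,{\bf s})$ and the dominance definition in \ref{multiprtns}, is the technical heart of part (1); the dominance of ${\bf m}$ is precisely what is needed to make the relevant inequalities go the right way. The bijection on standards $\Delta_{\kappa,({\bf s},{\bf m})}(\lambda)\mapsto W_{q,{\bf s}^\star}(\lambda^\star)$ then follows because both correspond to the same simple Hecke module under the respective covers, up to the $\star$-twist.

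For part (2): here both sides are Cherednik categories $\mathcal{O}$, for $({\bf s},{\bf m})$ and for $(w({\bf s}),w({\bf m}))$, and again they have the same Hecke algebra (permuting the $s_r$ just permutes the $Q_r$, and $w$ stabilizes ${\bf m}$ so the integral shifts are only reindexed). Since $w$ has minimal length in its coset modulo $\mathrm{Stab}_{\mathfrak{S}_\ell}({\bf m})$, the two $c$-function orderings are comparable — one refines the other — which is exactly the hypothesis needed to apply Theorem \ref{Prop:lab_pres_equiv} (the ${\sf KZ}$-twist theorem), noting that the ${\sf KZ}$-twist $\gamma$ is trivial for $W = G(\ell,1,n)$ by \cite[Theorems 3.7, 3.10]{ArikiKoike} as recorded in \ref{SSSection_KZ_twist}. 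This gives the highest weight equivalence sending $\Delta_{\kappa,({\bf s},{\bf m})}(\lambda)$ to $\Delta_{\kappa,(w({\bf s}),w({\bf m}))}(w(\lambda))$. Finally, preservation of the support filtration follows from Proposition \ref{Prop:supp_pres}: the equivalence intertwines the ${\sf KZ}$ functors by construction (it comes from Rouquier's theorem applied to ${\sf KZ}$-covers), and $p = (\kappa,({\bf s},{\bf m}))$, $p' = (\kappa,(w({\bf s}),w({\bf m})))$ have coinciding Hecke parameters, so that proposition applies verbatim. I expect the main obstacle to be the ordering comparisons — verifying from \eqref{cfunvalue} that $<_\Psi$ refines dominance in part (1) under the dominance hypothesis on ${\bf m}$, and that the two orders are comparable in part (2) under the minimal-length hypothesis on $w$ — since everything else is an assembly of quoted results.
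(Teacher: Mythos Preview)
Your approach to part (1) is broadly right but contains one slip: Rouquier's Theorem \ref{Thm:Rouquier_main} requires \emph{both} covers to be $1$-faithful over the DVR, not just the Cherednik side. Your claim that ``$0$-faithfulness suffices'' on the Schur side because it is the ``finest $0$-cover'' is not how the argument works. The paper deforms over $R=\C[[t]]$ and invokes \cite[Theorem 6.6]{rouqqsch} to get $1$-faithfulness of the Schur cover as well; then both hypotheses of Theorem \ref{Thm:Rouquier_main} are met. The ordering comparison you anticipate is indeed the heart of the matter, and the paper carries it out via the refined block-compatible order $\succ_{{\bf s},{\bf m}}$ rather than the raw $c$-order; but conceptually you have the right picture for (1).

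Part (2), however, has a genuine gap. You propose to apply Theorem \ref{Prop:lab_pres_equiv} (the {\sf KZ}-twist) directly between $\mathcal{O}_{\kappa,({\bf s},{\bf m})}$ and $\mathcal{O}_{\kappa,(w({\bf s}),w({\bf m}))}$. But that theorem requires the two parameter tuples $(h_{H,j})$ and $(h'_{H,j})$ to differ by an element of $\Z^U$. Computing, the difference at $(H_\bullet,j)$ is
\[
\kappa\bigl(s_j - s_{w^{-1}(j)}\bigr) + \bigl(m_j - m_{w^{-1}(j)}\bigr),
\]
and since $\kappa\notin\mathbb{Q}$ while $s_j - s_{w^{-1}(j)}\in\Z$, this lies in $\Z$ only when $s_j = s_{w^{-1}(j)}$ for every $j$, i.e.\ when $w$ fixes ${\bf s}$. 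So the {\sf KZ}-twist cannot be applied directly. (Also, $w$ does not stabilise ${\bf m}$: it is a minimal-length representative of a coset of the stabiliser, so $w({\bf m})$ is generally a different dominant tuple.) The paper's route is essentially different: it first shifts ${\bf m}\mapsto {\bf m}+{\bf m}'$ by a large dominant ${\bf m}'$ using Theorem \ref{Prop:lab_pres_equiv} (this \emph{is} an integer shift), chooses ${\bf m}'$ so that the resulting parameter is spherical, and then applies Theorem \ref{SSS_sph_sym_main} --- the $\mathfrak{S}_\ell$-equivalence proved in Section \ref{SECTION_sym_equiv} --- to pass from $({\bf s},{\bf m}+{\bf m}')$ to $(w({\bf s}),w({\bf m}+{\bf m}'))$. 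A second ordering comparison (this is where the minimal-length hypothesis on $w$ is used) and another application of Rouquier's theorem bring $w({\bf m}+{\bf m}')$ back to $w({\bf m})$. Support preservation then comes from Theorem \ref{SSS_sph_sym_main} for the middle step and from Proposition \ref{Prop:supp_pres} for the two {\sf KZ}-twist steps. In short, the missing idea in your proposal for (2) is the use of Theorem A as the bridge that actually permutes ${\bf s}$.
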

Part (a) of the proposition is similar to \cite[Theorem 6.6]{rouqqsch}; the difference is that since we assume $\kappa$ is not rational we are able to give a slightly more generous set of parameters producing an equivalence with the cyclotomic $q$-Schur algebra. The proof will occupy the remainder of the subsection.

\subsubsection{An ordering}

The assumption that the entries of ${\bf s}$ are distinct ensures that the faithfulness conditions of \eqref{eq:ss_Hecke1} are satisfied, so we may apply Theorem \ref{Thm:Rouquier_main} provided there is  a compatibility on the orderings.

We begin by refining the ordering we wish to use on $\mathcal{O}_{\kappa, ({\bf s}, {\bf m})}(n)$. For this, we do not need to assume that ${\bf m}$ is dominant. We apply \cite[(4.2) and Theorem 4.1]{DG} to see that if $L_{\kappa, ({\bf s}, {\bf m})}(\lambda)$ and $L_{\kappa , ({\bf s}, {\bf m})}(\mu)$ belong to the same block of $\mathcal{O}_{\kappa, ({\bf s}, {\bf m})}(n)$ then there is an equality of multisets $$\{ \res^{\bf s}(b) : b\in \lambda\} = \{ \res^{\bf s}(b): b\in \mu\}.$$ Thanks to \eqref{cfunvalue}, we may therefore take an ordering on $\mathcal{O}_{\kappa, ({\bf s}, {\bf m})}(n)$ defined as follows: $$\lambda \succ_{{\bf s}, {\bf m}} \mu \text{ if } \{ \res^{\bf s}(b) : b\in \lambda\} = \{ \res^{\bf s}(b): b\in \mu\} \text{ and } \sum_{r=0}^{\ell-1}|\mu^{(r)}|(\ell m_r- r)  > \sum_{r=0}^{\ell-1}|\lambda^{(r)}|(\ell m_r- r).$$

\subsubsection{Proof of (1)} Let $R = \C[[t]]$ and let $\hat{\Psi}: {\bf R}\rightarrow R$ be defined by $\hat{\Psi} (h_{H_{i,j}^k,0}) = \kappa , \hat{\Psi}(h_{H_{i,j}^k,1}) = 0 ,$ and $\hat{\Psi}(h_{H_i, j}) = (t+\kappa)s_{j} - j/\ell + m_j$ for $0\leq j \leq \ell-1$, allowing us to define $\mathcal{O}_{\hat{\Psi}}$. By \ref{whencanweuserouquier}(2), this produces a $1$-highest weight cover of $\mathcal{H}^R_{q, {\bf Q}}(n)$ with $q = \exp(2\pi \sqrt{-1}\kappa)$ and $Q_j = \exp(2\pi\sqrt{-1}(t+\kappa)s_j)$. An integral version of the isomorphism $\sigma$ of \ref{Hecinvol} gives an isomorphism $\mathcal{H}^R_{q, {\bf Q}}(n)\cong \mathcal{H}^R_{q, {\bf Q}^{\star}}(n)$ where $Q^{\star}_j = \exp(2\pi\sqrt{-1}(-t-\kappa)s_{\ell-j})$ and so we may consider $(\mathcal{O}_{\hat{\Psi}}, {\sf KZ})$ as a $1$-highest weight cover of $\mathcal{H}^R_{q, {\bf Q}^{\star}}(n)$. Similarly \cite[Theorem 6.6]{rouqqsch} shows that $\mathcal{S}^R_{q,{\bf Q}^{\star}}(n)\md$ produces a $1$-highest weight cover of $\mathcal{H}^R_{q, {\bf Q}^{\star}}(n)$.  If $K = \C((t))$ and $\hat{\psi}: {\bf R} \stackrel{\hat{\Psi}}{\rightarrow} R \hookrightarrow K$ then we have $\irr{\mathcal{O}_{\hat{\psi}}} \stackrel{\sim}{\rightarrow} \irr{G_{\ell}(n)} \stackrel{\sim}{\leftarrow} \irr{\mathcal{S}^K_{q, {\bf Q}^{\star}}(n)}$ as both $\mathcal{O}_{\hat{\psi}}$ and $\mathcal{S}^K_{q, {\bf Q}^{\star}}(n)\md$ are Morita equivalent to $\mathcal{H}^K_{q, {\bf Q}^{\star}}(n)\md$ and the irreducibles of $H^K_{q, {\bf Q}^{\star}}(n)$ can be identified canonically with $\irr{G_{\ell}(n)}$ via Tits' deformation theorem. Under these identifications $W_{q, {\bf Q}^{\star}}^K(\lambda) = W_{q,{\bf Q}^{\star}}^R(\lambda)\otimes_R K$ is identified with the Specht module $Sp_{q, {\bf Q}^{\star}}^K(\lambda)$, which in turn is identified with $\lambda \in \irr{G_{\ell}(n)}$ since the Specht modules belong to a flat family over ${\bf k} = \C[\{{\bf q}_u^{\pm 1}\}]$ whose fibre at the special point ${\bf 1}$ corresponding to $q = 1$ and $Q_i = \zeta_{\ell}^i$ gives the $\mathcal{H}_{\bf 1}(G_{\ell}(n)) = \C [G_{\ell}(n)]$-representation $\lambda$. Similarly $\Delta_{\hat{\Psi}}(\lambda)\otimes_R K$ is identified with ${\sf KZ}_{\hat{\psi}}(\Delta_{\hat{\psi}}(\lambda))$ and this too belongs to a flat family whose fibre at $q=1$ and $Q_i = \zeta_\ell^i$ gives the $\mathcal{H}_{\bf 1}(G_{\ell}(n)) = \C [G_{\ell}(n)]$ representation $\lambda$. Under the isomorphism $\C[G_{\ell}(n)] \cong \C[G_{\ell}(n)]$ induced by $\mathcal{H}^R_{q, {\bf Q}}(n)\cong \mathcal{H}^R_{q, {\bf Q}^{\star}}(n)$ we have the generators $s_{H^0_{i,j}}$ sent to $-s_{H^0_{i,j}}$ and $s_{H_1}$ to $s_{H_1}^{-1}$, so we see that $\lambda$ is translated to $\lambda^{\star}$. Hence the identifications above match $\Delta_{\hat{\Psi}}(\lambda)\otimes_R K$ with $W_{q, {\bf Q}^{\star}}^R(\lambda^{\star})\otimes_R K$.

The ordering on $\mathcal{S}_{q, {\bf Q}^{\star}}^R(n)\md$ can be chosen to be the dominance ordering of $\prt{\ell}{n}$ intersected with block decomposition of $\mathcal{S}_{q,{\bf s}^{\star}}(n)$. By \cite[Theorem 2.11]{lylemat} this ordering is explicitly $$\lambda \succ_{{\bf s}^{\star}} \mu \text{ if and only if } \lambda \rhd \mu \text{ and } \{\res^{{\bf s}^{\star}}(b) : b\in\lambda\} = \{\res^{{\bf s}^{\star}}(b) : b\in \mu\} \text{ as multisets}.$$

Now we will show that $\lambda^{\star} \succ_{{\bf s}^{\star}} \mu^{\star}$ implies $\lambda\succ_{{\bf s}, {\bf m}} \mu$ for any $\lambda,\mu\in\prt{\ell}{n}$. As multisets $\{ \res^{\bf s}(b): b\in \lambda\} = \{ -\res^{{\bf s}^{\star}} (b): b\in \lambda^{\star}\}$ so the conditions involving these sets are clear. Now observe that \begin{eqnarray*}\sum_{r=0}^{\ell-1}(|\mu^{(r)}|-|\lambda^{(r)}|)(\ell m_r- r) &=& \sum_{i=1}^{\ell - 1}\left(\ell (m_{i} - m_{i-1}) -1\right)\left(\sum_{r=i}^{\ell -1} |\mu^{(r)}| - \sum_{r=i}^{\ell-1}|\lambda^{(r)}|\right).\end{eqnarray*} Since $m_{i-1}\geq m_i$ for $1\leq i \leq \ell -1$ by hypothesis, we see that $\lambda^{\star} \rhd \mu^{\star}$ results in $\sum_{r=0}^{\ell-1}|\mu^{(r)}|(\ell m_r- r)  \geq \sum_{r=0}^{\ell-1}|\lambda^{(r)}|(\ell m_r- r)$. If we have equality, then $|\lambda^{(r)}| = |\mu^{(r)}|$ for all $r$, so that for each $r$ we have $\lambda^{(r)}\unrhd \mu^{(r)}$ in the usual dominance ordering for partitions and $\lambda^{(r)} \rhd \mu^{(r)}$ for at least one value of $r$. Now consider the multisets $\{ \res^{{\bf s}}(b): b\in \lambda^{(r)} \text{ where } 1\leq r\leq \ell \text{ with }\lambda^{(r)}\neq \mu^{(r)}  \}$ and $\{ \res^{{\bf s}}(b): b\in \mu^{(r)} \text{ where } 1\leq r\leq \ell \text{ with }\lambda^{(r)}\neq \mu^{(r)}\}$. By hypothesis they are equal. However \cite[Lemma 4.2]{DG} shows that for each $r$ appearing in the definition of these multisets there exists a box $b\in \mu^{(r)}$ such that $\res^{\bf s}(b)$ is less than $\res^{\bf s}(b')$ for all $b'\in \lambda^{(r)}$. This contradicts equality. Hence we deduce that $\lambda^{\star} \succ_{{\bf s}^{\star}} \mu^{\star}$ implies that $\lambda \succ_{({\bf s}, {\bf m})} \mu$.

Now we may apply Theorem \ref{Thm:Rouquier_main} to produce the equivalence of highest weight categories between  $\mathcal{O}_{\kappa, ({\bf s}, {\bf m})}(n)$ and $S_{q, {\bf Q}^{\star}}(n)\md$ sending $\Delta_{\kappa, {\bf s}}(\lambda)$ to $W_{q, {\bf Q}^{\star}}(\lambda^{\star})$.

\subsubsection{Proof of (2)} We now know that we may take the ordering $(\;)^{\star} \succ_{{\bf s}}(\;)^{\star}$ on $\mathcal{O}_{\kappa, ({\bf s}, {\bf m})}(n)$. Adding arbitrary dominant ${\bf m}'$ to ${\bf m}$ does not change this ordering and so, by Theorem \ref{SSSection_KZ_twist}, we have an equivalence of highest weight categories between $\mathcal{O}_{\kappa, ({\bf s}, {\bf m})}(n)$ and $\mathcal{O}_{\kappa, ({\bf s}, {\bf m}+{\bf m}')}(n)$ which sends $\Delta_{\kappa, ({\bf s}, {\bf m})}(\lambda)$ to $\Delta_{\kappa, ({\bf s}, {\bf m}+{\bf m}')}(\lambda)$ for all $\lambda \in \mathscr{P}_{\ell}(n)$.  Choosing an appropriate $m$', we may (and will)
 assume that the hypothesis of Theorem \ref{SSS_sph_sym_main} is satisfied
 for the parameter corresponding to $(\kappa, {\bf s},{\bf m}+{\bf m'})$ and so by that result we find these categories are in turn equivalent to $\mathcal{O}_{\kappa, (w({\bf s}), w({\bf m}+{\bf m}'))}(n)$. Moreover, we may take the ordering on this category to be $$\lambda \succ'  \mu \text{ if and only if } w^{-1}(\lambda)^{\star} \rhd w^{-1}(\mu)^{\star} \text{ and } \{\res^{{\bf s}^{\star}}(b) : b\in w^{-1}(\lambda)\} = \{\res^{{\bf s}^{\star}}(b) : b\in w^{-1}(\mu)\} \text{ as multisets}.$$

We claim that $\lambda \succ' \mu$ implies $\lambda \succ_{(w({\bf s}), w({\bf m}))} \mu$. It is immediate that $ \{\res^{{\bf s}^{\star}}(b) : b\in w^{-1}(\lambda)\} =  \{-\res^{w({\bf s})}(b) : b\in\lambda\}$ so the conditions involving these sets are clear. Now \begin{eqnarray*}\sum_{r=0}^{\ell-1}(|\mu^{(r)}|-|\lambda^{(r)}|)(\ell m_{w(r)}- r) &=& \sum_{r=0}^{\ell - 1}(|w^{-1}(\mu)^{(r)}| - |w^{-1}(\lambda)^{(r)}|)(\ell m_r - w^{-1}(r)) \\ & = & \sum_{i=1}^{\ell - 1}\left(\ell (m_i - m_{i-1}) + (w^{-1}(i-1) -w^{-1}(i))\right)\sum_{r=i}^{\ell -1} (|w^{-1}(\mu)^{(r)}| - |w^{-1}(\lambda)^{(r)}|).\end{eqnarray*}
By hypothesis $m_{i-1}\geq m_i$ for $1\leq i \leq \ell -1$, and since $w$ was chosen to have minimal length, $m_{i-1} = m_i$ implies that $w^{-1}(i-1)< w^{-1} (i)$. Thus $\ell (m_i - m_{i-1}) + (w^{-1}(i-1) -w^{-1}(i))<0$ for $1\leq i\leq \ell -1$. It follows that if $w^{-1}(\lambda) \lhd w^{-1}(\mu)$ then $\sum_{r=0}^{\ell-1}|\mu^{(r)}|(\ell m_{w(r)}- r)  \geq \sum_{r=0}^{\ell-1}|\lambda^{(r)}|(\ell m_{w(r)}- r)$. Moreover, arguing as in the proof of (i) this inequality must be strict. Thus the claim follows.

By Theorem \ref{Thm:Rouquier_main} $\mathcal{O}_{\kappa, (w({\bf s}), w({\bf m}+{\bf m}'))}(n)$ is equivalent to $\mathcal{O}_{\kappa, (w({\bf s}), w({\bf m}))}(n)$. Moreover, Theorem \ref{SSSection_KZ_twist} implies that we can assume, in addition, that this equivalence
preserves the labeling of the standards. So we get an equivalence $\mathcal{O}_{\kappa, ({\bf s}, {\bf m})}(n)\xrightarrow{\sim}\mathcal{O}_{\kappa, (w({\bf s}), w({\bf m}))}(n)$ with the required behavior on standards.
It remains to check that this equivalence preserves the support filtrations. The equivalence
$\mathcal{O}_{\kappa, ({\bf s}, {\bf m}+{\bf m}')}(n)\xrightarrow{\sim} \mathcal{O}_{\kappa, (w({\bf s}), w({\bf m}+{\bf m}'))}(n)$
preserves the supports by Theorem \ref{SSS_sph_sym_main}. The remaining two intermediate equivalences commute
with ${\sf KZ}$ and so preserve the supports by Proposition \ref{Prop:supp_pres}.

 This completes the proof of (2).

\subsection{Isomorphisms of cyclotomic Schur algebras}\label{SUBSECTION_iso_Schur}

\subsubsection{Result} If $q\in \C$ is not a root of unity, there is an explicit $\C$-algebra isomorphism between $\mathcal{H}_{q, {\bf s}}(n)$ and $\mathcal{H}_{1, {\bf s}}(n)$, \cite[Corollary 2]{BKKLR}. Similarly there is an isomorphism between the degenerate and non-degenerate cyclotomic $q$-Schur algebras, although we couldn't find stated explicitly in the literature.

\begin{proposition}
\label{degenerateSchurIM}
Suppose that $q\in \C$ is not a root of unity. There is a $\C$-algebra isomorphism between $\mathcal{S}_{q, {\bf s}}(n)$ and $\mathcal{S}_{1, {\bf s}}(n)$. This isomorphism induces an equivalence between $\mathcal{S}_{q, {\bf s}}(n)\md$ and $\mathcal{S}_{1, {\bf s}}(n)\md$ which, for each $\lambda \in \prt{\ell}{n}$, sends the Weyl module $W_{q, {\bf s}}(\lambda)$ to the degenerate Weyl module $W_{1, {\bf s}}(\lambda)$.\end{proposition}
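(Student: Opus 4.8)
The plan is to lift the Brundan--Kleshchev isomorphism $\mathcal{H}_{q,{\bf s}}(n)\cong\mathcal{H}_{1,{\bf s}}(n)$ of \cite[Corollary 2]{BKKLR} to the level of $q$-Schur algebras. The key observation is that, by definition, $\mathcal{S}^R_{q,{\bf Q}}(n)=\edo_{\mathcal{H}^R_{q,{\bf Q}}(n)^{op}}\big(\bigoplus_{\lambda}m_\lambda\mathcal{H}^R_{q,{\bf Q}}(n)\big)^{op}$, so an isomorphism of Hecke algebras carrying the permutation module $\bigoplus_\lambda m_\lambda\mathcal{H}_{q,{\bf s}}(n)$ to $\bigoplus_\lambda m_\lambda\mathcal{H}_{1,{\bf s}}(n)$ automatically induces an isomorphism of the Schur algebras. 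First I would recall the explicit form of the BK isomorphism $\beta:\mathcal{H}_{q,{\bf s}}(n)\to\mathcal{H}_{1,{\bf s}}(n)$: it is built out of a power series in the Jucys--Murphy elements, and (this is the crucial point) it is the identity on the idempotent truncations attached to the combinatorics of multipartitions. Concretely, $\beta$ respects the decomposition of $1$ into the mutually orthogonal idempotents $e(\mathbf{i})$ attached to residue sequences $\mathbf{i}$, because both algebras are cyclotomic KLR algebras in the Brundan--Kleshchev--Kang presentation and $\beta$ is an isomorphism of KLR presentations.

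Next I would argue that $\beta$ sends the module $\bigoplus_\lambda m_\lambda\mathcal{H}_{q,{\bf s}}(n)$ to $\bigoplus_\lambda m_\lambda\mathcal{H}_{1,{\bf s}}(n)$ up to isomorphism of right modules. The cleanest route is the homological characterization: by \cite[\S 6]{AMR} and \cite[\S 4--5]{mathassurv}, the permutation module $\bigoplus_\lambda m_\lambda\mathcal{H}(n)$ is (up to Morita equivalence / additive closure) characterized as the direct sum of Young modules, i.e.\ a full tilting-type object whose summands are the projective covers of the Young modules $Y^\lambda$. Since $\beta$ is an algebra isomorphism it carries indecomposable projectives to indecomposable projectives and, matching up the combinatorial labels via the residue data that $\beta$ preserves, carries $m_\lambda\mathcal{H}_{q,{\bf s}}(n)$ to something isomorphic to $m_\lambda\mathcal{H}_{1,{\bf s}}(n)$. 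Hence $\edo$ of the two permutation modules are isomorphic as algebras, giving the desired $\mathcal{S}_{q,{\bf s}}(n)\cong\mathcal{S}_{1,{\bf s}}(n)$, and the induced equivalence $\mathcal{S}_{q,{\bf s}}(n)\md\xrightarrow{\sim}\mathcal{S}_{1,{\bf s}}(n)\md$ is transport of structure along $\beta$.

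Finally I would check that this equivalence sends $W_{q,{\bf s}}(\lambda)$ to $W_{1,{\bf s}}(\lambda)$. For this I would use that the Weyl module $W(\lambda)$ is, by its construction in \cite[\S 4]{mathassurv} and \cite[\S 6]{AMR}, defined purely in terms of the permutation module and the cyclotomic relations — it is $m_\lambda$ applied to a product of the form $\prod(L_k-Q_t)$ times a Murphy-type element, then pushed through the Schur algebra. Since $\beta$ preserves this combinatorial recipe (it sends $L_k$ to a power series in $L_k$ with invertible leading term and $Q_t$ to $Q_t$ suitably reinterpreted, compatibly with the $e(\mathbf i)$-decomposition), it matches the standard filtrations, hence sends $W_{q,{\bf s}}(\lambda)$ to $W_{1,{\bf s}}(\lambda)$. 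An equivalent and perhaps safer final argument: both $\mathcal{S}_{q,{\bf s}}(n)\md$ and $\mathcal{S}_{1,{\bf s}}(n)\md$ are highest weight with the same poset $(\mathscr{P}_\ell(n),\unrhd)$, and the equivalence induced by $\beta$ is compatible with the Schur functors $F_{q,{\bf s}}$ and $F_{1,{\bf s}}$ (because $\beta$ intertwines them, being literally the same $\Hom(M,-)$ after applying $\beta$); since $F_{q,{\bf s}}(W_{q,{\bf s}}(\lambda))\cong Sp_q(\lambda)$ and $F_{1,{\bf s}}(W_{1,{\bf s}}(\lambda))\cong Sp_1(\lambda)$ correspond under $\beta$, and Weyl modules are the projective covers inside the subcategory filtered by standards above them, one recovers $W_{q,{\bf s}}(\lambda)\mapsto W_{1,{\bf s}}(\lambda)$ exactly as in the argument of \ref{SSS_BK_functor}.

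\smallskip
The main obstacle I anticipate is \emph{not} the abstract transport of structure but pinning down the explicit compatibility of the BK isomorphism with the specific elements $m_\lambda$ (the products $\prod_{t}\prod_k(L_k-Q_t)$ times $\sum_{w\in\mathfrak{S}_\lambda}T_w$): one must verify that $\beta$ sends $m_\lambda\mathcal{H}_{q,{\bf s}}(n)$ to a module isomorphic (not equal) to $m_\lambda\mathcal{H}_{1,{\bf s}}(n)$, and the discrepancy between the two idempotents could a priori be a nontrivial Morita self-equivalence. Handling this requires knowing that $\beta$ sends the Jucys--Murphy element $L_k$ to $1+(q-1)L_k+(\text{higher order in }q-1)$ — more precisely to an element of $\mathbb{C}[L_k]$ with the same $e(\mathbf i)$-eigenvalues after the exponential reparametrization $Q_j\leftrightarrow q^{Q_j}$ — so that the Murphy-type idempotent decomposition is literally preserved. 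Once that is in hand, the rest is formal.
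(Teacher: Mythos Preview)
Your overall strategy coincides with the paper's: lift the Brundan--Kleshchev isomorphism $\Theta:\mathcal{H}_{1,{\bf s}}(n)\to\mathcal{H}_{q,{\bf s}}(n)$ to the Schur algebras by showing $\Theta(m_\lambda\mathcal{H}_{1,{\bf s}}(n))\cong m_\lambda\mathcal{H}_{q,{\bf s}}(n)$ as right modules, and then handle the Weyl modules via the Schur functor and preservation of Specht modules. Your second argument for the Weyl-module claim (using $0$-faithfulness and $F(W(\lambda))\cong Sp(\lambda)$, with $\Theta$ preserving Specht modules by \cite{kmr}) is exactly what the paper does and is correct.

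The genuine gap is in the middle step. Your ``homological characterization'' of $\bigoplus_\lambda m_\lambda\mathcal{H}$ does not do the work you want: the $m_\lambda\mathcal{H}$ are not projective in general, and even granting that $\Theta$ permutes indecomposable summands, nothing you have said forces the \emph{labels} to match. You correctly identify the obstacle in your final paragraph, but your proposed fix only addresses the Jucys--Murphy factor $u_\lambda=\prod_{t,k}(L_k-Q_t)$: you note that $\Theta$ sends $e({\bf j})L_k$ to something of the form $e({\bf j})(\text{invertible power series in }L_k)$, which indeed gives $\Theta(e({\bf j})u_\lambda)=\alpha_{{\bf j},\lambda}\cdot e({\bf j})u_\lambda$ with $\alpha_{{\bf j},\lambda}$ invertible. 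What you do not address at all is the symmetrizer $p_\lambda=\sum_{w\in\mathfrak{S}_\lambda}T_w$, and this is where the real work lies.

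The paper handles $p_\lambda$ by first characterizing $p_\lambda\mathcal{H}=\bigcap_{i\in I_\lambda}(T_i+1)\mathcal{H}$ (this uses that $q$ is not a root of unity), reducing the problem to showing $\Theta((T_i+1)e({\bf j})\mathcal{H}_{1,{\bf s}}(n))=(T_i+1)e({\bf j})\mathcal{H}_{q,{\bf s}}(n)$ for each simple reflection $i$. This in turn is not automatic: the BK isomorphism depends on choices of certain power series $q_i({\bf j}),Q_i({\bf j})$ in the KLR generators, and the paper makes a specific non-obvious choice of $Q_i({\bf j})$ (different from the one in \cite{BKKLR}) engineered precisely so that $\Theta((T_i+1)e({\bf j}))=(T_i+1)\cdot(\text{unit})\cdot e({\bf j})$. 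Without this computation, your argument does not close; the step ``$\beta$ preserves this combinatorial recipe'' is exactly the thing that must be proved, and it requires going into the formulas of \cite{BKKLR} rather than invoking abstract transport of structure.
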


The proof of this proposition occupies the rest of the subsection.

\subsubsection{Isomorphism via KLR algebras}
Let $\Theta: \mathcal{H}_{1, {\bf s}}(n)\rightarrow \mathcal{H}_{q,{\bf s}}(n)$ denote the isomorphism of \cite[Corollary 2]{BKKLR}: it is the composition of the isomorphisms $\sigma: \mathcal{H}_{1, {\bf s}}(n)
\rightarrow R_n^{\Lambda_{\bf s}}$ of \cite[(3.49)]{BKKLR} and $\rho: R_n^{\Lambda_{\bf s}}\rightarrow \mathcal{H}_{q,{\bf s}}(n)$ of \cite[(4.47)]{BKKLR} which pass through the Khovanov-Lauda-Rouquier algebra $R_n^{\Lambda_{\bf s}}$. We wish to show first that under this isomorphism $m_{\lambda}\mathcal{H}_{1,{\bf s}}(n)$ is transferred to a module that is isomorphic to $m_{\lambda}\mathcal{H}_{q,{\bf s}}(n)$, in other words that $\Theta(m_{\lambda}\mathcal{H}_{1,{\bf s}}(n)) \cong m_{\lambda}\mathcal{H}_{q,{\bf s}}(n)$ as right $\mathcal{H}_{q, {\bf s}}(n)$-modules. The isomorphism between the cyclotomic $q$-Schur algebra and its degenerate version follows from this.

We will decompose $m_{\lambda }$ as $u_{\lambda} p_{\lambda}$ where $u_{\lambda}= \prod_{t=2}^{\ell} \prod_{k=1}^{|\lambda^{(1)}| + \cdots + |\lambda^{(t-1)}|} (L_k - Q_t)$ and $p_{\lambda} = \sum_{w\in \mathfrak{S}_{\lambda}} T_w.$ There are sets of pairwise orthogonal idempotents $\{ e({\bf j}): {\bf j}\in \Z^n \}$ in $\mathcal{H}_{1,{\bf s}}(n)$ and in $\mathcal{H}_{q, {\bf s}}(n)$, \cite[\S 3.1 and \S4.1]{BKKLR}: almost all $e({\bf j})$ are zero, and they sum to $1$. Thus $$\Theta(m_{\lambda} \mathcal{H}_{1, {\bf s}}(n))= \Theta\left(\bigoplus_{{\bf j}\in \Z^n} e({\bf j})u_{\lambda}p_{\lambda}\mathcal{H}_{1, {\bf s}}(n)\right) = \bigoplus_{{\bf j}\in \Z^n} {\Theta}(e({\bf j})u_{\lambda})\cdot {\Theta}(p_{\lambda}\mathcal{H}_{1, {\bf s}}(n)).$$  By definition we have $$\sigma(e({\bf j})(L_k - {s_t})) = e({\bf j})(y_k + j_k - s_t) \quad \text{and} \quad \rho^{-1}(e({\bf j})(L_k-q^{s_t})) = e({\bf j})q^{j_k}(1-q^{s_t-j_k} - y_k)$$ where $y_k$ is a nilpotent element in the generating set of $R_n^{\Lambda_s}$, see \cite[Lemma 2.1]{BKKLR}. Since the $e({\bf j})$'s commute with the $L_k$'s by construction, this shows that for all ${\bf j}\in \Z^n$ and all $\lambda \in \prt{\ell}{n}$, $\Theta(e({\bf j})u_{\lambda}) =  \alpha_{{\bf j}, \lambda} e({\bf j})u_{\lambda} $ where $\alpha_{{\bf j},\lambda}$ is an invertible element of $\mathcal{H}_{q, {\bf s}}(n)$ (the invertibility follows from the fact
 that all $y_k$ are nilpotent). Thus $${\Theta}(e({\bf j})u_{\lambda})\cdot {\Theta}(p_{\lambda}\mathcal{H}_{1, {\bf s}}(n)) = \alpha_{{\bf j}, \lambda}e({\bf j})u_{\lambda}\cdot {\Theta}(p_{\lambda}\mathcal{H}_{1, {\bf s}}(n)) \cong e({\bf j})u_{\lambda}\cdot {\Theta}(p_{\lambda}\mathcal{H}_{1, {\bf s}}(n)), $$ and we will now show that ${\Theta}(p_{\lambda}\mathcal{H}_{1, {\bf s}}(n)) = p_{\lambda}\mathcal{H}_{q, {\bf s}}(n)$ for each $\lambda \in \prt{\ell}{n}$.

Let $I_{\lambda}\subseteq \{ 1, \ldots , n-1\}$ be the indexing set for the simple reflections in $\mathfrak{S}_{\lambda}$. Observe first that \begin{equation}\label{intersection}p_{\lambda} \mathcal{H}_{q, {\bf s}}(n) = \bigcap_{i\in I_{\lambda}} (T_i+1)\mathcal{H}_{q, {\bf s}}(n).\end{equation} Indeed the equality $(T_i+1)(\sum_{w\in \mathfrak{S}_\lambda, s_iw>w} T_w) = p_{\lambda}$ shows the left hand side is contained in the right hand side. Conversely if $x = (T_i+1)y$ for some $y\in \mathcal{H}_{q, {\bf s}}(n)$ then $T_ix = qx$, and so $x\in \bigcap_{i\in I_{\lambda}} (T_i+1)\mathcal{H}_{q, {\bf s}}(n)$ implies that $(\sum_{w\in \mathfrak{S}_{\lambda}} T_w)x = \sum_{w\in\mathfrak{S}_{\lambda}} q^{\ell(w)} x$. Since $q$ is not a non-trivial root of unity, we see that $\sum_{w\in\mathfrak{S}_{\lambda}} q^{\ell(w)}$ is non-zero, and so the right hand side is contained in the left hand side.

It follows from \eqref{intersection} that it is enough to show that $$\Theta ((T_i+1)e({\bf j})  \mathcal{H}_{1, {\bf s}}(n)) =  (T_i+1)e({\bf j})  \mathcal{H}_{q, {\bf s}}(n)$$ for each $i\in I_{\lambda}$ and ${\bf j}\in \Z^n$. By \cite[(3.42) and (4.43)]{BKKLR} we have $$\sigma((T_i+1)e({\bf j})) = (1 + \psi_iq_i({\bf j}) - p_i({\bf i}))e({\bf j}), \quad \rho^{-1}((T_i+1)e({\bf j})) = (1 + \psi_iQ_i({\bf j}) - P_i({\bf i}))e({\bf j}),$$ where $\psi_i$ is a particular generator of $R_n^{\Lambda}$ and the elements $p_i({\bf j}), q_i({\bf j}),P_i({\bf j})$ and $Q_i({\bf j})$ are elements defined in \cite[(3.3) and (4.3)]{BKKLR}. There is some freedom of choice in the $q_i({\bf j})$ and $Q_i({\bf j})$. We will choose $q_i({\bf j})$ exactly as in \cite[(3.30)]{BKKLR}, but we will take $$Q_i({\bf j}) = \begin{cases} 1 - q + qy_{i+1}-y_i & \quad \text{if } j_i = j_{i+1} \\
\frac{q_i({\bf j})(1-P_i({\bf j}))}{1-p_i({\bf j})} & \quad \text{if } j_i \neq j_{i+1}. \end{cases}$$ (In this definition if $j_i = j_{i+1}+1$ then $1-p_i({\bf j}) = (y_i - y_{i+1})/(1+ y_i - y_{i+1})$ which is not invertible. In this case, however, $1-P_i({\bf j}) = q^{j_i}(y_i-y_{i+1})/(y_{i+1}({\bf j}) - y_i({\bf j}))$ and the expression for $Q_i({\bf j})$ simplifies to $q_i({\bf j})q^{j_i}(1+ y_i - y_{i+1})/(y_{i+1}({\bf j}) - y_i({\bf j}))$, where the denominator is now invertible since it equals $q^{j_{i+1}}(1-q) + (q^{j_i}y_i-q^{j_i+1}y_{i+1})$ where $y_i$ and $y_{i+1}$ commute and are nilpotent. Using \cite[(3.23), (3.28), (3.29) and (4.28)]{BKKLR}  one checks that these $Q_i({\bf j})$ satisfy the conditions \cite[(4.33) - (4.35)]{BKKLR} which they are required to.

With this in hand we now see that $$\Theta ((T_i+1)e({\bf j}))= \rho \sigma ((T_i+1)e({\bf j})) = (T_i+1)\frac{Q_i({\bf j})}{q_i({\bf j})} e({\bf j}).$$ Since both $Q_i({\bf j})$ and $q_i({\bf j})$ are invertible and commute with $e({\bf j})$ it follows that $\Theta ((T_i+1)e({\bf j})  \mathcal{H}_{1, {\bf s}}(n)) =  (T_i+1)e({\bf j})  \mathcal{H}_{q, {\bf s}}(n)$, as required.

\subsubsection{Isomorphism vs Weyl modules}

To see that the isomorphism we have chosen sends Weyl modules to degenerate Weyl modules, let $F_{q,{\bf s}}: \mathcal{S}_{q, {\bf s}}(n)\md \rightarrow \mathcal{H}_{q, {\bf s}}(n)\md$ and ${F_{1,{\bf s}}}: \mathcal{S}_{1,{\bf s}}(n)\md \rightarrow \mathcal{H}_{1 , {\bf s}}(n)\md$ be the cyclotomic $q$-Schur functor and its degenerate analogue. As we have mentioned in \ref{SSS_Schur_cyclo}, $(\mathcal{S}_{q, {\bf s}}(n)\md, F_{q, {\bf s}})$ is a $0$-highest weight cover of $\mathcal{H}_{q , {\bf s}}(n)$ and $(\mathcal{S}_{1, {\bf s}}(n)\md, F_{1, {\bf s}})$ is a $0$-highest weight cover of $\mathcal{H}_{1,{\bf s}}(n)$. As a result we have that
\begin{equation}\label{eq:Weyl_q} W_{q, {\bf s}}(\mu) \cong Hom_{H_{q, {\bf s}}(n)}( \bigoplus_{\lambda\in\prt{\ell}{n}} m_{\lambda}\mathcal{H}_{q,{\bf s}}(n), F_{q,{\bf s}}(W_{q, {\bf s}}(\mu)))\end{equation} and \begin{equation}\label{eq:Weyl_1}W_{1, {\bf s}}(\mu) \cong Hom_{H_{1, {\bf s}}(n)}( \bigoplus_{\lambda\in\prt{\ell}{n}} m_{\lambda}\mathcal{H}_{1, {\bf s}}(n), F_{1,{\bf s}}(W_{1, {\bf s}}(\mu))).\end{equation} However, by \cite[Proposition 2.17]{jamesmatjantzensum}, $F_{q, {\bf s}}(W_{q, {\bf s}}(\mu))$and $F_{1, {\bf s}}(W_{1, {\bf s}}(\mu))$ are isomorphic to the corresponding Specht modules labelled by $\mu$, $Sp_q(\mu)$ and $Sp_1(\mu)$. The result now follows from (\ref{eq:Weyl_q}) and (\ref{eq:Weyl_1}), the above paragraphs and the fact that $\Theta$ preserves Specht modules, \cite[Theorem 6.23]{kmr}.

\subsection{Fock spaces and crystals}\label{SUBSECTION_Fock}
We fix ${\bf s} = (s_1,\ldots , s_{\ell})\in \Z^{\ell}$ and set $\Lambda_{\bf s} = \Lambda_{s_1}+ \cdots + \Lambda_{s_{\ell}}$, a level $\ell$ weight for $\mathfrak{gl}_{\infty}$. We assume  $s_1>s_2>\ldots>s_\ell$.\subsubsection{Space and quantum group action}
 \label{combaction}
Given boxes $A$ and $B$ from a multipartition,
we say that $A$ is above $B$, respectively below,
if $A$ lies in a row that is strictly above or to the left of, respectively strictly below or to the right of, the row containing $B$ in the Young diagram when visualized as in (\ref{youngdiag}). A box $A\in\la$ is called removable (for $\la$) if $\la\setminus \{A\}$ has a shape of a multipartition. A box $B\not\in\la$ is called addable (for $\la$) if $\la\cup \{B\}$ has a shape of a multipartition. We use the notation
$
\la_A=\la\setminus \{A\}, \la^B=\la\cup\{B\}.
$
Given $\la \in \mathscr{P}_{\ell}$, a removable $i$-box $A$
and an addable $i$-box $B$,
we set
\begin{equation*}
\:\:\:\:\:\: d_i(\la)=\#\{\text{addable $i$-boxes of $\la$}\}
-\#\{\text{removable $i$-boxes of $\la$}\};
\end{equation*}
\begin{equation*}
\begin{split}
d_A(\la)= \#\{\text{addable $i$-boxes of $\la$ below $A$}\}\hspace{34.5mm}\\
-\#\{\text{removable $i$-boxes of $\la$ below $A$}\};\hspace{10mm}
\end{split}
\end{equation*}
\begin{equation*}
\begin{split}
d^B(\la)=\#\{\text{addable $i$-boxes of $\la$ above $B$}\}\hspace{34.5mm}\\
-\#\{\text{removable $i$-boxes of $\la$ above $B$}\}.\hspace{10mm}
\end{split}
\end{equation*}

Define $F(\La_{\bf s})$, the level $\ell$ Fock space attached to ${\bf s}$, to be the $\mathbb{Q}(v)$-vector space on basis
$\{M_\la\:|\:\la \in \mathscr{P}_{\ell}\}$
with quantum group $U_v(\mathfrak{gl}_{\infty})$-action defined by
\begin{equation}
\label{actionqgp}
E_i {M}_\la=\sum_A v^{d_A(\la)}{M}_{\la_A},\qquad
F_i {M}_\la=\sum_B v^{-d^B(\la)}{M}_{\la^B}, \qquad
K_i {M}_\la=q^{d_i(\la)}{M}_\la,
\end{equation}
where the first sum is over all removable $i$-boxes $A$ for $\la$, and the
second sum is over all addable $i$-boxes $B$ for $\la$.

\subsubsection{Dual canonical basis}\label{ssdcb}
The bar involution on $U_v(\mathfrak{gl}_{\infty})$ is the
automorphism $-:U_v(\mathfrak{gl}_{\infty}) \rightarrow U_v(\mathfrak{gl}_{\infty})$ that is
semilinear with respect to the field automorphism
$\mathbb{Q}(v) \rightarrow \mathbb{Q}(v), f(v) \mapsto f(v^{-1})$
and satisfies
\begin{equation*}
\overline{E_i} = E_i,\qquad
\overline{F_i} = F_i,\qquad
\overline{K_i} = K_i^{-1}.
\end{equation*}
There is a compatible bar involution on $F(\La_{\bf s})$: this is an anti-linear involution
$-:F(\La_{\bf s}) \rightarrow F(\La_{\bf s})$
such that $\overline{uf} = \overline{u}\,\overline{f}$
for each $u \in U_v(\mathfrak{gl}_{\infty}), f \in F(\La_{\bf s})$. It arises from the realization $F(\La_{\bf s}) \cong F(\La_{s_1})\otimes \cdots \otimes F(\La_{s_{\ell}})$ and a tensor product construction of Lusztig, see \cite[\S 3.9]{BKgraded}. It has the property that
\begin{equation*}
\overline{M_\la} = M_\la + \text{(a $\Z[v,v^{-1}]$-linear combination of
$M_\mu$'s for $\mu \lhd \la$)}.
\end{equation*}
The dual-canonical basis
$
\left\{L_\lambda\:\big|\:\lambda \in \mathscr{P}_{\ell}\right\}
$
of $F(\La_{\bf s})$ is defined by letting $L_\la$ be the unique bar-invariant vector
such that
\begin{equation}
\label{dualcan}
L_\lambda = M_\la + \text{(a $v \Z[v]$-linear combination of
$M_\mu$'s for various $\mu\lhd\la$)}.
\end{equation}
The polynomials $d_{\mu,\lambda}(v) \in \Z[v]$ defined from
\begin{equation}\label{KLpoly}
M_\la = \sum_{\mu \in \mathscr{P}_{\ell}} d_{\mu,\la}(v) L_\mu\quad
\end{equation}
satisfy $d_{\la,\la}(v) = 1$ and $d_{\mu,\la}(v) = 0$
unless $\mu \unlhd \la$. In \cite[Remark 14]{Bdual} there are explicit formulae expressing these polynomials in terms of finite type $A$ Kazhdan-Lusztig polynomials.

\subsubsection{Crystal} \label{sscg}
The actions of the Chevalley generators
$E_i$ and $F_i$ on the dual-canonical basis
of $F(\La_{\bf s})$ is reflected by an underlying crystal graph, $(\mathscr{P}_{\ell}, \tilde e_i, \tilde f_i, \eps_i, \phi_i)$, which has the following explicit combinatorial description.

Given an integer $i$, let $A_1,\dots,A_k$ denote the addable and removable $i$-boxes of $\la\in\mathscr{P}$ ordered so that $A_m$ is above $A_{m+1}$ for each $m=1,\dots,k-1$.
Consider the sequence $(\sigma_1,\dots,\sigma_k)$
where $\sigma_r = +$ if $A_r$ is addable or $-$ if $A_r$ is removable.
Step by step, we remove  pairs $(-,+)$ of subsequent elements
(in this order, i.e., a $-$ should precede a $+$).
At the end we are left with a sequence in which no $-$ appears to
the left of a $+$.
This
is called the reduced $i$-signature of $\la$. For the reduced signature $(\sigma_1,\ldots,\sigma_n)$ of $\lambda$ we have that
\begin{align*}
\eps_i(\la) = \#\{r=1,\dots,n\:|\:\sigma_r = -\},\qquad
&\phi_i(\la) = \#\{r=1,\dots,n\:|\:\sigma_r = +\}.
\end{align*}
Finally, if $\eps_i(\la) > 0$, we have that
$\tilde e_i \la = \la_{A_r}$
where $r$ indexes the leftmost $-$ in the reduced $i$-signature;
similarly, if $\phi_i(\la) >0 $ we have that
$\tilde f_i \la = \la^{A_r}$
where $r$ indexes the rightmost $+$ in the reduced $i$-signature.

The singular vectors $\lambda\in\mathscr{P}_{\ell}$ in the crystal can be therefore characterized as follows:
 for each $i\in\Z$, the sequence $(\sigma_1,\ldots,\sigma_k)$ of $i$-addable and $i$-removable boxes has the property that for each $j$ the number of $+$'s among $\sigma_j,\ldots,\sigma_k$ is not less than the number of $-$'s.

\subsection{$\gl_\infty$-categorifications}\label{SUBSECTION_categor}
Here we consider the case when $W=G_{\ell}(n)$,  $\kappa\in \C\setminus\mathbb{Q}$,
while the parameters $s_1,\ldots, s_{\ell}$ are integers.
\subsubsection{Categorification for Hecke algebras}
The subalgebra $\Hecke_{\kappa,{\bf s}}(n-1)
\subset \Hecke_{\kappa,{\bf s}}(n)$ is centralized by the image of  $X_n\in \Hecke^{aff}_q(n)$ in $\Hecke_{\kappa,{\bf s}}(n)$.
So the element $X_n$ defines an endomorphism $X$ of the restriction functor
$^{\Hecke}\!\Res_{W(n)}^{W(n-1)}:\Lmod{\mathcal{H}_{\kappa,{\bf s}}(n)}\rightarrow \Lmod{\mathcal{H}_{\kappa,{\bf s}}(n-1)}$
and of the induction functor $^{\Hecke}\!\Ind_{W(n-1)}^{W(n)}:\Lmod{\mathcal{H}_{\kappa,{\bf s}}(n-1)}
\rightarrow \Lmod{\mathcal{H}_{\kappa,{\bf s}}(n)}$.
By the work of Ariki, \cite{Ariki}, one knows the following:
\begin{itemize}
\item All eigenvalues of $X$ are of the form $q^i$. So it makes sense to consider the
generalized endofunctors $^{\Hecke}\!E_i(n),^{\Hecke}\! F_i(n)$ corresponding to the eigenvalue
$q^i$ so that $^{\Hecke}\!\Res_{W(n)}^{W(n-1)}=\bigoplus_{i\in \BZ} {^\Hecke\! E_i(n)}, ^{\Hecke}\!\Ind_{W(n-1)}^{W(n)}=
\bigoplus_{i} {^\Hecke\! F_i(n)}$.
\item The functors $^{\Hecke}\!E_i:=\bigoplus_{n\geqslant 0} ^{\Hecke}\!E_i(n), ^{\Hecke}\!F_i:=\bigoplus_{n\geqslant 0} ^{\Hecke}\!F_i(n)$
define a $\glinfty$-categorification in the sense of Rouquier of the category $\Lmod{\Hecke_{\kappa,{\bf s}}}:=\bigoplus_{n\geqslant 0}\Lmod{\Hecke_{\kappa,{\bf s}}(n)}$.
\item With respect to the operators $e_i:=[^{\Hecke}\!E_i], f_i:=[^{\Hecke}\!F_i]$ the $K$-group $[\Lmod{\Hecke_{\kappa,{\bf s}}}]$
is isomorphic to the irreducible module $L(\Lambda_{\bf s})$, where the highest weight $\Lambda_{\bf s}$ is constructed
from ${\bf s}$ as explained in the beginning of Subsection \ref{SUBSECTION_Fock}.
\end{itemize}

\subsubsection{Categorification for Cherednik algebras}
\label{indresCher}
According to \cite{Shan}, one has exact functors
$E_i(n):\OCat_{\kappa,{\bf s}}(n)\rightarrow \OCat_{\kappa,{\bf s}}(n-1), F_i(n):\OCat_{\kappa,{\bf s}}(n-1)\rightarrow
\OCat_{\kappa,{\bf s}}(n)$. The following holds:
\begin{itemize}
\item $\Res_{G(n)}^{G(n-1)}=\bigoplus_i E_i(n), \Ind_{G(n-1)}^{G(n)} = \bigoplus_i F_i(n)$.
\item We have ${\sf KZ}\circ E_i(n)=\,^{\Hecke}\!E_i(n)\circ {\sf KZ}, {\sf KZ}\circ F_i(n)=\,^{\Hecke}\! F_i(n)\circ {\sf KZ}$.
\item The functors $E_i,F_i$ define a $\glinfty$-categorification of $\OCat_{\kappa,{\bf s}}:=
\bigoplus_{n\geqslant 0} \OCat_{\kappa,{\bf s}}(n)$.
\item With respect to the induced operators $e_i,f_i$ the $K$-group $[\OCat_{\kappa,{\bf s}}]$ is isomorphic
to the Fock space $F(\Lambda_{\bf s})$ with multi-charge ${\bf s}$. Moreover, the {\sf KZ} functor induces a natural projection from
the Fock space to $L(\Lambda_{\bf s})$.
\end{itemize}

\subsubsection{Categorification for parabolic category $\OCat$}\label{SSS_parab_cat}
There are $i$-induction and $i$-restriction endofunctors of  $\Opar{\bf s}:=\bigoplus_{n\geqslant 0}\Opar{\bf s}(n)$
essentially defined in \cite[7.4]{ChuangRouquier} and studied further  in \cite[\S 4.4]{BKshift}. Let us recall their construction.  We use the notation of Subsection \ref{SUBSECTION_parab} and assume, in particular, that $s_1>\ldots>s_\ell$.

First we define certain functors on $\tOpar{{\bf s},m}$. Namely,  we have the functors $F:=\C^{|\tau|}\otimes\bullet,
E:=(\C^{|\tau|})^*\otimes\bullet$, where $\C^{|\tau|}$ stands for the tautological $\gl_{|\tau|}$-module. The functors
are biadjoint. We have endomorphisms $X$ of $E,F$ given by applying the tensor Casimir $\sum_{i,j}e_{ij}\otimes e_{ji}$,
where $e_{ij}$ denotes the matrix unit in position $(i,j)$. For $E_i,F_i$ we take the generalized eigenfunctors of $E,F$
corresponding to eigenvalues $-|\tau|-i$ and $i$, respectively. This defines a $\gl_\infty$-categorification on
$\tOpar{{\bf s},m}$, in particular, $E_i$ and $F_i$ are biadjoint.

According to \cite[Lemma 4.3]{BKschurweyl}, $F_iN(A)$ (resp., $E_i N(A)$) has a Verma flag whose
successive quotients are precisely $N(B)$ with $B$ being a column strict table  obtained from $A$ by replacing an $i$ with
an $i+1$
(resp., an $i+1$ with an $i$) for each occurrence of $i$ (resp., $i+1$) in $A$, every $N(B)$ occurs once.

The previous paragraph implies that $f_i$ maps $\Opar{{\bf s},m}(n)$ to $\Opar{{\bf s},m}(n+1)$.
Also the construction of ${\sf BK}$  in \cite[Section 5.3]{BKschurweyl} implies that ${\sf BK}$
intertwines $F$ with the induction functor for $\Hecke_{1,{\bf s}}$. Recall, see \ref{SSS_BK_functor},
that ${\sf BK}$ is fully faithful on projectives and so induces an isomorphism of $\End(F)$
with the endomorphisms of the induction functor.  This isomorphism preserves the endomorphisms
$X$ and so ${\sf BK}$ intertwines also the functors $f_i$. From here and an adaptation of
\cite[Lemma 2.4]{Shan} it follows that the functors $f_i$ do not depend on the choice of $m$
and so give rise to functors $\Opar{{\bf s}}(n)\rightarrow \Opar{{\bf s}}(n+1)$
to be also denoted by $f_i$.

On the contrary, $e_i$ does not need to map $\Opar{{\bf s},m}(n)$ to $\Opar{{\bf s},m}(n-1)$.
However,  this always holds for $m\gg 0$ (for each $i$ we need its own $m$).
The adjointness property of $e_i$ and $f_i$ gives rise to functors $e_i:\Opar{{\bf s}}(n)\rightarrow \Opar{{\bf s}}(n-1)$.
The functor ${\sf BK}$ intertwines the functors $e_i$ as well.
So we get a $\gl_\infty$-categorification on $\Opar{{\bf s}}:=\bigoplus_{n\geqslant 0}\Opar{{\bf s}}(n)$.

\subsection{Main theorem and applications}\label{SUBSECTION_parab_main}
\subsubsection{Main result: stronger version} Recall the notation $\mathcal{O}_{\kappa, ({\bf s}, {\bf m})}(n)$ from \ref{SUBSECTION_reduction} where ${\bf s}, {\bf m}\in \mathbb{Z}^{\ell}$.
\begin{theorem}
\label{parabolic}
Let $\kappa \in \C\setminus \mathbb{Q}$, ${\bf s} = (s_1, \ldots , s_{\ell})\in \Z^{\ell}$ with $s_1 > s_2 > \cdots > s_{\ell}$ and ${\bf m} = (m_1, \ldots , m_{\ell})$ with $m_{\ell}\geq m_1 \geq \cdots \geq m_{\ell-1}$.
\begin{enumerate}
\item There is an equivalence of highest weight categories $\Upsilon_n : \mathcal{O}_{\kappa, ({\bf s}^{\star}, {\bf m})}(n) \longrightarrow \Opar{{\bf s}}(n)$ and a commutative diagram $$\begin{CD}
\mathcal{O}_{\kappa, ({\bf s}^{\star}, {\bf m})}(n) @> \Upsilon_n > \sim >  \Opar{{\bf s}}(n) \\ @V {\sf KZ} VV @VV {\sf BK} V \\
\mathcal{H}_{q, {\bf s}^{\star
}}(n)\md @> \sim > {\tilde{\Theta}^*} > \mathcal{H}_{1, {\bf s}}(n)\md,
\end{CD}$$
where $\tilde{\Theta} =  \sigma\circ \Theta: \mathcal{H}_{1, {\bf s}}(n)\rightarrow \mathcal{H}_{q, {\bf s}^{\star
}}(n)$ with $\sigma$ the involution defined in \ref{Hecinvol}.
\item We have $\Upsilon_n(\Delta_{\kappa, ({\bf s}^{\star}, {\bf m})}(\lambda^{\star}))\cong N(A_{\lambda})$  for each $\lambda\in \prt{\ell}{n}$.
\item The equivalence $\Upsilon:=\bigoplus_{n=0}^\infty \Upsilon_n$ intertwines the functors of $i$-restriction and $i$-induction on $\mathcal{O}_{\kappa, ({\bf s}^{\star}, {\bf m})}$ with those of $(-i)$-restriction and $(-i)$-induction on $\Opar{{\bf s}}$ for all $i\in \Z$.
\end{enumerate}
\end{theorem}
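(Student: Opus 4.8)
The strategy is to assemble the three parts from the pieces already established in Sections \ref{SUBSECTION_hw}--\ref{SUBSECTION_categor}. For part (1), the key input is the isomorphism $\Theta: \mathcal{H}_{1,{\bf s}}(n)\to \mathcal{H}_{q,{\bf s}}(n)$ of \cite[Corollary 2]{BKKLR} together with the involution $\sigma$ of \ref{Hecinvol}, which combine to give $\tilde\Theta = \sigma\circ\Theta: \mathcal{H}_{1,{\bf s}}(n)\to\mathcal{H}_{q,{\bf s}^\star}(n)$. First I would apply Proposition \ref{reductioncase}(2) to reduce to the case where ${\bf m}$ is such that the hypotheses match, and Proposition \ref{reductioncase}(1) to identify $\mathcal{O}_{\kappa,({\bf s}^\star,{\bf m})}(n)$ with $\mathcal{S}_{q,{\bf s}^\star}(n)\md$ as a highest weight cover of $\mathcal{H}_{q,{\bf s}^\star}(n)$, sending $\Delta_{\kappa,({\bf s}^\star,{\bf m})}(\lambda^\star)$ to $W_{q,{\bf s}^\star}(\lambda^\star)$. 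Next I would use Proposition \ref{degenerateSchurIM} to identify $\mathcal{S}_{q,{\bf s}^\star}(n)\md$ with $\mathcal{S}_{1,{\bf s}^\star}(n)\md$ compatibly with $\Theta$ (hence with $\tilde\Theta$ after the $\sigma$-twist), sending $W_{q,{\bf s}^\star}(\mu)$ to $W_{1,{\bf s}^\star}(\mu)$. Here I need to be careful about whether the relevant multicharge on the degenerate side is ${\bf s}$ or ${\bf s}^\star$; the $\sigma$-twist is precisely what reconciles the two, since $\sigma$ sends the Specht module labelled $\mu$ to the one labelled $\mu^\star$ as recorded in the proof of Proposition \ref{reductioncase}. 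Finally I would invoke the Brundan--Kleshchev equivalence $E: \mathcal{S}_{1,{\bf s}}(n)\md \xrightarrow{\sim} \tOpar{{\bf s},m}(n)$ of \cite[Theorem C]{BKschurweyl} recalled in \ref{SSS_BK_functor}, which restricts to an equivalence $\mathcal{S}_{1,{\bf s}}(n)\md\to\Opar{{\bf s}}(n)$ sending $W_1(\lambda)$ to $N(A_\lambda)$ and intertwining $F_{1,{\bf s}}$ with ${\sf BK}$. Composing all of these gives $\Upsilon_n$, and chasing the labels through the chain yields the commutative diagram in (1); the compatibility of each individual arrow with the corresponding Schur/KZ/BK functor is exactly what makes the diagram commute.

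\textbf{Parts (2) and (3).} Part (2) is immediate by tracking standard objects through the chain of equivalences above: $\Delta_{\kappa,({\bf s}^\star,{\bf m})}(\lambda^\star) \mapsto W_{q,{\bf s}^\star}(\lambda^\star)\mapsto W_{1,{\bf s}^\star}(\lambda^\star)\mapsto N(A_\lambda)$, using Proposition \ref{reductioncase}(1), Proposition \ref{degenerateSchurIM} (with the $\star$-bookkeeping from $\sigma$), and the computation $E(W_1(\lambda))\cong N(A_\lambda)$ from \ref{SSS_BK_functor}. For part (3), I would use that $\Upsilon$ intertwines the {\sf KZ} functor on the Cherednik side with the {\sf BK} functor on the parabolic side, by part (1). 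On the Hecke level, the categorification functors $e_i, f_i$ are defined by the eigenspace decomposition of the Jucys--Murphy endomorphism $X$ of restriction/induction; the isomorphism $\tilde\Theta$ sends the element governing the $q^i$-eigenspace on the $\mathcal{H}_{q,{\bf s}^\star}$ side to the element governing the eigenvalue corresponding to $-i$ on the $\mathcal{H}_{1,{\bf s}}$ side — this sign flip comes from the $\sigma$-twist, which sends $X_i \mapsto X_i^{-1}$ (so $q^i\mapsto q^{-i}$), together with the shift relating the degenerate eigenvalue $j$ to the non-degenerate eigenvalue $q^j$ and the reversal ${\bf s}\leftrightarrow{\bf s}^\star$. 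Since by \ref{indresCher} and \ref{SSS_parab_cat} the functors $E_i, F_i$ on both $\mathcal{O}_{\kappa,({\bf s}^\star,{\bf m})}$ and $\Opar{{\bf s}}$ are characterized as the summands of $\Res$/$\Ind$ on which $X$ acts with the appropriate generalized eigenvalue, and since $\Upsilon$ (being an equivalence of highest weight covers) intertwines $\Res$, $\Ind$ and the endomorphism $X$ via the corresponding statements for {\sf KZ} and {\sf BK} (Proposition \ref{proposition:KZ_commut} and the analogous fact in \ref{SSS_parab_cat}), it follows that $\Upsilon$ intertwines $E_i$ with $E_{-i}$ and $F_i$ with $F_{-i}$.

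\textbf{Main obstacle.} The technically delicate point is the bookkeeping of the $\star$-involution and the $\sigma$-twist: at each stage there is a choice of whether the Hecke/Schur parameter is ${\bf s}$ or ${\bf s}^\star$, whether a partition label $\mu$ or its image $\mu^\star$ appears, and whether the $\gl_\infty$-weight is $\Lambda_{\bf s}$ or a reflected version. These must all be made consistent so that the square in (1) literally commutes and so that (3) has the stated $i\leftrightarrow -i$ form rather than some other sign convention. I would organize this by fixing once and for all the identifications in \ref{paramsHec}, \ref{Hecinvol}, and \ref{SSS_parab_main_easy}, then verifying on the level of Tits' deformation theorem / the flat family of Specht modules (as in the proof of Proposition \ref{reductioncase}(1)) that $\tilde\Theta$ induces the map $\lambda\mapsto\lambda^\star$ on $\irr{G_\ell(n)}$; once that single fact is pinned down, the rest of the diagram-chase and the eigenvalue matching in (3) follow mechanically. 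A secondary point worth checking carefully is that Proposition \ref{degenerateSchurIM}'s isomorphism of Schur algebras is genuinely compatible with the cover structure (i.e. intertwines $F_{q,{\bf s}}$ with $F_{1,{\bf s}}$ up to the Hecke isomorphism), which is the content of its proof via \eqref{eq:Weyl_q} and \eqref{eq:Weyl_1} and the fact that $\Theta$ preserves Specht modules.
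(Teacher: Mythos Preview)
Your approach for parts (1) and (2) is essentially the paper's: chain Proposition~\ref{reductioncase}(1), Proposition~\ref{degenerateSchurIM}, and the Brundan--Kleshchev equivalence of \ref{SSS_BK_functor}. (Two small points: Proposition~\ref{reductioncase}(2) is not needed since ${\bf m}$ is already dominant by hypothesis; and applying Proposition~\ref{reductioncase}(1) with ${\bf s}^\star$ in place of ${\bf s}$ lands you in $\mathcal{S}_{q,{\bf s}}(n)\md$, not $\mathcal{S}_{q,{\bf s}^\star}(n)\md$, because $({\bf s}^\star)^\star={\bf s}$ --- but you yourself flag this bookkeeping as the main obstacle, and once it is sorted the chain goes through exactly as you say.)

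For part (3) there is a genuine gap. You correctly identify that $\sigma(X_n)=X_n^{-1}$ forces $\tilde\Theta^*E_i\cong E_{-i}\tilde\Theta^*$ at the Hecke level, and that ${\sf KZ}$ and ${\sf BK}$ each intertwine $E_i$, $F_i$ with their Hecke counterparts. But from this you only obtain ${\sf BK}\,E_i\,\Upsilon_n\cong{\sf BK}\,\Upsilon_{n-1}\,E_{-i}$; you still have to \emph{cancel} ${\sf BK}$ to conclude $E_i\,\Upsilon_n\cong\Upsilon_{n-1}\,E_{-i}$. Your sentence ``since $\Upsilon$ (being an equivalence of highest weight covers) intertwines $\Res$, $\Ind$ and the endomorphism $X$'' asserts exactly what needs to be proved. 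The paper handles this by restricting to projectives: since $\Upsilon_n$, $\Upsilon_{n-1}$ and $E_i$ preserve projectives and ${\sf BK}^!{\sf BK}\cong\id$ on projectives (the $0$-faithfulness of the cover), one gets $E_i\,\Upsilon_n\cong\Upsilon_{n-1}\,E_{-i}$ on $\mathcal{O}_{\kappa,({\bf s}^\star,{\bf m})}(n)\prj$, and then \cite[Lemma~1.2]{Shan} extends this to the whole category. You should insert this step.
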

We expect it is possible to obtain an analogous result on weakening the condition on ${\bf s}$ to that the $s_i$'s are pairwise distinct,
see Section \ref{SS_open_prob}. 
Taken with the results of \ref{SUBSECTION_reduction} this would deal with all faithful parameter choices with $\kappa \in \C\setminus \mathbb{Q}$.
\begin{proof}
(1) and (2).
Combine the equivalences from  \ref{SSS_BK_functor},\ref{reductioncase}(1) and \ref{degenerateSchurIM}.

(3) By Shan's results recalled in \ref{indresCher} ${\sf KZ}$ commutes with $i$-restriction and $i$-induction, while by Brundan and Kleshschev's results, see \ref{SSS_parab_cat}, so too does ${\sf BK}$.

Let $I: \mathcal{O}_{\kappa, {\bf s}^{\star}}(n)\prj \rightarrow  \mathcal{O}_{\kappa, {\bf s}^{\star}}(n)$ be the inclusion of the full subcategory of projective modules. Abusing notation, let $E_i$ and $F_i$ denote the $i$-restriction and $i$-induction functors on any relevant category. Since $\sigma(X_n) = X_n^{-1}$ we see that $\tilde{\Theta}^{\ast} E_i = E_{-i} \tilde{\Theta}^{\ast}$. We have ${\sf BK} E_i \Upsilon_n \cong E_i {\sf BK} \Upsilon_n \cong E_i \tilde{\Theta}^*{\sf KZ} \cong \tilde{\Theta}^* {\sf KZ} E_{-i} \cong {\sf BK} \Upsilon_{n-1} E_{-i}.$ Restricting to $\mathcal{O}_{\kappa, {\bf s}'}(n)\prj$ and noting that $\Upsilon_{n-1}, \Upsilon_n$ and $E_i$ preserve projectives and that ${\sf BK}^!{\sf BK} \cong \id$ on projectives, we see that $E_i \Upsilon_n I \cong \Upsilon_{n-1} E_{-i} I$. By \cite[Lemma 1.2]{Shan} this implies that $E_i \Upsilon_n \cong \Upsilon_{n-1} E_{-i}$. The argument for the $F_i$'s is similar.\end{proof}

\subsubsection{Decomposition numbers} \label{firstconsequence}
According to \cite{backelin}, the category $\tOpar{{\bf s},m}$ admits a $\Z$-grading that makes it Koszul.
Being the sum of blocks of $\tOpar{{\bf s},m}$, so does $\Opar{\bf s}(n)$.
It now follows from Theorem \ref{parabolic} that the category $\mathcal{O}_{\kappa , ({\bf s}^{\star}, {\bf m})}(n)$ admits
such a grading. Let ${\mathcal{O}}^{\sf gr}_{\kappa, ({\bf s}^{\star}, {\bf m})}(n)$ denote this category, with grading shift $\lan j \ran$ defined by $(M\lan j \ran)_i = M_{i-j}$ for $M\in {\mathcal{O}}^{\sf gr}_{\kappa, ({\bf s}^{\star}, {\bf m})}(n)$ and $i,j\in \Z$.

\begin{corollary} Let $\kappa\in \C\setminus \mathbb{Q}$ and ${\bf s} = (s_1, \ldots , s_{\ell})\in \Z^{\ell}$ with $s_1 > s_2 > \cdots > s_{\ell}$. Set $\mathcal{O}^{\sf gr}_{\kappa,({\bf s}^{\star}, {\bf m})} = \bigoplus_{n\geq 0}\mathcal{O}^{\sf gr}_{\kappa,({\bf s}^{\star}, {\bf m})}(n)$ and let $F_{\kappa , ({\bf s}^{\star}, {\bf m})}$ be the $\Z[v,v^{-1}]$-module $[ \mathcal{O}^{\sf gr}_{\kappa,({\bf s}^{\star}, {\bf m})} ] =  \bigoplus_{n\geq 0}[\mathcal{O}^{\sf gr}_{\kappa,({\bf s}^{\star}, {\bf m})}(n)]$, the direct sum of the corresponding Grothendieck groups.
\begin{enumerate}
\item Under the isomorphism $\Phi: F_{\kappa , ({\bf s}^{\star}, {\bf m})}\otimes_{\Z[v,v^{-1}]}\mathbb{Q}(v) \rightarrow F(\Lambda_{\bf s})$ induced by sending $[\Delta^{{\sf gr}}_{\kappa, ({\bf s}^{\star}, {\bf m})}(\lambda^{\star})]$ to $M_{\lambda}$ for all $\la \in \mathscr{P}_{\ell}$, we have $\Phi([L^{\sf gr}_{\kappa , ({\bf s}^{\star}, {\bf m})}(\la^{\star})]) = L_{\la}$, the dual-canonical basis element defined in \eqref{dualcan}.
\item In particular, $$\sum_j [{\sf rad}^j\Delta_{\kappa , ({\bf s}^{\star}, {\bf m})}(\la^{\star})/{\sf rad}^{j+1}\Delta_{\kappa , ({\bf s}^{\star}, {\bf m})}(\la^{\star}) : L_{\kappa, ({\bf s}^{\star}, {\bf m})}(\mu^{\star})]v^j = d_{\mu, \lambda}(v),$$ where $d_{\mu,\lambda}(v)$ is defined in (\ref{KLpoly}) and $$\Delta_{\kappa , ({\bf s}^{\star}, {\bf m})}(\la^{\star}) = {\sf rad}^0\Delta_{\kappa ,({\bf s}^{\star}, {\bf m})}(\lambda^{\star})\supseteq {\sf rad}^1\Delta_{\kappa , ({\bf s}^{\star}, {\bf m})}(\la^{\star})\supseteq {\sf rad}^2\Delta_{\kappa , ({\bf s}^{\star}, {\bf m})}(\la^{\star})\supseteq \cdots$$ is the radical filtration of $\Delta_{\kappa, ({\bf s}^{\star}, {\bf m})}(\la^{\star})$.
\end{enumerate}
\end{corollary}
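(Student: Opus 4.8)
The plan is to pull everything back along the equivalence $\Upsilon_n$ of Theorem~\ref{parabolic} to the parabolic category $\Opar{{\bf s}}(n)$, where both assertions reduce to known facts about graded parabolic category $\mathcal{O}$ in type~$A$. As recalled in \ref{firstconsequence}, the Koszul $\Z$-grading on $\tOpar{{\bf s},m}$ from \cite{backelin} restricts to the sum of blocks $\Opar{{\bf s}}(n)$ and is transported by $\Upsilon_n$ to the grading defining $\mathcal{O}^{\sf gr}_{\kappa,({\bf s}^{\star},{\bf m})}(n)$; under this transport $\Delta^{\sf gr}_{\kappa,({\bf s}^{\star},{\bf m})}(\lambda^{\star})$ corresponds to the graded lift $N(A_\lambda)^{\sf gr}$ normalised to have head in degree $0$, and $L^{\sf gr}_{\kappa,({\bf s}^{\star},{\bf m})}(\lambda^{\star})$ to $L(A_\lambda)^{\sf gr}$. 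Since for a graded highest weight category the graded Grothendieck group is free over $\Z[v,v^{-1}]$ on the classes of graded standard objects, $\Phi$ is a well-defined $\Z[v,v^{-1}]$-linear map, and because $\{M_\lambda\}$ is a $\mathbb{Q}(v)$-basis of $F(\Lambda_{\bf s})$ the extension $\Phi\otimes\mathbb{Q}(v)$ is an isomorphism onto $F(\Lambda_{\bf s})$, namely the identification $[N(A_\lambda)^{\sf gr}]\mapsto M_\lambda$.

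For part~(1) the key input is the identity $[\Delta^{\sf gr}_{\kappa,({\bf s}^{\star},{\bf m})}(\lambda^{\star}):L^{\sf gr}_{\kappa,({\bf s}^{\star},{\bf m})}(\mu^{\star})]_v = d_{\mu,\lambda}(v)$. Indeed, by \cite{backelin} together with the Kazhdan--Lusztig multiplicity formula for parabolic category $\mathcal{O}$, the left-hand side (computed on the $\Opar{{\bf s}}(n)$ side as $[N(A_\lambda)^{\sf gr}:L(A_\mu)^{\sf gr}]_v$) is a finite type~$A$ parabolic Kazhdan--Lusztig polynomial, and by \cite{Bdual} these coincide with the transition coefficients $d_{\mu,\lambda}(v)$ of \eqref{KLpoly}, once the dictionary $\lambda\leftrightarrow A_\lambda$ and the $\star$-twist are accounted for. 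Granting this, $[\Delta^{\sf gr}_{\kappa,({\bf s}^{\star},{\bf m})}(\lambda^{\star})]=\sum_\mu d_{\mu,\lambda}(v)\,[L^{\sf gr}_{\kappa,({\bf s}^{\star},{\bf m})}(\mu^{\star})]$ in $[\mathcal{O}^{\sf gr}_{\kappa,({\bf s}^{\star},{\bf m})}(n)]$; applying $\Phi$ and comparing with $M_\lambda=\sum_\mu d_{\mu,\lambda}(v)L_\mu$, and using that the matrix $\bigl(d_{\mu,\lambda}(v)\bigr)$ is unitriangular for $\unlhd$ and hence invertible over $\mathbb{Q}(v)$, forces $\Phi([L^{\sf gr}_{\kappa,({\bf s}^{\star},{\bf m})}(\mu^{\star})])=L_\mu$ for all $\mu$. (Alternatively one could show directly that $\Phi([L^{\sf gr}(\mu^{\star})])$ is bar-invariant, by identifying the graded duality on $\mathcal{O}^{\sf gr}$ with Lusztig's bar involution on $F(\Lambda_{\bf s})$ via the graded $\gl_\infty$-categorification of \ref{SSS_parab_cat}, cf.~\cite{BKgraded}, and that it is $\equiv M_\mu$ modulo $v\Z[v]$-combinations of lower $M_\nu$ by positivity of the Koszul grading, then invoke the uniqueness in \eqref{dualcan}; the inversion argument above is shorter.)

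Part~(2) follows from (1) once the radical filtration of the ungraded module $\Delta_{\kappa,({\bf s}^{\star},{\bf m})}(\lambda^{\star})$ is identified with the grading filtration of $\Delta^{\sf gr}_{\kappa,({\bf s}^{\star},{\bf m})}(\lambda^{\star})$. That identification holds because parabolic category $\mathcal{O}$ --- hence $\Opar{{\bf s}}(n)$ and therefore $\mathcal{O}^{\sf gr}_{\kappa,({\bf s}^{\star},{\bf m})}(n)$ --- is standard Koszul: the graded standard modules admit linear projective resolutions, so each is rigid with radical filtration equal to its grading filtration. Consequently the $v^j$-coefficient of $[\Delta^{\sf gr}(\lambda^{\star}):L^{\sf gr}(\mu^{\star})]_v=d_{\mu,\lambda}(v)$ is the multiplicity of $L_{\kappa,({\bf s}^{\star},{\bf m})}(\mu^{\star})$ in ${\sf rad}^j\Delta_{\kappa,({\bf s}^{\star},{\bf m})}(\lambda^{\star})/{\sf rad}^{j+1}\Delta_{\kappa,({\bf s}^{\star},{\bf m})}(\lambda^{\star})$, which is the displayed formula.

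The main difficulty is not conceptual: the work lies in the bookkeeping that makes the cited inputs line up --- matching the labelling $\lambda\leftrightarrow A_\lambda$, the $\star$-twists on multipartitions and on charges, and the normalisation of the Koszul grading --- so that Backelin's Koszulity, the parabolic Kazhdan--Lusztig formula, and Brundan's combinatorial identification \cite{Bdual} of $d_{\mu,\lambda}(v)$ with finite type~$A$ parabolic Kazhdan--Lusztig polynomials assemble into the clean statement ``graded decomposition number $=d_{\mu,\lambda}(v)$'', and in recording that parabolic $\mathcal{O}$ is standard Koszul, an ingredient beyond bare Koszulity that part~(2) genuinely requires.
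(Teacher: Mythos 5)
Your proposal is correct and follows essentially the same route as the paper: transfer everything along $\Upsilon_n$ to $\Opar{{\bf s}}(n)$, then invoke the graded Kazhdan--Lusztig theory for parabolic category $\mathcal{O}$. The paper does this by simply citing \cite[Theorem~4.5]{BKshift} for part~(1) and \cite[Theorem~3.11.4]{BGS} for part~(2); you have unpacked those citations (Backelin's Koszul grading plus Brundan's \cite{Bdual} identification for the graded decomposition numbers, then inversion, and the Koszul ``radical filtration = grading filtration'' fact for part~(2)), so the content matches.
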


\begin{proof}
After using Theorem \ref{parabolic} to transfer to the setting of $\Opar{\bf s}$, (1) is proved exactly as in \cite[Theorem 4.5]{BKshift}, where, of course, the variable $v$ keeps track of the grading. Part (2) is a standard Koszul fact, see for instance \cite[Theorem 3.11.4]{BGS}.
\end{proof}
Recently Stroppel and Webster have determined the decomposition numbers of the cyclotomic $q$-Schur algebras at roots of unity in terms of Fock space combinatorics, \cite{SW}.
\subsubsection{Support of irreducible modules} \label{secondconsequence}It is shown in \cite[Theorem 6.3]{Shan} that the set $\irr{\kappa, ({\bf s}^{\star}, {\bf m})} = \{ [L_{\kappa, ({\bf s}^{\star}, {\bf m})}(\la)]: \la\in\mathscr{P}_{\ell} \}$ and the operators $$\tilde{e}_i, \tilde{f}_i : \irr{\kappa, ({\bf s}^{\star}, {\bf m})}\rightarrow \irr{\kappa, ({\bf s}^{\star}, {\bf m})}\coprod \{ 0\}, \qquad \tilde{e}_i([L]) = [\text{head}(E_iL)]; \, \tilde{f}_i([L]) = [\text{soc}(F_iL)]$$ define a crystal that is isomorphic to the crystal of \ref{sscg} for $\Lambda_{{\bf s}^{\star}}$. It seems, however, to be complicated to write an explicit description of this isomorphism. But, thanks to Theorem \ref{parabolic} there is also an isomorphism from the representation theoretic crystal of \cite{Shan} to the crystal of $\Lambda_{\bf s}$ in which the roles of $\tilde{e}_i$ and $\tilde{f}_i$ are switched to that of $\tilde{e}_{-i}$ and $\tilde{f}_{-i}$ for all $i\in \Z$ and which just sends $[L_{\kappa, ({\bf s}^{\star}, {\bf m})}(\la^{\star})]$ to $\la$ for all $\la\in \mathscr{P}_{\ell}$.

For each $\lambda\in\mathscr{P}_{\ell}$ let $$N(\lambda) = \max \{ j\in \Z_{\geq 0}: \text{there exist } i_1, \ldots , i_j \in \Z \text{ such that }\tilde{e}_{i_1}\cdots \tilde{e}_{i_j} (\lambda) \neq 0\}.$$ Given a non-negative integer $m\leq n$ set $$X^n_{m} = G_n\cdot \{ (0, \ldots ,0, x_1, \ldots , x_{m})\}\subseteq \C^n.$$
It is known (and can be proved similarly to \cite[Remark 3.7]{shanvasserot}) that the support of any irreducible module in $\OCat_{\kappa,{\bf s}}$ is one of $X^n_m$.

\begin{corollary}  Let $\kappa\in \C\setminus \mathbb{Q}$ and ${\bf s} = (s_1, \ldots , s_{\ell})\in \Z^{\ell}$ with $s_1 > s_2 > \cdots > s_{\ell}$.
\begin{enumerate}
\item If $\lambda\in\prt{\ell}{n}$ then the support of $L_{\kappa, ({\bf s}^{\star}, {\bf m})}(\la^{\star})$ is $X^n_{N(\lambda)}$.
\item The module $L_{\kappa, ({\bf s}^{\star}, {\bf m})}(\lambda^{\star})$ is finite dimensional if and only if $N(\lambda) = 0$.
\end{enumerate}
\end{corollary}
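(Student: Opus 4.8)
The plan is to read off the support from the Bezrukavnikov--Etingof restriction functors and then translate it into crystal combinatorics by means of \ref{indresCher} and \ref{secondconsequence}. For $0\le k\le n$ the point $(0,\dots,0,x_1,\dots,x_k)\in\C^n=\h$ with $x_1,\dots,x_k$ generic has stabiliser the parabolic subgroup $G_\ell(n-k)\subset G_\ell(n)$ acting on the first $n-k$ coordinates; hence, in the notation of \ref{SSS_res_support}, $X^n_k=X_{G_\ell(n-k)}$. Since the restriction functors of \cite{BE} are transitive, $\Res^{G_\ell(n)}_{G_\ell(n-k)}$ is the $k$-fold composite of the one-step functors $\Res^{j}_{j-1}$. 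Combining \ref{SSS_res_support} with the fact recalled above that the support of any irreducible of $\OCat_{\kappa,({\bf s}^\star,{\bf m})}(n)$ is one of the $X^n_m$, I obtain that $\operatorname{Supp}_{\C[\h]}L=X^n_m$, where $L:=L_{\kappa,({\bf s}^\star,{\bf m})}(\lambda^\star)$ and $m$ is the largest $k$ for which $(\Res^{n}_{n-1})^{\circ k}L\ne 0$.

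By \ref{indresCher} we have $\Res^{j}_{j-1}=\bigoplus_i E_i$, so $m$ is the maximal length of a word $E_{i_1}\cdots E_{i_k}$ not annihilating $L$. On the other hand, by the crystal isomorphism of \ref{secondconsequence} (which sends $[L]\mapsto\lambda$ and interchanges $\tilde e_i$ with $\tilde e_{-i}$), $N(\lambda)$ equals the maximal length of a nonzero word in the representation-theoretic operators $\tilde e_i[M]=[\operatorname{head}(E_iM)]$ starting from $[L]$. Thus (1) amounts to the equality $m=N(\lambda)$. The inequality $m\ge N(\lambda)$ is formal: given a crystal word with $\tilde e_{i_1}\cdots\tilde e_{i_k}[L]\ne 0$, peel the operators off from the right and use exactness of each $E_i$ together with the surjection from $E_iL(\mu)$ onto its simple head (whose class is $\tilde e_i[L(\mu)]$ whenever the latter is nonzero); this produces a surjection from $E_{i_1}\cdots E_{i_k}L$ onto the nonzero simple at the end of the word, so this word of length $k=N(\lambda)$ does not annihilate $L$.

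For the reverse inequality I would introduce the crystal depth $D([M])$, the longest nonzero crystal word starting from $[M]$, so that $D([L])=N(\lambda)$, and invoke depth-monotonicity under restriction: every composition factor $L(\nu)$ of $E_iL(\mu)$ satisfies $D([L(\nu)])\le D([L(\mu)])-1$. Granting this, one argues by induction on $k$: if $E_{i_1}\cdots E_{i_k}L\ne 0$ then $E_{i_k}L\ne 0$, so by exactness $E_{i_1}\cdots E_{i_{k-1}}$ is nonzero on some composition factor $L(\nu)$ of $E_{i_k}L$, whence $D([L])\ge D([L(\nu)])+1\ge k$ by the inductive hypothesis; thus $N(\lambda)=D([L])\ge m$, completing (1). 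The depth-monotonicity statement is the genuinely non-formal point, and it is where I expect the main difficulty to lie; it is an instance of the structure theory of $\gl_\infty$-categorifications and can be established exactly as in the proof of \cite[Remark 3.7]{shanvasserot} (equivalently, by noting that $e_i$ sends a dual-canonical basis vector of the higher-level Fock space of \ref{SUBSECTION_Fock} to a combination of dual-canonical basis vectors of strictly smaller crystal depth). Finally, (2) follows at once: $L$ is finitely generated over $\C[\h]$, so it is finite dimensional if and only if $\operatorname{Supp}_{\C[\h]}L=\{0\}=X^n_0$, which by (1) happens precisely when $N(\lambda)=0$.
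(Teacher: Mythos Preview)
Your argument is correct in outline and in spirit it coincides with the paper's: both reduce the computation of the support to the crystal depth and defer the one genuinely non-formal step to Shan--Vasserot. The paper's proof is simply a direct citation of \cite[Corollary 3.17]{shanvasserot} together with the explicit crystal identification from the preceding paragraph; it does not unpack the mechanism at all.

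Your unpacking is useful and the easy inequality $m\ge N(\lambda)$ is argued correctly. The point to flag is the citation for the hard inequality. What you call ``depth-monotonicity'' --- that every composition factor of $E_iL(\mu)$ has strictly smaller crystal depth --- is not a consequence of the $\mathfrak{sl}_2$-categorification axioms alone (those only control $\varepsilon_i$, not the mixed-index depth $D$), and it is essentially \emph{equivalent} to the statement you are trying to prove, not a lemma on the way to it. The reference \cite[Remark 3.7]{shanvasserot} is the one the paper uses for the preliminary fact that supports are always of the form $X^n_m$; the result that actually pins down \emph{which} $m$ in terms of the crystal is \cite[Corollary 3.17]{shanvasserot}, and that is where the work is done. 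Your parenthetical alternative (that $e_i$ sends a dual-canonical basis vector to a combination of vectors of strictly smaller depth) is true, but it is again the Fock-space shadow of the same result rather than an independent elementary argument. So: correct strategy, correct identification of the crux, but adjust the citation to \cite[Corollary 3.17]{shanvasserot} and be aware that you are invoking, not reproving, the key step.
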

\begin{proof}
(2) is a consequence of (1) which itself is a consequence of  \cite[Corollary 3.17]{shanvasserot}
 and the explicit identification above of the crystal $(\irr{\kappa, ({\bf s}^{\star}, {\bf m})}, \tilde{e}_i, \tilde{f}_i)$.
\end{proof}
This gives the labeling of all finite dimensional representations of ${\bf H}_{\kappa, ({\bf s}^{\star}, {\bf m})}(G_\ell(n))$ under our current restrictions on $\kappa$, ${\bf s}$ and ${\bf m}$. They are $L_{\kappa, ({\bf s}^{\star}, {\bf m})}(\la^{\star})$ where $\la\in\mathscr{P}_{\ell}$ is such that $\tilde{e}_i (\lambda) = 0$ for all $i\in \Z$. This set can be calculated then by the process of \ref{sscg}.

\subsubsection{Inner product on finite dimensional representations}
There is  a contravariant duality $d$ on ${\mathcal{O}}^{\sf gr}_{\kappa, ({\bf s}^{\star}, {\bf m})}(n)$ induced from the usual duality on parabolic category $\mathcal{O}$. It fixes the simple modules, and it has the property that $d(M\lan j\ran) = (dM)\lan -j \ran.$

For $M,N\in {\mathcal{O}}^{\sf gr}_{\kappa, ({\bf s}^{\star}, {\bf m})}(n)$ we set $$\lan M, N \ran_v = \sum_{i, j \in \Z} (-1)^i \dim \ext^i_{{\mathcal{O}}^{\sf gr}_{\kappa, ({\bf s}^{\star}, {\bf m})}(n)}(M, (dN)\lan j \ran)v^j.$$ This descends to a $\Z[v,v^{-1}]$-bilinear form on $[{\mathcal{O}}^{\sf gr}_{\kappa, ({\bf s}^{\star}, {\bf m})}(n)]$. For any $\lambda, \mu\in \prt{\ell}{n}$ we have \begin{eqnarray*} \sum_{j}(-v)^j &\dim &\ext^j_{{\bf H}_{\kappa, ({\bf s}^{\star}, {\bf m})}(G_n)}(L_{\kappa, ({\bf s}^{\star}, {\bf m})}(\la), L_{\kappa, ({\bf s}^{\star}, {\bf m})}(\mu)) \\ & = &\sum_{j}(-v)^j \dim \ext^j_{\mathcal{O}_{\kappa, ({\bf s}^{\star}, {\bf m})}(n)}(L_{\kappa, ({\bf s}^{\star}, {\bf m})}(\la), L_{\kappa, ({\bf s}^{\star}, {\bf m})}(\mu)) \\ &=& \sum_{i, j}(-v)^j \dim \ext^j_{\mathcal{O}^{\sf gr}_{\kappa, ({\bf s}^{\star}, {\bf m})}(n)}(L^{\sf gr}_{\kappa, ({\bf s}^{\star}, {\bf m})}(\la), L^{\sf gr}_{\kappa, ({\bf s}^{\star}, {\bf m})}(\mu)\lan i\ran) \\ & = & \sum_{j}(-v)^j \dim \ext^j_{\mathcal{O}^{\sf gr}_{\kappa, ({\bf s}^{\star}, {\bf m})}(n)}(L^{\sf gr}_{\kappa, ({\bf s}^{\star}, {\bf m})}(\la), L^{\sf gr}_{\kappa, ({\bf s}^{\star}, {\bf m})}(\mu)\lan j\ran) \\ & = & \lan L^{\sf gr}_{\kappa, ({\bf s}^{\star}, {\bf m})}(\la), L^{\sf gr}_{\kappa, ({\bf s}^{\star}, {\bf m})}(\mu) \ran_v.\end{eqnarray*} Here the first equality is proved in \cite[Proposition 4.4]{etingofaffine}, the third equality  is a consequence of $\mathcal{O}_{\kappa, ({\bf s}^{\star}, {\bf m})}(n)$ being Koszul and the fourth a consequence of Koszulity and $d(L^{\sf gr}_{\kappa, ({\bf s}^{\star}, {\bf m})}(\mu)) = L^{\sf gr}_{\kappa, ({\bf s}^{\star}, {\bf m})}(\mu)$. The above sequence of equalities shows that the following result confirms \cite[Conjecture 4.7]{etingofaffine}.

\begin{corollary} If $q\in \C^*$ is not a non-trivial root of unity then the restriction of the inner product $\lan - , - \ran_{v\mapsto q}$ to the finite dimensional representations in ${\mathcal{O}}^{\sf gr}_{\kappa, ({\bf s}^{\star}, {\bf m})}(n)$ is non-degenerate.
\end{corollary}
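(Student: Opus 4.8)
The plan is to reduce the non-degeneracy of $\lan-,-\ran_{v\mapsto q}$ on finite dimensional representations to a statement about the Cartan pairing between projective covers and simples in the graded parabolic category $\OCat$, and then to exploit Koszulity together with the transfer along $\Upsilon_n$. First I would recall that, by the sequence of equalities displayed just before the statement, it suffices to show that the Gram matrix of $\lan L^{\sf gr}_{\kappa,({\bf s}^\star,{\bf m})}(\la^\star), L^{\sf gr}_{\kappa,({\bf s}^\star,{\bf m})}(\mu^\star)\ran_v$ — with $\la,\mu$ running over those multipartitions for which $L_{\kappa,({\bf s}^\star,{\bf m})}(\la^\star)$ is finite dimensional, i.e. $N(\la)=0$ in the notation of \ref{secondconsequence} — is invertible after the substitution $v\mapsto q$. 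Via $\Upsilon_n$ and Theorem \ref{parabolic}, together with the Koszul grading on $\Opar{\bf s}(n)$ coming from \cite{backelin}, this Gram matrix coincides with the analogous Ext-pairing between the simple modules $L(A_\la)$ in the Koszul category $\Opar{\bf s}(n)$ that are projective-injective (equivalently, whose associated weight lies in the relevant finite part of the orbit, the images of the finite dimensional Cherednik modules under $\Upsilon_n$).

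\textbf{Key steps.} Step one: identify, using Corollary in \ref{secondconsequence}, the finite dimensional simples as exactly the $L_{\kappa,({\bf s}^\star,{\bf m})}(\la^\star)$ with $N(\la)=0$, and observe that under $\Upsilon_n$ these go to the simple objects $L(A_\la)$ in $\Opar{\bf s}(n)$ whose support condition forces them to be projective-injective objects of the parabolic block — this is the standard fact that a simple module in parabolic category $\OCat$ is projective-injective precisely when it is ``minimal'' in the appropriate sense, which here matches $N(\la)=0$. Step two: in a Koszul (hence standard Koszul) highest weight category, for projective-injective simples $L,L'$ the graded Euler form $\sum_j(-1)^j\dim\Ext^j(L,(dL')\lan j\ran)v^j$ is, up to a monomial, a single Kazhdan--Lusztig-type polynomial entry, and the full Gram matrix is (up to diagonal monomial rescaling and permutation) unitriangular with respect to a suitable order with $1$'s on the diagonal when $L=L'$ and lower-triangular entries in $v\Z[v]$; more robustly, Koszul duality identifies the Gram matrix with (a submatrix of) the product $P^T P$ of a decomposition-type matrix $P$ which is unitriangular over $\Z[v]$, hence invertible over $\Z[v,v^{-1}]$, hence specialising to an invertible matrix at \emph{any} $q$ such that $q$ is not a nontrivial root of unity. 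Step three: transport this invertibility back through $\Upsilon_n$ and the chain of equalities preceding the statement to conclude non-degeneracy of $\lan-,-\ran_{v\mapsto q}$ on the span of the finite dimensional representations.

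\textbf{Main obstacle.} The technical heart — and the step I expect to be the main obstacle — is Step two: pinning down precisely why the finite-dimensional block of the Gram matrix specialises to something invertible at $v=q$ for $q$ not a nontrivial root of unity, rather than only at $v$ generic or at $v=1$. The safest route is to note that for projective-injective simples in a Koszul category the relevant $\Ext$'s are concentrated so that, after Koszul duality, the Gram matrix becomes $D^T C D$ for a monomial diagonal $D$ and a matrix $C$ which is unitriangular over $\Z[v]$ (its strictly-off-diagonal entries lying in $v\Z[v]$ by \eqref{dualcan}); such a $C$ has determinant a unit in $\Z[v,v^{-1}]$ — indeed $\det C = \pm v^{k}$ for some $k$ — and therefore remains invertible under the substitution $v\mapsto q$ for every nonzero $q$, a fortiori when $q$ is not a nontrivial root of unity. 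I would also double-check that the duality $d$ fixing simples (used in \ref{secondconsequence} and here) is the Koszul-compatible one with $d(M\lan j\ran)=(dM)\lan -j\ran$, so that the bilinear form $\lan-,-\ran_v$ really does compute the Koszul-dual Cartan pairing and not some variant; this compatibility is exactly what was recorded just above the corollary, so the argument closes.
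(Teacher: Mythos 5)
The argument has a genuine gap at its core, and the paper in fact takes a quite different route.

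Your Step two rests on the claim that the Gram matrix restricted to the finite-dimensional simples is "a submatrix of $P^T P$ for unitriangular $P$, hence invertible." This inference is false: a principal submatrix of an invertible matrix need not be invertible, and unitriangularity of $P$ controls the determinant of the full Gram matrix $C^{-1}=P^{-1}(P^T)^{-1}$, not of its restrictions to subsets of the basis. Without an additional structural input, nothing prevents the finite-dimensional block from being degenerate even though the ambient form is non-degenerate. Your Step one is also problematic: the finite-dimensional simples in $\OCat_{\kappa,({\bf s}^\star,{\bf m})}(n)$ are characterized as those killed by all Bezrukavnikov--Etingof restriction functors, equivalently as the $\lambda$ with $N(\lambda)=0$, i.e.\ highest-weight vertices of the crystal; these are, in general, neither projective nor injective in $\Opar{\bf s}(n)$, and I don't see any justification for the asserted correspondence with projective-injective simple objects (simple projective-injectives are extremely rare in parabolic $\OCat$).

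What is actually needed is the orthogonality of the finite-dimensional span to a natural complement, and this is exactly what the paper supplies by working at the level of the Fock space $F(\Lambda_{\bf s})$. After identifying $\lan-,-\ran_v$ with the form in which the $M_\lambda$'s are orthonormal (using Corollary~\ref{firstconsequence}), one checks that the form intertwines the $U_v(\gl_\infty)$-action with a suitable antiautomorphism $\tau$, so that in the decomposition $F(\Lambda_{\bf s})_n=F(\Lambda_{\bf s})_{n,{\sf sing}}\oplus\sum_i F_i F(\Lambda_{\bf s})_{n-1}$ the two summands are orthogonal. Non-degeneracy on the full space therefore restricts to non-degeneracy on $F(\Lambda_{\bf s})_{n,{\sf sing}}$, which by Corollary~\ref{secondconsequence} is the span of the finite-dimensional simples. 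Finally, the specialisation $v\mapsto q$ is handled not by a determinant computation but by the fact that the integrable $U_v(\gl_\infty)$-theory behaves well at $q$ not a non-trivial root of unity, so that taking singular vectors commutes with specialisation. Your Koszul-duality heuristic points at the right objects but misses precisely this orthogonality, which is the crux of the proof.
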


\begin{proof}
We abuse notation by letting $\lan -,-\ran_v$ also denote the sum of the forms on all $[{\mathcal{O}}^{\sf gr}_{\kappa, ({\bf s}^{\star}, {\bf m})}(n)]$. We have $$\lan \Delta^{\sf gr}_{\kappa, ({\bf s}^{\star}, {\bf m})}(\lambda), \Delta^{\sf gr}_{\kappa, ({\bf s}^{\star}, {\bf m})}(\mu)\ran_v =  \sum_{i, j \in \Z} (-1)^i \dim \ext^i_{{\mathcal{O}}^{\sf gr}_{\kappa, ({\bf s}^{\star}, {\bf m})}(n)}( \Delta^{\sf gr}_{\kappa, ({\bf s}^{\star}, {\bf m})}(\lambda), \nabla^{\sf gr}_{\kappa, ({\bf s}^{\star}, {\bf m})}(\mu)\lan j \ran)v^j = \delta_{\lambda, \mu}.$$ So thanks to Corollary \ref{firstconsequence} we can identify with $\lan -,-\ran_v$ with the bilinear form on an integral form of $F(\Lambda_{\bf s})$ which is defined by $$\lan M_{\lambda}, M_{\mu} \ran_v = \delta_{\lambda, \mu}.$$
Using \eqref{actionqgp} one checks that $\lan u x, y \ran_v = \lan x, \tau(u) y\ran_v$ for $x,y\in F(\Lambda_{\bf s}), u \in U_{v}(\mathfrak{gl}_{\infty})$ and where $\tau$ is the antiautomorphism of $U_v(\mathfrak{gl}_{\infty})$ defined by $$\tau (E_i) = vF_iK_i  , \quad \tau(F_i) = v^{-1}K_i^{-1}E_i, \quad  \tau(K_i) = K_i.$$

Let $F(\Lambda_{\bf s})^{\sf sing}_n$ denote the subspace of $F(\Lambda_{\bf s})_{n, {\sf sing}}$ annihilated by the $E_i$'s. As we have that $$F(\Lambda_{\bf s})_n = F(\Lambda_{\bf s})_{n,{\sf sing}} \oplus \sum_{i\in \Z}F_i (F(\Lambda_{\bf s})_{n-1})$$ and moreover $$\lan F(\Lambda_{\bf s})_{n,{\sf sing}} , F_i (F(\Lambda_{\bf s})_{n-1})\ran_v = \lan v^{-1}K_i^{-1}E_i (F(\Lambda_{\bf s})_{n,{\sf sing}}) , F(\Lambda_{\bf s})_{n-1} \ran_v = 0,$$
it follows that $\lan -, -\ran_v$ restricted to $F(\Lambda_{\bf s})_{n,{\sf sing}}$ is non-degenerate.

 By Corollary \ref{secondconsequence} $F(\Lambda_{\bf s})_{n, {\sf sing}}$ corresponds to the subspace of $[{\mathcal{O}}^{\sf gr}_{\kappa, ({\bf s}^{\star}, {\bf m})}(n)]$ spanned by the finite dimensional representations. Thus the Gram determinant of the restriction of $\lan -,-\ran_v$ to the finite dimensional representations is non-zero. Moreover, since the integrable representation theory of $U_{v}(\mathfrak{gl}_{\infty})$ is stable under specializing $v$ to $q\in \C^*$ (because $q$ is not a non-trivial root of unity), we see that $F(\Lambda_{\bf s})_{n, {\sf sing}}\otimes_{\Z [v,v^{-1}]} \C_q = (F(\Lambda_{\bf s})\otimes_{\Z [v,v^{-1}]} \C_q)_{n, {\sf sing}}$. Since $\lan - , -\ran_{v\mapsto q}$ is non-degenerate on $F(\Lambda_{\bf s})_n$, the same analysis as above shows that the form is non-degenerate on the finite dimensionals.\end{proof}
We end by noting that for $k\in \mathbb{Q}$ \cite[Remark 5.14]{shanvasserot} confirms this conjecture in the special case $q=1$, see also \cite[Conjecture 4.1]{etingofaffine}.
\section{Problems}\label{SS_open_prob}
Here we give a few problems related to the topics discussed in this paper. 
\medskip

\noindent
{\bf Problem 1: Conjecture \ref{sph_gen_main}}. Confirm it!
\medskip

\noindent
{\bf Problem 2: Cherednik $\mathcal{O}$ vs parabolic $\mathcal{O}$}.
Recall that for irrational $\kappa\notin \mathbb{Q}$ and distinct integral ${\bf s}=(s_1,\ldots,s_{\ell})$ and ${\bf m}=(m_1,\ldots,m_\ell)$ we have a reduction to the case ${\bf m}$ is dominant, Proposition \ref{reductioncase} . If $s_1>s_2>\ldots >s_{\ell}$ then we showed that the cyclotomic $\mathcal{O}_{\kappa, ({\bf s}^{\ast}, {\bf m})}$ is equivalent to an appropriate parabolic category $\mathcal{O}$ of type $A$. A non-trivial further generalization would be to remove this restriction on strict dominance for ${\bf s}$, showing that the cyclotomic category $\mathcal{O}_{\kappa, ({\bf s}^{\ast}, {\bf m})}$
is still isomorphic to an appropriate parabolic category $\mathcal{O}$
of type $A$. To prove this one  needs to generalize results from \cite{BKschurweyl} to more general
parabolic categories $\mathcal{O}$ then considered there.
\medskip

\noindent
{\bf Problem 3: Spherical categories $\mathcal{O}$ at aspherical parameters}.
Consider the spherical subalgebra $e{\bf H}_pe$ inside the Cherednik algebra
${\bf H}_p$. In Section \ref{SECTION_sphericalO} we have introduced the spherical category $\mathcal{O}_p^{sph}$ of
$e{\bf H}_pe$-modules. This category comes equipped with an exact functor $e:\mathcal{O}_p\rightarrow \mathcal{O}^{sph}_p, N\mapsto eN,$ that is a quotient functor. By considering supports, it is easy to show that the {\sf KZ} functor  factors through $e$.
Thus in the cyclotomic case the  category $\mathcal{O}_p^{sph}$ inherits a categorification from $\mathcal{O}_p$. It would be very interesting to understand the kernel of $e$, even at the level of Grothendieck groups.
\medskip

\noindent
{\bf Problem 4: Subcategories associated to singular irreducibles}.
Let $N$ be an irreducible module in $\mathcal{O}_p(n_0)$ that is annihilated by
all $i$-restriction functors. Consider the
Serre subcategory $\OCat_p^N$ in $\OCat_p$  that is spanned by the simples appearing
in the modules obtained from $N$ by all possible compositions of the $i$-induction functors.
It would be very interesting to understand the structure of this subcategory: for instance one could hope that $\OCat^N(n)$ is a highest weight category. One expects that the category is related to the finer structure of the categorification of the higher level Fock space. Using techniques from \cite{shanvasserot} we can show that it admits
a quotient functor (an analogue of the {\sf KZ} functor) to the category of modules over an appropriate
cyclotomic Hecke algebra.
\medskip

\noindent{\bf Problem 5: Definition of $\mathcal{O}$ at the microlocal level}. Let us use the notation of
Section \ref{SECTION_derived}. Consider the microlocal sheaf of algebras $\Dcal^\alpha$ whose
space of sections over a $\C^\times$-stable open subset $U$ is $\Dcal^\alpha(U):=\Dcal_\hbar^\alpha(U)_{T_2-fin}/(\hbar-1)$,
where the subscript ``$T_2$-fin'' means that one takes $T_2$-finite elements. Then the full subcategory of $\Dcal^\alpha$-$\mods^{T_1}$ consisting of objects supported on $\pi^{-1}(\h/W)$ is an analogue of category $\OCat$. We do not, however, know how to define an analogue of $\OCat$
inside the non-equivariant category $\Dcal^\alpha$-$\mods$. The support condition is insufficient:
we get too many objects in $\Dcal^\alpha$-$\mods$ supported on $\pi^{-1}(\h/W)$. We conjecture
that the correct category is defined by the support condition together with monodromy conditions
on  the local behaviour of an object near each irreducible component of $\pi^{-1}(\h/W)$.
One motivation for this problem comes from the study of the symplectic reflection algebras
associated to the wreath products $\Gamma_n$ of $S_n$ with a {\it non-cyclic} Kleinian group. In those cases
no category behaving like the category $\mathcal{O}$ is known (at least in characteristic $0$).

\end{document}